\newcommand{\R}{\mathbb{R}}
\newcommand{\E}{\mathbb{E}}
\newcommand{\X}{\mathbb{X}}
\newcommand{\Y}{\mathbb{Y}}
\def\restriction#1#2{\mathchoice
              {\setbox1\hbox{${\displaystyle #1}_{\scriptstyle #2}$}
              \restrictionaux{#1}{#2}}
              {\setbox1\hbox{${\textstyle #1}_{\scriptstyle #2}$}
              \restrictionaux{#1}{#2}}
              {\setbox1\hbox{${\scriptstyle #1}_{\scriptscriptstyle #2}$}
              \restrictionaux{#1}{#2}}
              {\setbox1\hbox{${\scriptscriptstyle #1}_{\scriptscriptstyle #2}$}
              \restrictionaux{#1}{#2}}}
\def\restrictionaux#1#2{{#1\,\smash{\vrule height .8\ht1 depth .85\dp1}}_{\,#2}}
\newcommand{\norm}[1]{\left\Vert #1\right\Vert}
\newcommand{\tens}[2]{#1^{\otimes #2}}
\DeclareMathOperator{\image}{im}
\newtheorem{thm}{Theorem}
\newtheorem{lem}{Lemma}
\newtheorem{Def}{Definition}
\newtheorem{prop}{Proposition}
\newtheorem{Athm}{Theorem}[section]
\newtheorem{Alem}[Athm]{Lemma}
\newtheorem{Aprop}[Athm]{Proposition}
\begin{document}

\begin{frontmatter}

\title{Non-asymptotic Rates for Manifold, Tangent Space and Curvature Estimation}
\runtitle{Convergence Rates for Manifold Learning}

\begin{aug}

\author{\fnms{Eddie} \snm{Aamari}\thanksref{m1,t1,t2,t3}\ead[label=e1]{eaamari@ucsd.edu}}
and
\author{\fnms{Cl\'ement} \snm{Levrard}\thanksref{m3,t1,t2}\ead[label=e2]{levrard@math.univ-paris-diderot.fr}}


\ead[label=u1,url]{http://www.math.ucsd.edu/~eaamari/}
\ead[label=u2,url]{http://www.normalesup.org/~levrard/}

\thankstext{t1}{Research supported by ANR project TopData ANR-13-BS01-0008}
\thankstext{t2}{Research supported by Advanced Grant of the European Research Council GUDHI}
\thankstext{t3}{Supported by the Conseil r\'egional d'\^{I}le-de-France program RDM-IdF}
\runauthor{E. Aamari and C. Levrard}

\affiliation{
	U.C. San Diego\thanksmark{m1}
	,
	Universit\'e Paris-Diderot\thanksmark{m3}
		}

\address{
Department of Mathematics \\
University of California San Diego \\
9500 Gilman Dr. La Jolla \\
CA 92093  \\
United States \\
\printead{e1} \\
\printead{u1}
}

\address{
Laboratoire de probabilit\'es et modèles al\'eatoires \\
Bâtiment Sophie Germain \\
Universit\'e Paris-Diderot \\
75013 Paris \\
France \\
\printead{e2} \\
\printead{u2}
}
\end{aug}

\begin{abstract} \textit{:}
Given a noisy sample from a submanifold $M \subset \mathbb{R}^D$, we derive optimal rates for the estimation of tangent spaces $T_X M$, the second fundamental form $II_X^M$, and the submanifold $M$.
After motivating their study, we introduce a quantitative class of $\mathcal{C}^k$-submanifolds in analogy with Hölder classes.
The proposed estimators are based on local polynomials and allow to deal simultaneously with the three problems at stake. 
Minimax lower bounds are derived using a conditional version of Assouad's lemma when the base point $X$ is random.
\end{abstract}

\begin{keyword}[class=MSC]
\kwd{62G05, 62C20}
\end{keyword}

\begin{keyword}
\kwd{geometric inference}
\kwd{minimax}
\kwd{manifold learning}
\end{keyword}

\end{frontmatter}

\maketitle


\section{Introduction}
\label{sec:intro}

A wide variety of data can be thought of as being generated on a shape of low dimensionality compared to possibly high ambient dimension.
This point of view led to the development of the so-called topological data analysis, which proved fruitful for instance when dealing with physical parameters subject to constraints, biomolecule conformations, or natural images \cite{Wasserman16}. 
This field intends to associate geometric quantities to data without regard of any specific coordinate system or parametrization.
If the underlying structure is sufficiently smooth, one can model a point cloud $\X_n = \left\{ X_1,\ldots,X_n \right\}$ as being sampled on a $d$-dimensional submanifold $M \subset \R^D$. 
In such a case, geometric and topological intrinsic quantities include (but are not limited to) homology groups \cite{Niyogi08}, persistent homology \cite{Fasy14}, volume \cite{Arias16}, differential quantities \cite{Cazals06} or the submanifold itself \cite{Genovese12,AamariLevrard2015,Maggioni16}.

The present paper focuses on optimal rates for estimation of quantities up to order two: (0) the submanifold itself, (1) tangent spaces, and (2) second fundamental forms. 

     Among these three questions, a special attention has been paid to the estimation of the submanifold. In particular, it is a central problem in manifold learning. Indeed, there exists a wide bunch of algorithms intended to reconstruct submanifolds from point clouds (Isomap \cite{Tenenbaum00}, LLE \cite{Roweis00}, and restricted Delaunay Complexes \cite{Boissonnat14,Dey05} for instance), but few come with theoretical guarantees \cite{Genovese12,AamariLevrard2015,Maggioni16}. Up to our knowledge, minimax lower bounds were used to prove optimality in only one case \cite{Genovese12}. 
Some of these reconstruction procedures are based on tangent space estimation \cite{Boissonnat14,AamariLevrard2015,Dey05}. Tangent space estimation itself also yields interesting applications in manifold clustering \cite{Gashler11,Arias13}. 
Estimation of curvature-related quantities naturally arises in shape reconstruction, since curvature can drive the size of a meshing. 
As a consequence, most of the associated results deal with the case $d=2$ and $D=3$, though some of them may be extended to higher dimensions \cite{Merigot11,Gumhold01}. 
Several algorithms have been proposed in that case \cite{Rusinkiewicz04,Cazals06,Merigot11,Gumhold01}, but with no analysis of their performances from a statistical point of view. 

To assess the quality of such a geometric estimator, the class of submanifolds over which the procedure is evaluated has to be specified.
Up to now, the most commonly used model for submanifolds relied on the reach $\tau_{M}$, a generalized convexity parameter. 
Assuming $\tau_{M} \geq \tau_{min} > 0$ involves both local regularity --- a bound on curvature --- and global regularity --- no arbitrarily pinched area ---. 
This $\mathcal{C}^2$-like assumption has been extensively used in the computational geometry and geometric inference fields \cite{AamariLevrard2015,Niyogi08,Fasy14,Arias16,Genovese12}.
One attempt of a specific investigation for higher orders of regularity $k\geq 3$ has been proposed in \cite{Cazals06}.

Many works suggest that the regularity of the submanifold has an important impact on convergence rates.
This is pretty clear for tangent space estimation, where convergence rates of PCA-based estimators range from $(1/n)^{1/d}$ in the $\mathcal{C}^2$ case \cite{AamariLevrard2015} to $(1/n)^{\alpha}$ with $1/d<\alpha<2/d$ in more regular settings \cite{Singer12,Tyagi13}.
In addition, it seems that PCA-based estimators are outperformed by estimators taking into account higher orders of smoothness \cite{Cheng16,Cazals06}, for regularities at least $\mathcal{C}^3$. 
For instance fitting quadratic terms leads to a convergence rate of order $(1/n)^{2/d}$ in \cite{Cheng16}. These remarks naturally led us to investigate the properties of local polynomial approximation for regular submanifolds, where ``regular" has to be properly defined.
Local polynomial fitting for geometric inference was studied in several frameworks such as \cite{Cazals06}. In some sense, a part of our work extends these results, by  investigating the dependency of convergence rates on the sample size $n$, but also on the order of regularity $k$ and the ambient and intrinsic dimensions $d$ and $D$.

\subsection{Overview of the Main Results}
\label{subsec:contribution}
In this paper, we build a collection of models for $\mathcal{C}^k$-submanifolds ($k\geq 3$) that naturally generalize the commonly used one for $k=2$ (Section \ref{sec:model}). 
Roughly speaking, these models are defined by their local differential regularity $k$ in the usual sense, and by their minimum reach $\tau_{min}>0$ that may be thought of as a global regularity parameter (see Section \ref{subsec:model_k=2}). On these models, we study the non-asymptotic rates of estimation for tangent space, curvature, and manifold estimation~(Section~\ref{sec:main_results}). 
Roughly speaking, if $M$ is a $\mathcal{C}_{\tau_{min}}^k$ submanifold and if $Y_1, \hdots, Y_n$ is an $n$-sample drawn on $M$ uniformly enough, then we can derive the following minimax bounds:

\noindent
$
\displaystyle
 \text{\textit{(Theorems \ref{thm:upper_bound_tangent} and \ref{thm:lower_bound_tangent})}}
\hfill
\inf_{\hat{T}} \sup_{\substack{M \in \mathcal{C}^k \\ \tau_M \geq \tau_{min}}} \mathbb{E} \max_{1\leq j \leq n} \angle \bigl( T_{Y_j}M, \hat{T}_j \bigr) \asymp 
\left(\frac{1}{n}\right)^{\frac{k-1}{d}},
$

\medskip
\noindent
where $T_y M$ denotes the tangent space of $M$ at $y$;

\noindent
$
\displaystyle
\text{\textit{(Theorems \ref{thm:upper_bound_curvature} and \ref{thm:lower_bound_curvature})}}
\hfill
\inf_{\widehat{II}} \sup_{\substack{M \in \mathcal{C}^k \\ \tau_M \geq \tau_{min}}} \mathbb{E} \max_{1\leq j \leq n} \bigl\| II_{Y_j}^M - \widehat{II}_j \bigr\| \asymp 
\left(\frac{1}{n}\right)^{\frac{k-2}{d}},
$

\medskip
\noindent
where $II_y^M$ denotes the second fundamental form of $M$ at $y$;

\noindent
$
\displaystyle
\text{
\textit{(Theorems \ref{thm:upper_bound_hausdorff} and \ref{thm:lower_bound_hausdorff})}
}
\hfill\hfill\hfill 
\inf_{\hat{M}} \sup_{\substack{M \in \mathcal{C}^k \\ \tau_M \geq \tau_{min}}} \mathbb{E}~d_H \bigl( M, \hat{M} \bigr) \asymp 
\left(\frac{1}{n}\right)^{\frac{k}{d}},
\hfill 
$

\medskip
\noindent
where $d_H$ denotes the Hausdorff distance.

These results shed light on the influence of $k$, $d$, and $n$ on these estimation problems, showing for instance that the ambient dimension $D$ plays no role. The estimators proposed for the upper bounds all rely on the analysis of local polynomials, and allow to deal with the three estimation problems in a unified way 
(Section \ref{subsec:proofs_l_polynomials}). Some of the lower bounds are derived using a new version of Assouad's Lemma (Section \ref{subsubsec:proofs_lower_bounds_tangent_curvature}).

We also emphasize the influence of the reach $\tau_M$ of the manifold $M$ in Theorem \ref{thm:minimax_non_consistency_tangent_and_curvature}. 
Indeed, we show that whatever the local regularity $k$ of $M$, if we only require $\tau_M \geq 0$, then for any fixed point $y \in M$,
            \[
            \inf_{\hat{T}} \sup_{\substack{M \in \mathcal{C}^k \\ \tau_M \geq 0}} \mathbb{E} \angle \bigl ( T_{y} M, \hat{T} \bigr ) \geq 1/2,
            \qquad
                        \inf_{\widehat{II}} \sup_{\substack{M \in \mathcal{C}^k \\ \tau_M \geq 0}} \mathbb{E} \bigl\Vert{II_y^M - \widehat{II}}\bigr\Vert \geq c > 0,
            \]
assessing that the global regularity parameter $\tau_{min}> 0$ is crucial for estimation purpose.

        It is worth mentioning that our bounds also allow for perpendicular noise of amplitude $\sigma>0$. When $\sigma \lesssim (1/n)^{\alpha/d}$ for $1 \leq \alpha$, then our estimators behave as if the corrupted sample $X_1, \hdots, X_n$ were exactly drawn on a manifold with regularity $\alpha$. Hence our estimators turn out to be optimal whenever $\alpha \geq k$. If $\alpha <k$, the lower bounds suggest that better rates could be obtained with different estimators, by pre-processing data as in \cite{Genovese11} for instance.

For the sake of completeness, geometric background and proofs of technical lemmas are given in the Appendix.


\section{$\mathcal{C}^k$ Models for Submanifolds}
\label{sec:model}
\subsection{Notation}
\label{subsec:general_notation}
Throughout the paper, we consider $d$-dimensional compact submanifolds $M \subset \R^D$ without boundary. 
The submanifolds will always be assumed to be at least $\mathcal{C}^2$.
For all $p \in M$, $T_p M$ stands for the tangent space of $M$ at $p$ \cite[Chapter 0]{DoCarmo92}. 
We let $II_p^M: T_p M \times T_p M \rightarrow T_p M^\perp$ denote the second fundamental form of $M$ at $p$ \cite[p. 125]{DoCarmo92}. $II_p^M$ characterizes the curvature of $M$ at $p$.
The standard inner product in $\R^D$ is denoted by $\langle \cdot,\cdot \rangle$ and the Euclidean distance by $\norm{\cdot}$.
Given a linear subspace $T \subset \R^D$, write $T^\perp$ for its orthogonal space. 
We write  $\mathcal{B}(p,r)$ for the closed Euclidean ball of radius $r>0$ centered at $p \in \R^D$, and for short $\mathcal{B}_T(p,r) = \mathcal{B}(p,r) \cap T$. 
For a smooth function $\Phi: \R^D \rightarrow \R^D$ and $i \geq 1$, we let $d_x^i \Phi$ denote the $i$th order differential of $\Phi$ at $x\in \R^D$. 
For a linear map $A$ defined on $T \subset \R^D$, $\norm{A}_\mathrm{op}= {\sup_{v \in T}} \frac{\norm{A v}}{\norm{v}}$ stands for the operator norm. 
We adopt the same notation $\norm{\cdot}_{op}$ for tensors, i.e. multilinear maps. 
Similarly, if $\left\{A_x\right\}_{x \in T'}$ is a family of linear maps, its $L^\infty$ operator norm is denoted by $\norm{A}_{op} = \sup_{x \in T'} \norm{A_x}_{op}$.
When it is well defined, we will write $\pi_B(z)$ for the projection of $z \in \R^D$ onto the closed subset $B \subset \R^D$, that is the nearest neighbor of $z$ in $B$.
The distance between two linear subspaces $U,V \subset \R^D$ of the same dimension is measured by the principal angle {\color{black}{$\angle(U,V) 
= 
\norm{\pi_U - \pi_V}_\mathrm{op}.$}}
The Hausdorff distance \cite{Genovese12} in $\R^D$ is denoted by ${d}_{H}$.
For a probability distribution $P$, $\E_P$ stands for the expectation with respect to $P$. 
We write $P^{\otimes n}$ for the $n$-times tensor product of $P$.

Throughout this paper, $C_\alpha$ will denote a generic constant depending on the parameter $\alpha$. For clarity's sake, $C'_\alpha$, $c_\alpha$, or $c'_\alpha$ may also be used when several constants are involved.

\subsection{Reach and Regularity of Submanifolds}
\label{subsec:model_k=2}
As introduced in \cite{federer1959}, the reach $\tau_M$ of a subset $M \subset \R^D$ is the maximal neighborhood radius for which the projection $\pi_M$ onto $M$ is well defined. More precisely, denoting by $d(\cdot,M)$ the distance to $M$, the medial axis of $M$ is defined to be the set of points which have at least two nearest neighbors on $M$, that is
\begin{align*}
Med(M) 
= 
\left\{
z \in \R^D | \exists p \neq q \in M, \norm{z-p} = \norm{z-q} = d(z,M)
\right\}.
\end{align*}
The reach is then defined by
\begin{align*}
\tau_M = 
\underset{p \in M}{\inf} d\left(p,Med(M)\right) 
= 
\underset{z \in Med(M)}{\inf} d\left(z,M\right)
. 
\end{align*}
It gives a minimal scale of geometric and topological features of $M$.
\begin{figure}\label{fig:reach}
\begin{center}
\includegraphics[scale=0.6]{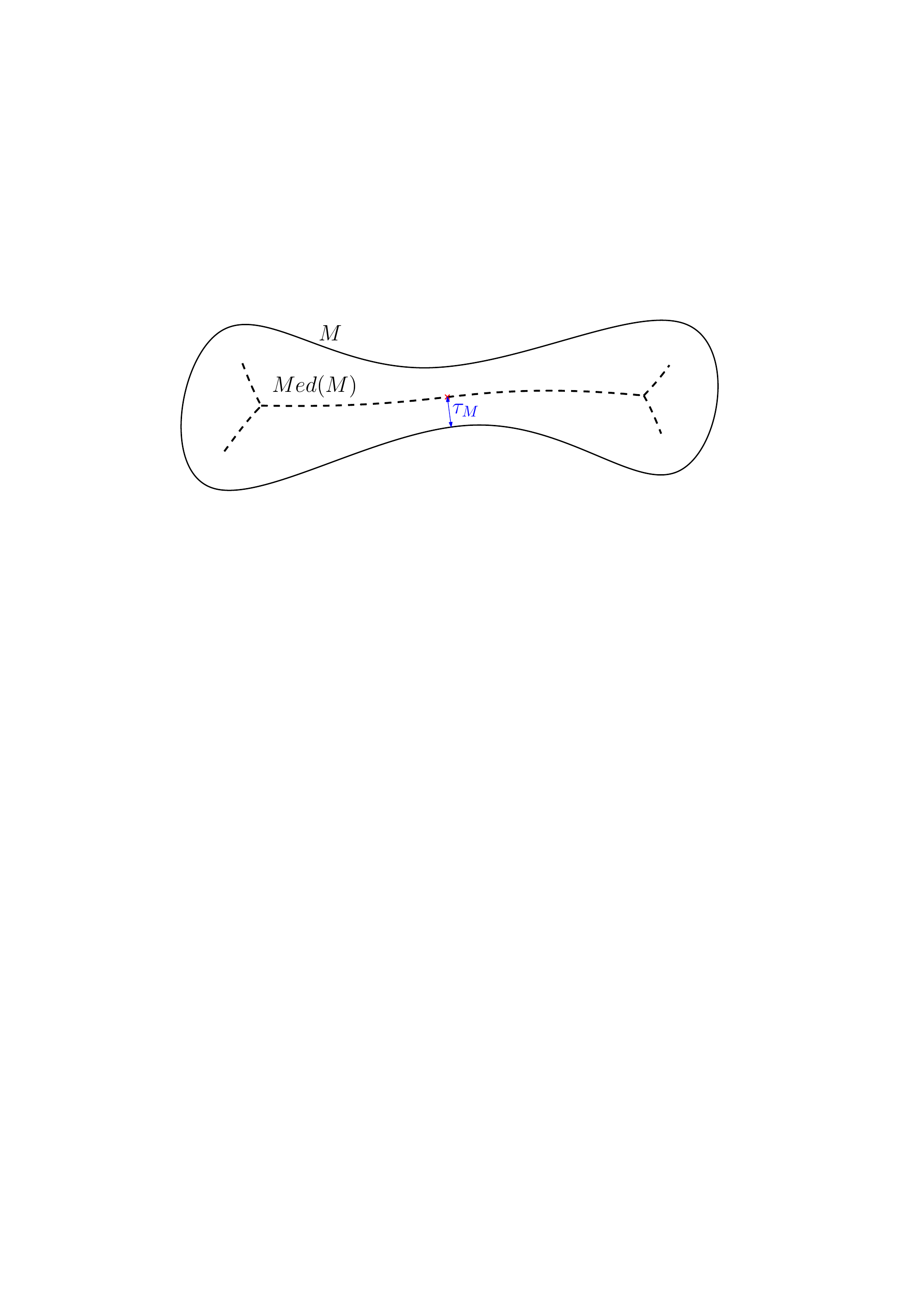}
\end{center}
\caption{Medial axis and reach of a closed curve in the plane.}
\end{figure}
As a generalized convexity parameter, $\tau_M$ is a key parameter in reconstruction \cite{AamariLevrard2015,Genovese12} and in topological inference \cite{Niyogi08}.
Having $\tau_M \geq \tau_{min} > 0$ prevents $M$ from almost auto-intersecting, and bounds its curvature in the sense that $\norm{II^M_p}_{op} \leq \tau_M^{-1 } \leq \tau_{min}^{-1}$ for all $p\in M$ \cite[Proposition 6.1]{Niyogi08}.

{\color{black}
For $\tau_{min}>0$, we let $\mathcal{C}^{2}_{\tau_{min}}$ denote the set of $d$-dimensional compact connected submanifolds $M$ of $\R^D$ such that $\tau_M \geq \tau_{min}>0$.
A key property of submanifolds $M \in \mathcal{C}^2_{\tau_{min}}$ is the existence of a parametrization closely related to the projection onto tangent spaces.
We let $\exp_p : T_p M \rightarrow M$ denote the exponential map of $M$  \cite[Chapter 3]{DoCarmo92}, that is defined by $\exp_p(v) = \gamma_{p,v}(1)$, where $\gamma_{p,v}$ is the unique constant speed geodesic path of $M$ with initial value $p$ and velocity $v$.
\begin{lem}\label{model_properties_k=2}
If $M \in \mathcal{C}^2_{\tau_{min}}$,
$\exp_p: \mathcal{B}_{T_pM}\left(0, \tau_{min}/4 \right) \rightarrow M$ is one-to-one. Moreover, it can be written as
\begin{align*}
  \exp_p \colon \mathcal{B}_{T_pM}\left(0, \tau_{min}/4 \right) &\longrightarrow M 
  \\
  v &\longmapsto p + v + \mathbf{N}_p(v)
\end{align*}
with $\mathbf{N}_p$ such that for all $v \in \mathcal{B}_{T_pM}\left(0, \tau_{min} /4 \right)$,
\begin{align*}
\mathbf{N}_p(0)=0, \quad d_0 \mathbf{N}_p = 0 , \quad \norm{d_v \mathbf{N}_p}_{op} \leq L_\perp \norm{v},
\end{align*}
where $L_\perp = 5/(4\tau_{min})$.
Furthermore, for all $p,y \in M $,
\begin{align*}
y-p = \pi_{T_pM}(y-p) + R_2(y-p),
\end{align*}
where $\norm{R_2(y-p)} \leq \frac{\norm{y-p}^2}{2\tau_{min}}$. 
\end{lem}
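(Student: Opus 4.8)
The statement splits into two essentially independent parts: the decomposition and injectivity of $\exp_p$, which is a local Riemannian statement, and the bound on $R_2$, which is a purely metric consequence of the reach and holds for \emph{all} $p,y \in M$. I would dispatch the latter first.

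Fix $p,y \in M$ and a unit vector $\nu \in T_pM^\perp$. Since $\tau_M \geq \tau_{min}$, a standard property of the reach \cite{federer1959} gives that $p$ is the nearest point of $M$ to $p + t\nu$ for every $0 \leq t < \tau_{min}$, so that $d(p+t\nu, M) = t$ and hence $\norm{y - p - t\nu} \geq t$; letting $t \uparrow \tau_{min}$ yields $\norm{y - p - \tau_{min}\nu} \geq \tau_{min}$. Expanding the square gives $\langle y - p, \nu\rangle \leq \norm{y-p}^2/(2\tau_{min})$, and the same inequality with $-\nu$ in place of $\nu$ gives $|\langle y - p, \nu\rangle| \leq \norm{y-p}^2/(2\tau_{min})$. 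Taking $\nu$ in the direction of $R_2(y-p) = (y-p) - \pi_{T_pM}(y-p) = \pi_{T_pM^\perp}(y-p)$, whenever the latter is nonzero, proves $\norm{R_2(y-p)} \leq \norm{y-p}^2/(2\tau_{min})$.

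For the exponential map, I would work in $\R^D$ and use that, for $v \in T_pM$, the curve $\gamma(t) = \exp_p(tv)$, $t \in [0,1]$ (well defined since $M$ is compact, hence geodesically complete), is a geodesic of $M$ with $\gamma(0)=p$, $\gamma'(0)=v$ and ambient acceleration $\gamma''(t) = II_{\gamma(t)}(\gamma'(t),\gamma'(t)) \in T_{\gamma(t)}M^\perp$, so that $\norm{\gamma''(t)} \leq \norm{II}_{op}\norm{v}^2 \leq \norm{v}^2/\tau_{min}$. A second-order Taylor expansion then gives $\exp_p(v) = \gamma(1) = p + v + \mathbf{N}_p(v)$ with $\mathbf{N}_p(v) = \int_0^1 (1-s)\gamma''(s)\,\dd s$; in particular $\mathbf{N}_p(0) = 0$, and $d_0\mathbf{N}_p = d_0\exp_p - \mathrm{Id}_{T_pM} = 0$ because $d_0\exp_p$ is the canonical identification of $T_0(T_pM)$ with $T_pM$. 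For the gradient bound I would decompose any $u \in T_pM$ into a part along $v$ and a part orthogonal to $v$: writing $\iota\colon T_pM\hookrightarrow\R^D$ for the inclusion, $d_v\exp_p$ maps the unit radial vector $v/\norm{v}$ to $\gamma'(1)/\norm{v} = v/\norm{v} + \norm{v}^{-1}\int_0^1 \gamma''(s)\,\dd s$, so the radial contribution to $\norm{d_v\mathbf{N}_p}_{op} = \norm{d_v\exp_p - \iota}_{op}$ is at most $\norm{v}/\tau_{min}$; the orthogonal part is handled by the standard identity relating $d\exp_p$ to Jacobi fields, namely it is the value at arclength $\norm{v}$ of the Jacobi field $J$ along $\gamma$ vanishing at $0$, and its ambient deviation from its linear part is controlled by a Gr\"onwall estimate for the Jacobi equation $J'' = -R(J,\gamma')\gamma'$ together with the Gauss formula, the curvature term being bounded through $\norm{II}_{op} \leq \tau_{min}^{-1}$ (and the Gauss equation, giving sectional curvature at most $\tau_{min}^{-2}$ in absolute value). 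Summing the two contributions and bookkeeping the numerical constants on the range $\norm{v} \leq \tau_{min}/4$ yields $\norm{d_v\mathbf{N}_p}_{op} \leq \frac{5}{4\tau_{min}}\norm{v} = L_\perp\norm{v}$. I expect this last constant-tracking in the Jacobi-field estimate to be the only delicate point; everything else is routine.

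Finally, injectivity of $\exp_p$ on the (convex) ball $\mathcal{B}_{T_pM}(0,\tau_{min}/4)$ follows from the gradient bound: for $v, v'$ in this ball, the mean value inequality gives $\norm{\mathbf{N}_p(v) - \mathbf{N}_p(v')} \leq \sup_{w \in [v',v]}\norm{d_w\mathbf{N}_p}_{op}\,\norm{v-v'} \leq \frac{5}{16}\norm{v-v'}$, whence $\norm{\exp_p(v) - \exp_p(v')} \geq \norm{v - v'} - \frac{5}{16}\norm{v-v'} = \frac{11}{16}\norm{v-v'} > 0$ unless $v = v'$.
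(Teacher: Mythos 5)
Your treatment of the last claim is correct and self-contained: the inequality $\langle y-p,\nu\rangle \leq \norm{y-p}^2/(2\tau_{min})$ for unit normals $\nu$ is exactly Federer's quadratic-separation property, which the paper simply cites (Lemma 4.7 of Federer), and your injectivity-from-Lipschitz argument is a legitimate alternative to the paper's route via the injectivity-radius bound $\pi\tau_{min}$ of Alexander--Bishop. The expansion $\exp_p(v)=p+v+\mathbf{N}_p(v)$ with $\mathbf{N}_p(0)=0$, $d_0\mathbf{N}_p=0$ and the radial estimate via $\gamma''=II(\gamma',\gamma')$ are also fine.

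The gap is precisely the step you flag and do not carry out: the bound $\norm{d_v\mathbf{N}_p}_{op}\leq \frac{5}{4\tau_{min}}\norm{v}$, and your radial/orthogonal split is unlikely to deliver this constant. The deviation of $d_v\exp_p$ from the inclusion $T_pM\hookrightarrow\R^D$ has a \emph{first-order} component in every direction, coming from the tilting of tangent planes along $\gamma$: for parallel transport $P_t$ one only gets $\norm{P_t-\mathrm{Id}_{T_pM}}_{op}\leq t/\tau_{min}$, and this order-$\norm{v}/\tau_{min}$ term hits the orthogonal directions just as much as the radial one (the Jacobi field $J(1)$ differs from $P_1 u_\perp$ only at second order, but $P_1u_\perp$ differs from $u_\perp$ at first order). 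So your radial block contributes $\approx\norm{v}/\tau_{min}$ and your orthogonal block contributes $\approx\norm{v}/\tau_{min}+\norm{v}^2/\tau_{min}^2$, and combining restricted operator norms over an orthogonal decomposition of the domain gives a constant strictly worse than $5/4$ (roughly $\sqrt{1+(5/4)^2}$ at best). Moreover, a direct ambient Gr\"onwall estimate on $J$ produces terms involving the derivative of $II$, which is not controlled in a $\mathcal{C}^2$ model with only a reach bound. The paper's proof avoids both issues by comparing the \emph{whole} differential to parallel transport, $\norm{d_v\exp_p-P_{\norm{v}}}_{op}\leq \frac{2}{\tau_{min}^2}\cdot\frac{\norm{v}^2}{2}\leq\frac{\norm{v}}{4\tau_{min}}$ on $\norm{v}\leq\tau_{min}/4$ (an intrinsic Jacobi-field estimate, Lemma 18 of Dyer et al., needing only the sectional curvature bound $|\kappa|\leq 2/\tau_{min}^2$ from the Gauss equation; the radial direction costs nothing since $\gamma'$ is parallel), and then comparing $P_{\norm{v}}$ to the identity using only $\norm{II}_{op}\leq 1/\tau_{min}$, which sums exactly to $\frac{5}{4\tau_{min}}\norm{v}$. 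To repair your proof you should adopt this two-step comparison (or reproduce such a quantitative Jacobi/parallel-transport estimate) rather than the radial/orthogonal split plus an unspecified Gr\"onwall bookkeeping.
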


A proof of Lemma \ref{model_properties_k=2} is given in Section \ref{subsec:A1} of the Appendix. In other words, elements of $\mathcal{C}^2_{\tau_{min}}$ have local parametrizations on top of their tangent spaces that are defined on neighborhoods with a minimal radius, and these parametrizations differ from the identity map by at most a quadratic term. 
The existence of such local parametrizations leads to the following convergence result:
}
if data $Y_1, \hdots, Y_n$ are drawn uniformly enough on $M \in \mathcal{C}^2_{\tau_{min}}$, then it is shown in  \cite[Proposition 14]{AamariLevrard2015} that a tangent space estimator $\hat{T}$ based on local PCA achieves 
       \[
      \mathbb{E} \max_{1\leq j \leq n} \angle \bigl( T_{Y_j}M, \hat{T}_j \bigr) \leq C 
\left(\frac{1}{n}\right)^{\frac{1}{d}}.
\]

When $M$ is smoother, it has been proved in \cite{Cheng16} that a convergence rate in $n^{-2/d}$ might be achieved, based on the existence of a local order $3$ Taylor expansion of the submanifold on top of its tangent spaces.
{\color{black}
 Thus, a natural extension of the $\mathcal{C}^2_{\tau_{min}}$ model to $\mathcal{C}^{k}$-submanifolds should ensure that such an expansion exists at order $k$ and satisfies some regularity constraints. To this aim, we introduce the following class of regularity $\mathcal{C}^{k}_{\tau_{min}, \mathbf{L}}$.
}
\begin{Def}\label{best_model_definition}
For $k\geq 3$, $\tau_{min}>0$, and $\mathbf{L} =  (L_{\perp}, L_3,\ldots,L_k)$, we let $\mathcal{C}^{k}_{\tau_{min},\mathbf{L}}$ denote the set of $d$-dimensional compact connected submanifolds $M$ of $\R^D$ with $\tau_M \geq \tau_{min}$ and such that, for all $p \in M$, there exists a local one-to-one parametrization $\Psi_p$ of the form:
\begin{align*}
  \Psi_p \colon \mathcal{B}_{T_pM}\left(0,r \right) &\longrightarrow M 
  \\
  v &\longmapsto p + v + \mathbf{N}_p(v)
\end{align*}
for some $r \geq \frac{1}{4 L_\perp}$,
with $\mathbf{N}_p \in \mathcal{C}^{k}\left( \mathcal{B}_{T_pM}\left(0, r \right) , \R^D\right)$  such that
\begin{align*}
\mathbf{N}_p(0)=0 , \quad d_0 \mathbf{N}_p = 0 , \quad \norm{d_v^2 \mathbf{N}_p}_{op} \leq L_\perp,
\end{align*}
for all $\norm{v} \leq \frac{1}{4 L_\perp}$. 
Furthermore, we require that
\begin{align*}
\norm{d^i_v \mathbf{N}_p}_{op} \leq L_i \text{ for all }3 \leq i \leq k. 
\end{align*}
\end{Def}
{\color{black}{
It is important to note that such a family of $\Psi_p$'s exists for any compact $\mathcal{C}^k$-submanifold, if one allows $\tau_{min}^{-1}$, $L_{\perp}$, $L_3$,$\ldots$,$L_k$ to be large enough.
Note that the radius $1/(4L_\perp)$ has been chosen for convenience. 
Other smaller scales would do and we could even parametrize this constant, but without substantial benefits in the results.
}}

The $\Psi_p$'s can be seen as unit parametrizations of $M$. The conditions on $\mathbf{N}_p(0)$, $d_0 \mathbf{N}_p$, and $d^2_v \mathbf{N}_p$ ensure that $\Psi_p^{-1}$ is close to the projection $\pi_{T_p M}$.
The bounds on $d_v^i\mathbf{N}_p$ ($3\leq i \leq k$) allow to control the coefficients of the polynomial expansion we seek. Indeed, whenever $M \in \mathcal{C}^k_{\tau_{min},\mathbf{L}}$, Lemma \ref{lem:fourre_tout_geom_model} shows that for every $p$ in $M$, and $y $ in $\mathcal{B}\bigl(p,\frac{\tau_{min} \wedge L_\perp^{-1}}{4}\bigr)\cap M$, 
       \begin{align}\label{korderdecomposition}
       y-p = \pi^*&(y-p) + \sum_{i=2}^{k-1}{T_i^*( \tens{\pi^*(y-p)}{i})} + R_{k}(y-p),
       \end{align}
where $\pi^*$ denotes the orthogonal projection onto $T_pM$, the $T_i^*$ are $i$-linear maps from $T_pM$ to $\mathbb{R}^D$ with $\|T_i^*\|_{op} \leq L'_i$ and $R_k$ satisfies $\|R_k(y-p)\| \leq C \|y-p\|^k$, where the constants $C$ and the $L'_i$'s depend on the parameters $\tau_{min}$, $d$, $k$, $L_{\perp}, \hdots, L_k$.

 {\color{black} Note that for $k\geq3$ the exponential map can happen to be only $\mathcal{C}^{k-2}$ for a $\mathcal{C}^k$-submanifold \cite{Hartman51}. Hence, it may not be a good choice of $\Psi_p$.} However, for $k=2$, taking $\Psi_p = \exp_p$ is sufficient for our purpose. For ease of notation, we may write $\mathcal{C}^2_{\tau_{min},\mathbf{L}}$ although the specification of $\mathbf{L}$ is useless. In this case, we implicitly set by default $\Psi_p = \exp_p$ and $L_\perp = 5/(4\tau_{min})$.
As will be shown in Theorem \ref{thm:minimax_non_consistency_tangent_and_curvature}, the global assumption $\tau_M \geq \tau_{min}>0$ cannot be dropped, even when higher order regularity bounds $L_i$'s are fixed.




Let us now describe the statistical model. Every $d$-dimensional submanifold $M \subset \R^D$ inherits a natural uniform volume measure by restriction of the ambient $d$-dimensional Hausdorff measure $\mathcal{H}^d$. 
In what follows, we will consider probability distributions that are almost uniform on some $M$ in $\mathcal{C}^k_{\tau_{min},\mathbf{L}}$, with some bounded noise, as stated below. 
\begin{Def}[Noise-Free and Tubular Noise Models]\label{statistical_best_model}
\ \\
\noindent
- \textit{(Noise-Free Model)} For $k\geq 2$, $\tau_{min}>0$, $\mathbf{L} = (L_{\perp}, L_3,\ldots,L_k)$ and $f_{min} \leq f_{max}$, we let $\mathcal{P}^k_{\tau_{min}, \mathbf{L},f_{min},f_{max}}$ denote the set of distributions $P_0$ with support $M \in \mathcal{C}^k_{\tau_{min}, \mathbf{L}}$ that have a density $f$ with respect to the volume measure on $M$, and such that for all $y \in M$,
\begin{align*}
0 < f_{min} \leq f(y) \leq f_{max} < \infty.
\end{align*}

\noindent
- \textit{(Tubular Noise Model)} For $0 \leq \sigma < \tau_{min}$,
we denote by $\mathcal{P}^k_{\tau_{min}, \mathbf{L},f_{min},f_{max}}\left(\sigma \right)$ the set of distributions of random variables $X = Y + Z$, where $Y$ has distribution $P_0 \in \mathcal{P}^k_{\tau_{min}, \mathbf{L},f_{min},f_{max}}$, and $Z \in T_{Y}M^{\perp}$ with $\|Z\| \leq \sigma$ and $\mathbb{E}(Z|Y)=0$.
      
\end{Def}
For short, we write $\mathcal{P}^k$ and $\mathcal{P}^k(\sigma)$ when there is no ambiguity.
We denote by $\X_n$  an i.i.d. $n$-sample $\left\{ X_1,\ldots,X_n \right\}$, that is, a sample with distribution $P^{\otimes n}$ for some $P \in \mathcal{P}^k(\sigma)$, so that $X_i = Y_i + Z_i$, where $Y$ has distribution $P_0 \in \mathcal{P}^k$, $Z\in \mathcal{B}_{T_Y M^\perp}(0,\sigma)$ with $\mathbb{E}(Z|Y)=0$. It is immediate that for $\sigma < \tau_{min}$, we have $Y = \pi_M(X)$. Note that the tubular noise model $\mathcal{P}^k(\sigma)$ is a slight generalization of that in \cite{Genovese11}.

In what follows, though $M$ is unknown, all the parameters of the model will be assumed to be known, including the intrinsic dimension $d$ and the order of regularity $k$.
We will also denote by $\mathcal{P}^k_{(x)}$ the subset of elements in $\mathcal{P}^k$ whose support contains a prescribed $x \in \R^D$.

In view of our minimax study on $\mathcal{P}^k$, it is important to ensure by now that $\mathcal{P}^k$ is stable with respect to deformations and dilations.
\begin{prop}\label{statistical_model_stability}
Let $\Phi : \R^D \rightarrow \R^D$ be a global $\mathcal{C}^k$-diffeomorphism. If
$\norm{d \Phi - I_D}_{op}$
,
$\norm{d^2 \Phi}_{op}$
,
\ldots
,
$\norm{d^k \Phi}_{op}
$ 
are small enough,
then for all $P$ in $\mathcal{P}^{k}_{\tau_{min},\mathbf{L}, f_{min}, f_{max}}$, the pushforward distribution $P' = \Phi_\ast P$ belongs to $\mathcal{P}^{k}_{\tau_{min}/2,2\mathbf{L}, f_{min}/2, 2 f_{max}}$.

Moreover, if $\Phi = \lambda I_D$ ($\lambda>0$) is an homogeneous dilation, then $P' \in \mathcal{P}^{k}_{\lambda \tau_{min},\mathbf{L}_{(\lambda)},f_{min}/\lambda^d,f_{max}/\lambda^d}$, where $\mathbf{L}_{(\lambda)} = (L_\perp/\lambda,L_3/\lambda^2,\ldots,L_k/\lambda^{k-1})$.
\end{prop}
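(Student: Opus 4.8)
The plan is to verify both parts of Proposition~\ref{statistical_model_stability} by tracking how the defining data of the model --- the reach, the local parametrizations $\Psi_p$, and the density bounds --- transform under $\Phi$. I would start with the reach. Since $\Phi$ is a $\mathcal{C}^k$-diffeomorphism close to the identity, it is bi-Lipschitz with constants close to $1$; a standard estimate (which I would cite or prove as a one-line lemma) shows that if $\norm{d\Phi - I_D}_{op}$ and $\norm{d^2\Phi}_{op}$ are small enough then $\tau_{\Phi(M)} \geq \tau_M/2 \geq \tau_{min}/2$. For the dilation $\Phi = \lambda I_D$ this is exact: $\tau_{\lambda M} = \lambda \tau_M$, because the medial axis scales linearly.

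Next I would produce the local parametrizations of $M' = \Phi(M)$ at a point $p' = \Phi(p)$. The natural candidate is to post-compose: set $\Psi'_{p'}(w) = \Phi(\Psi_p(v))$ where $v$ is the preimage of $w$ under the linear identification $T_pM \to T_{p'}M'$ induced by $\pi_{T_{p'}M'} \circ d_p\Phi$, suitably corrected so that the domain is a ball in $T_{p'}M'$ and the first-order part is the identity. Writing $\Psi'_{p'}(w) = p' + w + \mathbf{N}'_{p'}(w)$, one checks $\mathbf{N}'_{p'}(0) = 0$ and $d_0\mathbf{N}'_{p'} = 0$ directly from $\mathbf{N}_p(0) = 0$, $d_0\mathbf{N}_p = 0$, and the fact that $d_p\Phi$ is close to $I_D$. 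The bounds $\norm{d^i_w \mathbf{N}'_{p'}}_{op} \leq 2L_i$ for $2 \leq i \leq k$ follow from the Faà di Bruno / chain rule: each derivative $d^i(\Phi \circ \Psi_p)$ is a sum of terms built from $d^j\Phi$ ($j \leq i$) composed with derivatives of $\Psi_p$; the leading term $d\Phi \cdot d^i\mathbf{N}_p$ is within $(1 + \norm{d\Phi - I_D}_{op})L_i$ of $L_i$, and all remaining terms involve at least one factor $\norm{d^j\Phi}_{op}$ with $j \geq 2$, hence are as small as we wish. One also has to check the domain radius $r' \geq 1/(4 \cdot 2L_\perp)$; since the linear parts are nearly isometries and $r \geq 1/(4L_\perp)$, shrinking by a factor close to $1$ suffices. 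For the dilation case everything is exact and clean: $\mathbf{N}'_{\lambda p}(w) = \lambda \mathbf{N}_p(w/\lambda)$, so $d^i_w\mathbf{N}'_{\lambda p} = \lambda^{1-i} d^i_{w/\lambda}\mathbf{N}_p$, giving the claimed $\mathbf{L}_{(\lambda)}$.

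Finally I would handle the density. If $P_0$ has density $f$ with respect to the volume measure $\mathcal{H}^d_{|M}$, then $\Phi_*P_0$ has density $f' = (f \circ \Phi^{-1}) \cdot J^{-1}$ with respect to $\mathcal{H}^d_{|M'}$, where $J$ is the Jacobian of $\Phi_{|M} : M \to M'$ (the $d$-dimensional volume distortion), namely $J(p) = \sqrt{\det\bigl((d_p\Phi_{|T_pM})^\top d_p\Phi_{|T_pM}\bigr)}$. Since $d\Phi$ is close to $I_D$, $J$ is close to $1$, so $f_{min}/2 \leq f' \leq 2f_{max}$. For $\Phi = \lambda I_D$, $J \equiv \lambda^d$, giving density bounds $f_{min}/\lambda^d$ and $f_{max}/\lambda^d$. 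The main obstacle --- really the only place requiring care rather than routine bookkeeping --- is making the smallness requirements on $\norm{d\Phi - I_D}_{op}, \norm{d^2\Phi}_{op}, \ldots, \norm{d^k\Phi}_{op}$ uniform and explicit enough to simultaneously control the reach, all $k-1$ derivative bounds via Faà di Bruno, the domain radius, and the Jacobian; once one fixes the order in which these are driven small (reach and Jacobian first, then the highest-order derivative bound, working down), each constraint is an open condition and a finite intersection of open conditions is open, so "small enough" is well-defined.
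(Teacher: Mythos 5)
Your proposal follows essentially the same route as the paper's proof: the reach is controlled by the known stability of reach under $\mathcal{C}^2$-perturbations (the paper invokes Federer's Theorem 4.19, which is exactly the ``standard estimate'' you would cite), the new parametrization is $\Psi'_{p'} = \Phi \circ \Psi_p \circ \bigl(d_p\Phi\restriction{}{T_pM}\bigr)^{-1}$ with the bounds $2L_\perp, 2L_i$ on a radius $1/(8L_\perp)$ obtained by the chain rule, and the density is handled by the area formula with the $d$-dimensional Jacobian $\sqrt{\det\bigl(\pi_{T_pM}\circ d_p\Phi^{T}\circ d_p\Phi\restriction{}{T_pM}\bigr)}$ close to $1$; the dilation case is treated by the same explicit rescaling $\mathbf{N}'_{\lambda p}(w)=\lambda\mathbf{N}_p(w/\lambda)$ and homogeneity of $\mathcal{H}^d$. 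The only cosmetic difference is that no ``correction'' of the first-order part is actually needed, since $d_p\Phi$ maps $T_pM$ onto $T_{p'}M'$ so that $d_0\mathbf{N}'_{p'}=0$ automatically.
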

Proposition \ref{statistical_model_stability} follows from a geometric reparametrization argument (Proposition A.5 in Appendix A) and a change of variable result for the Hausdorff measure (Lemma A.6 in Appendix A).


\subsection{Necessity of a Global Assumption}
\label{subsec:reach_is_necessary}

In the previous Section \ref{subsec:model_k=2}, we generalized $\mathcal{C}^2$-like models --- stated in terms of reach --- to $\mathcal{C}^k$, for $k\geq 3$, by imposing  higher order differentiability bounds on parametrizations $\Psi_p$'s. The following Theorem \ref{thm:minimax_non_consistency_tangent_and_curvature} shows that the global assumption  $\tau_M \geq \tau_{min} > 0$ is necessary for estimation purpose.
\begin{thm}\label{thm:minimax_non_consistency_tangent_and_curvature}
Assume that $\tau_{min} = 0$. 
If $D \geq d+3$, then for all $k \geq 3$ and $L_\perp>0$, provided that $L_3/L_\perp^2,\ldots,{L_k}/L_\perp^{k-1},{L_\perp^d}/{f_{min}}$ and ${f_{max}}/{L_\perp^d}$ are large enough (depending only on $d$ and $k$), for all $n\geq 1$, 
\begin{align*}
\inf_{\hat{T}} 
\sup_{P \in \mathcal{P}_{(x)}^k}
\E_{P^{\otimes n}}
\angle
\bigl(
T_{x} M
,
\hat{T}
\bigr)
\geq \frac{1}{2} > 0,
\end{align*}
where the infimum is taken over all the estimators $\hat{T} = \hat{T}\bigl(X_1,\ldots,X_n\bigr)$. 

Moreover, for any $D \geq d+1$, provided that $L_3/L_\perp^2,\ldots,{L_k}/L_\perp^{k-1},{L_\perp^d}/{f_{min}}$ and ${f_{max}}/{L_\perp^d}$ are large enough (depending only on $d$ and $k$), for all $n\geq 1$, 
\begin{align*}
\inf_{\widehat{II}} 
\sup_{P \in \mathcal{P}_{(x)}^k}
\E_{P^{\otimes n}}
\norm{
II_{x}^M
\circ
\pi_{T_{x} M}
-
\widehat{II}
}_{op}
\geq \frac{L_\perp}{4} > 0,
\end{align*}
where the infimum is taken over all the estimators $\widehat{II} = \widehat{II}\bigl(X_1,\ldots,X_n\bigr)$.
\end{thm}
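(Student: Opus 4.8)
The plan is to exhibit, for each sample size $n$, two distributions $P_0, P_1 \in \mathcal{P}^k_{(x)}$ that are mutually indistinguishable from $n$ i.i.d. observations — ideally with $P_0^{\otimes n}$ and $P_1^{\otimes n}$ having total variation bounded away from $1$, or better, with $P_0 = P_1$ on a neighborhood large enough that the samples are literally identically distributed — while the tangent spaces $T_x M_0$ and $T_x M_1$ (resp. the second fundamental forms $II^{M_0}_x$ and $II^{M_1}_x$) are forced to be far apart. The key leverage is that when $\tau_{min} = 0$ we are allowed to ``pinch'' the manifold: away from $x$ we may let the manifold come arbitrarily close to itself, which lets us glue two very different local pieces near $x$ onto one and the same global compact submanifold without violating any constraint in $\mathcal{C}^k_{0,\mathbf{L}}$ (only the $L_\perp, L_3,\dots,L_k$ bounds on the $\Psi_p$'s and the density bounds must hold, and these are local and can be met by taking the free constants large).

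For the tangent space bound: fix $x$ and build two $d$-dimensional model pieces near $x$ whose tangent spaces at $x$ are orthogonal, e.g. two flat $d$-disks $D_0, D_1$ through $x$ spanning orthogonal $d$-planes in $\R^D$ (this needs $D \geq 2d$, but in fact $D \geq d+3$ suffices once one only rotates by enough of an angle, not a full $90^\circ$ — I would pick the angle so that $\angle(T_xM_0, \hat T) + \angle(T_xM_1,\hat T) \geq 1$ for every candidate $\hat T$, which is why $1/2$ appears). Extend each disk to a closed compact $\mathcal{C}^k$-submanifold $M_0, M_1$ of $\R^D$, and then deform them so that $P_0$ and $P_1$ \emph{coincide on a small ball around $x$} — here one uses the $\tau_{min}=0$ freedom to route the ``rest'' of each manifold through a thin tube that rejoins a common configuration, with the densities chosen equal on the shared part and matched in total mass. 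If $P_0$ and $P_1$ agree on a neighborhood $U$ of $x$ and place all remaining mass on a common region, one cannot do better: conditionally on the (identically distributed) data, any estimator $\hat T$ satisfies $\E_{P_0^{\otimes n}}\angle(T_xM_0,\hat T) + \E_{P_1^{\otimes n}}\angle(T_xM_1,\hat T) \geq \angle(T_xM_0, T_xM_1) \geq 1$ by the triangle inequality for the metric $\angle = \|\pi_U - \pi_V\|_{op}$, hence the max of the two expectations is $\geq 1/2$. The curvature bound is identical in structure: take two pieces tangent to the \emph{same} $d$-plane at $x$ but with second fundamental forms differing by at least $L_\perp/2$ in operator norm (one flat, one with curvature exactly $L_\perp$ in some normal direction — this only needs one extra ambient dimension, $D \geq d+1$), again glued so that the two distributions agree near $x$; the triangle inequality for $\|\cdot\|_{op}$ then gives the $L_\perp/4$ lower bound. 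Throughout, one checks the constructed $M_i$ lie in $\mathcal{C}^k_{0,\mathbf{L}}$ and the densities lie in $[f_{min},f_{max}]$, which is exactly where the hypotheses ``$L_3/L_\perp^2,\dots,L_k/L_\perp^{k-1}, L_\perp^d/f_{min}, f_{max}/L_\perp^d$ large enough'' get used: the gluing tubes and the curved cap require the higher-order constants and the density window to be generous relative to $L_\perp$.

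The main obstacle I anticipate is the gluing/compactification step: producing genuinely $\mathcal{C}^k$, \emph{embedded}, compact submanifolds $M_0$ and $M_1$ that (i) contain $x$ with the prescribed tangent/curvature data, (ii) carry the prescribed unit parametrizations $\Psi_p$ with all derivative bounds $\mathbf{L}$, and (iii) support probability densities that are identical on a fixed neighborhood of $x$ and on whatever common ``reservoir'' region absorbs the rest of the mass — so that the two sampling experiments are literally the same. This requires a careful partition-of-unity interpolation between the two local models and the shared part, controlling all derivatives up to order $k$ and verifying embeddedness (self-intersection is permitted to be \emph{approached} since $\tau_{min}=0$, but not actually attained). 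A cleaner route, which I would try first, is to make $M_0$ and $M_1$ coincide \emph{exactly} outside a small ball $\mathcal{B}(x,\rho)$ and differ inside only by the local model choice; then $P_0 = P_1$ outside $\mathcal{B}(x,\rho)$ automatically, and inside one just needs the two local densities to integrate to the same mass, which is arranged by scaling $f$ — reducing the whole problem to a purely local $\mathcal{C}^k$ interpolation lemma near $x$, after which the information-theoretic conclusion is the short triangle-inequality argument above.
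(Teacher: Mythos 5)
Your high-level strategy (a two-point argument exploiting the $\tau_{min}=0$ pinching, with the two hypotheses coinciding outside a small ball around $x$) is the right one and is indeed how the paper proceeds, but the information-theoretic step as you state it cannot be realized, and this is a genuine gap. Since every distribution in $\mathcal{P}^k_{(x)}$ has a density bounded below by $f_{min}>0$ on its support, if $P_0$ and $P_1$ agree on a neighborhood of $x$ (let alone are globally equal, so that the samples are ``literally identically distributed''), then $M_0$ and $M_1$ coincide on a neighborhood of $x$, and hence $T_xM_0=T_xM_1$ and $II^{M_0}_x=II^{M_1}_x$: the separation $\angle(T_xM_0,T_xM_1)\geq 1$ you need is incompatible with the coupling you invoke. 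The correct substitute is the quantitative Le Cam bound $\tfrac12\,d(\theta(P_0),\theta(P_1))\,\norm{P_0\wedge P_1}_1^n$, applied to hypotheses whose supports differ only inside a ball of radius $\delta$, followed by letting $\delta\to 0$ \emph{at fixed $n$} so that $\norm{P_0\wedge P_1}_1^n\geq(1-c\delta^d)^n\to 1$; this limiting step, which is what produces the uniform-in-$n$ constants $1/2$ and $L_\perp/4$, is missing from your argument (your fallback ``total variation bounded away from $1$'' only yields a bound decaying geometrically in $n$).

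The second gap is in the construction itself. For the differing mass to vanish while the tangent spaces (resp.\ curvatures) at $x$ stay maximally separated, \emph{both} manifolds must contain both local sheets near $x$ — i.e.\ the near-self-intersection must occur at $x$ itself, with an arbitrarily small bump (height $\Lambda\lesssim\delta^k$, supported in $\mathcal{B}(x,\delta)$) deciding which sheet actually passes through $x$; this is exactly the paper's ``flip'' construction, realized as a closed curve in $\R^3$ that nearly crosses itself orthogonally at $x$, multiplied by a sphere $\mathcal{S}^{d-1}$ of radius $1/L_\perp$ (whence $D\geq d+3$), the two hypotheses being pushforwards by bump diffeomorphisms whose $\mathcal{C}^k$ bounds are checked via the model-stability proposition. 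Your alternative of routing each local disk through a ``thin tube'' to a common reservoir does not work: the constraint $\Vert d^2_v\mathbf{N}_p\Vert_{op}\leq L_\perp$ forces any tube to have cross-sectional radius of order $1/L_\perp$, so the parts on which $P_0$ and $P_1$ differ keep mass bounded below, the affinity term again decays in $n$, and no $n$-independent constant survives. So while your ``cleaner route'' points in the right direction, the proposal as written neither produces admissible hypotheses with vanishing statistical distance nor closes the Le Cam limit argument, and the curvature case inherits the same two defects.
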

The proof of Theorem \ref{thm:minimax_non_consistency_tangent_and_curvature} can be found in Section \ref{subsec:C5}. In other words, if the  class of submanifolds is allowed to have arbitrarily small reach, no estimator can perform uniformly well to estimate neither $T_x M$ nor $II_x^M$. And this, even though each of the underlying submanifolds have arbitrarily smooth parametrizations.
Indeed, if two parts of $M$ can nearly intersect around $x$ at an arbitrarily small scale $\Lambda \rightarrow 0$, no estimator can decide whether the direction (resp. curvature) of $M$ at $x$ is that of the first part or the second part (see Figures \ref{fig:non_consistency_two_hypotheses_tangent} and \ref{fig:non_consistency_two_hypotheses_curvature}).
\begin{figure}[h]
\centering
\includegraphics[width= 0.7\textwidth]{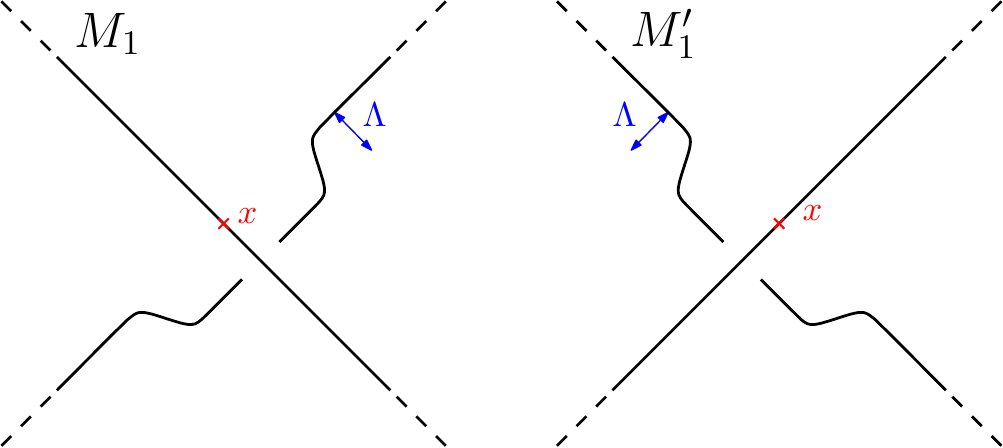}
\caption{Inconsistency of tangent space estimation for $\tau_{min} = 0$. }
\label{fig:non_consistency_two_hypotheses_tangent}
\end{figure}

\begin{figure}[h]
\centering
\includegraphics[width= 0.7\textwidth]{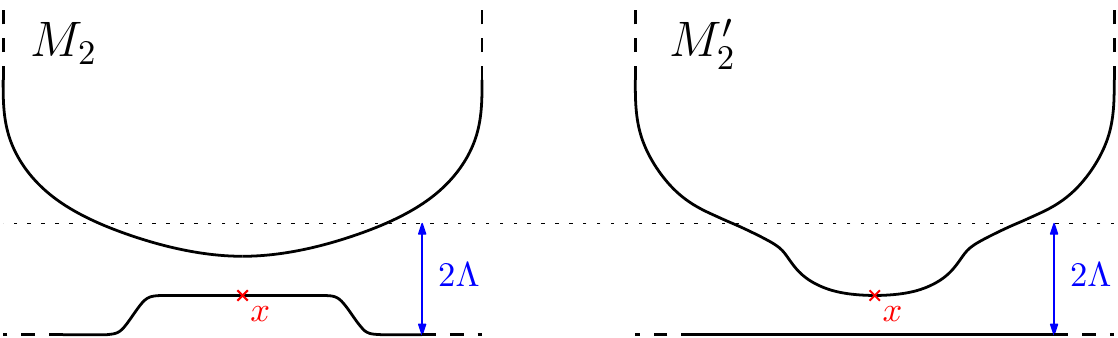}
\caption{Inconsistency of curvature estimation for $\tau_{min} = 0$. }
\label{fig:non_consistency_two_hypotheses_curvature}
\end{figure}


\section{Main Results}
\label{sec:main_results}
Let us now move to the statement of the main results.
Given an i.i.d. $n$-sample $\X_n=\left\{X_1,\ldots,X_n\right\}$ with unknown common distribution $P \in \mathcal{P}^k(\sigma)$, we detail non-asymptotic rates for the estimation of tangent spaces  $T_{Y_j} M$, second fundamental forms $II_{Y_j}^M$, and $M$ itself.

For this, we need one more piece of notation. For $1 \leq j \leq n$, $P_{n-1}^{(j)}$ denotes integration with respect to $1/(n-1) \sum_{i \neq j} \delta_{(X_i - X_j)}$, and $\tens{z}{i}$ denotes the $D \times i$-dimensional vector $(z, \ldots, z)$.
%
For a constant $t > 0$ and a bandwidth $h>0$ to be chosen later, we define the local polynomial estimator $(\hat{\pi}_{j},\hat{T}_{2,j},\ldots,\hat{T}_{k-1,j})$ at $X_j$ to be any element of
\begin{align}
\label{estimator_full_tensors_definition0}
\underset{\pi, \sup_{2 \leq i \leq k} \|T_i\|_{op} \leq t }{\arg\min} 
P_{n-1}^{(j)} 
\left [ 
\norm{
x
-
\pi(x)
- 
\sum_{i=2}^{k-1}
T_i(\tens{\pi (x)}{i})
}^2 \mathbbm{1}_{\mathcal{B}(0,h)}(x) 
\right]
,  
\end{align}
where $\pi$ ranges among all the orthogonal projectors on $d$-dimensional subspaces, and $T_i : \left(\R^D\right)^i \rightarrow \R^D$ among the symmetric tensors of order $i$  such that $\norm{T_i}_{op} \leq t$.
For $k=2$, the sum over the tensors $T_i$ is empty, and the integrated term reduces to $\norm{x - \pi(x)}^2\mathbbm{1}_{\mathcal{B}(0,h)}(x)$.
By compactness of the domain of minimization, such a minimizer exists almost surely. 
In what follows, we will work with a maximum scale $h \leq h_0$, with
\[
h_0 = \frac{\tau_{min} \wedge L_\perp^{-1}}{8}.
\]

The set of $d$-dimensional orthogonal projectors is not convex, which leads to a more involved optimization problem  than usual least squares. In practice, this problem may be solved using tools from optimization on Grassman manifolds \cite{Usevich14}, or adopting a two-stage procedure such as in \cite{Cazals06}: from local PCA, a first $d$-dimensional space is estimated at each sample point, along with an orthonormal basis of it. Then, the optimization problem \eqref{estimator_full_tensors_definition0} is expressed as a minimization problem in terms of the coefficients of $(\pi_j, T_{2,j}, \hdots, T_{k,j})$ in this basis under orthogonality constraints. 
It is worth mentioning that a similar problem is explicitly solved in \cite{Cheng16}, leading to an optimal tangent space estimation procedure in the case $k=3$.

The constraint $\|T_i\|_{op}\leq t$ involves a parameter $t$ to be calibrated. As will be shown in the following section, it is enough to choose $t$ roughly smaller than $1/h$, but still larger than the unknown norm of the optimal tensors $\|T_i^*\|_{op}$. Hence, for $h \rightarrow 0$, the choice $t=h^{-1}$ works to guarantee optimal convergence rates. 
Such a constraint on the higher order tensors might have been stated under the form of a $\|.\|_{op}$-penalized   least squares minimization --- as in ridge regression --- leading to the same results.


\subsection{Tangent Spaces}
\label{subsec:main_results_tangent_space}

By definition, the tangent space $T_{Y_j} M$ is the best linear approximation of $M$ nearby $Y_j$. Thus, it is natural to take the range of the first order term minimizing $\eqref{estimator_full_tensors_definition0}$ and write $\hat{T}_j = \image \hat{\pi}_j$. The $\hat{T}_j$'s approximate simultaneously the $T_{Y_j}M$'s with high probability, as stated below.

\begin{thm}\label{thm:upper_bound_tangent}
Assume that $t \geq C_{k,d,\tau_{min},\mathbf{L}} \geq  \sup_{2 \leq i \leq k} \|T^*_i\|_{op} $.
Set $h = \left( C_{d,k} \frac{f_{max}^2\log n }{f_{min}^3(n-1)} \right )^{1/d}$, for $C_{d,k}$ large enough, and assume that $\sigma \leq h/4$. If $n$ is large enough so that $h \leq h_0$, 
then with probability at least $1 - \left(\frac{1}{n}\right)^{k/d}$,
\begin{align*}
\max_{1 \leq j \leq n} 
\angle 
\bigl(T_{Y_j} M, \hat{T}_j\bigr)
\leq 
C_{d,k,\tau_{min},\mathbf{L}} \sqrt{\frac{f_{max}}{f_{min}}} (h^{k-1}\vee \sigma h^{-1}) (1 + th)
.
\end{align*}
As a consequence, taking $t = h^{-1}$, for $n$ large enough,

\begin{align*}
\sup_{P \in \mathcal{P}^k(\sigma)}
\E_{P^{\otimes n}}
\max_{1 \leq j \leq n} 
\angle 
\bigl(T_{Y_j} M, \hat{T}_j\bigr)
\leq 
C
\left(
\frac{\log n}{n-1}
\right)^{\frac{k-1}{d}} \left\{ 1 \vee \sigma \left (\frac{\log n}{n-1}
\right)^{-\frac{k}{d}} \right\},
\end{align*}
where $C = C_{d,k,\tau_{min},\mathbf{L},f_{min},f_{max}}$.
\end{thm}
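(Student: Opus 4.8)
The plan is to combine a deterministic analysis of the minimizer of \eqref{estimator_full_tensors_definition0} under a favorable sampling configuration around $X_j$ with a concentration argument showing that such a configuration holds simultaneously for all $j$ with probability at least $(1/n)^{k/d}$, for the prescribed bandwidth $h$.

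\emph{Step 1 (the truth is a good competitor).} Fix $j$ and condition on $Y_j=p$, so $X_j=p+Z_j$ with $\|Z_j\|\le\sigma\le h/4$. For any sample point with $\|X_i-X_j\|\le h$ one has $\|Y_i-p\|\le h+2\sigma\le 3h/2\le h_0\cdot 2$, so Lemma~\ref{lem:fourre_tout_geom_model} applies at base point $p$: $Y_i-p=\pi^*(Y_i-p)+\sum_{l=2}^{k-1}T_l^*(\tens{\pi^*(Y_i-p)}{l})+R_k$ with $\|R_k\|\le C\|Y_i-p\|^k\lesssim h^k$. Since $Z_i\in T_{Y_i}M^\perp$ and $\angle(T_{Y_i}M,T_pM)\lesssim h/\tau_{min}$ by Lemma~\ref{model_properties_k=2}, we get $\|\pi^*(Z_i)\|\lesssim\sigma h$ and $\pi^*(Z_j)=0$. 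Consequently the true parameter $(\pi^*,T_2^*,\ldots,T_{k-1}^*)$ — which is feasible in \eqref{estimator_full_tensors_definition0} thanks to the hypothesis $t\ge C_{k,d,\tau_{min},\mathbf{L}}\ge\sup_l\|T_l^*\|_{op}$ — produces a pointwise residual of size $O(h^k\vee\sigma)$ on $\mathcal{B}(0,h)$. On the favorable event of Step~3 the fraction of sample points in $\mathcal{B}(0,h)$ is at most $C f_{max}\omega_d h^d$, so the criterion of the truth, hence (by optimality) that of the minimizer $(\hat\pi,\hat T_2,\ldots,\hat T_{k-1})$, is at most $C(h^k\vee\sigma)^2 f_{max} h^d$.

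\emph{Step 2 (the criterion sees the angle).} I would then prove a matching lower bound: on a favorable design, for every feasible $(\pi,T_2,\ldots,T_{k-1})$,
\begin{align*}
P_{n-1}^{(j)}\Bigl[\bigl\|x-\pi(x)-\textstyle\sum_{l}T_l(\tens{\pi(x)}{l})\bigr\|^2\mathbbm{1}_{\mathcal{B}(0,h)}(x)\Bigr]\;\gtrsim\;\frac{\angle(\image\pi,T_pM)^2\,h^2}{(1+th)^2}\,f_{min}h^d\;-\;C(h^k\vee\sigma)^2 f_{max}h^d.
\end{align*}
The geometric mechanism is that replacing $\pi^*$ by $\pi$ injects into the fit a term linear in the tangential increment $v_i=\pi^*(X_i-X_j)$ of size comparable to $\angle(\image\pi,T_pM)\,h$, which the higher-order tensors $T_l$ (quadratic and up in $v_i$, with $\|T_l\|_{op}\le t$) can only partially cancel; quantifying the near-orthogonality between the degree-one and higher-degree parts on the design is what produces the $(1+th)^{-2}$ loss, and it rests on a non-degeneracy/anti-concentration property of the empirical moment tensor of $(v_i,\tens{v_i}{2},\ldots,\tens{v_i}{k-1})$ restricted to $\mathcal{B}(0,h)$: enough sample points spread in every tangent direction out to scale $h$, with mass lower-bounded by $c\,f_{min}\omega_d h^d$. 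Combining Steps~1 and~2 and cancelling $f_{min}h^d>0$ gives $\angle(\hat T_j,T_{Y_j}M)^2 h^2/(1+th)^2\lesssim (f_{max}/f_{min})(h^k\vee\sigma)^2$, i.e. the stated pointwise bound $\angle(\hat T_j,T_{Y_j}M)\le C\sqrt{f_{max}/f_{min}}\,(h^{k-1}\vee\sigma h^{-1})(1+th)$ after taking square roots, using $\angle(\hat T_j,T_{Y_j}M)=\angle(\image\hat\pi,T_pM)$.

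\emph{Step 3 (uniform design control) and conclusion.} The favorable-design event must hold for all $j=1,\ldots,n$ at once. I would bound $P_{n-1}^{(j)}-\E^{(j)}$ on the bounded-complexity class of functions $x\mapsto q(x)\mathbbm{1}_{\mathcal{B}(0,h)}(x)$ with $q$ polynomial of degree $\le 2(k-1)$ and coefficients controlled by $t$ and $h$, via Bernstein's inequality together with a union bound over the $n$ centers; since the relevant variances scale like the local mass $\sim f_{min}h^d$, the choice $h=\bigl(C_{d,k}f_{max}^2\log n/(f_{min}^3(n-1))\bigr)^{1/d}$ with $C_{d,k}$ large enough makes the deviations negligible against $f_{min}h^d$ with total failure probability at most $(1/n)^{k/d}$, provided $n$ is large enough that $h\le h_0$. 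On the complementary event one uses the trivial bound $\max_j\angle(\cdot,\cdot)\le 1$; since $(1/n)^{k/d}\le(\log n/(n-1))^{(k-1)/d}$ for $n$ large, taking $t=h^{-1}$ (so $1+th\le 2$) and integrating yields the announced control of $\sup_{P\in\mathcal{P}^k(\sigma)}\E_{P^{\otimes n}}\max_{1\le j\le n}\angle(T_{Y_j}M,\hat T_j)$.

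\textbf{Main obstacle.} The heart of the matter is Step~2: obtaining a lower bound on the least-squares criterion that genuinely ``sees'' $\angle(\image\pi,T_pM)$. The difficulty is threefold — the minimization ranges over the non-convex Grassmannian of $d$-dimensional projectors, the competing higher-order tensors (of operator norm up to $t$) can absorb part of the linear misfit (which is exactly why the clean rate is degraded by the factor $1+th$), and the whole estimate must be robust both to the random design and to the perpendicular noise $Z$. Establishing the required quantitative non-degeneracy of the empirical design tensor, uniformly over $j$ and with the correct dependence on $h$ and $t$, is the technical crux; Steps~1 and~3 are comparatively routine Taylor expansion and empirical-process bookkeeping.
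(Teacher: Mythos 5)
Your strategy is the same as the paper's: Step 1 is the comparison $\mathcal{R}_{n-1}(\hat\pi,\hat T_2,\ldots,\hat T_{k-1})\le\mathcal{R}_{n-1}(\pi^*,T_2^*,\ldots,T_{k-1}^*)$ combined with the local polynomial expansion of Lemma \ref{lem:pol_expression}, Step 2 is the design non-degeneracy that the paper isolates as Proposition \ref{polminoration}, and Step 3 is the union bound over the $n$ base points plus the trivial bound $\angle\le 1$ on the bad event. The genuine gap is that Step 2 --- which you yourself identify as the crux --- is asserted, not proved, and your sketch (Bernstein plus a union bound over a ``bounded-complexity class'') stops short of what is needed. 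Because $\hat\pi,\hat T_i$ are data-dependent, you need a lower bound on the empirical quadratic form $S\mapsto P_{0,n-1}\bigl[S^2(\pi^*(y))\mathbbm{1}_{\mathcal{B}(0,h/2)}(y)\bigr]$ holding \emph{uniformly} over all polynomials $S$ of degree at most $k$, with the explicit constant $c_{k,d}f_{min}h^d\|S_h\|_2^2$; it is this coefficient-wise control that lets one read off every tensor $\hat T'_j$, in particular the linear one $\pi^*-\hat\pi$ whose operator norm is the angle. The paper gets this in two steps: (i) a uniform deviation bound for the empirical process indexed by products of at most $k$ linear forms on the local ball (Proposition \ref{poldeviations}, via Talagrand--Bousquet and a Slepian comparison), which controls all entries of the empirical moment tensor simultaneously with deviations of order $f_{max}h^d/\sqrt{K'}$; and (ii) the deterministic norm equivalence $\int_{\mathcal{B}(0,r)}S^2\ge C_{d,k}r^d\|S_r\|_2^2$ together with the geometric fact (Lemma \ref{lem:fourre_tout_geom_model} (iii)) that $\pi_{T_yM}(\mathcal{B}(y,h)\cap M)$ covers a tangent ball of radius $7h/8$, so the population term genuinely has mass of order $f_{min}h^d$ in every polynomial direction. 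Without an argument of this kind your pointwise bound is not established; this is precisely where the stated dependence on $f_{max}^2/f_{min}^3$ in the choice of $h$ comes from.

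Two smaller points. First, your explanation of the $(1+th)$ factor is off: it does not arise because higher-order tensors ``partially absorb'' the linear misfit --- the uniform polynomial lower bound suffers no such degradation --- but from the remainder in Lemma \ref{lem:pol_expression}: replacing the noisy increments $x-x_0$ by $\pi^*(y-y_0)$ inside tensors of operator norm up to $t$ produces an error of size $C(\sigma\vee h^k)(1+th)$, which multiplies the error term (equivalently, after rearrangement, divides your main term, so your final inequality is the correct one). Second, the empirical lower bound in Step 2 should be taken over the event $\{Y_i\in\mathcal{B}(Y_j,h/2)\}$, which forces $X_i\in\mathcal{B}(X_j,h)$ since $\sigma\le h/4$, so that the indicator in the criterion is active; this is a detail, but it is needed to make the two sides of the comparison compatible.
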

The proof of Theorem \ref{thm:upper_bound_tangent} is given in Section \ref{sec_proofofthmupper_bound_tangent}. The same bound holds for the estimation of $T_y M$ at a prescribed $y \in M$ in the model $\mathcal{P}^k_{(y)}(\sigma)$. For that, simply take $P_n^{(y)} = 1/n \sum_{i} \delta_{(X_i - y)}$ as integration in \eqref{estimator_full_tensors_definition0}.

 In the noise-free setting, or when $\sigma \leq h^k$, this result is in line with those of \cite{Cazals06} in terms of the sample size dependency $(1/n)^{(k-1)/d}$. Besides, it shows that the convergence rate of our estimator does not depend on the ambient dimension $D$, even in codimension greater than $2$. 
When $k=2$, we recover the same rate as \cite{AamariLevrard2015}, where we used local PCA, which is a reformulation of \eqref{estimator_full_tensors_definition0}. 
When $k \geq 3$, the procedure \eqref{estimator_full_tensors_definition0} outperforms PCA-based estimators of \cite{Singer12} and \cite{Tyagi13}, where convergence rates of the form $(1/n)^{\beta}$ with $1/d<\beta<2/d$ are obtained. 
This bound also recovers the result of \cite{Cheng16} in the case $k=3$, where a similar procedure is used. When the noise level $\sigma$ is of order $h^{\alpha}$, with $1 \leq \alpha \leq k$, Theorem \ref{thm:upper_bound_tangent} yields a convergence rate in $h^{\alpha-1}$. Since a polynomial decomposition up to order $k_{\alpha} = \lceil \alpha \rceil$ in \eqref{estimator_full_tensors_definition0} results in the same bound, the noise level $\sigma = h^{\alpha}$ may be thought of as an $\alpha$-regularity threshold. At last, it may be worth mentioning that the results of Theorem \ref{thm:upper_bound_tangent} also hold when the assumption $\mathbb{E}(Z|Y) = 0$ is relaxed.
Theorem \ref{thm:upper_bound_tangent} nearly matches the following lower bound.

\begin{thm}\label{thm:lower_bound_tangent}
{If $\tau_{min}{L_\perp},\ldots,\tau_{min}^{k-1}{L_k},({\tau_{min}^d}f_{min})^{-1}$ and ${\tau_{min}^d}{f_{max}}$ are large enough (depending only on $d$ and $k$),}
 then
\begin{multline*}
\inf_{\hat{T}} 
\sup_{P \in \mathcal{P}^k(\sigma)}
\E_{P^{\otimes n}}
\angle
\bigr(
T_{\pi_M(X_1)} M
,
\hat{T}
\bigr) \\
\geq 
c_{d,k,\tau_{min}} 
\left \{
\left(\frac{1}{n-1}\right)^{\frac{k-1}{d}} \right . 
\left .
\vee
\left(\frac{\sigma}{n-1}\right)^{\frac{k-1}{d+k}}
\right \}
,
\end{multline*}
where the infimum is taken over all the estimators $\hat{T} = \hat{T}(X_1,\ldots,X_n)$.
\end{thm}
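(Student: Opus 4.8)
The plan is to use a two-point (Le Cam) / Assouad-type hypothesis construction adapted to the conditional nature of the loss (the base point $\pi_M(X_1)$ is itself random), and to handle the two terms of the lower bound separately by constructing two families of competing submanifolds. For the noise-free term $(1/(n-1))^{(k-1)/d}$, I would start from a fixed flat base manifold $M_0$ (a piece of $d$-plane, suitably completed to a compact boundaryless submanifold in $\mathcal{C}^k_{\tau_{min},\mathbf{L}}$) and build a perturbed competitor $M_\omega$ by adding a localized bump of height $\lambda^k$ supported in a ball of radius $\lambda$, tuned so that the bump stays inside the regularity class (this forces $\lambda^k \cdot \lambda^{-i} \lesssim L_i$, i.e. $\lambda$ small enough, and the reach bump constraint $\lambda^k \lambda^{-2}\lesssim \tau_{min}^{-1}$; here the hypotheses that $\tau_{min}^i L_i$ and $\tau_{min}^d f_{\min/\max}^{\pm1}$ are large enough are exactly what guarantees the construction lives in $\mathcal{P}^k$). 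On the bump the tangent space tilts by an angle of order $\lambda^{k-1}$, which will be the separation in the loss. The number of sample points falling in the bump region is of order $n \lambda^d$, so to keep the total variation between $P_0^{\otimes n}$ and $P_\omega^{\otimes n}$ bounded away from $1$ — equivalently to keep the chi-square/Hellinger affinity bounded below — I need $n\lambda^d \lesssim 1$, i.e. $\lambda \asymp (1/(n-1))^{1/d}$, yielding the rate $\lambda^{k-1}=(1/(n-1))^{(k-1)/d}$.

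The subtlety is that the loss $\angle(T_{\pi_M(X_1)}M,\hat T)$ is evaluated at a random point, so a naive two-point argument risks the event $\{X_1 \text{ far from the bump}\}$ having probability close to $1$, killing the bound. Here I would invoke the same conditional device the authors announce for the upper-bound lower bounds (a conditional version of Assouad/Le Cam, cf. Section \ref{subsubsec:proofs_lower_bounds_tangent_curvature}): condition on the position of $X_1$, restrict attention to the event that $X_1$ lands in (a slightly shrunk copy of) the bump's footprint, which has probability $\asymp \lambda^d f_{\min}$; on that event the two hypotheses disagree on $T_{\pi_M(X_1)}M$ by $\asymp \lambda^{k-1}$; then average. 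To recover a constant (not $\lambda^d$-small) lower bound one should not use a single bump but a grid of $\asymp \lambda^{-d}$ disjoint bumps on a fixed region of $M_0$ indexed by $\omega\in\{0,1\}^m$, so that $X_1$ lands in \emph{some} bump with probability $\Omega(1)$, and apply Assouad coordinate-wise: flipping one bump changes the law only through the $\asymp n\lambda^d$ points in that bump, and $n\lambda^d\lesssim 1$ keeps the per-coordinate testing affinity bounded below. This is the standard mechanism and I expect the bookkeeping to be routine once the bump profile is fixed.

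For the noisy term $(\sigma/(n-1))^{(k-1)/(d+k)}$, the idea is different: here one exploits that perpendicular noise of amplitude $\sigma$ can "hide" a perturbation. I would again perturb $M_0$ by a bump, but now of a larger horizontal scale $\lambda$ and height $\asymp \sigma$ (the largest height consistent with being drowned by the noise), still inside the class provided $\sigma/\lambda^k \lesssim$ (regularity constants) and $\sigma/\lambda^2\lesssim \tau_{min}^{-1}$, i.e. $\lambda \gtrsim \sigma^{1/k}$. The tangent-space separation is then of order $\sigma/\lambda$. The information-theoretic constraint changes: because the noise $Z\in T_YM^\perp$ with $\|Z\|\le\sigma$, one can arrange the two noise distributions so that the \emph{mixtures} $X=Y+Z$ are much closer than the noise-free laws — effectively one can make the single-observation TV distance between the two hypotheses of order $(\text{height}/\sigma)\cdot(\text{fraction in bump})\asymp 1\cdot\lambda^d$ only on a $\lambda$-fraction, but the key gain is that hiding under noise lets the bump be wide, so more points contribute while the per-point discrepancy shrinks. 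Balancing: with height $\asymp\sigma$, horizontal scale $\lambda$, the chi-square distance over $n$ points is $\asymp n\lambda^d\cdot(\text{something}/\lambda)^{?}$ — optimizing $\lambda$ under $\lambda\gtrsim\sigma^{1/k}$ and keeping the affinity bounded gives $\lambda\asymp(\sigma/(n-1))^{1/(d+k)}$ and separation $\sigma/\lambda\cdot$... which after the arithmetic produces exactly $(\sigma/(n-1))^{(k-1)/(d+k)}$. I'd then take the maximum of the two constructions.

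The main obstacle, as I see it, is making the conditional Le Cam/Assouad argument fully rigorous: the loss is a function of the random point $\pi_M(X_1)$, which is determined by the \emph{same} sample $X_1$ that carries statistical information, so one cannot cleanly separate "where we estimate" from "what we observe." The clean way out is the authors' conditional Assouad lemma — condition on $X_1=x$, note that for $x$ in the $\omega$-th bump footprint the target $T_{\pi_{M_\omega}(x)}M_\omega$ depends on $\omega_j$ for the relevant $j$, bound below the conditional minimax risk by $\tfrac12\lambda^{k-1}(1-\|P_{0,\,n-1}^{\otimes}-P_{\omega^{(j)},\,n-1}^{\otimes}\|_{TV})$ using the \emph{remaining} $n-1$ observations, and then integrate over $x$ against the (essentially hypothesis-independent, up to constants) marginal of $X_1$. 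The remaining real work is the geometric verification that the bumped manifolds genuinely lie in $\mathcal{C}^k_{\tau_{min},\mathbf{L}}$ with the stated (large-enough) constants and that the densities stay in $[f_{min},f_{max}]$ — this is where Lemma \ref{model_properties_k=2}, Lemma \ref{lem:fourre_tout_geom_model}, and the stability Proposition \ref{statistical_model_stability} get used, and it is tedious but not deep.
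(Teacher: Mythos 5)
Your overall architecture (conditional Assouad with $X=X_1$, $X'=X_{2:n}$, a grid of $\asymp\lambda^{-d}$ bumps for the noise-free term, and noise-hidden bumps for the $\sigma$-term) is the paper's architecture, but the noise-free half of your argument has a genuine gap at exactly the point you gloss over. With deterministic hypotheses (bump present vs.\ absent at $x_k$), the two marginal laws of $X_1$ restricted to the footprint $U_k$ are mutually singular: one is supported on a flat $d$-disc, the other on the graph of the tilted/bumped perturbation, and these meet only in a null set. Consequently the quantity that drives any conditional Le Cam/Assouad bound, namely $\int_{U_k} d\bar{\mu}_\tau \wedge d\bar{\mu}_{\tau^k}$, is zero, and your claim that one can ``integrate over $x$ against the (essentially hypothesis-independent) marginal of $X_1$'' is false precisely on the only event that matters: an estimator can read off from the position of $X_1$ alone (e.g.\ its normal coordinate over $x_k$) which local hypothesis holds, and then output the exact tangent space, so the per-coordinate lower bound degenerates to $0$ rather than $\lambda^{k-1}$. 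The fact that flipping one bump only affects $\asymp n\lambda^d\lesssim 1$ of the \emph{other} observations does not save this, because the singularity sits in the first observation itself.

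The paper's resolution, which is the one substantive idea your sketch is missing, is to make each submodel a \emph{mixture} over nuisance parameters: for $\tau_k=0$ the bump is a flat plateau whose height $\Lambda_k$ is drawn uniformly in $[-\Lambda_+,\Lambda_+]$, and for $\tau_k=1$ the slope $A_k$ of the bump $A_k(x-x_k)_1$ is drawn uniformly in $[A_-,A_+]$, with $\Lambda_+\asymp A_+\delta$ (Lemmas \ref{lem:hypotheses_glissant_nonoise}, C.7--C.8). Then both marginals of $X_1$ near $x_k$ spread over a common solid slab with comparable densities, giving $\int_{U_k} d\bar{\mu}_\tau\wedge d\bar{\mu}_{\tau^k}\gtrsim(\delta/\tau_{min})^d$ while the remaining $n-1$ points still cannot detect the flip (affinity bounded below by a constant since $(\delta/\tau_{min})^d\asymp 1/(n-1)$), and the conditional-independence hypothesis of the conditional Assouad lemma holds on $U_k\times U_k'$ because the bump maps act locally. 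Your tangent-space separation and the tuning $\delta\asymp(1/(n-1))^{1/d}$ then go through as you describe. For the noisy term your idea is conceptually aligned with the paper (Lemma \ref{lem:hypotheses_glissant_noise}): there the uniform distributions on the $\sigma/2$-offsets of the bumped manifolds are absolutely continuous with respect to one another, so no mixture is needed; but your tuning is off as stated --- the bump height is not $\asymp\sigma$ in general but $\asymp\delta^k$ subject to $\delta^k\lesssim\sigma$, the per-observation total variation is $\asymp(\delta^k/\sigma)(\delta/\tau_{min})^d$, and setting this $\asymp1/(n-1)$ gives $\delta\asymp(\sigma/(n-1))^{1/(k+d)}$ and separation $\asymp\delta^{k-1}$, which is where the exponent $(k-1)/(d+k)$ actually comes from.
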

A proof of Theorem \ref{thm:lower_bound_tangent} can be found in Section \ref{subsubsec:proofs_lower_bounds_tangent_curvature}. When $\sigma \lesssim (1/n)^{k/d}$, the lower bound matches Theorem \ref{thm:upper_bound_tangent} in the noise-free case, up to a $\log n$ factor. Thus, the rate $(1/n)^{(k-1)/d}$ is optimal for tangent space estimation on the model $\mathcal{P}^k$. The rate $(\log n /n)^{1/d}$ obtained in \cite{AamariLevrard2015} for $k=2$ is therefore optimal, as well as the rate $(\log n /n)^{2/d}$ given in \cite{Cheng16} for $k=3$.
The rate $(1/n)^{(k-1)/d}$ naturally appears on the the model $\mathcal{P}^k$, as the estimation rate of differential objects of order $1$ from $k$-smooth submanifolds.

When $\sigma \asymp (1/n)^{\alpha/d}$ with $\alpha <k$, the lower bound provided by Theorem \ref{thm:lower_bound_tangent} is of order $(1/n)^{(k-1)(\alpha + d)/[d(d+k)]}$, hence smaller than the $(1/n)^{\alpha/d}$ rate of Theorem \ref{thm:upper_bound_tangent}. 
This suggests that the local polynomial estimator \eqref{estimator_full_tensors_definition0} is suboptimal whenever $\sigma \gg (1/n)^{k/d}$ on the model $\mathcal{P}^k(\sigma)$.

Here again, the same lower bound holds for the estimation of $T_y M$ at a fixed point $y$ in the model $\mathcal{P}^k_{(y)}(\sigma)$.

\subsection{Curvature}
\label{subsec:main_results_curvature}
The second fundamental form $II_{Y_j}^M : T_{Y_j} M \times T_{Y_j} M \rightarrow T_{Y_j} M^\perp \subset \R^D$ is a symmetric bilinear map that encodes completely the curvature of $M$ at $Y_j$ \cite[Chap. 6, Proposition 3.1]{DoCarmo92}.
Estimating it only from a point cloud $\X_n$ does not trivially make sense, since $II_{Y_j}^M$ has domain $T_{Y_j} M$ which is unknown. To bypass this issue we extend $II_{Y_j}^M$ to $\R^D$. That is, we consider the estimation of $II_{Y_j}^M \circ \pi_{T_{Y_j} M}$ which has full domain $\R^D$. 
Following the same ideas as in the previous Section \ref{subsec:main_results_tangent_space}, we use the second order tensor 
$ \hat{T}_{2,j} \circ \hat{\pi}_j$ obtained in \eqref{estimator_full_tensors_definition0} to estimate $II_{Y_j}^M \circ \pi_{T_{Y_j} M}$.

\begin{thm}\label{thm:upper_bound_curvature}
Let $k \geq 3$. Take $h$ as in Theorem \ref{thm:upper_bound_tangent}, $\sigma \leq h/4$, and $t=1/h$. If $n$ is large enough so that $h \leq h_0$ and $h^{-1} \geq C^{-1}_{k,d,\tau_{min},\mathbf{L}} \geq  (\sup_{2 \leq i \leq k} \|T^*_i\|_{op})^{-1}$, then with probability at least $1 - \left(\frac{1}{n}\right)^{k/d}$,
\begin{align*}	
\max_{1 \leq j \leq n}
\norm{
II_{Y_j}^M \circ \pi_{T_{Y_j} M}- \hat{T}_{2,j} \circ \hat{\pi}_j
}_{op} 
&\leq 
C_{d,k,\tau_{min},\mathbf{L}} \sqrt{\frac{f_{max}}{f_{min}}}(h^{k-2} \vee \sigma h^{-2}) 
.
\end{align*}
In particular, for $n$ large enough,
\begin{multline*}
\sup_{P \in \mathcal{P}^k(\sigma)}
\E_{P^{\otimes n}}
\max_{1 \leq j \leq n} 
\norm{
II_{Y_j}^M \circ \pi_{T_{Y_j} M}- \hat{T}_{2,j} \circ \hat{\pi}_j
}_{op} \\
\leq 
C_{d,k,\tau_{min},\mathbf{L},f_{min},f_{max}}
\left(
\frac{\log n}{n-1}
\right)^{\frac{k-2}{d}}\left\{ 1 \vee \sigma 
\left(
\frac{\log n}{n-1}
\right)^{-\frac{k}{d}}\right\}.
\end{multline*}
\end{thm}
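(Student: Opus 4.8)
\textbf{Proof plan for Theorem \ref{thm:upper_bound_curvature}.}
The strategy is to extract the curvature estimate from the same least-squares fit \eqref{estimator_full_tensors_definition0} already analyzed for Theorem \ref{thm:upper_bound_tangent}, and to control the error in the second-order tensor by comparing the empirical minimizer to the "true" polynomial expansion provided by \eqref{korderdecomposition}. First I would fix $j$ and work in the local chart at $Y_j$ given by $\Psi_{Y_j}$, writing every sample point $X_i - X_j$ (for $i$ in the relevant neighborhood $\mathcal{B}(0,h)$) via \eqref{korderdecomposition}, so that $X_i - X_j = \pi^\ast(X_i-X_j) + \sum_{i'=2}^{k-1} T_{i'}^\ast(\pi^\ast(X_i-X_j)^{\otimes i'}) + R_k$, with $\|R_k\|\leq C\|X_i-X_j\|^k \leq C h^k$ and an extra contribution of size $\sigma$ coming from the noise $Z_i - Z_j$. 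The pair $(\pi^\ast, II_{Y_j}^M\circ\pi^\ast/2,\dots)$ — more precisely $(\pi^\ast, T_2^\ast,\dots,T_{k-1}^\ast)$ — is then a \emph{near-minimizer} of the population objective up to residuals of order $(h^k \vee \sigma)^2$ integrated against the local mass, which by the density bounds $f_{min}\leq f\leq f_{max}$ and $h\leq h_0$ is comparable to $f_{min} h^d$ times that squared residual.

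The heart of the argument is a \textbf{stability / identifiability estimate}: if two admissible tuples $(\pi, T_2,\dots,T_{k-1})$ and $(\pi',T_2',\dots,T_{k-1}')$ (all tensors bounded by $t=1/h$) produce polynomial values that are $L^2(P_{n-1}^{(j)}\mathbbm{1}_{\mathcal{B}(0,h)})$-close, then the coefficients themselves, appropriately rescaled by powers of $h$, are close; in particular $\|T_2\circ\pi - T_2'\circ\pi'\|_{op} \lesssim h^{-2}\,\|\text{difference of polynomials}\|_{L^2}/\sqrt{\text{local mass}}$. This is the usual phenomenon that a degree-$\leq k-1$ polynomial small in $L^2$ on a ball of radius $h$ has coefficients controlled by $h^{-(\text{degree})}$ times that $L^2$ norm — here made quantitative using that the empirical measure $P_{n-1}^{(j)}$ on $\mathcal{B}(0,h)\cap M$ is, with the stated probability $1-(1/n)^{k/d}$, close to the uniform measure on $\mathcal{B}_{T_{Y_j}M}(0,h)$ (this is precisely the event already established in the proof of Theorem \ref{thm:upper_bound_tangent}, combining a uniform deviation bound for the number of points in balls with the near-flatness of $M$ at scale $h$). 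I would invoke that event verbatim, so as not to redo the concentration step. Applying the stability estimate to $(\hat\pi_j,\hat T_{2,j},\dots)$ versus $(\pi^\ast,T_2^\ast,\dots)$ and using that the empirical value of the objective at $\hat\pi_j$ is no larger than at $\pi^\ast$ (which is $O((h^k\vee\sigma)^2 \cdot f_{max}h^d)$), one gets $\|\hat T_{2,j}\circ\hat\pi_j - T_2^\ast\circ\pi^\ast\|_{op} \lesssim \sqrt{f_{max}/f_{min}}\,h^{-2}(h^k\vee\sigma) = \sqrt{f_{max}/f_{min}}\,(h^{k-2}\vee\sigma h^{-2})$.

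The final step is to identify $T_2^\ast\circ\pi^\ast$ with the curvature: from Lemma \ref{lem:fourre_tout_geom_model} (or directly from the definition of the second fundamental form as the Hessian of $\mathbf{N}_{Y_j}$ at $0$) the quadratic coefficient $T_2^\ast$ equals $\tfrac12 II_{Y_j}^M$ on $T_{Y_j}M$, up to an error controlled by the higher-order bounds $L_3,\dots,L_k$ and absorbed into the constant; this uses that $\pi^\ast = \pi_{T_{Y_j}M}$ exactly. A uniform bound over $1\leq j\leq n$ then follows by a union bound, which only costs a $\log n$ factor in the choice of $h$ — already built into $h=(C_{d,k}f_{max}^2\log n/(f_{min}^3(n-1)))^{1/d}$. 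Passing from the high-probability bound to the expectation bound is routine: on the complement event (probability $\leq (1/n)^{k/d}$) the quantity $\|II_{Y_j}^M\circ\pi_{T_{Y_j}M} - \hat T_{2,j}\circ\hat\pi_j\|_{op}$ is bounded deterministically by $\|II\|_{op} + t \lesssim 1/h \lesssim (n/\log n)^{1/d}$, so its contribution to the expectation is $\lesssim (1/n)^{k/d}(n/\log n)^{1/d} = o((\log n/n)^{(k-2)/d})$ since $k\geq 3$. Substituting $h\asymp(\log n/(n-1))^{1/d}$ into $h^{k-2}\vee\sigma h^{-2}$ gives the claimed rate $(\log n/(n-1))^{(k-2)/d}\{1\vee\sigma(\log n/(n-1))^{-k/d}\}$.

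\textbf{Main obstacle.} The delicate point is the stability estimate for the second-order coefficient: the minimization set contains orthogonal projectors (a non-convex set) and the polynomial parametrization is nonlinear in $\pi$, so "closeness of fitted values implies closeness of coefficients" is not automatic and must be proved using that $\hat\pi_j$ is already known (from Theorem \ref{thm:upper_bound_tangent}) to be $O(h^{k-1}\vee\sigma h^{-1})$-close to $\pi^\ast$, which linearizes the problem and lets one treat the remaining tensor-fitting as a genuine (conditionally) linear least-squares problem on the near-flat local sample, where invertibility of the relevant moment matrix on $\mathcal{B}_{T_{Y_j}M}(0,h)$ — with condition number depending only on $d,k$ after rescaling by $h$ — provides the needed coefficient control. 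Making this moment-matrix invertibility uniform over $j$ on the good event, and checking that the error terms $R_k$ and the noise enter only at the orders claimed, is where the real work lies.
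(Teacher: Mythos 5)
Your plan follows essentially the same route as the paper's proof: compare the empirical risk of $(\hat\pi_j,\hat T_{2,j},\dots)$ with that of the true coefficients $(\pi^*,T_2^*,\dots)$, which is $O\bigl((h^k\vee\sigma)^2\bigr)$ per point; convert smallness of the fitted residual in $L^2(P_{n-1}^{(j)})$ on $\mathcal{B}(0,h)$ into smallness of rescaled coefficients via a local moment/coefficient lower bound (this is exactly Proposition \ref{polminoration}); and then recover the curvature from the quadratic coefficient, using the tangent-space bound of Theorem \ref{thm:upper_bound_tangent} and $\|\hat T_{2,j}\|_{op}\leq t=1/h$ to control the cross term, yielding $h^{k-2}\vee\sigma h^{-2}$. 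The one methodological difference is how the non-convexity in $\pi$ is handled: you propose to linearize around $\hat\pi_j\approx\pi^*$ (using Theorem \ref{thm:upper_bound_tangent}), whereas the paper's Lemma \ref{lem:pol_expression} sidesteps any stability argument by writing the residual of an \emph{arbitrary} candidate $(\pi,T_2,\dots,T_{k-1})$ exactly as a polynomial in $\pi^*(y-Y_j)$ with explicit coefficients $T'_1=\pi^*-\pi$ and $T'_2=(\pi^*-\pi)\circ T_2^*+(T_2^*\circ\pi^*-T_2\circ\pi)$, so that Proposition \ref{polminoration} applies directly and the curvature bound follows from \eqref{normopmajoration} with $j=1,2$ and a triangle inequality; your linearization produces error terms of the same order $th\,\|\hat\pi-\pi^*\|_{op}\lesssim h^k\vee\sigma$, so it would work, but it is really the same mechanism. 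Your treatment of the union bound and of the expectation on the bad event is fine and slightly more explicit than the paper's.

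The one step that would fail as written is the final identification of the quadratic coefficient with the curvature. You assert $T_2^*=\tfrac12 II_{Y_j}^M$ ``up to an error controlled by $L_3,\dots,L_k$ and absorbed into the constant.'' First, the identification used by the paper is exact and carries no error: Lemma \ref{lem:fourre_tout_geom_model}(iv) states $T_2^*=II_{Y_j}^M$ for $k\geq 3$, and this is what makes $\hat T_{2,j}\circ\hat\pi_j$ (and not a rescaling of it) the estimator appearing in the theorem. Second, and more importantly, a constant-factor mismatch at this point cannot be ``absorbed'': if you only bound $\bigl\|\tfrac12 II_{Y_j}^M\circ\pi_{T_{Y_j}M}-\hat T_{2,j}\circ\hat\pi_j\bigr\|_{op}$ by $C(h^{k-2}\vee\sigma h^{-2})$, then for the quantity in the theorem you would get $\bigl\|II_{Y_j}^M\circ\pi_{T_{Y_j}M}-\hat T_{2,j}\circ\hat\pi_j\bigr\|_{op}\geq\tfrac12\|II_{Y_j}^M\|_{op}-o(1)$, which is not a vanishing bound since $II$ need not vanish. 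So the conclusion must rest on the exact normalization $T_2^*=II_{Y_j}^M$ of Lemma \ref{lem:fourre_tout_geom_model}(iv) (equivalently, on whatever convention for the quadratic term makes the estimator consistent), not on a factor $\tfrac12$ swept into constants. With that step corrected, your argument matches the paper's.
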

     The proof of Theorem \ref{thm:upper_bound_curvature} is given in Section \ref{subsubsec:proofofuppercurvature}. As in Theorem \ref{thm:upper_bound_tangent}, the case $\sigma \leq h^k$ may be thought of as a noise-free setting, and provides an upper bound of the form $h^{k-2}$. Interestingly, Theorems \ref{thm:upper_bound_tangent} and \ref{thm:upper_bound_curvature} are enough to provide estimators of various notions of curvature.
For instance, consider the scalar curvature \cite[Section 4.4]{DoCarmo92} at a point $Y_j$, defined by 
\begin{align*}
Sc_{Y_j}^M = \frac{1}{d(d-1)}\sum_{r \neq s}\left [ \left \langle II^M_{Y_j}(e_r,e_r),II^M_{Y_j}(e_s,e_s) \right\rangle - \|II^M_{Y_j}(e_r,e_s)\|^2 \right ]
,
\end{align*}
where $(e_r)_{1 \leq r \leq d}$ is an orthonormal basis of $T_{Y_j}M$. A plugin estimator of $Sc_{Y_j}^M$ is
     \[
     \widehat{Sc}_j = \frac{1}{d(d-1)} \sum_{r \neq s}\left [ \left \langle \hat{T}_{2,j}(\hat{e}_r,\hat{e}_r),\hat{T}_{2,j}(\hat{e}_s,\hat{e}_s) \right\rangle - \|\hat{T}_{2,j}(\hat{e}_r,\hat{e}_s)\|^2 \right ],
     \]
     where $(\hat{e}_r)_{1 \leq r \leq d}$ is an orthonormal basis of $\hat{T}_{2,j}$.  Theorems \ref{thm:upper_bound_tangent} and \ref{thm:upper_bound_curvature} yield 
     \[
     \E_{P^{\otimes n}}
\max_{1 \leq j \leq n} 
\left | \widehat{Sc}_j - Sc_{Y_j}^M \right |
\leq 
C
\left (
\frac{\log n}{n-1}
\right)^{\frac{k-2}{d}}\left \{ 1 \vee \sigma \left (
\frac{\log n}{n-1}
\right)^{-\frac{k}{d}}\right\},
\]
where $C = C_{d,k,\tau_{min},\mathbf{L},f_{min},f_{max}}$.    

The (near-)optimality of the bound stated in Theorem \ref{thm:upper_bound_curvature} is assessed by the following lower bound.

\begin{thm}\label{thm:lower_bound_curvature}
If $\tau_{min}{L_\perp},\ldots,\tau_{min}^{k-1}{L_k},({\tau_{min}^d}f_{min})^{-1}$ and ${\tau_{min}^d}{f_{max}}$ are large enough (depending only on $d$ and $k$), then
\begin{multline*}
\inf_{\widehat{II}} 
\sup_{P \in \mathcal{P}^k(\sigma)}
\E_{P^{\otimes n}}
\norm{
II_{\pi_M(X_1)}^M
\circ
\pi_{T_{\pi_M(X_1)} M}
-
\widehat{II}
}_{op} \\
\geq 
c_{d,k,\tau_{min}} 
\left\{
\left(\frac{1}{n-1}\right)^{\frac{k-2}{d}}  \right .
\vee
\left .
\left(\frac{\sigma}{n-1}\right)^{\frac{k-2}{d+k}}
\right\},
\end{multline*}
where the infimum is taken over all the estimators $\widehat{II} = \widehat{II}(X_1,\ldots,X_n)$.
\end{thm}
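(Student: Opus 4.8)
\textbf{Proof plan for Theorem \ref{thm:lower_bound_curvature}.}

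The plan is to establish the two terms in the lower bound separately, combining them at the end via the standard fact that a maximum is at least either term. For the noise-free part $(1/(n-1))^{(k-2)/d}$, I would use a conditional version of Assouad's lemma (the one announced in Section \ref{subsubsec:proofs_lower_bounds_tangent_curvature}), since the base point $\pi_M(X_1)$ is random rather than fixed. The construction begins with a fixed reference submanifold $M_0 \in \mathcal{C}^k_{\tau_{min},\mathbf{L}}$, say a $d$-plane capped off to be compact, carrying the (almost) uniform distribution $P_0$. I would then perturb $M_0$ on a family of $m$ disjoint balls of radius $\asymp \rho$ centered at well-separated points $z_1,\dots,z_m \in M_0$, where each perturbation is a localized bump of the form $x \mapsto x + \tau \, \phi\bigl((x-z_\ell)/\rho\bigr)\, v_\ell$ for a fixed smooth compactly supported profile $\phi$, a normal direction $v_\ell$, and an amplitude $\tau$. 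For the bump to keep the manifold in $\mathcal{C}^k_{\tau_{min},\mathbf{L}}$ one needs $\tau/\rho^2 \lesssim 1$ (to control the second fundamental form and reach) and $\tau/\rho^i \lesssim L_i$ for $i \le k$; the binding constraint is $\tau \asymp \rho^2$. The resulting change in $II$ at the center of bump $\ell$ is of order $\tau/\rho^2 \asymp 1$ — wait, this must instead scale correctly: to get the rate one takes $\tau \asymp \rho^k$, so that the $\mathcal{C}^k$-bound $\tau/\rho^k \lesssim L_k$ is the active one while $\tau/\rho^2 = \rho^{k-2} \to 0$ keeps the reach and curvature bounds slack, and then $\|II_{z_\ell}^{M_\omega} - II_{z_\ell}^{M_0}\| \asymp \tau/\rho^2 \asymp \rho^{k-2}$.

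The second ingredient is the information-theoretic budget. Each bump modifies $P$ on a region of mass $\asymp \rho^d$ (times $f_{min}$), and the pairwise distance between the perturbed and unperturbed densities controls the testing error. One can pack $m \asymp \rho^{-d}$ disjoint bumps into $M_0$; the total-variation or Hellinger distance between $P_\omega^{\otimes n}$ and a neighbor differing in one coordinate is $\lesssim n \rho^d$ (more precisely $\chi^2 \lesssim n \rho^d$, using that the densities agree outside the bump and differ by a bounded multiplicative factor inside, after reparametrizing by the volume distortion). Choosing $\rho \asymp (1/(n-1))^{1/d}$ makes this budget $O(1)$, so each of the $m$ coordinates is estimated with constant probability of error. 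Since the loss is a maximum over $j$ of $\|II - \widehat{II}\|$ and $X_1$ falls into a given bump region with probability $\asymp 1/m$, the conditional Assouad argument — conditioning on which bump $\pi_M(X_1)$ lies in — yields the lower bound $\asymp \rho^{k-2} \asymp (1/(n-1))^{(k-2)/d}$; this is exactly the point where the ``conditional'' refinement of Assouad is needed, because one cannot simply sum the per-coordinate losses when the point of evaluation is itself random.

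For the noisy term $(\sigma/(n-1))^{(k-2)/(d+k)}$, the construction is similar but the amplitude $\tau$ and the radius $\rho$ are decoupled by the noise: the perpendicular noise of amplitude $\sigma$ can absorb a bump of amplitude $\tau \lesssim \sigma$, so two hypotheses $M_0$ and $M_\omega$ differing by such a bump produce distributions $X = Y+Z$ that are statistically closer than in the noise-free case — the noise can be chosen (depending on $\omega$) to partially cancel the perturbation, making the $\chi^2$-divergence between the observed laws scale like $n (\tau/\sigma)^2 \rho^d$ rather than $n\rho^d$. Optimizing $\|II\|$-separation $\asymp \tau/\rho^2$ against the constraints $\tau \le \sigma$, $\tau \lesssim \rho^2$, and budget $n(\tau/\sigma)^2\rho^d \lesssim 1$ gives, after setting $\tau \asymp \sigma$ and $\rho^{d+k} \asymp \sigma^2/n$ roughly, the stated exponent $(k-2)/(d+k)$; I would carry out this optimization carefully mimicking the analogous computation for Theorem \ref{thm:lower_bound_tangent}, where the only change is that the order-$2$ differential object $II$ scales as $\tau/\rho^2$ instead of the order-$1$ object (tangent angle) which scales as $\tau/\rho$, explaining the shift from $(k-1)$ to $(k-2)$ in the numerator. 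The main obstacle I anticipate is the bookkeeping in the conditional Assouad lemma — verifying that the hypotheses $M_\omega$ genuinely lie in $\mathcal{C}^k_{\tau_{min},\mathbf{L}}$ with the prescribed (large) constants, and correctly handling the fact that the loss is an $L^\infty$-type maximum over sample points rather than a fixed-point loss, so that the reduction to a single random bump index is legitimate; the noisy-case divergence bound, where the noise distribution is used adversarially to reduce distinguishability, is the second delicate point.
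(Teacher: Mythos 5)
Your overall strategy (a conditional Assouad argument with localized bumps for the noise-free term, and a separate noisy construction for the $(\sigma/(n-1))^{(k-2)/(d+k)}$ term) is the same as the paper's, and your scaling $\tau\asymp\rho^k$, $m\asymp\rho^{-d}$, $\rho\asymp(1/(n-1))^{1/d}$ matches Lemma \ref{lem:hypotheses_glissant_nonoise}. However, there is a genuine gap in the noise-free part: your information budget treats neighboring hypotheses as mutually absolutely continuous ("differ by a bounded multiplicative factor inside the bump"), which is false here. A flat patch and a quadratically bumped patch are different $d$-dimensional supports in $\R^D$, so inside the bump region the two distributions are mutually \emph{singular}; the $\chi^2$ divergence is infinite, and — more to the point — the quantity that the conditional Assouad lemma (Lemma \ref{conditional_assouad}) actually requires, namely the overlap $m\cdot\int_{U_k} d\bar\mu_\tau\wedge d\bar\mu_{\tau^k}$ of the $X_1$-marginals restricted to the bump region, is exactly $0$ for deterministic bump amplitudes, so the lemma yields a vacuous bound. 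This is precisely why the paper mixes: in Lemma \ref{lem:hypotheses_glissant_nonoise} the flat bumps have random heights $\Lambda_k\in[-\Lambda_+,\Lambda_+]$ and the quadratic bumps random coefficients $A_k\in[A_-,A_+]$, so that $\bar Q^{(2)}_{\tau,1}$ and $\bar Q^{(2)}_{\tau^k,1}$ have Lebesgue densities in the extra coordinate on $U_k$ with overlap of order $(\delta/\tau_{min})^d\asymp 1/m$ (Lemma \ref{Alem:hypotheses_TV_estimates}). Your proposal contains no mechanism playing this role, and without it the reduction "each coordinate is estimated with constant probability of error" does not go through when the evaluation point $\pi_M(X_1)$ is random.

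The noisy part also needs repair as written. First, you list the smoothness constraint as $\tau\lesssim\rho^2$; to stay in $\mathcal{C}^k_{\tau_{min},\mathbf{L}}$ the binding constraint is $\tau\lesssim\rho^k$ (as you yourself noted in the noise-free paragraph), and with only $\tau\lesssim\rho^2$ the curvature separation $\tau/\rho^2$ would not scale correctly. Second, your proposed budget $n(\tau/\sigma)^2\rho^d\lesssim 1$ together with your choices $\tau\asymp\sigma$, $\rho^{d+k}\asymp\sigma^2/n$ is internally inconsistent (the budget with $\tau\asymp\sigma$ forces $\rho^d\asymp 1/n$) and, even taken at face value with $\tau\asymp\rho^k$, it yields the exponent $(k-2)/(2k+d)$ in $\sigma^2/n$ rather than the stated $(k-2)/(k+d)$ in $\sigma/n$. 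The paper's route (Lemma \ref{lem:hypotheses_glissant_noise}) is more concrete: take the uniform distributions on the $\sigma/2$-offsets of the bumped manifolds, for which the per-sample $L^1$ affinity satisfies $\int \mathbf{P}_\tau\wedge\mathbf{P}_{\tau^k}\geq 1-3\frac{\tau}{\sigma}(\delta/\tau_{min})^d$ (a budget \emph{linear} in $\tau/\sigma$, not quadratic), and choose $\tau\asymp\delta^k$ with $\frac{\tau}{\sigma}(\delta/\tau_{min})^d\asymp\frac{1}{n-1}$, i.e.\ $\delta^{k+d}\asymp\sigma/(n-1)$, which gives the separation $\tau/\delta^2\asymp(\sigma/(n-1))^{(k-2)/(k+d)}$; the absolute continuity provided by the noise also makes mixtures unnecessary in this regime. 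If you adopt randomized bump amplitudes in the noise-free case and the offset comparison with the linear $\tau/\sigma$ budget in the noisy case, your plan becomes essentially the paper's proof.
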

The proof of Theorem \ref{thm:lower_bound_curvature} is given in Section \ref{subsubsec:proofs_lower_bounds_tangent_curvature}. The same remarks as in Section \ref{subsec:main_results_tangent_space} hold. 
If the estimation problem consists in approximating $II_{y}^M$ at a fixed point $y$ known to belong to $M$ beforehand, we obtain the same rates. 
The ambient dimension $D$ still plays no role. 
The shift $k-2$ in the rate of convergence on a $\mathcal{C}^k$-model can be interpreted as the order of derivation of the object of interest, that is $2$ for curvature.

Notice that the lower bound (Theorem \ref{thm:lower_bound_curvature}) does not require $k\geq 3$. Hence, we get that for $k=2$, curvature cannot be estimated uniformly consistently on the $\mathcal{C}^2$-model $\mathcal{P}^2$. 
This seems natural, since the estimation of a second order quantity should require an additional degree of smoothness.

\subsection{Support Estimation}
\label{subsec:main_results_hausdorff}
For each $1 \leq j \leq n$, the minimization \eqref{estimator_full_tensors_definition0} outputs a series of tensors $(\hat{\pi}_{j},\hat{T}_{2,j},\ldots,\hat{T}_{k-1,j})$. This collection of multidimensional monomials can be further exploited as follows. By construction, they fit $M$ at scale $h$ around $Y_j$, so that
\begin{align*}
\widehat{\Psi}_j(v)
&=
X_j + v + \sum_{i=2}^{k-1} \hat{T}_{i,j}\left(\tens{v}{i}\right)
\end{align*}
is a good candidate for an approximate parametrization in a neighborhood of $Y_j$. We do not know the domain $T_{Y_j} M$ of the initial parametrization, though we have at hand an approximation $\hat{T}_j = \image \hat{\pi}_j$ which was proved to be consistent in Section \ref{subsec:main_results_tangent_space}. As a consequence, we let the support estimator based on local polynomials $\hat{M}$ be
\begin{align*}
\hat{M} 
&= 
\bigcup_{j=1}^{n} \widehat{\Psi}_j \left(\mathcal{B}_{\hat{T}_j}(0,{7h/8}) \right).
\end{align*}
\begin{figure}
\centering
\includegraphics[width=0.5\textwidth]{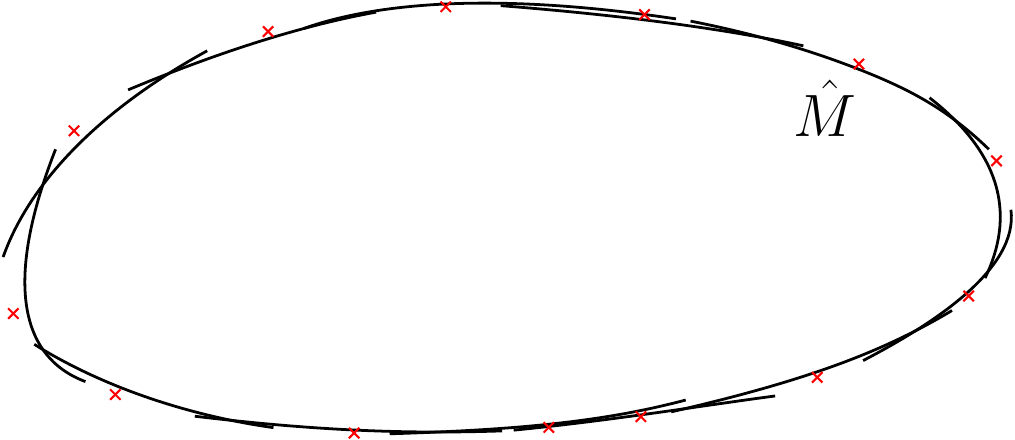}
\caption{ $\hat{M}$ is a union of polynomial patches at sample points.}
\label{fig:patches}
\end{figure}
The set $\hat{M}$ has no reason to be globally smooth, since it consists of a mere union of polynomial patches (Figure \ref{fig:patches}). However, $\hat{M}$ is provably close to $M$ for the Hausdorff distance.
\begin{thm}\label{thm:upper_bound_hausdorff}
With the same assumptions as Theorem \ref{thm:upper_bound_curvature}, with probability at least $1-2\left(\frac{1}{n}\right)^{\frac{k}{d}}$, we have
\begin{align*}
d_{H}\bigl(M,\hat{M}\bigr) \leq C_{d,k,\tau_{min},\mathbf{L},f_{min},f_{max}}(h^{k}\vee \sigma).
\end{align*} 
In particular, for $n$ large enough,
\begin{align*}
\sup_{P \in \mathcal{P}^k(\sigma)}
\E_{P^{\otimes n}}
d_{H}\bigl(M,\hat{M}\bigr)
\leq 
C_{d,k,\tau_{min},\mathbf{L},f_{min},f_{max}} \left\{
\left(
\frac{\log n }{n-1}
\right)^{\frac{k}{d}} \vee \sigma \right\}.
\end{align*}
\end{thm}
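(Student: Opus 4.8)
The plan is to establish the two directions of the Hausdorff bound separately: (i) every point of $\hat M$ is within $C(h^k \vee \sigma)$ of $M$, and (ii) every point of $M$ is within $C(h^k \vee \sigma)$ of $\hat M$. Throughout I work on the probability-$1-(1/n)^{k/d}$ event of Theorems \ref{thm:upper_bound_tangent} and \ref{thm:upper_bound_curvature}, on which $\angle(T_{Y_j}M,\hat T_j)\lesssim h^{k-1}$, $\|II^M_{Y_j}\circ\pi_{T_{Y_j}M}-\hat T_{2,j}\circ\hat\pi_j\|_{op}\lesssim h^{k-2}$, and additionally the analogous patchwise control on the higher tensors $\|\hat T_{i,j}-T^*_{i,j}\|_{op}\lesssim h^{k-i}$ (this last estimate should come out of the same proof as Theorem \ref{thm:upper_bound_curvature}, and I would cite the relevant lemma in Section \ref{subsec:proofs_l_polynomials}); I also use the standard covering fact that on that event the $Y_j$'s form an $(h/4)$-net of $M$, since $h$ is chosen so that every ball $\mathcal B(y,h/2)\cap M$ contains a sample point with the stated probability.

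For direction (i): fix $j$ and $v\in\mathcal B_{\hat T_j}(0,7h/8)$, and let $\hat z=\widehat\Psi_j(v)=X_j+v+\sum_{i=2}^{k-1}\hat T_{i,j}(\tens{v}{i})$. Using $X_j=Y_j+Z_j$ with $\|Z_j\|\le\sigma$, and replacing $v$ by its projection $v'=\pi_{T_{Y_j}M}(v)$ (an error $O(h\cdot\angle(T_{Y_j}M,\hat T_j))=O(h^k)$), and replacing each $\hat T_{i,j}$ by the true $T^*_{i,j}$ (an error $\sum_i h^i\cdot O(h^{k-i})=O(h^k)$), one finds $\hat z$ within $O(h^k\vee\sigma)$ of the point $Y_j+v'+\sum_{i=2}^{k-1}T^*_{i,j}(\tens{v'}{i})$, which by the Taylor decomposition \eqref{korderdecomposition} applied at $p=Y_j$ (valid since $\|v'\|\le 7h/8\le(\tau_{min}\wedge L_\perp^{-1})/8\cdot 7 <(\tau_{min}\wedge L_\perp^{-1})/4$) lies within $C\|v'\|^k=O(h^k)$ of the actual point $\Psi_{Y_j}(v')\in M$. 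Hence $d(\hat z,M)\le C(h^k\vee\sigma)$.

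For direction (ii): fix $y\in M$. Pick $j$ with $\|y-Y_j\|\le h/4$ (net property). Set $w=\pi_{T_{Y_j}M}(y-Y_j)$, so $\|w\|\le h/4$ and, by \eqref{korderdecomposition}, $y=Y_j+w+\sum_{i=2}^{k-1}T^*_{i,j}(\tens{w}{i})+R_k$ with $\|R_k\|\le C\|w\|^k=O(h^k)$; thus $y$ is within $O(h^k)$ of $\Psi_{Y_j}(w)$. Now I need $w$, or a small perturbation of it, to lie in $\mathcal B_{\hat T_j}(0,7h/8)$: take $v=\pi_{\hat T_j}(w)$, so $\|v\|\le\|w\|\le h/4<7h/8$ and $\|v-w\|\le\|w\|\,\angle(T_{Y_j}M,\hat T_j)=O(h^k)$. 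Then $\widehat\Psi_j(v)\in\hat M$, and reversing the estimates of direction (i) (swap $Y_j+Z_j\leftrightarrow X_j$, $T^*\leftrightarrow\hat T$, $w\leftrightarrow v$, each at cost $O(h^k\vee\sigma)$) gives $\|\widehat\Psi_j(v)-y\|\le C(h^k\vee\sigma)$. Combining (i) and (ii) yields $d_H(M,\hat M)\le C(h^k\vee\sigma)$ on the good event. The expectation bound then follows by plugging in $h=(C_{d,k}f_{max}^2\log n/(f_{min}^3(n-1)))^{1/d}$, bounding the contribution of the complementary event (probability $2(1/n)^{k/d}$) using that $d_H(M,\hat M)$ is deterministically $O(\mathrm{diam}\,M + h)$ which is $O(1)$, so its contribution is $O((1/n)^{k/d})$, absorbed into the main term.

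**Main obstacle.** The delicate point is direction (ii): guaranteeing that the approximant $v$ actually lands in the domain $\mathcal B_{\hat T_j}(0,7h/8)$ of the patch $\widehat\Psi_j$. This is why the radius $7h/8$ (rather than $h$) is used — it leaves room to absorb the $O(h\cdot h^{k-1})$ discrepancy between $\hat T_j$ and $T_{Y_j}M$ and the $h/4$ slack from the net, so that any $y$ within $h/4$ of some $Y_j$ has its projection onto $\hat T_j$ safely inside the $7h/8$-ball. One must also make sure the net argument and the patchwise tensor bounds hold \emph{simultaneously} over all $j$, which is already the content of the uniform (max over $j$) statements in Theorems \ref{thm:upper_bound_tangent}–\ref{thm:upper_bound_curvature}; the union bound on the covering event is what forces the extra factor $2$ in the probability $1-2(1/n)^{k/d}$.
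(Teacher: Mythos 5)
Your proposal is correct and follows essentially the same route as the paper: both directions of the Hausdorff bound are obtained from the order-$j$ tensor estimates established in the proof of Theorems \ref{thm:upper_bound_tangent}--\ref{thm:upper_bound_curvature} (the paper's bound \eqref{normopmajoration}, valid for all $1\leq j\leq k$), combined with the covering lemma for $\sup_{y\in M}d(y,\hat M)$ and a union bound giving the factor $2$ in the probability. The only cosmetic difference is that where you write ``within $C\|v'\|^k$ of $\Psi_{Y_j}(v')$'', the correct reference point is the point $y'\in M\cap\mathcal{B}(Y_j,h)$ with $\pi_{T_{Y_j}M}(y'-Y_j)=v'$ (supplied by Lemma \ref{lem:fourre_tout_geom_model}~(iii)--(iv), since the $T_i^*$ are the Taylor coefficients of the graph over $T_{Y_j}M$, not of $\Psi_{Y_j}$ itself), which changes nothing in the conclusion.
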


A proof of Theorem \ref{thm:upper_bound_hausdorff} is given in Section \ref{subsubsec:hausdorffupper}. As in Theorem \ref{thm:upper_bound_tangent}, for a noise level of order $h^{\alpha}$, $\alpha \geq 1$, Theorem \ref{thm:upper_bound_hausdorff} yields a convergence rate of order $h^{(k \wedge \alpha)/d}$. Thus the noise level $\sigma$ may also be thought of as a regularity threshold. 
Contrary to  \cite[Theorem 2]{Genovese11}, the case $h/4 <\sigma < \tau_{min}$ is not in the scope of Theorem \ref{thm:upper_bound_hausdorff}. Moreover, for $1 \leq \alpha  < 2d/(d+2)$, \cite[Theorem  2]{Genovese11} provides a better convergence rate of $h^{2/(d+2)}$. Note however that Theorem \ref{thm:upper_bound_hausdorff} is also valid whenever the assumption $\mathbb{E}(Z|Y)=0$ is relaxed. In this non-centered noise framework, Theorem \ref{thm:upper_bound_hausdorff} outperforms \cite[Theorem 7]{Maggioni16} in the case $d \geq 3$, $k=2$, and $\sigma \leq h^2$.

In the noise-free case or when $\sigma \leq h^k$, for $k=2$, we recover the rate $(\log n/n)^{2/d}$ obtained in \cite{AamariLevrard2015,Genovese12,Kim2015} and improve the rate $(\log n/n)^{2/(d+2)}$ in \cite{Genovese11,Maggioni16}. However, our estimator $\hat{M}$ is an unstructured union of $d$-dimensional balls in $\R^D$. Consequently, $\hat{M}$ does not recover the topology of $M$ as the estimator of \cite{AamariLevrard2015} does.

When $k\geq 3$, $\hat{M}$ outperforms reconstruction procedures based on a somewhat piecewise linear interpolation \cite{AamariLevrard2015,Genovese12,Maggioni16}, and achieves the faster rate $(\log n / n)^{k/d}$ for the Hausdorff loss. This seems quite natural, since our procedure fits higher order terms. This is done at the price of a probably worse dependency on the dimension $d$ than in \cite{AamariLevrard2015,Genovese12}.
Theorem \ref{thm:upper_bound_hausdorff} is now proved to be (almost) minimax optimal.
\begin{thm}\label{thm:lower_bound_hausdorff}
If $\tau_{min}{L_\perp},\ldots,\tau_{min}^{k-1}{L_k},({\tau_{min}^d}f_{min})^{-1}$ and ${\tau_{min}^d}{f_{max}}$ are large enough (depending only on $d$ and $k$), then for $n$ large enough,
\begin{align*}
\inf_{\hat{M}} 
\sup_{P \in \mathcal{P}^k(\sigma)}
\E_{P^{\otimes n}}
d_H
\bigr(
M,
\hat{M}
\bigr)
\geq 
c_{d,k,\tau_{min}} 
\left\{
\left(\frac{1}{n}\right)^{\frac{k}{d}}
\vee
\left(\frac{\sigma}{n}\right)^{\frac{k}{d+k}}
\right\}
,
\end{align*}
where the infimum is taken over all the estimators $\hat{M} = \hat{M}(X_1,\ldots,X_n)$.
\end{thm}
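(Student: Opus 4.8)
The plan is to use Le Cam's two-point method together with a many-hypotheses perturbation argument, exactly in the spirit of the lower bounds for tangent space and curvature estimation proved in Section \ref{subsubsec:proofs_lower_bounds_tangent_curvature}, but measuring discrepancy in Hausdorff distance rather than in an angle or a tensor norm. First I would treat the noise-free term $(1/n)^{k/d}$. Start from a fixed reference manifold $M_0 \in \mathcal{C}^k_{\tau_{min},\mathbf{L}}$ (say a $d$-sphere of radius comparable to $\tau_{min}$, or a flat piece patched into a compact model) carrying the almost-uniform distribution $P_0 \in \mathcal{P}^k$. Fix a scale $\lambda > 0$ to be tuned, and a point $x_0 \in M_0$. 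Build a bump perturbation: choose a smooth, compactly supported ``bump'' profile $\phi: \mathcal{B}_{T_{x_0}M_0}(0,\lambda) \to T_{x_0}M_0^\perp$ with $\|\phi\|_\infty \asymp \lambda^k$ and with all derivatives up to order $k$ controlled so that $\|d^i\phi\|_{op} \lesssim \lambda^{k-i}$; this is the standard way to saturate the $\mathcal{C}^k$ constraints. Define $M_1$ by replacing the graph $v \mapsto x_0 + v + \mathbf{N}_{x_0}(v)$ over $\mathcal{B}_{T_{x_0}M_0}(0,\lambda)$ with $v \mapsto x_0 + v + \mathbf{N}_{x_0}(v) + \phi(v)$, leaving $M_0$ unchanged elsewhere. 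One must check $M_1 \in \mathcal{C}^k_{\tau_{min},\mathbf{L}}$: the derivative bounds on $\phi$ give the $L_i$ bounds (for $\lambda$ small), and the reach bound $\tau_{M_1} \geq \tau_{min}$ follows because a $\mathcal{C}^k$, hence $\mathcal{C}^2$, perturbation of size $\lambda^k$ with second derivatives $\lesssim \lambda^{k-2} \to 0$ changes the reach by a vanishing amount — this is where the hypotheses ``$\tau_{min}L_\perp, \ldots, \tau_{min}^{k-1}L_k$ large enough'' enter, giving room for the bump. Equip $M_1$ with the renormalized almost-uniform density $P_1 \in \mathcal{P}^k$.

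The two key quantitative facts are then: (i) $d_H(M_0, M_1) \asymp \lambda^k$, since the bump displaces points by that amount and the two manifolds agree outside the patch; and (ii) the total variation (or Hellinger/chi-square) distance between $P_0^{\otimes n}$ and $P_1^{\otimes n}$ stays bounded away from $1$ provided $n \cdot (\text{mass of the symmetric difference region}) \lesssim 1$, i.e. $n\lambda^d \lesssim 1$. Indeed $P_0$ and $P_1$ differ only on a region of $\mathcal{H}^d$-measure $\asymp \lambda^d$, and on that region their densities are both of order $f_{min}\wedge f_{max}$ (up to the Jacobian of the graph map, which is $1 + O(\lambda^{2(k-1)})$), so $\mathrm{TV}(P_0,P_1) \lesssim \lambda^d$ and by tensorization $\mathrm{TV}(P_0^{\otimes n}, P_1^{\otimes n}) \lesssim n\lambda^d$. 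Choosing $\lambda \asymp n^{-1/d}$ makes this a small constant, and Le Cam's lemma gives
\begin{align*}
\inf_{\hat M}\sup_{P\in\{P_0,P_1\}} \E_{P^{\otimes n}} d_H(M,\hat M) \;\geq\; c\, d_H(M_0,M_1) \;\geq\; c'\, n^{-k/d},
\end{align*}
which is the first branch of the bound.

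For the noisy branch $(\sigma/n)^{k/(d+k)}$, I would run the same construction but now also exploit that tubular noise of amplitude $\sigma$ can ``hide'' a perturbation: take the bump to have amplitude $\asymp \sigma$ rather than $\asymp \lambda^k$, spread over a patch of radius $\lambda \asymp \sigma^{1/k}$ (so the $\mathcal{C}^k$ and reach constraints are still met, with derivative $i$ of the bump of size $\sigma\lambda^{-i} = \sigma^{1-i/k}$, which is $\lesssim \lambda^{k-i}$ exactly when the amplitude is $\lesssim \lambda^k$ — so in fact one must take amplitude $=\lambda^k$ and let $\lambda$ be free, giving $d_H \asymp \lambda^k$ and a TV budget that now also accounts for the noise smoothing). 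More precisely, comparing $P_0(\sigma)$ and $P_1(\sigma)$: convolving (within the normal fibers) by the noise makes the two noisy distributions agree even better, and a chi-square computation shows $\chi^2(P_0(\sigma)^{\otimes n} \| P_1(\sigma)^{\otimes n}) \lesssim n \lambda^d (\lambda^k/\sigma)^2$ when $\lambda^k \lesssim \sigma$, because the perturbation of size $\lambda^k$ is felt only up to relative precision $\lambda^k/\sigma$ after the noise is added. Setting this $\asymp 1$ gives $\lambda^{d+2k} \asymp \sigma^2/n$, i.e. $\lambda^k \asymp (\sigma^2/n)^{k/(d+2k)}$; optimizing the tradeoff between this regime and the noise-free regime (and using the cruder bound $d_H \gtrsim$ smallest scale at which the noisy laws are indistinguishable) yields the stated $(\sigma/n)^{k/(d+k)}$ after balancing. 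I expect the delicate point here to be getting the correct exponent in the chi-square bound for the noisy model — i.e. correctly quantifying how much the centered perpendicular noise degrades the information about a $\lambda^k$-sized normal displacement — and making sure the resulting $\lambda$ is small enough that all the model constraints ($\tau_{min}$, $\mathbf{L}$, $f_{min}$, $f_{max}$, and $\sigma < \tau_{min}$) remain satisfied for $n$ large.

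The main obstacle, therefore, is not the Le Cam mechanics but the construction and verification of the perturbed manifold $M_1$: one must produce an explicit bump that simultaneously (a) saturates the $\mathcal{C}^k$ regularity so that the $\lambda^k$ displacement is the best possible, (b) keeps the reach above $\tau_{min}$ — which forces $\lambda$ small and uses the slack in the large-parameter hypotheses — and (c) keeps the perturbed density within $[f_{min},f_{max}]$ after renormalization, which again needs $\lambda$ small so the volume change is negligible. Once such an $M_1$ and $P_1$ are in hand, the rest is the standard two-point bound above; a single extra hypothesis suffices here (no full Assouad cube is needed for the Hausdorff loss, since a single well-placed bump already realizes the worst case up to constants).
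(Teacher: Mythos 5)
Your noise-free branch is essentially the paper's own argument: a single bump of width $\delta \asymp n^{-1/d}$ (chosen so that $P_0(\mathcal{B}(x_0,\delta)) \asymp 1/n$) and height $\Lambda \asymp \delta^k$, followed by the two-point Le Cam bound, with the only cosmetic difference that the paper realizes $M_1$ as the image of $M_0$ under an ambient bump diffeomorphism and invokes its stability result (Proposition \ref{statistical_model_stability}) to certify membership in $\mathcal{P}^k$, rather than re-checking the reach and density bounds by hand as you propose. That part is fine.

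The gap is in the noisy branch. Your own computation gives the indistinguishability condition $n\lambda^d(\lambda^k/\sigma)^2 \asymp 1$, hence $\lambda^k \asymp (\sigma^2/n)^{k/(d+2k)}$, and no amount of ``balancing'' of this with the noise-free regime produces the exponent in the statement: the closing claim that this ``yields the stated $(\sigma/n)^{k/(d+k)}$'' is a non sequitur, and the chi-square bound itself is only asserted --- it requires choosing a specific smooth noise density (it fails for the natural uniform-on-tube noise, whose sharp edges give only a linear gain), verifying that this noise fits the model constraints ($Z \in T_YM^\perp$, $\|Z\|\leq\sigma$, $\E(Z|Y)=0$), and controlling the geometry of the normal fibers over the bumped patch, none of which is done. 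The paper takes a much more direct route: the two hypotheses are the \emph{uniform distributions on the $\sigma/2$-offsets} of $M_0$ and $M_1$ (shown to lie in $\mathcal{P}^k(\sigma)$ via a density bound for their projections onto the manifolds), and the $L^1$ affinity is bounded through the volume of the symmetric difference of the two tubes, giving $1-\|\mathbf{P}_0\wedge\mathbf{P}_1\|_1 \lesssim (\Lambda/\sigma)(\delta/\tau_{min})^d$, i.e.\ a \emph{linear} dependence on $\Lambda/\sigma$; setting this equal to $1/n$ with $\Lambda \asymp \delta^k$ yields $\delta^{d+k} \asymp \sigma/n$ and hence $d_H \asymp \Lambda \asymp (\sigma/n)^{k/(d+k)}$ immediately. (If your quadratic heuristic could be made rigorous it would actually give a bound at least as strong in the regime $\sigma \gtrsim n^{-k/d}$, but as written the step you yourself flag as the delicate point is missing, and the stated exponent is never reached by a coherent argument.)
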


Theorem \ref{thm:lower_bound_hausdorff}, whose proof is given in Section \ref{subsubsec:proofs_lower_bounds_hausdorff}, is obtained from Le Cam's Lemma (Theorem \ref{lecam}). 
Let us note that it is likely for the extra $\log n$ term appearing in Theorem \ref{thm:upper_bound_hausdorff} to actually be present in the minimax rate. Roughly, it is due to the fact that the Hausdorff distance $d_H$ is similar to a $L^\infty$ loss.
The $\log n$ term may be obtained in Theorem \ref{thm:lower_bound_hausdorff} with the same combinatorial analysis as in \cite{Kim2015} for $k=2$.

As for the estimation of tangent spaces and curvature, Theorem \ref{thm:lower_bound_hausdorff} matches the upper bound in Theorem \ref{thm:upper_bound_hausdorff} in the noise-free case $\sigma \lesssim (1/n)^{k/d}$. Moreover, for $\sigma < \tau_{min}$, it also generalizes Theorem 1 in \cite{Genovese11} to higher orders of regularity ($k \geq 3$). Again, for $\sigma \gg (1/n)^{-k/d}$, the upper bound in Theorem \ref{thm:upper_bound_hausdorff} is larger than the lower bound stated in Theorem \ref{thm:lower_bound_hausdorff}. However our estimator $\hat{M}$ achieves the same convergence rate if the assumption $\mathbb{E}(Z|Y)$ is dropped. 

\section{Conclusion, Prospects}
\label{sec:conclusion}
In this article, we derived non-asymptotic bounds for inference of geometric objects associated with smooth submanifolds $M \subset \R^D$. We focused on tangent spaces, second fundamental forms, and the submanifold itself.
We introduced new regularity classes $\mathcal{C}^k_{\tau_{min},\mathbf{L}}$ for submanifolds that extend the case $k=2$. 
For each object of interest, the proposed estimator relies on local polynomials that can be computed through a least square minimization.
Minimax lower bounds were presented, matching the upper bounds up to $\log n$ factors in the regime of small noise.

The implementation of \eqref{estimator_full_tensors_definition0} needs to be investigated. The non-convexity of the criterion comes from that we minimize over the space of orthogonal projectors, which is non-convex. However, that space is pretty well understood, and it seems possible to implement gradient descents on it \cite{Usevich14}.
Another way to improve our procedure could be to fit orthogonal polynomials instead of monomials. 
Such a modification may also lead to improved dependency on the dimension $d$ and the regularity $k$ in the bounds for both tangent space and support estimation. 

Though the stated lower bounds are valid for quite general tubular noise levels $\sigma$, it seems that our estimators based on local polynomials are suboptimal whenever $\sigma$ is larger than the expected precision for $\mathcal{C}^k$ models in a $d$-dimensional space (roughly $(1/n)^{k/d}$). In such a setting, it is likely that a preliminary centering procedure is needed, as the one exposed in \cite{Genovese11}. Other pre-processings of the data might adapt our estimators to other types of noise. For instance, whenevever outliers are allowed in the model $\mathcal{C}^2$, \cite{AamariLevrard2015} proposes an iterative denoising procedure based on tangent space estimation. 
It exploits the fact that tangent space estimation allows to remove a part of outliers, and removing outliers enhances tangent space estimation.
An interesting question would be to study how this method can apply with local polynomials.

Another open question is that of exact topology recovery with fast rates for $k \geq 3$. Indeed, $\hat{M}$ converges at rate $(\log n / n)^{k/d}$ but is unstructured.
It would be nice to glue the patches of $\hat{M}$ together, for example using interpolation techniques, following the ideas of \cite{Fefferman15}.


\section{Proofs}
\label{sec:proofs}

\subsection{Upper bounds}
\label{subsec:proofs_l_polynomials}
\subsubsection{Preliminary results on polynomial expansions}

To prove Theorem \ref{thm:upper_bound_tangent}, \ref{thm:upper_bound_curvature} and \ref{thm:upper_bound_hausdorff}, the following lemmas are needed.
First, we relate the existence of parametrizations $\Psi_p$'s mentioned in Definition \ref{best_model_definition} to a local polynomial decomposition.
      
\begin{lem}\label{lem:fourre_tout_geom_model}
For any $M \in \mathcal{C}^{k}_{\tau_{min},\mathbf{L}}$ and $y\in M$, the following holds.

\begin{enumerate}[(i)]

\item For all $v_1,v_2\in \mathcal{B}_{T_yM}\left(0,\frac{1}{4L_\perp}\right)$,
\begin{align*}
\frac{3}{4}\norm{v_2-v_1} \leq \norm{\Psi_y(v_2)-\Psi_y(v_1)} \leq \frac{5}{4} \norm{v_2-v_1}
.
\end{align*}

\item For all $h \leq \frac{1}{4L_\perp} \wedge \frac{2 \tau_{min}}{5}$, 
\begin{align*}
M \cap \mathcal{B}\left(y,\frac{3h}{5}\right)
\subset
\Psi_y \left( \mathcal{B}_{T_y M} \left(y,h\right) \right)
\subset
M \cap \mathcal{B}\left(y,\frac{5h}{4}\right)
.
\end{align*}

\item For all $h \leq \frac{\tau_{min}}{2}$, 
\begin{align*}
\mathcal{B}_{T_y M}\left(0,\frac{7h}{8}\right) \subset \pi_{T_y M}\left(\mathcal{B}(y,h) \cap M\right).
\end{align*}

\item
Denoting by $\pi^\ast = \pi_{T_y M}$ the orthogonal projection onto $T_y M$, for all $y \in M$, there exist multilinear maps $T_2^\ast,\ldots,T_{k-1}^\ast$ from $T_y M$ to $\R^D$, and $R_k$ such that for all $y' \in  \mathcal{B}\left(y,\frac{\tau_{min} \wedge L_\perp^{-1}}{4}\right)\cap M$,
\begin{align*}
y'-y = \pi^*&(y'-y) + T_2^*( \tens{\pi^*(y'-y)}{2}) + \hdots + T_{k-1}^*(\tens{\pi^*(y'-y)}{k-1}) 
		\\
		&+ R_{k}(y'-y),
\end{align*}
with
\begin{align*}
\norm{R_k (y'-y)} \leq C \norm{y'-y}^{k}
\text{ \quad  and \quad }
\norm{T_i^\ast}_{op} \leq L_i', \text{ for } 2 \leq i \leq k-1,
\end{align*}

where $L'_i$ depends on $d, k, \tau_{min}, L_\perp, \ldots, L_i$, and $C$ on $d$, $k$, $\tau_{min}$, $L_\perp$,$\ldots$,  $L_k$. Moreover, for $k\geq 3$, $T_2^\ast = II^M_y$.

\item For all $y \in M$, $\norm{II^M_y}_{op} \leq 1/\tau_{min}$. In particular, the sectional curvatures of $M$ satisfy 
\begin{align*}
\frac{-2}{\tau_{min}^2}
\leq
\kappa
\leq 
\frac{1}{\tau_{min}^2}
.
\end{align*}

\end{enumerate}

\end{lem}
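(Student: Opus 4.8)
We establish the five items in turn. For (i), write $\Psi_y(v_2)-\Psi_y(v_1)=(v_2-v_1)+\bigl(\mathbf{N}_y(v_2)-\mathbf{N}_y(v_1)\bigr)$; since $\mathbf{N}_y(0)=0$ and $d_0\mathbf{N}_y=0$, integrating $\norm{d_v^2\mathbf{N}_y}_{op}\leq L_\perp$ along segments gives $\norm{d_v\mathbf{N}_y}_{op}\leq L_\perp\norm{v}\leq 1/4$ for $\norm{v}\leq 1/(4L_\perp)$, hence $\norm{\mathbf{N}_y(v_2)-\mathbf{N}_y(v_1)}\leq\tfrac14\norm{v_2-v_1}$, and the two-sided bound follows by the triangle inequality. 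For (ii), the inclusion into $M\cap\mathcal{B}(y,5h/4)$ is immediate from (i), since $\Psi_y(v)\in M$ and $\norm{\Psi_y(v)-y}\leq\tfrac54\norm{v}\leq\tfrac{5h}{4}$. For the reverse inclusion, $\Psi_y$ is a homeomorphism onto its image, so $\Psi_y\bigl(\mathcal{B}_{T_yM}(0,\tfrac{4h}{5})\bigr)$ is a relatively open topological ball in $M$ whose boundary $\Psi_y\bigl(\partial\mathcal{B}_{T_yM}(0,\tfrac{4h}{5})\bigr)$ lies at distance $\geq\tfrac34\cdot\tfrac{4h}{5}=\tfrac{3h}{5}$ from $y$ by (i); since $M\cap\mathcal{B}(y,\tfrac{3h}{5})$ is connected for $h$ below the stated threshold (a standard consequence of $\tau_M\geq\tau_{min}$) and contains $y=\Psi_y(0)$, it must lie inside $\Psi_y\bigl(\mathcal{B}_{T_yM}(0,\tfrac{4h}{5})\bigr)\subset\Psi_y\bigl(\mathcal{B}_{T_yM}(y,h)\bigr)$.

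For (iii), set $\pi^\ast=\pi_{T_yM}$ and consider $F_y\colon v\mapsto v+\pi^\ast\mathbf{N}_y(v)$. Using a $\mathcal{C}^2$ normal parametrization of $M$ around $y$ of the form $y+v+\mathbf{N}_y(v)$ defined on a ball of radius comparable to $\tau_{min}$ (available thanks to $\tau_M\geq\tau_{min}$), $F_y$ is defined on a ball of radius slightly larger than $\tfrac{7h}{8}$ when $h\leq\tau_{min}/2$, and satisfies $\norm{d_v(F_y-\mathrm{id})}_{op}=\norm{\pi^\ast d_v\mathbf{N}_y}_{op}\leq L_\perp\norm{v}$. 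A quantitative inverse-function argument --- solving $v=w-\pi^\ast\mathbf{N}_y(v)$ by Banach iteration, which is a contraction because $L_\perp\norm{v}$ is small --- shows $F_y$ maps its domain onto a set containing $\mathcal{B}_{T_yM}(0,\tfrac{7h}{8})$; for $w$ in that ball the corresponding $v$ obeys $\norm{\Psi_y(v)-y}\leq h$ by (i), so $w\in\pi^\ast\bigl(\mathcal{B}(y,h)\cap M\bigr)$.

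Item (iv) is the main point. First Taylor-expand $\mathbf{N}_y$ at $0$: using $\mathbf{N}_y(0)=0$, $d_0\mathbf{N}_y=0$ and $\norm{d_v^i\mathbf{N}_y}_{op}\leq L_i$ for $2\leq i\leq k$ (with $L_2:=L_\perp$), one gets $\mathbf{N}_y(v)=\sum_{i=2}^{k-1}\tfrac{1}{i!}d_0^i\mathbf{N}_y(\tens{v}{i})+\widetilde R_k(v)$ with $\norm{\widetilde R_k(v)}\leq\tfrac{L_k}{k!}\norm{v}^k$. For $y'\in M$ with $\norm{y'-y}\leq(\tau_{min}\wedge L_\perp^{-1})/4$, set $u=\pi^\ast(y'-y)$ and $v=\Psi_y^{-1}(y')\in T_yM$ (well defined by Definition \ref{best_model_definition} and (i)); writing $\pi^{\ast\perp}=I_D-\pi^\ast$, the relation $y'=y+v+\mathbf{N}_y(v)$ splits into the tangential equation $u=v+\pi^\ast\mathbf{N}_y(v)$ and the full identity $y'-y=u+\pi^{\ast\perp}\mathbf{N}_y(v)$. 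It remains to invert the tangential equation: substituting $v=u-\pi^\ast\mathbf{N}_y(v)$ into itself repeatedly and inserting the Taylor expansion yields, by a finite bootstrap, $v=u+\sum_{i=2}^{k-1}Q_i(\tens{u}{i})+O(\norm{u}^k)$ for symmetric tensors $Q_i$ with $\norm{Q_i}_{op}$ bounded in terms of $d,k,L_\perp,\ldots,L_i$ (the bilipschitz bound (i) makes $\norm{v}$ and $\norm{u}$ comparable, which controls every remainder). Plugging this back into $y'-y=u+\pi^{\ast\perp}\mathbf{N}_y(v(u))$, expanding $\mathbf{N}_y$ once more and regrouping by homogeneous degree in $u$ produces the claimed decomposition, with $T_i^\ast$ the composition of $\pi^{\ast\perp}$ with the degree-$i$ coefficient, $\norm{T_i^\ast}_{op}\leq L_i'$, and $\norm{R_k(y'-y)}\leq C\norm{u}^k\leq C\norm{y'-y}^k$. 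Finally, for $k\geq3$ the degree-two coefficient is $\pi^{\ast\perp}\circ d_0^2\mathbf{N}_y$ (up to the normalization implicit in $\tens{u}{2}$); evaluating along the curves $t\mapsto\Psi_y(tX)$, whose velocity at $0$ is $X$ since $d_0\mathbf{N}_y=0$, identifies it with $II_y^M(X,X)=\pi^{\ast\perp}\bigl(d_0^2\Psi_y(X,X)\bigr)=\pi^{\ast\perp}\bigl(d_0^2\mathbf{N}_y(X,X)\bigr)$, so $T_2^\ast=II_y^M$. Tracking the tensor norms and the order-$k$ remainder through the inversion is the only genuinely delicate step.

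For (v), apply the quadratic estimate $\norm{y'-y-\pi^\ast(y'-y)}\leq\norm{y'-y}^2/(2\tau_{min})$ of Lemma \ref{model_properties_k=2} (equivalently the case $i=2$ of (iv)) to $y'=\gamma(t)$, where $\gamma$ is a unit-speed geodesic of $M$ with $\gamma(0)=y$: from $\gamma(t)-y=t\gamma'(0)+\tfrac{t^2}{2}\gamma''(0)+o(t^2)$ and $\pi^\ast(\gamma(t)-y)=t\gamma'(0)+\tfrac{t^2}{2}\pi^\ast\gamma''(0)+o(t^2)$ one gets $\tfrac{t^2}{2}\norm{\pi^{\ast\perp}\gamma''(0)}+o(t^2)\leq\tfrac{t^2}{2\tau_{min}}+o(t^2)$; dividing by $t^2/2$, letting $t\to0$, using $\pi^{\ast\perp}\gamma''(0)=II_y^M(X,X)$ for $X=\gamma'(0)$, and polarizing gives $\norm{II_y^M}_{op}\leq1/\tau_{min}$. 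The sectional curvature bounds then follow from the Gauss equation $\kappa(X,Y)=\langle II_y^M(X,X),II_y^M(Y,Y)\rangle-\norm{II_y^M(X,Y)}^2$ for an orthonormal pair $(X,Y)$ in $T_yM$: the first term has absolute value $\leq1/\tau_{min}^2$ and the second lies in $[0,1/\tau_{min}^2]$, whence $-2/\tau_{min}^2\leq\kappa\leq1/\tau_{min}^2$.
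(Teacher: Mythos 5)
Most of your write-up is sound and close in spirit to the paper's proof: (i) is exactly the paper's argument; (iv) follows the same route (invert the tangential map $v\mapsto v+\pi_{T_yM}\mathbf{N}_y(v)$, compose with $\Psi_y$, identify the quadratic coefficient via the curves $t\mapsto\Psi_y(tX)$), except that you explicitly defer the one step where the work lies --- controlling the tensor norms and the order-$k$ remainder of the inverse map --- which the paper isolates and actually proves via a quantitative inverse-function lemma (Faà di Bruno induction); as written, (iv) is a correct plan but a sketch at its crux. Your (v) is a legitimate self-contained derivation of the bound $\norm{II^M_y}_{op}\leq 1/\tau_{min}$ where the paper simply cites Niyogi--Smale--Weinberger. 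Your (ii) is genuinely different from the paper (which uses that $G=\pi_{T_yM}\circ\Psi_y$ is a perturbation of the identity together with injectivity of $\pi_{T_yM}$ at scale $5h/4$, cited from Arias-Castro et al.): your connectedness-plus-boundary argument works, but it rests on the connectedness of $M\cap\mathcal{B}(y,3h/5)$, which is true under the reach bound yet is itself a nontrivial reach-based fact you only assert, and it also needs invariance of domain for openness of the image and a strict-radius adjustment to handle the closed-ball equality case; fixable, but not free.

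The genuine gap is (iii). The claim is at scale $h\leq\tau_{min}/2$, i.e.\ you must show that $\pi_{T_yM}(\mathcal{B}(y,h)\cap M)$ contains a ball of radius $7h/8$, which can be as large as $7\tau_{min}/16$. Your argument requires a graph parametrization $v\mapsto y+v+\mathbf{N}_y(v)$ over $T_yM$ on a ball of radius larger than $7h/8$: neither Definition 2 (radius $1/(4L_\perp)$, and $L_\perp\gtrsim 1/\tau_{min}$, so this is at most about $\tau_{min}/4$) nor Lemma 1 (radius $\tau_{min}/4<7\tau_{min}/16$) provides such a domain, and claiming one "of radius comparable to $\tau_{min}$" is essentially assuming the graph structure at scale $\tau_{min}/2$, which is the content of the result the paper cites (Lemma 3 of Arias-Castro et al.) to dispose of (iii). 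Moreover, even granting such a parametrization with $\norm{d_v\mathbf{N}_y}_{op}\leq \frac{5}{4\tau_{min}}\norm{v}$, your Banach iteration does not close at the extremal scale: the fixed-point inequality $\norm{v}\leq \frac{7h}{8}+\frac{5}{8\tau_{min}}\norm{v}^2$ has no solution at $h=\tau_{min}/2$ (since $4\cdot\frac{7}{16}\cdot\frac{5}{8}=\frac{35}{32}>1$), and your final step ``$\norm{\Psi_y(v)-y}\leq h$ by (i)'' would need $\norm{v}\leq 4h/5$, which the iteration does not deliver. So the proof of (iii) fails precisely in the regime the lemma covers (it would be fine for $h$ a small multiple of $\tau_{min}\wedge L_\perp^{-1}$); you need either to invoke a projection result valid directly at scale $\tau_{min}/2$, as the paper does, or to give a different argument there.
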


          The proof of Lemma \ref{lem:fourre_tout_geom_model} can be found in Section \ref{subsec:A2}. A direct consequence of Lemma \ref{lem:fourre_tout_geom_model} is the following Lemma \ref{lem:pol_expression}.
          
          \begin{lem}\label{lem:pol_expression}
          Set $h_0= (\tau_{min} \wedge L_{\perp}^{-1})/8$ and $h \leq h_0$. Let $M \in \mathcal{C}^{k}_{\tau_{min},\mathbf{L}}$, $x_0 = y_0 + z_0$, with $y_0 \in M$ and $\|z_0\| \leq \sigma \leq h/4$. Denote by $\pi^*$ the orthogonal projection onto $T_{y_0}M$, and by $T_2^*, \hdots, T_{k-1}^*$ the multilinear maps given by Lemma \ref{lem:fourre_tout_geom_model}, $iv)$.
          
           Then, for any $x = y+z$ such that    $y \in M$, $\|z\| \leq \sigma \leq h/4$ and $x \in \mathcal{B}(x_0,h)$, for any orthogonal projection $\pi$ and multilinear maps $T_2, \hdots, T_{k-1}$, we have 
           \begin{multline*}
        x-x_0 - \pi(x-x_0) - \sum_{j=2}^{k-1}{ T_j(\tens{\pi(x-x_0)}{j})} = \sum_{j=1}^{k}T'_j(\tens{\pi^*(y-y_0)}{j}) \\ + R_k{(x-x_0)},    
\end{multline*}     
where $T'_j$ are $j$-linear maps, and $\|R_k(x-x_0)\| \leq C \left( \sigma \vee h^k \right )(1 + th)$, with $t = \max_{j=2\hdots, k}{\|T\|_{op}}$ and $C$ depending on $d$, $k$, $\tau_{min}$, $L_\perp$,$\ldots$,  $L_k$. Moreover, we have              
\[
    \begin{array}{@{}ccc}
    T'_1  &=& (\pi^*-{\pi}),\\
    T'_2  &=& (\pi^*-{\pi})\circ T_2^* + (T_2^* \circ \pi^* - {T}_2 \circ {\pi}),
    \end{array}
    \]
    and, if $\pi = \pi^*$ and $T_i = T_i^*$, for $i=2, \hdots, k-1$, then $T'_j=0$, for $j=1, \hdots, k$.
          \end{lem}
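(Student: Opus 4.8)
The plan is to substitute the order-$k$ polynomial decomposition of Lemma~\ref{lem:fourre_tout_geom_model}(iv) for both $x$ and $x_0$ around the common base point $y_0$, and then reorganize the resulting expression as a polynomial in $\pi^*(y-y_0)$ plus a remainder. First I would write, using Lemma~\ref{lem:fourre_tout_geom_model}(iv) at $y_0$ (valid since $x,x_0\in\mathcal{B}(x_0,h)$ and $h\le h_0$ forces $y,y_0$ to lie in the small ball around $y_0$ where the expansion holds, after controlling $\|y-y_0\|\le\|x-x_0\|+2\sigma\le 3h/2$ via Lemma~\ref{lem:fourre_tout_geom_model}(i)-(ii)),
\begin{align*}
y - y_0 &= \pi^*(y-y_0) + \sum_{i=2}^{k-1} T_i^*\bigl(\tens{\pi^*(y-y_0)}{i}\bigr) + R_k(y-y_0), \\
z - z_0 &= (z - z_0),
\end{align*}
so that $x - x_0 = (y-y_0) + (z-z_0)$ has a known polynomial part in $u := \pi^*(y-y_0)$ and a remainder of size $O(\sigma \vee h^k)$ coming from $R_k$ and the noise terms $z,z_0$ (each $\le \sigma$).

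Next I would expand the quantity $\pi(x-x_0) = \pi^*(x-x_0) - (\pi^*-\pi)(x-x_0)$ and substitute into $\sum_{j=2}^{k-1} T_j(\tens{\pi(x-x_0)}{j})$, using multilinearity to expand each $\tens{\pi(x-x_0)}{j}$ as a polynomial in $u$ (plus remainder terms). Since $\|T_j\|_{op}\le t$ and each factor $\pi(x-x_0)$ is $O(h)$, the terms involving the remainder pieces are bounded by $C t h \cdot (\sigma\vee h^k)$, which accounts for the factor $(1+th)$ in the final bound; the purely polynomial terms are collected, degree by degree, into the $T'_j$. Matching degree $1$: only $T'_1(u) = (\pi^*-\pi)(u)$ survives, because $\pi(x-x_0)$ contributes $\pi^*(x-x_0)$ at linear order, whose $u$-component is $\pi^* u = u$, against $\pi(x-x_0)$ whose $u$-component is $\pi u$, and $\pi^* u - \pi(x-x_0)$'s linear part is $(\pi^*-\pi)u$ up to the subtracted $\pi(x-x_0)$; tracking this carefully gives $T'_1 = \pi^*-\pi$. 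Matching degree $2$: the contributions are $(\pi^*-\pi)$ applied to the degree-$2$ part $T_2^*(\tens{u}{2})$ of $y-y_0$, plus $T_2^*(\tens{u}{2})$ from the expansion of $x-x_0$ itself, minus $T_2(\tens{\pi^* u}{2}) = T_2(\tens{u}{2})$ from the $j=2$ term with $\pi$ replaced by $\pi^*$ at leading order; this yields $T'_2 = (\pi^*-\pi)\circ T_2^* + (T_2^*\circ\pi^* - T_2\circ\pi)$ after re-expressing everything with the projectors reinstated. The special case $\pi=\pi^*$, $T_i=T_i^*$ making all $T'_j=0$ is then immediate: the whole left-hand side reduces to $x-x_0$ minus its own Lemma~\ref{lem:fourre_tout_geom_model}(iv) expansion, i.e. exactly $R_k(x-x_0)$, with no polynomial remainder.

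The main obstacle I expect is bookkeeping rather than conceptual: carefully tracking which cross-terms in the multinomial expansions of $\tens{\pi(x-x_0)}{j}$ are genuine polynomials in $u$ versus which are absorbed into $R_k(x-x_0)$, and verifying uniformly that every absorbed term is $O\bigl((\sigma\vee h^k)(1+th)\bigr)$ with constants depending only on $d,k,\tau_{min},L_\perp,\dots,L_k$. The key quantitative inputs are $\|\pi(x-x_0)\|\le \|x-x_0\|\le h$, $\|u\|=\|\pi^*(y-y_0)\|\le \|y-y_0\|\le 3h/2$, the bounds $\|T_i^*\|_{op}\le L_i'$ from Lemma~\ref{lem:fourre_tout_geom_model}(iv), $\|R_k(y-y_0)\|\le C\|y-y_0\|^k\le C' h^k$, and $\|z\|,\|z_0\|\le\sigma$; a geometric-series argument over the at most $k$ degrees keeps all constants under control, and the factor $(1+th)$ arises precisely because each application of a tensor $T_j$ to a slot containing a remainder costs a multiplicative $th$.
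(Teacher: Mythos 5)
Your plan is correct and follows essentially the same route as the paper's proof: first absorb the noise terms $z,z_0$ into a remainder of size $C\sigma(1+th)$, then substitute the expansion of Lemma \ref{lem:fourre_tout_geom_model} (iv) for $y-y_0$ in powers of $\pi^*(y-y_0)$, and collect terms degree by degree, the factor $(1+th)$ arising exactly as you say from tensors applied to remainder slots. The one step to make explicit in your degree-$2$ bookkeeping is that matching the computed coefficient $(I_D-\pi)\circ T_2^*-T_2\circ\pi$ with the stated $T'_2$ requires $\pi^*\circ T_2^*=0$, which holds because $T_2^*=II^M_{y_0}$ takes values in the normal space; the paper invokes this fact explicitly to conclude.
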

          Lemma \ref{lem:pol_expression} roughly states that, if $\pi$, $T_j$, $j \geq 2$ are designed to locally approximate $x=y+z$ around $x_0=y_0 + z_0$, then the approximation error may be expressed as a polynomial expansion in $\pi^*(y-y_0)$. 
          
          \begin{proof}[Proof of Lemma \ref{lem:pol_expression}]
          For short assume that $y_0 = 0$. In what follows $C$ will denote a constant depending on $d$, $k$, $\tau_{min}$, $L_\perp$,$\ldots$,  $L_k$. We may write
    \begin{multline*}
    x-x_0 - \pi(x-x_0) - \sum_{j=2}^{k-1}{ T_j(\tens{\pi(x-x_0)}{j})} = y - \pi(y) - \sum_{j=2}^{k-1}{ T_j(\tens{\pi(y)}{j})}\\ + R'_k(x-x_0),
\end{multline*}              
          with $\|R'_k(x-x_0)\| \leq C \sigma(1+ t h)$. Since $\sigma \leq h/4$, $y \in \mathcal{B}(0,3h/2)$, with $h \leq h_0$. Hence Lemma \ref{lem:fourre_tout_geom_model} entails  
          \begin{align*}
y = \pi^*&(y) + T_2^*( \tens{\pi^*(y)}{2}) + \hdots + T_{k-1}^*(\tens{\pi^*(y)}{k-1}) 
		\\
		&+ R''_{k}(y),
\end{align*}
           with $\| R''_{k}(y) \| \leq C h^k$. We deduce that
          \begin{multline*}
          y - \pi(y) - \sum_{j=2}^{k-1}{ T_j(\tens{\pi(y)}{j})} = (\pi^*-\pi \circ \pi^*)(y) + 
          T_2^*(\tens{\pi^*(y)}{2}) - \pi(T_2^*(\tens{\pi^*(y)}{2})) \\ - T_2(\tens{\pi \circ\pi^*(y)}{2}) + \sum_{j=3}^{k}{ T'_k(\tens{\pi^*(y)}{j})} - \pi(R''_k(y)) - R'''_k(y),
\end{multline*}           
with $\|R'''_k(y)\| \leq C t h^{k+1}$, since only tensors of order greater than $2$ are involved in $R'''_k$. Since $T_2^* = II^M_0$, $\pi^* \circ T_2^* =0$, hence the result.
          \end{proof}
          
          At last, we need a result relating deviation in terms of polynomial norm and $L^2(P_{0,n-1}^{(j)})$ norm, where $P_0 \in \mathcal{P}^k$, for polynomials taking arguments in $\pi^{*,(j)}(y)$.
 For clarity's sake, the bounds are given for $j=1$, and we denote $ P_{0,n-1}^{(1)}$ by $P_{0,n-1}$.
Without loss of generality, we can assume that $Y_1=0$.

Let $\mathbb{R}^k[y_{1:d}]$ denote the set of real-valued polynomial functions in $d$ variables with degree less than $k$. For $S \in \mathbb{R}^k[y_{1:d}]$, we denote by $\|S\|_2$ the Euclidean norm of its coefficients, and by $S_h$ the polynomial defined by $S_h(y_{1:d}) = S(hy_{1:d})$. With a slight abuse of notation, $S(\pi^*(y))$ will denote $S(e_1^*(\pi^*(y)), \hdots, e_d^*(\pi^*(y)))$, where $e_1^*, \hdots, e_d^*$ form an orthonormal coordinate system of $T_0M$. 

\begin{prop}\label{polminoration}
      Set $h= \left ( K \frac{\log n }{n-1} \right )^{\frac{1}{d}}$. There exist constants $\kappa_{k,d}$, $c_{k,d}$ and $C_d$ such that, if $K \geq (\kappa_{k,d} f_{max}^2 /f_{min}^3)$ and $n$ is large enough so that $h \leq h_0 \leq \tau_{min}/8$, then with probability at least $1-\left ( \frac{1}{n} \right )^{\frac{k}{d}+1}$, we have
      \[
      \begin{array}{@{}ccc}
      P_{0,n-1} [S^2(\pi^*(y)) \mathbbm{1}_{\mathcal{B}(h/2)}(y)] &\geq& c_{k,d} h^d f_{min} \|S_h\|_2^2, \\
      N(3h/2) & \leq & C_d f_{max} (n-1) h^d,
      \end{array}
      \]
      for every $S \in \R^{k}[y_{1:d}]$, where $N(3h/2) = \sum_{j = 2}^n \mathbbm{1}_{\mathcal{B}(0,3h/2)}(Y_j)$. 
\end{prop}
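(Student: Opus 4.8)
The plan is to prove the two statements separately; the second is an elementary Chernoff/Bernstein bound, so the real work is the first. For the count $N(3h/2) = \sum_{j=2}^n \mathbbm{1}_{\mathcal{B}(0,3h/2)}(Y_j)$, I would observe that each indicator is Bernoulli with parameter $p = P_0(\mathcal{B}(0,3h/2))$, and since $P_0$ has density bounded by $f_{max}$ with respect to the volume measure on $M$, item (ii) of Lemma \ref{lem:fourre_tout_geom_model} (comparing $M \cap \mathcal{B}(y,r)$ with $\Psi_y$-images of Euclidean balls in the tangent space) together with item (i) (bi-Lipschitz control of $\Psi_y$) gives $\mathcal{H}^d(M \cap \mathcal{B}(0,3h/2)) \leq C_d h^d$, hence $p \leq C_d f_{max} h^d$. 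A Bernstein inequality for $\sum_{j} (\mathbbm{1} - p)$ then yields $N(3h/2) \leq C_d f_{max}(n-1)h^d$ with the claimed exceptional probability, because $(n-1)p \gtrsim f_{max} K \log n$ is of logarithmic order and the deviation term is absorbed for $K$ large.

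For the main bound, fix $S \in \R^k[y_{1:d}]$. The first reduction is to pass from $\pi^*(y)$ (the projection onto $T_0M$ of a point $y \in M$) to the intrinsic normal coordinate: by item (iv) of Lemma \ref{lem:fourre_tout_geom_model}, the parametrization $v \mapsto \Psi_0(v)$ satisfies $\pi^*(\Psi_0(v)) = v$, and by item (i) it is bi-Lipschitz with constants $3/4$ and $5/4$, while by item (ii), $\Psi_0(\mathcal{B}_{T_0M}(0, 2h/5))\subset M \cap \mathcal{B}(0,h/2)$. So it suffices to lower bound $\int_{\mathcal{B}_{T_0M}(0,c h)} S^2(v)\, f(\Psi_0(v))\, J(v)\, \dd v$ in expectation, where $J$ is the Jacobian of $\Psi_0$, which is bounded below by a constant (again from item (i), or the $d^2\mathbf N$ bound in Definition \ref{best_model_definition}). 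Using $f \geq f_{min}$ and rescaling $v = h u$, this expectation is at least $c_{k,d}' h^d f_{min} \int_{\mathcal{B}(0,c)} S^2(hu)\,\dd u = c_{k,d}' h^d f_{min} \int_{\mathcal{B}(0,c)} S_h^2(u)\,\dd u$. The key deterministic fact is then that on the finite-dimensional space $\R^k[u_{1:d}]$, the quadratic form $S_h \mapsto \int_{\mathcal{B}(0,c)} S_h^2$ is positive definite, hence comparable to $\|S_h\|_2^2$ by equivalence of norms in finite dimension (with a constant depending only on $k,d$); this gives $\E P_{0,n-1}[S^2(\pi^*(y))\mathbbm{1}_{\mathcal{B}(h/2)}(y)] \geq c_{k,d} h^d f_{min}\|S_h\|_2^2$ for every $S$ simultaneously.

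It remains to turn this expectation bound into a high-probability bound uniform over all $S$. The standard device is a \emph{chaining/net argument over the finite-dimensional unit sphere}: normalize so that $\|S_h\|_2 = 1$, take a $\delta$-net $\mathcal{N}$ of $\{S : \|S_h\|_2 = 1\}$ of cardinality $(C/\delta)^{\dim \R^k[y_{1:d}]}$, apply a Bernstein inequality to the random variable $P_{0,n-1}[S^2(\pi^*(y))\mathbbm{1}_{\mathcal{B}(h/2)}(y)]$ for each fixed $S\in\mathcal{N}$ — here the summands are bounded by $C_{k,d} h^2$ on $\mathcal{B}(0,h/2)$ after using that monomials of degree $\geq 1$ in $v$ with $\|v\|\leq h/2$ are $O(h)$, the variance is $O(h^{d+2})$, and $(n-1)h^d \asymp K\log n$ — and then control the oscillation of $S \mapsto P_{0,n-1}[S^2(\pi^*(y))\mathbbm{1}]$ over a net cell by a Lipschitz estimate, again using $\|S(\pi^*(y))\|\lesssim h\|S_h\|_2$ on the relevant ball. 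Choosing $\delta$ a small constant and $K = \kappa_{k,d} f_{max}^2/f_{min}^3$ large enough makes the union bound over $\mathcal{N}$ have probability at most $(1/n)^{k/d+1}$ and halves the expectation, yielding the claim with $c_{k,d}$ half the constant above. The main obstacle is bookkeeping in this last step: getting the Bernstein parameters (bound, variance) sharp enough in terms of $h$ so that the net union bound closes with the prescribed polynomial-in-$1/n$ failure probability and forces exactly the stated dependence $K \gtrsim f_{max}^2/f_{min}^3$; the geometric inputs are all already packaged in Lemma \ref{lem:fourre_tout_geom_model}.
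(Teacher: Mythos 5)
Your proposal is correct in substance but reaches uniformity over $S$ by a genuinely different route than the paper. The paper first proves a uniform deviation bound (Proposition B.2) for the empirical process indexed by \emph{products of at most $k$ linear forms} $\prod_j(\langle u_j,y\rangle/b)^{\varepsilon_j}\mathbbm{1}_{\mathcal{B}(y_0,b)}$, via symmetrization, a Slepian comparison to a simpler Gaussian process, and the Talagrand--Bousquet inequality; it then expands $S^2=\sum_{\alpha,\beta}a_\alpha a_\beta\, y^{\alpha+\beta}$ and applies that uniform monomial bound termwise to get $(P_{0,n-1}-P_0)S^2\mathbbm{1}\geq -c_{d,k}f_{max}h^d\|S_h\|_2^2/\sqrt{K'}$, and the count bound is the $\varepsilon=0$ case of the same proposition. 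You instead work directly on the finite-dimensional unit sphere $\{\|S_h\|_2=1\}$ with a $\delta$-net, Bernstein for each fixed $S$, and a Lipschitz oscillation control; since $S\mapsto P_{0,n-1}[S^2\mathbbm{1}]$ is a quadratic form in the coefficients of $S_h$, this closes (a constant $\delta$ suffices if you use the standard net argument for quadratic forms), and it does force the same $K\gtrsim f_{max}^2/f_{min}^3$. Your route is more elementary; the paper's buys a reusable uniform bound on all empirical monomial moments. The treatment of the population lower bound (geometric lemma plus equivalence of norms on $\R^k[u_{1:d}]$) and of $N(3h/2)$ matches the paper's.

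Two slips to fix, neither fatal. First, $\pi^*(\Psi_0(v))=v$ is not exact in the $\mathcal{C}^k$ model: $\mathbf{N}_0(v)$ need not be normal to $T_0M$, so $G=\pi^*\circ\Psi_0$ is only a perturbation of the identity with $\|d_vG-I\|_{op}\leq 1/4$; you should route the change of variables through $G$ (or directly through item (iii) of Lemma \ref{lem:fourre_tout_geom_model}, which is what the paper does when it passes to the ball of radius $7h/16$), with the Jacobian bounded above and below. Second, with $\|S_h\|_2=1$ the correct pointwise bound on $\mathcal{B}(0,h/2)$ is $|S(\pi^*(y))|\leq C_{k,d}$, not $C_{k,d}h$ --- the constant coefficient of $S$ contributes $O(\|S_h\|_2)$, not $O(h)$. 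This only changes the Bernstein parameters to bound $O(1)$ and variance $O(h^d)$, and the union bound still closes for $\kappa_{k,d}$ large.
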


The proof of Proposition \ref{polminoration} is deferred to Section \ref{subsec:B2}. 
\subsubsection{Upper Bound for Tangent Space Estimation}\label{sec_proofofthmupper_bound_tangent}
\begin{proof}[Proof of Theorem \ref{thm:upper_bound_tangent}]

We recall that for every $j=1, \hdots, n$, $X_j = Y_j + Z_j$, where $Y_j \in M$ is drawn from $P_0$ and $\|Z_j\|\leq \sigma \leq h/4$, where $h \leq h_0$ as defined in Lemma \ref{lem:pol_expression}. Without loss of generality we consider the case $j=1$, $Y_1=0$. From now on we assume that the probability event defined in Proposition \ref{polminoration} occurs, and denote by $\mathcal{R}_{n-1}(\pi,T_{2},\hdots, T_{k-1})$ the empirical criterion defined by \eqref{estimator_full_tensors_definition0}. Note that $X_j \in \mathcal{B}(X_1,h)$ entails $Y_j \in \mathcal{B}(0,3h/2)$. Moreover, since for $t \geq \max_{i=2, \hdots, k-1}\|T^*_{i}\|_{op}$, $
\mathcal{R}_{n-1}(\hat{\pi},\hat{T}_1, \hdots, \hat{T}_{k-1}) \leq \mathcal{R}_{n-1}(\pi^*,T^*_2, \hdots, T_{k-1}^*)$, 
we deduce that \[
\mathcal{R}_{n-1}(\hat{\pi},\hat{T}_1, \hdots, \hat{T}_{k-1}) \leq \frac{C_{\tau_{min},\mathbf{L}} \left( \sigma^2 \vee h^{2k} \right )(1 + th)^2 N(3h/2)}{n-1} , 
\] according to Lemma \ref{lem:pol_expression}.
On the other hand, note that if $Y_j \in \mathcal{B}(0,h/2)$, then $X_j \in \mathcal{B}(X_1,h)$. Lemma \ref{lem:pol_expression} then yields 
\begin{align*}
\mathcal{R}_{n-1}(\hat{\pi},\hat{T}_2, \hdots, \hat{T}_{k-1})  \geq & P_{0,n-1} \left (\left \| \sum_{j=1}^{k}\hat{T}'_j(\tens{\pi^*(y)}{j}) \right \|^2\mathbbm{1}_{\mathcal{B}(0,h/2)}(y) \right ) \\ & - \frac{C_{\tau_{min},\mathbf{L}} \left( \sigma^2 \vee h^{2k} \right )(1 + th)^2 N(3h/2)}{n-1}.
\end{align*} 
Using Proposition \ref{polminoration}, we can decompose the right-hand side as
\begin{multline*}
\sum_{r=1}^D P_{0,n-1} \left ( \sum_{j=1}^{k}{\hat{T}'{}^{(r)}_j}(\tens{\pi^*(y)}{j})\mathbbm{1}_{\mathcal{B}(0,h/2)}(y) \right )^2 \\ \leq C_{\tau_{min},\mathbf{L}} f_{max} h^d \left( \sigma^2 \vee h^{2k} \right )(1 + th)^2, 
\end{multline*}
where for any tensor $T$, $T^{(r)}$ denotes the $r$-th coordinate of $T$ and is considered as a real valued $r$-order polynomial.
Then, applying Proposition \ref{polminoration} to each coordinate leads to
\begin{align*}
c_{d,k} f_{min} \sum_{r=1}^{D}\sum_{j=1}^{k} \left \| \left ( T'{}^{(r)}_j(\tens{\pi^*(y)}{j}) \right )_h \right \|^2_2   \leq C_{\tau_{min},\mathbf{L}} f_{max} h^d \left( \sigma^2 \vee h^{2k} \right )(1 + th)^2.
\end{align*}
It follows that, for $1\leq j \leq k$, 
\begin{align}
        \| \hat{T}'_j \circ \pi^* \|^2_{op}  &\leq C_{d,k,\mathbf{L},\tau_{min}} \frac{f_{max}}{f_{min}}(h^{2(k-j)}\vee \sigma^2 h^{-2j})(1 + t^2 h^2). \label{normopmajoration}
\end{align}
Noting that, according to \cite[Section 2.6.2]{Golub96}, 
  \[\|\hat{T}'_1 \circ \pi^*\|_{op}
  = 
  \|(\pi^*-\hat{\pi})\pi^*\|_{op}
  =
  \|\pi_{\hat{T}_1^\perp} \circ \pi^\ast\|
  =
  \angle(T_{Y_1} M, \hat{T}_1),
  \]
we deduce that 
\[
\angle 
\bigl(T_{Y_1} M, \hat{T}_1\bigr) \leq C_{d,k,\mathbf{L},\tau_{min}} \sqrt{\frac{f_{max}}{f_{min}}}(h^{(k-1)}\vee \sigma h^{-1})(1 + t h).
\]
Theorem \ref{thm:upper_bound_tangent} then follows from a straightforward union bound.
\end{proof}

\subsubsection{Upper Bound for Curvature Estimation}\label{subsubsec:proofofuppercurvature}
\begin{proof}[{Proof of Theorem \ref{thm:upper_bound_curvature}}]
         Without loss of generality, the derivation is conducted in the same framework as in the previous Section \ref{sec_proofofthmupper_bound_tangent}. In accordance with assumptions of Theorem \ref{thm:upper_bound_curvature}, we assume that $\max_{2\leq i \leq k} \|T_i^*\|_{op} \leq t \leq 1/h$.         
         Since, according to Lemma \ref{lem:pol_expression}, 
         \[
         T'_2 (\tens{\pi^*(y)}{2}) = (\pi^*-\hat{\pi})(T_2^*(\tens{\pi^*(y)}{2})) + (T_2^* \circ \pi^* - \hat{T}_2 \circ \hat{\pi})(\tens{\pi^*(y)}{2}),
        \] 
        we deduce that 
        \begin{multline*}
        \| T_2^* \circ \pi^* - \hat{T}_2 \circ \hat{\pi}\|_{op} \leq \|T'_2 \circ \pi^*\|_{op} + \|\hat{\pi} - \pi^*\|_{op} 
        + \| \hat{T}_2 \circ \hat{\pi} \circ \pi^* - \hat{T}_2 \circ \hat{\pi} \circ \hat{\pi} \|_{op}.
        \end{multline*}
        Using \eqref{normopmajoration} with $j=1,2$ and $th \leq 1$ leads to
        \begin{align*}
        \| T_2^* \circ \pi^* - \hat{T}_2 \circ \hat{\pi}\|_{op} \leq C_{d,k,\mathbf{L},\tau_{min}} \sqrt{\frac{f_{max}}{f_{min}}}(h^{(k-2)}\vee \sigma h^{-2}).
        \end{align*}
       Finally, Lemma \ref{lem:fourre_tout_geom_model} states that $II_{Y_1}^M = T_2^*$. Theorem \ref{thm:upper_bound_curvature} follows from a union bound. 
\end{proof}        

\subsubsection{Upper Bound for Manifold Estimation}\label{subsubsec:hausdorffupper}
\begin{proof}[{Proof of Theorem \ref{thm:upper_bound_hausdorff}} 
]
Recall that we take $X_i = Y_i + Z_i$, where $Y_i$ has distribution $P_0$ and $\|Z_j\| \leq \sigma \leq h/4$. We also assume that the probability events of Proposition \ref{polminoration} occur simultaneously at each $Y_i$, so that \eqref{normopmajoration} holds for all $i$, with probability larger than $1-(1/n)^{k/d}$. Without loss of generality set $Y_1=0$.  Let $v \in \mathcal{B}_{\hat{T}_1 M}(0,7h/8)$ be fixed. Notice that $\pi^* (v) \in \mathcal{B}_{T_0 M}(0,7h/8)$. Hence, according to  Lemma \ref{lem:fourre_tout_geom_model}, there exists $y \in \mathcal{B}(0,h) \cap M$ such that $\pi^*(v) = \pi^* (y)$. According to \eqref{normopmajoration}, we may write
        \begin{align*}
    \widehat{\Psi}(v) = Z_1 + v + \sum_{j=2}^{k-1} \hat{T}_j(\tens{v}{j}) & = \pi^*(v) + \sum_{j=2}^{k-1} \hat{T}_j(\tens{\pi^*(v)}{j}) + R_{k}(v),    
        \end{align*}
        where, since $\|\hat{T}_j\|_{op} \leq 1/h$, $\|R_k(v)\| \leq C_{k,d,\tau_{min},\mathbf{L}} \sqrt{f_{max}/f_{min}} (h^{k}\vee \sigma)$. Using \eqref{normopmajoration} again leads to 
        \begin{align*}
        \pi^*(v) + \sum_{j=2}^{k-1} \hat{T}_j(\tens{\pi^* (v)}{j}) & = \pi^* (v) + \sum_{j=2}^{k-1} T^*_j(\tens{\pi^*( v)}{j}) + R'(\pi^*(v)) \\
        &= \pi^*(y) + \sum_{j=2}^{k-1} T^*_j(\tens{\pi^* (y)}{j}) + R'(\pi^*(y)),
        \end{align*}
        where $\|R'(\pi^*(y))\| \leq C_{k,d,\tau_{min},\mathbf{L}} \sqrt{f_{max}/f_{min}}(h^{k}\vee \sigma)$. According to Lemma \ref{lem:fourre_tout_geom_model}, we deduce that $\|\widehat{\Psi}(v) - y \| \leq C_{k,d,\tau_{min},\mathbf{L}} \sqrt{f_{max}/f_{min}} (h^{k}\vee \sigma)$, hence
        \begin{align}\label{hausdorffsens1}
        \sup_{u \in \hat{M}} d(u,M) \leq  C_{k,d,\tau_{min},\mathbf{L}} \sqrt{\frac{f_{max}}{f_{min}}} (h^{k}\vee \sigma).
        \end{align}        
Now we focus on $\sup_{y \in M} d(y,\hat{M})$. For this, we need a lemma ensuring that $\Y_n = \{Y_1, \hdots, Y_n\}$ covers $M$ with high probability. 
       \begin{lem}\label{lem:volume+hausdorff_density}
%
Let $h = \left( \frac{C'_d k}{f_{min}}  \frac{\log n}{n}\right)^{1/d}$ with $C'_d$ large enough. Then for $n$ large enough so that $h \leq \tau_{min}/4$, with probability at least $1 - \left( \frac{1}{n}\right)^{k/d}$,
\begin{align*}
d_H\left(M,\Y_n\right)
\leq
h/2.
\end{align*}
\end{lem}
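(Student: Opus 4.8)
The plan is to show that with the stated bandwidth $h$, a maximal $h/2$-packing of $M$ cannot contain too many points, and that each packing ball is hit by some $Y_i$ with overwhelming probability. First I would fix a maximal family of points $p_1,\dots,p_N \in M$ that are pairwise at distance $\geq h/2$; by maximality, the balls $\mathcal{B}(p_\ell, h/2)$ cover $M$, so it suffices to prove that every $\mathcal{B}(p_\ell, h/2)$ contains at least one sample point $Y_i$. Indeed, if this holds, then any $y \in M$ lies in some $\mathcal{B}(p_\ell,h/2)$ which contains some $Y_i$, whence $d(y,\Y_n) \leq \|y - p_\ell\| + \|p_\ell - Y_i\| \leq h/2$... wait, that only gives $h$; instead I would directly use that the $h/4$-balls $\mathcal{B}(p_\ell,h/4)$ are disjoint while the $h/2$-balls cover, and argue on the finer covering, or simply note that a maximal $h/4$-net $\{p_\ell\}$ has $\mathcal{B}(p_\ell,h/4)$ disjoint and $\mathcal{B}(p_\ell,h/2)$ covering, then require each $\mathcal{B}(p_\ell,h/4)\cap M$ to contain a $Y_i$, which yields $d(y,\Y_n)\leq h/4 + h/4 = h/2$ for $y \in \mathcal{B}(p_\ell,h/2)$ — no, the triangle inequality gives $h/2 + h/4$. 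The clean route is: take a maximal $h/4$-net, so the $\mathcal{B}(p_\ell, h/4)$ cover $M$; if each such ball contains a sample point, then $d_H(M,\Y_n) \leq h/4 \leq h/2$. So I would work with the $h/4$-net throughout.

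Next I would bound $N$, the size of the net. Since the balls $\mathcal{B}(p_\ell, h/8)\cap M$ are pairwise disjoint and, by Lemma \ref{lem:fourre_tout_geom_model}(ii) together with the comparison of volumes under the bi-Lipschitz parametrization $\Psi_{p_\ell}$, each has $\mathcal{H}^d$-measure at least $c_d h^d$ (for $h \leq \tau_{min}/4$, since then the relevant radii are admissible and $\Psi_{p_\ell}$ distorts $d$-volume by a bounded factor), while $M$ has finite total volume $\mathrm{Vol}(M)$, we get $N \leq C_d \,\mathrm{Vol}(M)/h^d$. Then, for a fixed $\ell$, since $f \geq f_{min}$ and $\mathcal{B}(p_\ell, h/4)\cap M$ has volume $\geq c_d h^d$, we have $P_0(\mathcal{B}(p_\ell, h/4)) \geq c_d f_{min} h^d$. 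Hence
\[
\p\bigl(\mathcal{B}(p_\ell,h/4)\cap \Y_n = \emptyset\bigr) = \bigl(1 - P_0(\mathcal{B}(p_\ell,h/4))\bigr)^n \leq e^{-c_d f_{min} h^d n}.
\]
A union bound over the $N \leq C_d\,\mathrm{Vol}(M) h^{-d}$ net points gives failure probability at most $C_d\,\mathrm{Vol}(M)\, h^{-d} e^{-c_d f_{min} h^d n}$. Plugging $h^d = \frac{C_d' k}{f_{min}}\frac{\log n}{n}$ makes the exponential equal to $n^{-c_d C_d' k}$, and choosing $C_d'$ large enough (depending only on $d$) absorbs the polynomial prefactor $h^{-d} \asymp n/\log n$ and $\mathrm{Vol}(M)$, leaving a bound of order $(1/n)^{k/d}$, as claimed. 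Note $\mathrm{Vol}(M)$ is itself controlled in terms of $d$, $\tau_{min}$ (a compact submanifold of reach $\geq \tau_{min}$ in $\R^D$ has volume bounded once its diameter is — but here $D$ is arbitrary; however the constant $C_d'$ may be taken to also absorb a crude volume bound, or one restricts attention to the relevant neighborhood — this is a minor point I would handle by noting the volume bound only enters logarithmically).

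The main obstacle is making the volume lower bound $\mathcal{H}^d(\mathcal{B}(p_\ell, h/4)\cap M) \geq c_d h^d$ rigorous with a constant independent of the ambient dimension $D$: this requires that $\Psi_{p_\ell}$ restricted to $\mathcal{B}_{T_{p_\ell}M}(0, c h)$ has image inside $\mathcal{B}(p_\ell, h/4)$ and distorts $d$-dimensional Hausdorff measure by a factor in $[c_d, C_d]$, which follows from Lemma \ref{lem:fourre_tout_geom_model}(i)–(ii) (the $3/4$–$5/4$ bi-Lipschitz bounds) and the change-of-variables formula for $\mathcal{H}^d$ along a Lipschitz graph; a secondary nuisance is that $\mathrm{Vol}(M)$ appears in $N$, which I would dispatch either by a reach-based volume bound or by observing it only costs an additive constant inside the already-large $C_d'$. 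Everything else is a routine packing/union-bound argument.
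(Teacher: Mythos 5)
Your proposal is correct and follows essentially the same route as the paper: both rest on the volume lower bound $P_0\left(\mathcal{B}(x,r)\right) \geq c_d f_{min} r^d$ for $r \leq \tau_{min}/4$ (which the paper derives from geodesic-ball and exponential-map Jacobian comparisons, and you derive equivalently from the bi-Lipschitz parametrization of Lemma \ref{lem:fourre_tout_geom_model}), followed by the standard packing-plus-union-bound coverage argument, which the paper simply outsources to Theorem 3.3 of \cite{Chazal2013} while you re-derive it. The only loose end you flag, the prefactor $\mathrm{Vol}(M)$, is dispatched immediately by $1 = \int_M f\, d\mathcal{H}^d \geq f_{min}\,\mathrm{Vol}(M)$, so $\mathrm{Vol}(M) \leq 1/f_{min}$, matching the $\bigl(c_d f_{min} s^d\bigr)^{-1}$ prefactor in the cited bound.
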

The proof of Lemma \ref{lem:volume+hausdorff_density} is given in Section \ref{subsec:B1}.
Now we choose $h$ satisfying the conditions of Proposition \ref{polminoration} and Lemma \ref{lem:volume+hausdorff_density}. Let $y$ be in $M$ and assume that $\norm{y - Y_{j_0}} \leq h/2$. Then $y \in \mathcal{B}(X_{j_0},3h/4)$. According to Lemma \ref{lem:pol_expression} and \eqref{normopmajoration}, we deduce that $\| \widehat{\Psi}_{j_0}(\hat{\pi}_{j_0}(y-X_{j_0})) - y \| \leq C_{k,d,\tau_{min},\mathbf{L}} \sqrt{f_{max}/f_{min}} (h^{k}\vee \sigma)$. Hence, from Lemma \ref{lem:volume+hausdorff_density},
        \begin{align}\label{hausdorff_sens2}
        \sup_{y \in M}d(y,\hat{M}) \leq C_{k,d,\tau_M,\mathbf{L}}\sqrt{\frac{f_{max}}{f_{min}}} (h^{k}\vee \sigma)
        \end{align} 
with probability at least $1 - 2\left(\frac{1}{n}\right)^{k/d}$.
Combining \eqref{hausdorffsens1} and \eqref{hausdorff_sens2} gives Theorem \ref{thm:upper_bound_hausdorff}.
\end{proof}
\subsection{Minimax Lower Bounds}
\label{subsec:proofs_lower_bounds}

This section is devoted to describe the main ideas of the proofs of the minimax lower bounds. 
We prove Theorem \ref{thm:lower_bound_hausdorff} on one side, and
Theorem \ref{thm:lower_bound_tangent} and Theorem \ref{thm:lower_bound_curvature} in a unified way on the other side.
The methods used rely on hypothesis comparison \cite{Yu97}.

\subsubsection{Lower Bound for Manifold Estimation}
\label{subsubsec:proofs_lower_bounds_hausdorff}

We recall that for two distributions $Q$ and $Q'$ defined on the same space, 
the $L^1$ test affinity $\norm{Q\wedge Q'}_1$ is given by
\begin{align*}
\norm{Q\wedge Q'}_1 = \int dQ \wedge dQ',
\end{align*}
where $dQ$ and $dQ'$ denote densities of $Q$ and $Q'$ with respect to any dominating measure.


The first technique we use, involving only two hypotheses, is usually referred to as Le Cam's Lemma \cite{Yu97}.
Let $\mathcal{P}$ be a model and $\theta(P)$ be the parameter of interest. Assume that $\theta(P)$ belongs to a pseudo-metric space $(\mathcal{D},d)$, that is $d(\cdot,\cdot)$ is symmetric and satisfies the triangle inequality.
Le Cam's Lemma can be adapted to our framework as follows.
\begin{thm}[Le Cam's Lemma \cite{Yu97}]\label{lecam}
For all pairs $P,P'$ in $\mathcal{P}$,
\begin{align*}
\inf_{ \hat{\theta}}
\sup_{P \in \mathcal{P}}
\E_{P^{\otimes n}}
d( \theta(P) , \hat{\theta} )
\geq
\frac{1}{2} 
d\left( \theta(P) , \theta(P') \right) 
\norm{P \wedge P'}_1^n
,
\end{align*}
where the infimum is taken over all the estimators $\hat{\theta} = \hat{\theta}(X_1,\ldots,X_n)$.

\end{thm}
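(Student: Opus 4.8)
The statement to prove is Le Cam's Lemma, a classical two-point lower bound. My plan is to reduce the minimax risk to a two-point testing problem and then bound the testing error by the $L^1$ affinity between the $n$-fold product measures.

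First I would fix an arbitrary estimator $\hat\theta = \hat\theta(X_1,\dots,X_n)$ and the two distributions $P,P' \in \mathcal{P}$, writing $\theta = \theta(P)$, $\theta' = \theta(P')$ and $\delta = d(\theta,\theta')$. The supremum over $\mathcal{P}$ is at least the average over the two-point subfamily $\{P,P'\}$, so it suffices to lower bound
\begin{align*}
\frac{1}{2}\Bigl( \E_{P^{\otimes n}} d(\theta,\hat\theta) + \E_{P'^{\otimes n}} d(\theta',\hat\theta)\Bigr).
\end{align*}
The key geometric observation, using only the triangle inequality for the pseudo-metric $d$, is that for every realization of the data, $d(\theta,\hat\theta) + d(\theta',\hat\theta) \geq d(\theta,\theta') = \delta$. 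Hence if I introduce the (randomized) test $\psi$ that picks $P$ when $d(\theta,\hat\theta) < d(\theta',\hat\theta)$ and $P'$ otherwise, I get $d(\theta,\hat\theta) \geq (\delta/2)\mathbbm{1}\{\psi = P'\}$ under $P^{\otimes n}$-events and $d(\theta',\hat\theta) \geq (\delta/2)\mathbbm{1}\{\psi = P\}$ under $P'^{\otimes n}$-events, up to care on the tie set, which can be absorbed either way.

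Taking expectations, the displayed average is at least $\frac{\delta}{4}\bigl( P^{\otimes n}(\psi = P') + P'^{\otimes n}(\psi = P)\bigr)$, i.e. $\frac{\delta}{4}$ times the total error probability of the test $\psi$. The final step is the standard fact that the minimal sum of type-I and type-II errors over all tests between two probability measures $Q,Q'$ equals the $L^1$ affinity: $\inf_\psi \bigl( Q(\psi = Q') + Q'(\psi = Q)\bigr) = \int dQ \wedge dQ' = \norm{Q \wedge Q'}_1$. Applying this with $Q = P^{\otimes n}$, $Q' = P'^{\otimes n}$ and noting $\norm{P^{\otimes n} \wedge P'^{\otimes n}}_1 \geq \norm{P\wedge P'}_1^n$ (since $\int (f^{\otimes n}) \wedge (g^{\otimes n}) \geq (\int f\wedge g)^n$ by Fubini and the pointwise bound $\prod \min(f_i,g_i) \leq \min(\prod f_i, \prod g_i)$) yields the claimed bound $\frac{\delta}{4}\norm{P\wedge P'}_1^n$; a slightly sharper bookkeeping of the tie set recovers the constant $\frac{1}{2}$ in front of $d(\theta,\theta')$ as stated. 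Since $\hat\theta$, $P$, $P'$ were arbitrary, taking the infimum over estimators and the supremum over pairs gives the result.

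The argument has no real obstacle — the only point requiring mild care is the handling of ties in the definition of $\psi$ and the bookkeeping of constants so that the factor in front of $d(\theta(P),\theta(P'))$ comes out as $\frac12$ rather than $\frac14$; this is a matter of defining the test so that one of the two events is the complement of the other, after which one of the two terms in the average already dominates $\frac{\delta}{2}\norm{P\wedge P'}_1^n$. The product-measure inequality $\norm{P^{\otimes n}\wedge P'^{\otimes n}}_1 \geq \norm{P\wedge P'}_1^n$ is elementary and can be invoked without proof.
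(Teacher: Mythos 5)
You are proving a result the paper itself does not prove (it is quoted from \cite{Yu97}), so there is no internal proof to compare against; the relevant benchmark is the standard argument, which integrates the triangle inequality directly against the measure $P^{\otimes n}\wedge P'^{\otimes n}$. Your reduction to the two-point subfamily, the triangle-inequality observation $d(\theta,\hat\theta)+d(\theta',\hat\theta)\geq d(\theta,\theta')$, and the tensorization bound $\norm{P^{\otimes n}\wedge P'^{\otimes n}}_1\geq \norm{P\wedge P'}_1^n$ are all correct. The gap is in the last step, where you pass through the test $\psi$ and the indicator bound $d(\theta,\hat\theta)\geq (\delta/2)\mathbbm{1}\{\psi=P'\}$: this route only gives
\begin{align*}
\E_{P^{\otimes n}} d(\theta,\hat\theta)+\E_{P'^{\otimes n}} d(\theta',\hat\theta)
\geq \frac{\delta}{2}\bigl(P^{\otimes n}(\psi=P')+P'^{\otimes n}(\psi=P)\bigr)
\geq \frac{\delta}{2}\norm{P^{\otimes n}\wedge P'^{\otimes n}}_1,
\end{align*}
hence a minimax constant of $1/4$, and your proposed repair is not valid: it is only the \emph{sum} of the two error probabilities that dominates the affinity, not either one individually (take $P'$ a tiny perturbation of $P$, so the affinity is close to $1$, and an estimator for which each error probability is close to $1/2$; then each of your two lower bounds is about $\delta/4$, strictly below $\frac{\delta}{2}\norm{P\wedge P'}_1^n$). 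No "bookkeeping of the tie set" can recover the lost factor $2$, because replacing $d(\theta,\hat\theta)$ by $(\delta/2)\mathbbm{1}\{\psi=P'\}$ already discards exactly that factor.

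The fix is to drop the test altogether. Since $P^{\otimes n}\wedge P'^{\otimes n}$ is dominated by both product measures,
\begin{align*}
\E_{P^{\otimes n}} d(\theta,\hat\theta)+\E_{P'^{\otimes n}} d(\theta',\hat\theta)
\geq \int \bigl(d(\theta,\hat\theta)+d(\theta',\hat\theta)\bigr)\, d\bigl(P^{\otimes n}\wedge P'^{\otimes n}\bigr)
\geq d(\theta,\theta')\,\norm{P^{\otimes n}\wedge P'^{\otimes n}}_1,
\end{align*}
and combining with your tensorization inequality and the bound of the supremum by the average of the two risks gives the stated constant $1/2$. With this one-line substitution your proof is complete and is essentially the classical argument.
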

In this section, we will get interested in $\mathcal{P} = \mathcal{P}^k(\sigma)$ and $\theta(P) = M$, with $d  =d_H$.
In order to derive Theorem \ref{thm:lower_bound_hausdorff}, we build two different pairs $(P_0,P_1)$, $(P_0^\sigma,P_1^\sigma)$ of hypotheses in the model $\mathcal{P}^k(\sigma)$. Each pair will exploit a different property of the model $\mathcal{P}^k(\sigma)$.

The first pair $(P_0,P_1)$ of hypotheses (Lemma \ref{lem:hypotheses_hausdorff_nonoise}) is built in the model $\mathcal{P}^k \subset \mathcal{P}^k(\sigma)$, and exploits the geometric difficulty of manifold reconstruction, even if no noise is present. These hypotheses, depicted in Figure \ref{fig:bump_hausdorff}, consist of bumped versions of one another.
\begin{lem}
\label{lem:hypotheses_hausdorff_nonoise}
Under the assumptions of Theorem \ref{thm:lower_bound_hausdorff}, there exist $P_0,P_1 \in \mathcal{P}^k$ with associated submanifolds $M_0,M_1$ such that
\begin{align*}
d_H(M_0,M_1) 
\geq 
c_{k,d,\tau_{min}}
\left(\frac{1}{n}\right)^{\frac{k}{d}}
,
\text{ and}
\quad
\norm{P_0 \wedge P_1}_1^n \geq c_0.
\end{align*}
\end{lem}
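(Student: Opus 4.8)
\textbf{Proof plan for Lemma \ref{lem:hypotheses_hausdorff_nonoise}.}

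The plan is to construct $M_1$ as a localized bump perturbation of a fixed reference manifold $M_0$, with the bump of height $\delta$ supported on a ball of radius $r$ around a fixed point, and then tune $r$ and $\delta$ so that (i) $M_1$ still belongs to $\mathcal{C}^k_{\tau_{min},\mathbf{L}}$, (ii) the Hausdorff distance $d_H(M_0,M_1)$ is of order $\delta$, and (iii) the sampling distributions $P_0,P_1$ (uniform-enough densities on $M_0,M_1$) satisfy $\norm{P_0\wedge P_1}_1 \geq 1 - c/n$, so that the $n$-fold affinity stays bounded below. First I would take $M_0$ to be, say, a $d$-sphere of radius comparable to $\tau_{min}$ (or any fixed element of $\mathcal{C}^k_{\tau_{min},\mathbf{L}}$ with room to spare in all the constraints), fix a point $p_0 \in M_0$, and in the tangent-space parametrization around $p_0$ replace $\mathbf{N}_{p_0}$ by $\mathbf{N}_{p_0} + \phi$, where $\phi(v) = \delta\, g(\norm{v}/r)\, \nu$ for a fixed smooth compactly supported profile $g:\R_+\to[0,1]$ with $g(0)=1$, and $\nu$ a unit normal vector at $p_0$. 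The key scaling computation is that $\norm{d^i_v \phi}_{op} \lesssim \delta r^{-i}$; so to respect the $\mathcal{C}^k$ bounds ($\norm{d^2}\leq L_\perp$, $\norm{d^i}\leq L_i$ for $3\leq i\leq k$) and keep the reach above $\tau_{min}$, the binding constraint is the top order $i=k$, which forces $\delta \lesssim_{k,d,\tau_{min},\mathbf{L}} r^k$. I would take $\delta \asymp r^k$.

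Next, for the affinity bound: $P_0$ and $P_1$ agree outside the bump region, and the symmetric difference of their supports has $\mathcal{H}^d$-measure of order $r^d$ (the base disk of radius $r$, the perturbation being a graph over it). With densities bounded by $f_{max}$ one gets $\norm{P_0 \wedge P_1}_1 \geq 1 - C_{d} f_{max}\, r^d$, hence $\norm{P_0\wedge P_1}_1^n \geq (1 - C_d f_{max} r^d)^n \geq c_0$ provided $r^d \asymp 1/n$, i.e. $r \asymp (1/n)^{1/d}$. Plugging this into $\delta \asymp r^k$ gives $d_H(M_0,M_1) \asymp \delta \asymp (1/n)^{k/d}$, which is exactly the claimed lower bound. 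One still has to check $d_H(M_0,M_1) \geq c\,\delta$ from below: since the point $\Psi_{p_0}(0)$ on $M_1$ is at distance $\delta$ (along $\nu$) from $p_0 \in M_0$ and the nearest point of $M_0$ is still essentially $p_0$ up to higher-order terms, this is immediate for $r$ small. Finally one verifies that $f_{min}$ can be kept as a valid lower density bound: renormalizing the uniform density on $M_1$ changes it by a factor $1 + O(r^d)$, harmless for $n$ large, and the hypothesis that $\tau_{min}^d f_{min}$, $\tau_{min}^d f_{max}$ etc. are "large enough" leaves the necessary slack.

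The main obstacle I expect is the bookkeeping in step (i): verifying that the bumped manifold $M_1$ genuinely lies in $\mathcal{C}^k_{\tau_{min},\mathbf{L}}$, which requires not just controlling $\norm{d^i_v(\mathbf{N}_{p_0}+\phi)}_{op}$ in the chart at $p_0$ (easy, by the scaling above), but also checking that the reach of $M_1$ stays $\geq \tau_{min}$ globally and that \emph{every other} point $p\in M_1$ still admits an admissible parametrization $\Psi_p$ with the required bounds --- the perturbation is local, but the definition of the class quantifies over all base points. This is handled by choosing the reference manifold with constants strictly inside the allowed ranges (using the "large enough" hypotheses on $\tau_{min}\mathbf{L}$, $\tau_{min}^d f_{min}$, etc.), so that a perturbation of size $O(r^k)$ with $r$ small cannot push any of the constraints past threshold; the reach stability under $\mathcal{C}^2$-small perturbations is standard (it follows from Proposition \ref{statistical_model_stability}-type arguments, or directly from the definition of reach via the medial axis). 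The remaining computations --- the $\mathcal{H}^d$-volume of the symmetric difference, the explicit $d_H$ lower bound --- are routine.
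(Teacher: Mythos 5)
Your proposal is correct and follows essentially the same route as the paper: the paper also takes a fixed reference manifold $M_0$ (a rescaled flattened sphere), applies a localized bump of width $\delta$ and height $\Lambda \asymp \delta^k$ (the scaling forced by the top-order derivative bound $L_k$, exactly as you identify), verifies class membership via the stability of the model under small ambient diffeomorphisms (Proposition \ref{statistical_model_stability}), and tunes $\delta^d \asymp 1/n$ so that $\norm{P_0\wedge P_1}_1 = 1 - P_0(\mathcal{B}(0,\delta)) \geq 1 - c/n$. The only cosmetic difference is that the paper takes $P_1$ to be the pushforward of $P_0$ by the bump diffeomorphism rather than the renormalized uniform measure on $M_1$, which makes the two distributions coincide exactly off the bump and spares the $1+O(r^d)$ renormalization step you mention.
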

The proof of Lemma \ref{lem:hypotheses_hausdorff_nonoise} is to be found in Section \ref{subsec:C4}.1.

\begin{figure}
\includegraphics[width = 0.5\textwidth]{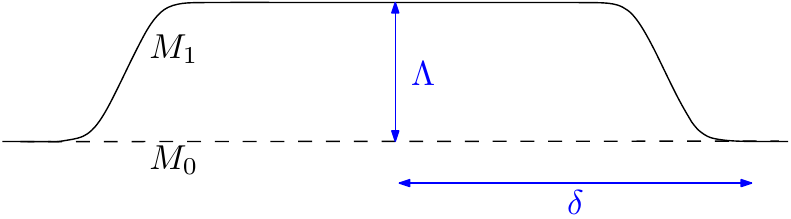}
\caption{Manifolds $M_0$ and $M_1$ of Lemma \ref{lem:hypotheses_hausdorff_nonoise} and Lemma \ref{lem:hypotheses_hausdorff_noise}. 
The width $\delta$ of the bump is chosen to have $\norm{P_0^\sigma \wedge P_1^\sigma}_1^n$ constant.
The distance $\Lambda = d_H(M_0,M_1)$ is of order $\delta^k$ to ensure that $M_1 \in \mathcal{C}^k$.}
\label{fig:bump_hausdorff}
\end{figure}

The second pair $(P_0^\sigma,P_1^\sigma)$ of hypotheses (Lemma \ref{lem:hypotheses_hausdorff_noise}) has a similar construction than $(P_0,P_1)$. 
Roughly speaking, they are the uniform distributions on the offsets of radii $\sigma/2$ of $M_0$ and $M_1$ of Figure \ref{fig:bump_hausdorff}.
Here, the hypotheses are built in $\mathcal{P}^k(\sigma)$, and fully exploit the statistical difficulty of manifold reconstruction induced by noise.
\begin{lem}
\label{lem:hypotheses_hausdorff_noise}
Under the assumptions of Theorem \ref{thm:lower_bound_hausdorff}, there exist $P_0^\sigma,P_1^\sigma \in \mathcal{P}^k(\sigma)$ with associated submanifolds $M_0^\sigma,M_1^\sigma$ such that
\begin{align*}
d_H(M_0^\sigma,M_1^\sigma) 
\geq 
c_{k,d,\tau_{min}}
\left(\frac{\sigma}{n}\right)^{\frac{k}{d+k}}
,
\text{ and}
\quad
\norm{P_0^\sigma \wedge P_1^\sigma}_1^n \geq c_0.
\end{align*}
\end{lem}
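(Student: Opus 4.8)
The plan is to construct $M_0^\sigma$ and $M_1^\sigma$ as tubular-type neighborhoods of radius $\sigma/2$ around two bumped submanifolds of the kind used in Lemma~\ref{lem:hypotheses_hausdorff_nonoise}, but now rescaled so that the geometric bump parameters are driven by the noise level $\sigma$ rather than only by $n$. Concretely, I would start from a flat $d$-plane and add a localized $\mathcal{C}^k$ bump of horizontal width $\delta$ and height $\Lambda \asymp \delta^k$ (the scaling $\Lambda \asymp \delta^k$ being exactly what is needed for the bumped manifold to stay in $\mathcal{C}^k_{\tau_{min},\mathbf{L}}$, as in Section~\ref{subsec:C4}). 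This gives two submanifolds $M_0$ (flat) and $M_1$ (bumped) with $d_H(M_0,M_1) = \Lambda$. The corresponding noisy distributions $P_0^\sigma, P_1^\sigma$ will be the uniform distributions on the $\sigma/2$-offsets $M_0^\sigma, M_1^\sigma$ of these submanifolds; one checks these belong to $\mathcal{P}^k(\sigma)$ by writing $X = Y + Z$ with $Y = \pi_{M_i}(X)$ uniform-enough on $M_i$ and $Z \in T_Y M_i^\perp$, $\|Z\|\le \sigma/2 < \sigma$, $\mathbb{E}(Z|Y)=0$ by the symmetry of the offset. Since $d_H(M_0^\sigma,M_1^\sigma)$ differs from $d_H(M_0,M_1)$ by at most a constant times $\sigma$, and we will arrange $\Lambda \gg \sigma$ is \emph{not} required — rather $\Lambda$ of the same order as the claimed bound — a lower bound of order $\Lambda$ on $d_H(M_0^\sigma,M_1^\sigma)$ follows (for the interesting regime; otherwise the noise-free pair already handles it).

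The core computation is the affinity bound $\|P_0^\sigma \wedge P_1^\sigma\|_1^n \geq c_0$. Here the key gain over the noise-free case is that smearing by noise of amplitude $\sigma$ makes the two offset regions overlap much more: the symmetric difference $M_0^\sigma \triangle M_1^\sigma$ has volume of order $\delta^{d-1}\Lambda$ (a region of horizontal extent $\delta^{d}$ and vertical extent $\Lambda$, but actually only the ``collar'' of the bump contributes), while the total mass is of constant order, so $\|P_0^\sigma - P_1^\sigma\|_{TV} \lesssim \delta^{d-1}\Lambda / (\text{const})$; more carefully, because both offsets have thickness $\sigma$, the densities differ only on a set of volume $\asymp \delta^{d-1}\Lambda$ and there the density is $\asymp 1/\sigma$ times a bounded factor, giving $\|P_0^\sigma \wedge P_1^\sigma\|_1 \geq 1 - C\delta^{d-1}\Lambda/\sigma \cdot(\text{vol normalization})$. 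I would then impose $n \cdot \delta^{d-1}\Lambda/\sigma \asymp 1$, i.e. using $\Lambda \asymp \delta^k$, $n \delta^{d+k-1}/\sigma \asymp 1$, so $\delta \asymp (\sigma/n)^{1/(d+k-1)}$ — wait, I would instead track volumes as $\delta^d$ not $\delta^{d-1}$ depending on how the bump collar scales, leading to $\delta \asymp (\sigma/n)^{1/(d+k)}$ and hence $\Lambda \asymp \delta^k \asymp (\sigma/n)^{k/(d+k)}$, matching the statement. The precise exponent bookkeeping is the part that must be done with care: one has to decide which power of $\delta$ governs the volume of the region where the two offset densities disagree, accounting for the fact that the offsets themselves are $\sigma$-thick.

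The main obstacle I anticipate is precisely this volume/overlap estimate: making rigorous that $\|P_0^\sigma \wedge P_1^\sigma\|_1 \geq 1 - C \,n^{-1}$ requires controlling the geometry of the offsets $M_i^\sigma = M_i \oplus \mathcal{B}(0,\sigma/2)$ near the bump — in particular, that the nearest-point projection onto $M_i$ is well-behaved there (which uses $\sigma < \tau_{min}$ and the reach lower bound guaranteed by $\Lambda \asymp \delta^k$ with suitable constants), and a change-of-variables from the offset to $M_i \times \mathcal{B}_{\perp}(0,\sigma/2)$ so that the uniform density on $M_i^\sigma$ can be compared pointwise across $i=0,1$ outside the bump collar. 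I would handle this by choosing coordinates in which both $M_0^\sigma$ and $M_1^\sigma$ coincide outside a slab of horizontal radius $\delta$, so that their uniform densities are literally equal there; the affinity deficit is then bounded by the mass in the slab, which is $\asymp \delta^d / (\text{total mass})$, and choosing $\delta^d \asymp (\sigma/n)^{d/(d+k)}$ — equivalently $n\delta^d \asymp$ the right constant after incorporating the $\sigma$-normalization of the uniform density — finishes the argument. The remaining checks (that $M_0^\sigma, M_1^\sigma$ as defined actually arise as supports of admissible $\mathcal{P}^k(\sigma)$ distributions, and that $f_{min}, f_{max}, \mathbf{L}, \tau_{min}$ constraints hold for the stated large-parameter regime) are routine given Lemma~\ref{lem:fourre_tout_geom_model} and the stability Proposition~\ref{statistical_model_stability}.
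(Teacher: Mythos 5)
Your construction and parameter choices coincide with the paper's: flat versus bumped base manifolds with $\Lambda\asymp\delta^k$, uniform distributions on their $\sigma/2$-offsets, and the affinity deficit driven by the volume of the symmetric difference of the offsets, which is $\asymp \Lambda\,\delta^{d}$ times the normal-fiber volume, giving $\norm{P_0^\sigma\wedge P_1^\sigma}_1\geq 1-C\,\frac{\Lambda}{\sigma}\bigl(\frac{\delta}{\tau_{min}}\bigr)^d$ and hence $\delta\asymp(\sigma/n)^{1/(d+k)}$ exactly as in the paper's Lemma C.4/C.8. The only caution is that your final ``mass in the slab'' bound $(\delta/\tau_{min})^d$ alone would be too weak (it loses the crucial factor $\Lambda/\sigma$ from the overlap of the two $\sigma$-thick offsets), so the proof must rest on the symmetric-difference estimate you give in the middle paragraph, and note that $M_0^\sigma,M_1^\sigma$ in the statement are the underlying submanifolds (so $d_H=\Lambda$ exactly), not the offsets.
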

The proof of Lemma \ref{lem:hypotheses_hausdorff_noise} is to be found in Section \ref{subsec:C4}.2.
We are now in position to prove Theorem \ref{thm:lower_bound_hausdorff}.
\begin{proof}[Proof of Theorem \ref{thm:lower_bound_hausdorff}]

Let us apply Theorem \ref{lecam} with $\mathcal{P} = \mathcal{P}^k(\sigma)$, $\theta(P) = M$ and $d = d_H$.
Taking $P = P_0$ and $P' = P_1$ of Lemma \ref{lem:hypotheses_hausdorff_nonoise}, these distributions both belong to $\mathcal{P}^k \subset \mathcal{P}^k(\sigma)$, so that Theorem \ref{lecam} yields
\begin{align*}
\inf_{\hat{M}} 
\sup_{P \in \mathcal{P}^k(\sigma)}
\E_{P^{\otimes n}}
d_H
\bigr(
M,
\hat{M}
\bigr)
&\geq
d_H(M_0,M_1) \norm{P_0 \wedge P_1}_1^n
\\
&\geq
c_{k,d,\tau_{min}}
\left(\frac{1}{n}\right)^{\frac{k}{d}} \times c_0.
\end{align*}
Similarly, setting hypotheses $P = P_0^\sigma$ and $P' = P_1^\sigma$ of Lemma \ref{lem:hypotheses_hausdorff_noise} yields
\begin{align*}
\inf_{\hat{M}} 
\sup_{P \in \mathcal{P}^k(\sigma)}
\E_{P^{\otimes n}}
d_H
\bigr(
M,
\hat{M}
\bigr)
&\geq
d_H(M_0^\sigma,M_1^\sigma) \norm{P_0^\sigma \wedge P_1^\sigma}_1^n
\\
&\geq
c_{k,d,\tau_{min}}
\left(\frac{\sigma}{n}\right)^{\frac{k}{k+d}} \times c_0,
\end{align*}
which concludes the proof.
\end{proof}

\subsubsection{Lower Bounds for Tangent Space and Curvature Estimation}
\label{subsubsec:proofs_lower_bounds_tangent_curvature}

Let us now move to the proof of Theorem \ref{thm:lower_bound_tangent} and \ref{thm:lower_bound_curvature}, that consist of lower bounds for the estimation of $T_{X_1} M$ and $II_{X_1}^M$ with random base point $X_1$. In both cases, the loss can be cast as
\begin{align*}
E_{P^{\otimes n}} ~d( \theta_{X_1}(P) , \hat{\theta} )
&=
\E_{P^{\otimes n-1}} 
\left[ 
E_{P} ~d( \theta_{X_1}(P) , \hat{\theta} )
\right]
\\
&=
\E_{P^{\otimes n-1}} \left[ \norm{d \bigl( \theta_{\cdot}(P) , \hat{\theta}\bigr)}_{L^1(P)} \right]
,
\end{align*}
where $\hat{\theta} = \hat{\theta}(X,X')$, with $X = X_1$ driving the parameter of interest, and $X' = (X_2,\ldots,X_n) = X_{2:n}$.
Since $\| . \|_{L^1(P)}$ obviously depends on $P$, the technique exposed in the previous section does not apply anymore.
However, a slight adaptation of Assouad's Lemma \cite{Yu97} with an extra conditioning on $X = X_1$ carries out for our purpose.
Let us now detail a general framework where the method applies. 

We let $\mathcal{X},\mathcal{X}'$ denote measured spaces.
For a probability distribution $Q$ on $\mathcal{X} \times \mathcal{X}'$, we let $(X,X')$ be a random variable with distribution $Q$. The marginals of $Q$ on $\mathcal{X}$ and $\mathcal{X}'$ are denoted by $\mu$ and $\nu$ respectively.
Let $(\mathcal{D},d)$ be a pseudo-metric space.
For $Q \in \mathcal{Q}$, we let $\theta_\cdot(Q): \mathcal{X} \rightarrow \mathcal{D}$ be defined $\mu$-almost surely, where $\mu$ is the marginal distribution of $Q$ on $\mathcal{X}$.
The parameter of interest is $\theta_X(Q)$, and the associated minimax risk over $\mathcal{Q}$ is
\begin{align}
\label{eqn:minimax_risk_random}
\inf_{\hat{\theta}} 
\sup_{Q \in \mathcal{Q}}
\E_Q
\left[
d
	\bigl(
	\theta_X(Q)
	,
	\hat{\theta}(X,X')
	\bigr)
\right]
,
\end{align}
where the infimum is taken over all the estimators $\hat{\theta} : \mathcal{X} \times \mathcal{X}' \rightarrow \mathcal{D}$.

Given a set of probability distributions $\mathcal{Q}$ on $\mathcal{X}\times\mathcal{X}'$, write $\overline{Conv}(\mathcal{Q})$ for the set of mixture probability distributions with components in $\mathcal{Q}$.
For all $\tau = (\tau_1,\ldots,\tau_m) \in \left\{0,1 \right\}^m$, $\tau^k$ denotes the $m$-tuple that differs from $\tau$ only at the $k$th position.
We are now in position to state the conditional version of Assouad's Lemma that allows to lower bound the minimax risk \eqref{eqn:minimax_risk_random}.

\begin{lem}[Conditional Assouad]
\label{conditional_assouad}
Let $m \geq 1$ be an integer and let 
$\left\{ \mathcal{Q}_\tau\right\}_{\tau\in \{0,1\}^m}$ be a family of $2^m$ submodels $\mathcal{Q}_\tau \subset \mathcal{Q}$.
Let $\left\{U_k \times U'_k\right\}_{1 \leq k \leq m}$ be a family of pairwise disjoint subsets of $\mathcal{X} \times \mathcal{X}'$, and $\mathcal{D}_{\tau,k}$ be subsets of $\mathcal{D}$. Assume that
for all $\tau \in \left\{ 0,1 \right\}^m$ and $1 \leq k \leq m$,
\begin{itemize}
\item for all $Q_\tau \in \mathcal{Q}_\tau$, $\theta_X(Q_\tau) \in \mathcal{D}_{\tau,k}$ on the event $\left\{ X \in U_k \right\}$;
\item for all $\theta \in \mathcal{D}_{\tau,k}$ and $\theta' \in \mathcal{D}_{\tau^k,k}$, $d(\theta,\theta')\geq \Delta$.
\end{itemize}
For all $\tau \in \left\{0,1 \right\}^m$, let $\overline{Q}_\tau \in \overline{Conv} (\mathcal{Q}_\tau)$, and write $\bar{\mu}_\tau$ and $\bar{\nu}_\tau$ for the marginal distributions of $\overline{Q}_\tau$ on $\mathcal{X}$ and $\mathcal{X}'$ respectively. Assume that if $(X,X')$ has distribution $\overline{Q}_\tau$, $X$ and $X'$ are independent conditionally on the event $\left\{(X,X')\in U_k \times U'_k \right\}$, and that
\begin{align*}
\min_{\mathclap{\substack{\tau \in \left\{ 0,1 \right\}^m \\ 1 \leq k \leq m}}}
~~
\left\{
\left( 
\int_{U_k} d \bar{\mu}_\tau \wedge d \bar{\mu}_{\tau^k}
\right)
\left( 
\int_{U'_k} d \bar{\nu}_\tau \wedge d \bar{\nu}_{\tau^k}
\right)
\right\}
\geq
1- \alpha.
\end{align*}
Then,
\begin{align*}
\inf_{\hat{\theta}} 
\sup_{Q \in \mathcal{Q}}
\E_Q
\left[
d
	\bigl( 
	\theta_X(Q)
	,
	\hat{\theta}(X,X')
	\bigr)
\right]
&\geq
m
\frac{\Delta}{2} 
(1- \alpha)
,
\end{align*}
where the infimum is taken over all the estimators $\hat{\theta} : \mathcal{X} \times \mathcal{X}' \rightarrow \mathcal{D}$.
\end{lem}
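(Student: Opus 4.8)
The plan is to mimic the classical reduction of minimax estimation to testing, but carried out fiberwise over the conditioning variable $X$, and then to average. First I would fix an arbitrary estimator $\hat\theta:\mathcal{X}\times\mathcal{X}'\to\mathcal{D}$. For each coordinate $k$ and each $\tau$, the key observation is that on the event $\{X\in U_k\}$ the parameter $\theta_X(Q_\tau)$ lives in $\mathcal{D}_{\tau,k}$, and any point of $\mathcal{D}_{\tau,k}$ is at distance at least $\Delta$ from any point of $\mathcal{D}_{\tau^k,k}$. Hence, by the triangle inequality, for any value of $\hat\theta$ one has
\begin{align*}
d\bigl(\theta_X(Q_\tau),\hat\theta\bigr) + d\bigl(\theta_X(Q_{\tau^k}),\hat\theta\bigr) \geq \Delta
\quad\text{on } \{X\in U_k\}.
\end{align*}
Integrating this against the mixture measures $\overline{Q}_\tau$ and $\overline{Q}_{\tau^k}$ restricted to $U_k\times U_k'$, and using that a supremum over $\mathcal{Q}$ dominates an average over the two mixtures in $\overline{Conv}(\mathcal{Q}_\tau)$, $\overline{Conv}(\mathcal{Q}_{\tau^k})$, gives a lower bound on $2\sup_{Q\in\mathcal{Q}}\E_Q[d(\theta_X(Q),\hat\theta)]$ in terms of $\Delta$ times an affinity-type term on $U_k\times U_k'$.

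The second step is to lower bound that affinity term using the product structure. Because $X$ and $X'$ are independent conditionally on $\{(X,X')\in U_k\times U_k'\}$ under both $\overline{Q}_\tau$ and $\overline{Q}_{\tau^k}$, the restricted joint densities factor, and the standard inequality $\int f\wedge g \cdot \int f'\wedge g' \leq \int (f f')\wedge(g g')$ for nonnegative functions lets me bound
\begin{align*}
\int_{U_k\times U_k'} d\overline{Q}_\tau \wedge d\overline{Q}_{\tau^k}
\geq
\Bigl(\int_{U_k} d\bar\mu_\tau\wedge d\bar\mu_{\tau^k}\Bigr)
\Bigl(\int_{U_k'} d\bar\nu_\tau\wedge d\bar\nu_{\tau^k}\Bigr)
\geq 1-\alpha,
\end{align*}
by the hypothesis. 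This yields, for every fixed $k$,
\begin{align*}
\sup_{Q\in\mathcal{Q}}\E_Q\bigl[d(\theta_X(Q),\hat\theta)\bigr] \geq \frac{\Delta}{2}(1-\alpha).
\end{align*}

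The final step is to gain the factor $m$. Here I would not take the pointwise-in-$k$ bound but rather sum over $k$: choosing $\tau$ and $\tau^k$ adapted to each coordinate and exploiting that the sets $U_k\times U_k'$ are pairwise disjoint, I can write, for a well-chosen averaging over $\tau\in\{0,1\}^m$ (the usual Assouad averaging, pairing each $\tau$ with its neighbors $\tau^k$),
\begin{align*}
2\sup_{Q\in\mathcal{Q}}\E_Q\bigl[d(\theta_X(Q),\hat\theta)\bigr]
\geq
\sum_{k=1}^m \Delta\,(1-\alpha),
\end{align*}
since the contributions from distinct coordinates are supported on disjoint events in $\mathcal{X}\times\mathcal{X}'$ and therefore add without overcounting. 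Dividing by $2$ gives the claimed bound $m\frac{\Delta}{2}(1-\alpha)$, and taking the infimum over $\hat\theta$ concludes.

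I expect the main obstacle to be the bookkeeping in this last step: making precise the averaging over the hypercube $\{0,1\}^m$ so that the disjointness of the $U_k\times U_k'$ genuinely converts the $m$ separate coordinatewise testing lower bounds into a sum, rather than a maximum. One must be careful that the "extra conditioning on $X$" does not break the independence used in step two — that is exactly why the hypothesis demands conditional independence on each $U_k\times U_k'$ separately — and that $\theta_X(Q)$ being defined only $\mu$-almost surely causes no measurability issue, which is handled by working on the events $\{X\in U_k\}$ where the structural assumption on $\mathcal{D}_{\tau,k}$ is available. The triangle inequality and the product-affinity inequality are routine; the genuine content is the disjoint-support averaging that produces the linear-in-$m$ rate.
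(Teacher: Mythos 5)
Your plan is correct and follows essentially the same route as the paper's proof: reduce to a two-point separation on each event $\{X\in U_k\}$ via the distance between $\mathcal{D}_{\tau,k}$ and $\mathcal{D}_{\tau^k,k}$, factorize the affinity of the mixtures over $U_k\times U_k'$ using the conditional independence, and recover the factor $m$ by the standard hypercube averaging $2^{-m}\sum_\tau$ combined with the disjointness of the sets $U_k\times U_k'$. The bookkeeping you flag is handled in the paper exactly as you anticipate, by working with $d(\hat{\theta},\mathcal{D}_{\tau,k})$ (which extends to mixtures by linearity) and summing the $m2^m$ paired terms with the prefactor $2^{-(m+1)}$.
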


Note that for a model of the form $\mathcal{Q}=\left\{ \delta_{x_0} \otimes P, P \in \mathcal{P} \right\}$ with fixed $x_0 \in \mathcal{X}$, one recovers the classical Assouad's Lemma \cite{Yu97} taking $U_k = \mathcal{X}$ and $U'_k=\mathcal{X}'$. Indeed, when $X=x$ is deterministic, the parameter of interest $\theta_X(Q) = \theta(Q)$ can be seen as non-random.

In this section, we will get interested in $\mathcal{Q} = \mathcal{P}^k(\sigma)^{\otimes n}$, and $\theta_{X}(Q) = \theta_{X_1}(Q)$ being alternatively $T_{X_1} M$ and $II_{X_1}^M$.
Similarly to Section \ref{subsubsec:proofs_lower_bounds_hausdorff}, we build two different families of submodels, each of them will exploit a different kind of difficulty for tangent space and curvature estimation.

{\color{black}
The first family, described in Lemma \ref{lem:hypotheses_glissant_nonoise}, highlights the geometric difficulty of the estimation problems, even when the noise level $\sigma$ is small, or even zero.
Let us emphasize that the estimation error is integrated with respect to the distribution of $X_1$.
Hence, considering mixture hypotheses is natural, since building manifolds with different tangent spaces (or curvature) necessarily leads to distributions that are locally singular. Here, as in Section \ref{subsubsec:proofs_lower_bounds_hausdorff}, the considered hypotheses are composed of bumped manifolds (see Figure \ref{fig:bumps_all}). We defer the proof of Lemma \ref{lem:hypotheses_glissant_nonoise} to Section \ref{subsec:C3}.1.
}

\begin{lem}
\label{lem:hypotheses_glissant_nonoise}

Assume that the conditions of Theorem \ref{thm:lower_bound_tangent} or \ref{thm:lower_bound_curvature} hold. 
Given $i \in \{1,2\}$, there exists a family of $2^m$ submodels 
$\bigl\{ \mathcal{P}^{(i)}_{\tau}\bigr\}_{\tau \in \{0,1\}^m} \subset \mathcal{P}^k$,
together with pairwise disjoint subsets $\{U_k \times U_k'\}_{1 \leq k \leq m}$ of $\R^D \times \bigl(\R^D\bigr)^{n-1}$ such that the following holds for all $\tau \in \{0,1\}^m$ and $1 \leq k \leq m$.

For any distribution $P^{(i)}_{\tau} \in \mathcal{P}^{(i)}_{\tau}$ with support $M^{(i)}_{\tau} = Supp\bigl( P^{(i)}_{\tau} \bigr)$, if $\left(X_1,\ldots,X_n\right)$ has distribution $\bigl(P^{(i)}_{\tau}\bigr)^{\otimes n}$, then on the event $\left\{X_1 \in U_k \right\}$, we have:
\begin{itemize}
\item  if $\tau_k = 0$,
\begin{align*}
T_{X_1} {M^{(i)}_{\tau}}  = \R^d \times \left\{0\right\}^{D-d}
\text{\quad , \quad }
\norm{
II_{X_1}^{M^{(i)}_{\tau}}
\circ
\pi_{T_{X_1} {M^{(i)}_{\tau}}}
}_{op} = 0,
\end{align*}
\item if $\tau_k = 1$,
\begin{itemize}
\item for $i = 1$:
$
\displaystyle
\angle \left(T_{X_1} M^{(1)}_{\tau} , \R^d \times \left\{0\right\}^{D-d} \right)
\geq
c_{k,d,\tau_{min}} \left( \frac{1}{n-1}\right)^{\frac{k-1}{d}}
$,
\item for $i = 2$:
$
\displaystyle
\norm{II_{X_1}^{M^{(2)}_{\tau}}
\circ
\pi_{T_{X_1} {M^{(2)}_{\tau}}}
 }_{op} 
\geq 
c_{k,d,\tau_{min}} \left( \frac{1}{n-1}\right)^{\frac{k-2}{d}}
.
$
\end{itemize}
\end{itemize}
Furthermore, there exists $\bar{Q}^{(i)}_{\tau,n} \in \overline{Conv} \bigl(\bigl( \mathcal{P}^{(i)}_{\tau} \bigr)^{\otimes n} \bigr)$
such that if $\left(Z_1,\ldots,Z_n\right) = \left(Z_1,Z_{2:n}\right)$ has distribution $\bar{Q}^{(i)}_{\tau,n}$, $Z_1$ and $Z_{2:n}$ are independent conditionally on the event $\left\{\left(Z_1,Z_{2:n}\right) \in U_k \times U'_k\right\}$.
The marginal distributions of $\bar{Q}^{(i)}_{\tau,n}$ on $\R^D \times \bigl(\R^D\bigr)^{n-1}$ are $\bar{Q}^{(i)}_{\tau,1}$ and $\bar{Q}^{(i)}_{\tau,n-1}$, and we have
\begin{align*}
\int_{U'_k}
d \bar{Q}^{(i)}_{\tau,n-1} \wedge d \bar{Q}^{(i)}_{\tau^k,n-1} 
\geq
c_0
,
\text{ and}
\quad
m \cdot 
\int_{U_k}
d \bar{Q}^{(i)}_{\tau,1} \wedge d \bar{Q}^{(i)}_{\tau^k,1} 
\geq c_d.
\end{align*}

\end{lem}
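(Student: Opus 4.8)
The plan is to realise each family $\{\mathcal{P}^{(i)}_\tau\}_{\tau\in\{0,1\}^m}$ by adding $m$ small, well-separated ``bumps'' to a fixed flat piece of a base manifold, one potential bump per coordinate of $\tau$, and to produce the mixtures $\bar Q^{(i)}_{\tau,n}$ by letting the active bumps \emph{slide} randomly inside their slots; for $i=1$ the profile is tuned so that a bump perturbs the tangent direction, for $i=2$ so that it perturbs the second fundamental form. First I would fix a compact $M_0\in\mathcal{C}^k_{\tau_{min},\mathbf{L}}$ containing, around a point, an exactly flat patch $\mathcal{B}_{\R^d}(0,r_0)\times\{0\}^{D-d}$ with $r_0$ of order $\tau_{min}$; such an $M_0$ exists precisely because $\tau_{min}L_\perp,\dots,\tau_{min}^{k-1}L_k$ and $\tau_{min}^d f_{max}$, $(\tau_{min}^d f_{min})^{-1}$ are assumed large, so there is room to bend $M_0$ back into a closed submanifold far from the patch. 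Then I would fix a $\mathcal{C}^\infty$ profile $\phi\colon\R^d\to\R$ supported in $\mathcal{B}(0,1/2)$ with a nondegenerate maximum at the origin, and for a scale $\delta$, a slot center $c_k$ and an offset $w$ with $\norm{w}\le\delta/8$, let the bump contribute $v\mapsto A\,\phi\!\left((v-c_k-w)/\delta\right)e_{d+1}$ to the parametrization over the patch, with amplitude $A=c_{k,d}\,\delta^{k}$. From $\norm{d^j(\text{bump})}_{op}\asymp A\delta^{-j}$ one checks, for $c_{k,d}$ small and $\delta$ small, that the perturbed graph still satisfies every bound of Definition~\ref{best_model_definition} and keeps reach $\ge\tau_{min}$ (this is where the largeness of the $L_j$'s is spent), while on the tilted annulus $\{1/4\le\norm{(v-c_k-w)/\delta}\le 1/2\}$ the tangent plane makes angle $\asymp A/\delta\asymp\delta^{k-1}$ with $\R^d\times\{0\}^{D-d}$ and, near the bump's center, $\norm{II}_{op}\asymp A/\delta^2\asymp\delta^{k-2}$.

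Next I would set $\delta=c\,(1/(n-1))^{1/d}$ with $c$ small enough that $(n-1)\delta^d$ is a small constant, pack $m\asymp(r_0/\delta)^d\asymp n-1$ pairwise disjoint slots $\mathcal{B}(c_k,\delta)\times\{0\}^{D-d}$ in the patch, and for $\tau\in\{0,1\}^m$ let $\mathcal{P}^{(i)}_\tau$ consist of the distributions with density equal to a fixed $f_0\asymp\tau_{min}^{-d}\in(f_{min},f_{max})$ away from the bumps, locally renormalised on the bumped parts (possible since $\tau_{min}^d f_{max}$, $(\tau_{min}^d f_{min})^{-1}$ are large), supported on the manifold obtained from $M_0$ by inserting in each slot $k$ with $\tau_k=1$ a scale-$\delta$ bump with some offset $w_k$; the locus of interest is the tilted annulus for $i=1$ and the curved center for $i=2$. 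Let $U_k\subset\R^D$ be the chart image of the relevant sub-region of slot $k$, and $U'_k=\{(x_2,\dots,x_n)\colon x_j\notin\mathcal{B}(c_k,\delta)\times\{0\}^{D-d}\ \forall j\}$; the sets $U_k\times U'_k$ are pairwise disjoint. On $\{X_1\in U_k\}$: if $\tau_k=0$ slot $k$ is flat, so $T_{X_1}M^{(i)}_\tau=\R^d\times\{0\}^{D-d}$ and $II_{X_1}=0$; if $\tau_k=1$ the estimates of the previous paragraph give $\angle(T_{X_1}M^{(1)}_\tau,\R^d\times\{0\}^{D-d})\ge c_{k,d,\tau_{min}}(1/(n-1))^{(k-1)/d}$, respectively $\norm{II_{X_1}^{M^{(2)}_\tau}\circ\pi_{T_{X_1}}}_{op}\ge c_{k,d,\tau_{min}}(1/(n-1))^{(k-2)/d}$, which we record as the sets $\mathcal{D}_{\tau,k}$ (so that $\mathcal{D}_{\tau,k}$ and $\mathcal{D}_{\tau^k,k}$ are $\Delta$-separated with $\Delta=c_{k,d,\tau_{min}}\delta^{k-i}$).

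For the mixtures I would define $\bar Q^{(i)}_{\tau,n}$ by drawing the offsets $(w_k)_{k:\tau_k=1}$ independently and uniformly in $\mathcal{B}(0,\delta/8)$ and then $n$ i.i.d.\ points from the resulting element of $\mathcal{P}^{(i)}_\tau$; this lies in $\overline{Conv}\bigl((\mathcal{P}^{(i)}_\tau)^{\otimes n}\bigr)$. Conditionally on $\{(X_1,X_{2:n})\in U_k\times U'_k\}$, the law of $X_1$ is governed only by $w_k$ and the base, while the law of $X_{2:n}$ is governed only by the base and the offsets $w_j$, $j\neq k$ (no $X_j$, $j\ge2$, enters slot $k$); since these ingredients are independent, $X_1\perp X_{2:n}$ conditionally. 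Because $\tau$ and $\tau^k$ differ only in slot $k$, the laws $\bar Q^{(i)}_{\tau,n-1}$ and $\bar Q^{(i)}_{\tau^k,n-1}$ coincide on the event $U'_k$, hence $\int_{U'_k}d\bar Q^{(i)}_{\tau,n-1}\wedge d\bar Q^{(i)}_{\tau^k,n-1}=\bar Q^{(i)}_{\tau,n-1}(U'_k)\ge(1-C_d\delta^d)^{n-1}\ge c_0$ by the choice of $\delta$. Finally, smearing a height-$\delta^k$ bump over its slot makes the $X_1$-marginals under $\tau$ and under $\tau^k$ agree on $U_k$ up to relative error $o(1)$, so that $\int_{U_k}d\bar Q^{(i)}_{\tau,1}\wedge d\bar Q^{(i)}_{\tau^k,1}\ge c\,\delta^d$, whence $m\int_{U_k}d\bar Q^{(i)}_{\tau,1}\wedge d\bar Q^{(i)}_{\tau^k,1}\ge c_d$ since $m\asymp(r_0/\delta)^d$. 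This is exactly the list of properties asserted, and feeding them into Lemma~\ref{conditional_assouad} then yields Theorems~\ref{thm:lower_bound_tangent} and~\ref{thm:lower_bound_curvature} (proof deferred to Section~\ref{subsec:C3}).

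\textbf{Main obstacle.} The crux is to reconcile three tensions at once: on $\{X_1\in U_k\}$ the geometric feature (tilt $\gtrsim\delta^{k-1}$, resp.\ curvature $\gtrsim\delta^{k-2}$) must be genuinely present whenever $\tau_k=1$; the $X_1$-marginals of the two mixtures must nevertheless overlap on $U_k$ to first order, i.e.\ on a set of mass $\asymp\delta^d\asymp1/m$; and the $(n-1)$-marginals must coincide on $U'_k$. Since a sharply bumped graph and the flat graph are mutually singular as subsets of $\R^D$, forcing the $X_1$-marginals to overlap is where the sliding average (together with the local renormalisation of the densities) is essential and delicate, and it dictates the exact geometry of the windows $U_k$, $U'_k$. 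The other bookkeeping point is quantitative: the amplitude $A\asymp\delta^k$ must be large enough to create a detectable perturbation on a region of mass $\asymp1/m$, yet small enough to keep every bumped manifold inside $\mathcal{C}^k_{\tau_{min},\mathbf{L}}$ with reach $\ge\tau_{min}$ and densities in $[f_{min},f_{max}]$ — which is precisely what consumes the four largeness hypotheses on $\tau_{min}L_\perp,\dots,\tau_{min}^{k-1}L_k,(\tau_{min}^d f_{min})^{-1},\tau_{min}^d f_{max}$.
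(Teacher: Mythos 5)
Your overall architecture (flat base patch, $m\asymp n-1$ slots of width $\delta\asymp(1/(n-1))^{1/d}$, bump amplitude $\asymp\delta^{k-i}$, conditional Assouad with $U'_k$ forcing the other points out of the slot, conditional independence, and the $(1-c\delta^d)^{n-1}$ bound on $U'_k$) matches the paper's. But there is a genuine gap at exactly the point you flag as "the crux" and then assert away: the claim that horizontally sliding a profile bump makes the $X_1$-marginals of the two mixtures "agree on $U_k$ up to relative error $o(1)$". In your construction the $\tau_k=0$ hypothesis is the \emph{fixed} flat patch, supported in $\{y=0\}$, while every $\tau_k=1$ manifold, over the region where the feature is present (where $\phi>0$, hence where the tilt or curvature is of the claimed order), lies at height $y\geq cA>0$. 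These supports are disjoint, so if $U_k$ is confined to the feature region the affinity $\int_{U_k}d\bar Q^{(i)}_{\tau,1}\wedge d\bar Q^{(i)}_{\tau^k,1}$ is $0$, not $\gtrsim\delta^d$; horizontal smearing of the bump's position cannot create overlap between a measure on $\{y=0\}$ and a mixture of measures carried by graphs at positive height. The only way your sliding produces overlap is through the portions of the $\tau_k=1$ manifolds that coincide with the flat patch (outside the shifted bump support), but on those portions the tangent space is exactly $\R^d\times\{0\}^{D-d}$ and $II=0$, so enlarging $U_k$ to capture that overlap destroys the separation property required on the event $\{X_1\in U_k\}$ for $\tau_k=1$ (the lemma demands the angle/curvature lower bound for \emph{every} distribution in $\mathcal{P}^{(i)}_\tau$ and every $X_1\in U_k$). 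A secondary issue, fixable but untreated, is that $U_k$ must not depend on the random offset $w$, whereas your "tilted annulus" does, and a profile bump has vanishing gradient at its apex and vanishing Hessian elsewhere, so neither the tilt nor the curvature bound holds uniformly over a fixed window under all slides.

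The paper resolves this tension differently, and this is the key idea you are missing: both hypotheses are randomized \emph{vertically}, not horizontally. For $\tau_k=0$ the bump is a flat plateau $\phi(\cdot/\delta)\Lambda_k e$ with random height $\Lambda_k$ uniform on $[-\Lambda_+,\Lambda_+]$; for $\tau_k=1$ it is the monomial graph $\phi(\cdot/\delta)A_k(x-x_k)_1^i e$ with random amplitude $A_k$ uniform on $[A_-,A_+]$, and the ranges are matched, $\Lambda_+\asymp A_+\delta^i$. Each individual realization is exactly flat (plateau) or exactly tilted/curved by $\geq A_-/2$ on the whole cylinder $U_k$ over the $\delta/2$-ball where $\phi\equiv1$ (the monomial has constant slope for $i=1$ and constant Hessian for $i=2$, so no sub-window selection is needed), while the two mixture marginals on $U_k$ have explicit one-dimensional vertical densities $\mathbbm{1}_{[-\Lambda_+,\Lambda_+]}(y)/(2\,Vol(M_0)\Lambda_+)$ and $\mathbbm{1}_{[A_-x_1^i,A_+x_1^i]}(y)/\bigl(Vol(M_0)(A_+-A_-)x_1^i\bigr)$ whose pointwise minimum integrates to $\gtrsim(\delta/\tau_{min})^d$. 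Without this (or an equivalent mechanism making the two mixtures mutually non-singular on $U_k$ while preserving pointwise separation), your affinity bound on $U_k$ is unproven and, as written, false.
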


\begin{figure}[h]
	\begin{center}
		\subfloat[Flat bump: $\tau_k = 0$.]{
			\includegraphics[width = 0.5\textwidth]{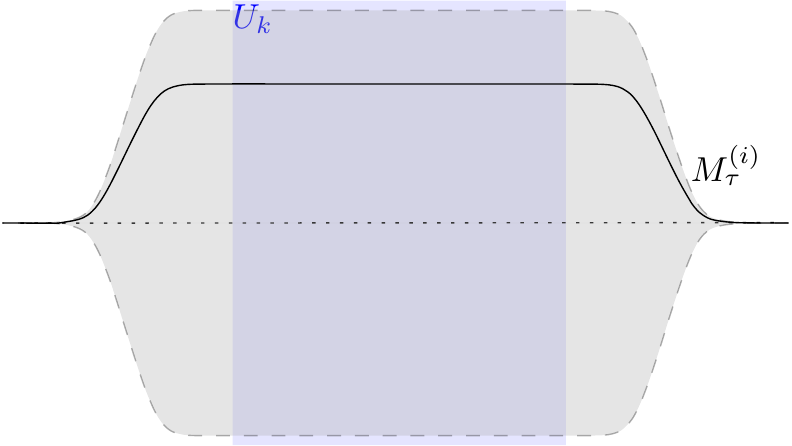}
			\label{subfig:bump_flat}
		}

		\subfloat[Linear bump: $\tau_k = 1$, $i = 1$.]{		
			\includegraphics[width = 0.5\textwidth]{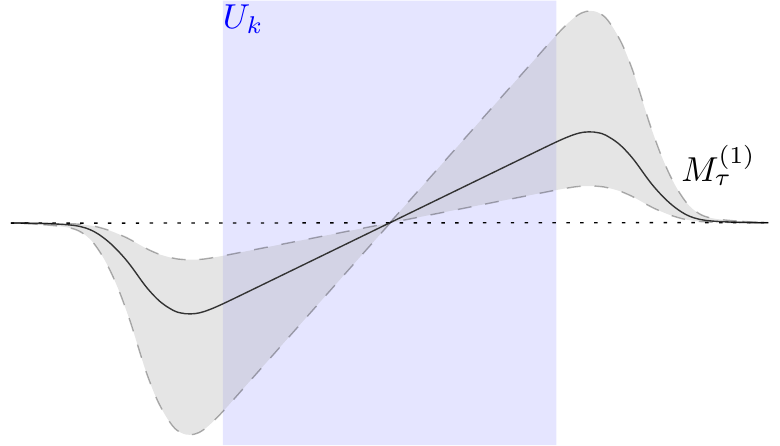}
			\label{subfig:bump_linear}
		}
		\subfloat[Quadratic bump: $\tau_k = 1$, $i = 2$.]{
			\includegraphics[width = 0.5\textwidth]{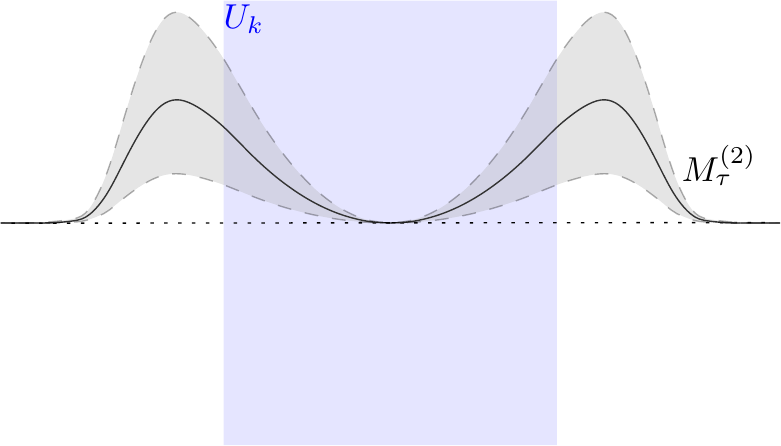}
			\label{subfig:bump_quadratic}
		}

	\end{center}	
	\caption{
		Distributions of Lemma \ref{lem:hypotheses_glissant_nonoise} in the neighborhood of $U_k$ ($1 \leq k \leq m$). Black curves correspond to the support $M_\tau^{(i)}$ of a distribution of $\mathcal{P}_\tau^{(i)} \subset \mathcal{P}^k$. The area shaded in grey depicts the mixture distribution $\bar{Q}_{\tau,1}^{(i)} \in \overline{Conv}\bigl( \mathcal{P}_\tau^{(i)} \bigr)$.
	}
	\label{fig:bumps_all}
\end{figure}

The second family, described in Lemma \ref{lem:hypotheses_glissant_noise}, testifies of the statistical difficulty of the estimation problem when the noise level $\sigma$ is large enough. The construction is very similar to Lemma \ref{lem:hypotheses_glissant_nonoise} (see Figure \ref{fig:bumps_all}).
Though, in this case, the magnitude of the noise drives the statistical difficulty, as opposed to the sampling scale in Lemma \ref{lem:hypotheses_glissant_nonoise}.
Note that in this case, considering mixture distributions is not necessary since the ample-enough noise make bumps that are absolutely continuous with respect to each other.
The proof of Lemma \ref{lem:hypotheses_glissant_noise} can be found in Section \ref{subsec:C3}.2.
\begin{lem}
\label{lem:hypotheses_glissant_noise}
Assume that the conditions of Theorem \ref{thm:lower_bound_tangent} or \ref{thm:lower_bound_curvature} hold, and that $\sigma \geq C_{k,d,\tau_{min}} \left({1}/({n-1)} \right)^{k/d}$ for $C_{k,d,\tau_{min}} > 0$ large enough.
Given $i \in \{1,2\}$, there exists a collection of $2^m$ distributions $\bigl\{\mathbf{P}_\tau^{(i),\sigma}\bigr\}_{\tau \in \{0,1\}^m} \subset \mathcal{P}^k(\sigma)$ with associated submanifolds $\bigl\{M_\tau^{(i),\sigma}\bigr\}_{\tau \in \{0,1\}^m}$, together with pairwise disjoint subsets $\{U^\sigma_k\}_{1 \leq k \leq m}$ of $\R^D$ such that the following holds for all $\tau \in \{0,1\}^m$ and $1 \leq k \leq m$.

If $x \in U_k^\sigma$ and $y = \pi_{M_\tau^{(i),\sigma}} (x)$, we have
\begin{itemize}
\item  if $\tau_k = 0$,
\begin{align*}
T_{y} M_\tau^{(i),\sigma}  = \R^d \times \left\{0\right\}^{D-d}
\text{\quad , \quad }
\norm{
II_{y}^{ M_\tau^{(i),\sigma} }
\circ
\pi_{T_{y} M_\tau^{(i),\sigma}}
}_{op} = 0,
\end{align*}
 
\item if $\tau_k = 1$,
\begin{itemize}
\item for $i = 1$:
$
\displaystyle
\angle 
\left(
T_{y} M_\tau^{(1),\sigma} 
, 
\R^d \times \left\{0\right\}^{D-d} \right) 
\geq c_{k,d,\tau_{min}} \left( \frac{\sigma}{n-1}\right)^{\frac{k-1}{k+d}}$,
\item for $i = 2$:
$
\displaystyle
\norm{
II_{y}^{M_\tau^{(2),\sigma} }
\circ
\pi_{
T_{y} M_\tau^{(2),\sigma} 
}
}_{op} 
\geq c'_{k,d,\tau_{min}} \left( \frac{\sigma}{n-1}\right)^{\frac{k-2}{k+d}}
$. 
\end{itemize}
\end{itemize}
Furthermore,
\begin{align*}
\int_{(\R^D)^{n-1}}
\bigl(\mathbf{P}_{\tau}^{(i),\sigma}\bigr)^{\otimes n-1} 
\wedge 
\bigl(\mathbf{P}_{\tau^k}^{(i),\sigma}\bigr)^{\otimes n-1} 
\geq
c_0,
\text{ and}
\quad
m \cdot \int_{U_k^\sigma}
\mathbf{P}_{\tau}^{(i),\sigma} \wedge \mathbf{P}_{\tau^k}^{(i),\sigma}
&
\geq
c_d.
\end{align*}
\end{lem}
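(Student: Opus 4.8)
The plan is to build, for each bit string $\tau\in\{0,1\}^m$, a slightly bumped copy of a fixed reference manifold, add tubular noise that is uniform on the normal $\sigma$-balls, and tune the common bump width $\delta$ so that (i) the $n$-sample laws are statistically indistinguishable, while (ii) over the prescribed regions the tangent spaces (for $i=1$), resp. the second fundamental forms (for $i=2$), differ by the claimed amount. This is the exact analogue of Lemma \ref{lem:hypotheses_glissant_nonoise}; the only differences are the choice of scale and the fact that no mixture is needed, because ample noise makes the hypotheses mutually absolutely continuous. \textbf{Step 1 (scales and reference).} I would set $\delta = c\,(\sigma/(n-1))^{1/(d+k)}$ for a small constant $c$ to be fixed, and $m = \lfloor c'\delta^{-d}\rfloor$. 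The hypothesis $\sigma\geq C_{k,d,\tau_{min}}(1/(n-1))^{k/d}$ forces $\delta^k\lesssim\sigma$ (the noise swamps the bump height) and $\delta\to 0$, which is precisely what makes the construction licit and places it in the regime where the noisy rate dominates. I start from a fixed $M_0\in\mathcal C^k_{\tau_{min},\mathbf L}$ containing a flat $d$-disk of radius $r_0(\tau_{min})$ and carrying an almost-uniform density with values in $[f_{min},f_{max}]$ — available once $\tau_{min}^{-d}f_{min}$ and $\tau_{min}^d f_{max}$ are large enough — then pick centers $z_1,\dots,z_m$ in that disk with the balls $\mathcal B(z_k,\delta)$ pairwise disjoint.

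\textbf{Step 2 (bumps and noisy laws).} Fix a profile $\phi\in\mathcal C_c^\infty(\mathcal B(0,1),\R)$ with $\|\nabla\phi\|\geq c_0$ on $\mathcal B(0,1/2)$ (case $i=1$) or $\|\nabla^2\phi\|_{op}\geq c_0$ there (case $i=2$), obtained by taking a linear, resp. quadratic, function on $\mathcal B(0,1/2)$ and extending it with a compactly supported plateau cutoff. Define $M_\tau^{(i),\sigma}$ by replacing, over each $\mathcal B(z_k,\delta)$ with $\tau_k=1$, the flat sheet of $M_0$ by the normal graph of $\delta^k\phi((\cdot-z_k)/\delta)$. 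Since $\|\nabla^j[\delta^k\phi(\cdot/\delta)]\|_{op}=\delta^{k-j}\|\nabla^j\phi\|_{op}$, the second-order bound $L_\perp$ holds for $\delta$ small and the higher-order bounds $L_3,\dots,L_k$ hold precisely because $\tau_{min}^{j-1}L_j$ is assumed large; the reach stays $\geq\tau_{min}$ because the perturbation is $\mathcal C^2$-small and localized. Let $\mathbf P_\tau^{(i),\sigma}$ be the law of $X=Y+Z$ with $Y$ almost-uniform on $M_\tau^{(i),\sigma}$ (whose volume, hence the density bounds, is preserved up to $1+O(\delta^{2(k-1)})$) and $Z\mid Y$ uniform on $\mathcal B_{T_YM^\perp}(0,\sigma)$, so that $\E(Z\mid Y)=0$ and $\mathbf P_\tau^{(i),\sigma}\in\mathcal P^k(\sigma)$. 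Put $U_k^\sigma=\{x:\pi_{M_0}(x)\in\mathcal B(z_k,\delta/2),\ d(x,M_0)\leq\sigma/2\}$; since the bump has height $\ll\sigma$, this set is the same for all $\tau$, and its points project under $\pi_{M_\tau^{(i),\sigma}}$ into the part of $M_\tau^{(i),\sigma}$ lying above $\mathcal B(z_k,\delta)$. The dichotomy follows: if $\tau_k=0$ that part is flat, so $T_y=\R^d\times\{0\}^{D-d}$ and $II_y=0$; if $\tau_k=1$ the scaled bump has slope $\asymp\delta^{k-1}$ (resp. Hessian $\asymp\delta^{k-2}$) uniformly there, yielding the separations $\angle(T_y,\R^d\times\{0\}^{D-d})\geq c\,\bigl(\sigma/(n-1)\bigr)^{(k-1)/(d+k)}$ and $\|II_y\circ\pi_{T_y}\|_{op}\geq c'\,\bigl(\sigma/(n-1)\bigr)^{(k-2)/(d+k)}$.

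\textbf{Step 3 (affinities).} For fixed $k$, $\mathbf P_\tau^{(i),\sigma}$ and $\mathbf P_{\tau^k}^{(i),\sigma}$ agree outside the $\sigma$-tube over $\mathcal B(z_k,\delta)$; inside it, describing both tubes in normal coordinates (licit since $\sigma<\tau_{min}$) shows that one density is the normal translate of the other by at most $\delta^k\|\phi\|_\infty$, up to Jacobian factors differing by $O(\delta^{k-2})$. Integrating, the total variation of the difference is $\leq C f_{max}\,\delta^d(\delta^k/\sigma)=Cf_{max}\,\delta^{d+k}/\sigma$, which is $\leq\varepsilon_0/(n-1)$ once $c$ is small in the definition of $\delta$. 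Since squared Hellinger is bounded by total variation, the tensorization bound $\|P^{\otimes(n-1)}\wedge Q^{\otimes(n-1)}\|_1\geq\tfrac12\bigl(1-H^2(P,Q)\bigr)^{2(n-1)}$ gives the $(n-1)$-fold affinity $\geq c_0$. For the one-observation bound, on $U_k^\sigma$ the two densities coincide on a fraction $1-O(\delta^k/\sigma)\geq\tfrac12$ of its mass and $\mathbf P_\tau^{(i),\sigma}(U_k^\sigma)\geq c_d\,\delta^d$ (the $\sigma$-powers from the noise normalization cancel), so $\int_{U_k^\sigma}\mathbf P_\tau^{(i),\sigma}\wedge\mathbf P_{\tau^k}^{(i),\sigma}\geq\tfrac12 c_d\delta^d$ and $m\cdot\tfrac12 c_d\delta^d\geq c_d'$ by $m\asymp\delta^{-d}$.

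\textbf{Main obstacle.} The genuinely delicate point is Step 3: bounding the total variation between the two noisy distributions by $\delta^{d+k}/\sigma$ rather than the cruder $\delta^d$. This requires writing both $\sigma$-tubes in normal coordinates relative to their respective manifolds and quantifying how the tube of a $\mathcal C^k$-bumped sheet deviates from that of the flat sheet — the shift of the base point (order $\delta^k$), the tilt of the normal fibration (order $\delta^{k-1}$), and the resulting Jacobian distortion (order $\delta^{k-2}$) — and checking that only the $\delta^k$-shift contributes to leading order, divided by the fibre radius $\sigma$. Everything else — membership in $\mathcal C^k_{\tau_{min},\mathbf L}$, the density bounds, and the geometric separations — is a direct rescaling of the noise-free construction of Lemma \ref{lem:hypotheses_glissant_nonoise}.
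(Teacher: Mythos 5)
Your proposal is correct and follows essentially the same route as the paper: bumps of height $\asymp\delta^{k}$ at scale $\delta\asymp(\sigma/(n-1))^{1/(d+k)}$ on a flat reference patch, tubular noise of amplitude $\sigma$, separation read off from the slope $\delta^{k-1}$ (resp. Hessian $\delta^{k-2}$), and a per-bump total-variation bound of order $\delta^{d+k}/(\sigma\tau_{min}^d)$ obtained exactly as you describe, with the $\delta^k$ normal shift divided by the fibre radius $\sigma$ as the leading contribution. The only cosmetic differences are that the paper takes the uniform distribution on the $\sigma/2$-offset (so the affinity reduces to a volume comparison of the symmetric difference of the two tubes) rather than fibre-wise uniform noise, and tensorizes via $\|P^{\otimes(n-1)}\wedge Q^{\otimes(n-1)}\|_1\geq(\int P\wedge Q)^{n-1}$ rather than through Hellinger.
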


\begin{proof}[Proof of Theorem \ref{thm:lower_bound_tangent}]

Let us apply Lemma \ref{conditional_assouad} with 
$\mathcal{X} = \R^D$, $
\mathcal{X}' = \bigl(\R^D\bigr)^{n-1}$,
$\mathcal{Q} = \bigl(\mathcal{P}^k(\sigma)\bigr)^{\otimes n}$,
$X = X_1$,
$X' = (X_2,\ldots,X_n) = X_{2:n}$,
$\theta_X(Q) = T_X M$,
and the angle between linear subspaces as the distance $d$.

If $\sigma < C_{k,d,\tau_{min}} \left({1}/({n-1)} \right)^{k/d}$, for $C_{k,d,\tau_{min}} > 0$ defined in Lemma \ref{lem:hypotheses_glissant_noise}, then, applying Lemma \ref{conditional_assouad} to the family $\bigl\{\bar{Q}^{(1)}_{\tau,n}\bigr\}_{\tau }$ together with the disjoint sets $U_k \times U_k'$ of Lemma \ref{lem:hypotheses_glissant_nonoise}, we get
\begin{align*}
\inf_{\hat{T}} 
\sup_{P \in \mathcal{P}^k(\sigma)}
\E_{P^{\otimes n}}
\angle
\bigr(
T_{\pi_M(X_1)} M
&,
\hat{T}
\bigr)
\geq 
m \cdot
c_{k,d,\tau_{min}} \left( \frac{1}{n-1}\right)^{\frac{k-1}{d}}
\cdot
c_0 \cdot c_d
\\
&=
c'_{d,k,\tau_{min}} 
\left\{
\left(\frac{1}{n-1}\right)^{\frac{k-1}{d}}  \right .
\vee
\left .
\left(\frac{\sigma}{n-1}\right)^{\frac{k-1}{d+k}}
\right\},
\end{align*}
where the second line uses that $\sigma < C_{k,d,\tau_{min}} \left({1}/({n-1)} \right)^{k/d}$.

If $\sigma \geq C_{k,d,\tau_{min}} \left({1}/({n-1)} \right)^{k/d}$, then Lemma \ref{lem:hypotheses_glissant_noise} holds, and considering the family $\bigl\{\bigl(\mathbf{P}_\tau^{(1),\sigma}\bigr)^{\otimes n} \bigr\}_{\tau}$, together with the disjoint sets $U_k^\sigma \times \bigl(\R^D\bigr)^{n-1}$, Lemma \ref{conditional_assouad} gives 
\begin{align*}
\inf_{\hat{T}} 
\sup_{P \in \mathcal{P}^k(\sigma)}
\E_{P^{\otimes n}}
\angle
\bigr(
T_{\pi_M(X_1)} M
&,
\hat{T}
\bigr)\geq 
m \cdot
c_{k,d,\tau_{min}} \left( \frac{\sigma}{n-1}\right)^{\frac{k-1}{k+d}}
\cdot
c_0 \cdot c_d
\\
&=
c''_{d,k,\tau_{min}} 
\left\{
\left(\frac{1}{n-1}\right)^{\frac{k-1}{d}}  \right .
\vee
\left .
\left(\frac{\sigma}{n-1}\right)^{\frac{k-1}{d+k}}
\right\},
\end{align*}
hence the result.

\end{proof}

\begin{proof}[Proof of Theorem \ref{thm:lower_bound_curvature}]
The proof follows the exact same lines as that of Theorem \ref{thm:lower_bound_tangent} just above. Namely, consider the same setting with $\theta_{X}(Q) = II_{\pi_M(X)}^M$. If $\sigma \geq C_{k,d,\tau_{min}} \left({1}/({n-1)} \right)^{k/d}$, apply Lemma \ref{conditional_assouad} with the family $\bigl\{\bar{Q}^{(2)}_{\tau,n}\bigr\}_{\tau}$ of Lemma \ref{lem:hypotheses_glissant_nonoise}. If $\sigma > C_{k,d,\tau_{min}} \left({1}/({n-1)} \right)^{k/d}$, Lemma \ref{conditional_assouad} can be applied to $\bigl\{\bigl(\mathbf{P}_\tau^{(2),\sigma}\bigr)^{\otimes n} \bigr\}_{\tau}$ in Lemma \ref{lem:hypotheses_glissant_noise}. This yields the announced rate.
\end{proof}

\section*{Acknowledgements}

We would like to thank Fr\'ed\'eric Chazal and Pascal Massart for their constant encouragements, suggestions and stimulating discussions.
We also thank the anonymous reviewers for valuable comments and suggestions.

\bibliographystyle{imsart-number}
\bibliography{biblio}

\newpage
\setcounter{section}{0}
\setcounter{subsection}{0}
\setcounter{subsubsection}{0}
\renewcommand{\thesection}{\Alph{section}}
\sectionfont{\centering\normalfont\fontsize{19}{22}}
\renewcommand{\thesubsection}{\thesection.\arabic{subsection}}

{\Large
\begin{center}
\textsc{Appendix: Geometric background and proofs of intermediate results}
\end{center}
}
\medskip
\section*{Appendix A: Properties and Stability of the Models}
\label{sec:appendix:model_properties}
\addtocounter{section}{1}
\subsection{Property of the Exponential Map in $\mathcal{C}^2_{\tau_{min}}$}\label{subsec:A1}

Here we show the following Lemma 1, reproduced as Lemma \ref{model_properties_k=2}.

\begin{Alem}\label{model_properties_k=2}
If $M \in \mathcal{C}^2_{\tau_{min}}$,
$\exp_p: \mathcal{B}_{T_pM}\left(0, \tau_{min}/4 \right) \rightarrow M$ is one-to-one. Moreover, it can be written as
\begin{align*}
  \exp_p \colon \mathcal{B}_{T_pM}\left(0, \tau_{min}/4 \right) &\longrightarrow M 
  \\
  v &\longmapsto p + v + \mathbf{N}_p(v)
\end{align*}
with $\mathbf{N}_p$ such that for all $v \in \mathcal{B}_{T_pM}\left(0, \tau_{min} /4 \right)$,
\begin{align*}
\mathbf{N}_p(0)=0, \quad d_0 \mathbf{N}_p = 0 , \quad \norm{d_v \mathbf{N}_p}_{op} \leq L_\perp \norm{v},
\end{align*}
where $L_\perp = 5/(4\tau_{min})$.
Furthermore, for all $p,y \in M $,
\begin{align*}
y-p = \pi_{T_pM}(y-p) + R_2(y-p),
\end{align*}
where $\norm{R_2(y-p)} \leq \frac{\norm{y-p}^2}{2\tau_{min}}$. 
\end{Alem}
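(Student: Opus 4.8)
The plan is to derive the parametrization $\mathbf{N}_p$ from the intrinsic exponential map and to control its first derivative by Jacobi-field estimates, the curvature bounds being supplied by the reach assumption. First I would record the elementary facts: since $\exp_p$ is the Riemannian exponential map, $\exp_p(0)=p$ and $d_0\exp_p=\mathrm{id}_{T_pM}$, so that $\mathbf{N}_p(v):=\exp_p(v)-p-v$ automatically satisfies $\mathbf{N}_p(0)=0$ and $d_0\mathbf{N}_p=0$. The reach hypothesis $\tau_M\ge\tau_{min}$ enters through $\norm{II^M_q}_{op}\le\tau_M^{-1}\le\tau_{min}^{-1}$ for every $q\in M$ \cite[Proposition 6.1]{Niyogi08}, and via the Gauss equation this bounds the sectional curvatures of $M$ by $\tau_{min}^{-2}$ in absolute value (up to a universal factor on the full curvature operator).

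The core step is the estimate $\norm{d_v\mathbf{N}_p}_{op}\le L_\perp\norm{v}$ for $\norm{v}\le\tau_{min}/4$. Fix $v$ and $w\in T_pM$ and let $\gamma(t)=\exp_p(tv)$, $t\in[0,1]$, a constant-speed geodesic with $\norm{\gamma'}=\norm{v}$. By definition of the exponential map, $d_v\exp_p(w)=J(1)$, where $J$ is the Jacobi field along $\gamma$ with $J(0)=0$ and $\tfrac{D}{dt}J(0)=w$; thus $d_v\mathbf{N}_p(w)=J(1)-w$. I would split $J(1)-w=\bigl(J(1)-P_\gamma w\bigr)+\bigl(P_\gamma w-w\bigr)$, with $P_\gamma$ parallel transport along $\gamma$. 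For the second term, a parallel field $W(t)=P_{\gamma,t}w$ satisfies $\tfrac{d}{dt}W=II_{\gamma(t)}(\gamma'(t),W(t))$ as a vector in $\R^D$, so $\norm{\tfrac{d}{dt}W}\le\tau_{min}^{-1}\norm{v}\norm{w}$ and hence $\norm{P_\gamma w-w}\le\tau_{min}^{-1}\norm{v}\norm{w}$. For the first term, integrating the Jacobi equation $\tfrac{D^2}{dt^2}J+R(J,\gamma')\gamma'=0$ twice and applying a Gr\"onwall/Rauch comparison — which needs only the $C^0$ bound on curvature and so is legitimate for $\mathcal{C}^2$ submanifolds — gives $\norm{J(1)-P_\gamma w}\le C\,\tau_{min}^{-2}\norm{v}^2\norm{w}$ with an explicit constant, which the restriction $\norm{v}\le\tau_{min}/4$ reduces to at most $\tfrac14\tau_{min}^{-1}\norm{v}\norm{w}$. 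Adding the two contributions yields $\norm{d_v\mathbf{N}_p(w)}\le\tfrac54\tau_{min}^{-1}\norm{v}\norm{w}$, i.e. $L_\perp=5/(4\tau_{min})$. I expect this to be the main obstacle: not the ideas, but the precise bookkeeping of the constants so that the parallel-transport defect and the Jacobi defect sum to exactly $5/4$ on the ball of radius $\tau_{min}/4$.

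The remaining claims are then short. On the convex ball $\mathcal{B}_{T_pM}(0,\tau_{min}/4)$ one has $\norm{d_v\mathbf{N}_p}_{op}\le L_\perp\cdot\tau_{min}/4=5/16<1$, so $\mathbf{N}_p$ is $(5/16)$-Lipschitz there; consequently $v\mapsto v+\mathbf{N}_p(v)=\exp_p(v)-p$ is a Lipschitz perturbation of the identity, hence injective, proving that $\exp_p$ is one-to-one on that ball. Finally, for the global decomposition $y-p=\pi_{T_pM}(y-p)+R_2(y-p)$ with $\norm{R_2(y-p)}\le\norm{y-p}^2/(2\tau_{min})$, observe that $R_2(y-p)=\pi_{(T_pM)^\perp}(y-p)$ is precisely the distance from $y-p$ to $T_pM$, and the bound $\mathrm{dist}(y-p,T_pM)\le\norm{y-p}^2/(2\tau_M)$ is a classical consequence of positive reach valid for all $p,y\in M$ \cite{federer1959}; alternatively, when $y=\exp_p(v)$ with $\norm{v}$ small it follows from the Taylor formula $\mathbf{N}_p(v)=\int_0^1(1-t)\gamma''(t)\,dt$ together with $\gamma''(t)=II_{\gamma(t)}(\gamma',\gamma')$ and $\norm{\gamma''}\le\norm{v}^2/\tau_{min}$, the remaining range of $\norm{y-p}$ being handled by the trivial bound $\norm{R_2(y-p)}\le\norm{y-p}$.
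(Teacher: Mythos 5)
Your proof is correct and, for the core derivative estimate, follows the same route as the paper: both split $d_v\exp_p-\mathrm{Id}_{T_pM}$ into $(d_v\exp_p-P_t)+(P_t-\mathrm{Id}_{T_pM})$, bound the parallel-transport defect by $\norm{v}/\tau_{min}$ via $\norm{II^M}_{op}\le 1/\tau_{min}$, bound the Jacobi-field defect by $\norm{v}^2$ times the curvature bound $2/\tau_{min}^2$ coming from the Gauss equation (the paper simply cites Lemma 18 of Dyer et al.\ for this step, where you rederive it), and restrict to $\norm{v}\le\tau_{min}/4$ to make the two terms sum to $5/(4\tau_{min})\norm{v}$; the final claim is Federer's Lemma 4.7 in both cases. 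The one genuine divergence is injectivity: the paper deduces it from the injectivity-radius bound $\pi\tau_{min}$ of Alexander--Bishop, whereas you obtain it as a Lipschitz perturbation of the identity, using $\norm{d_v\mathbf{N}_p}_{op}\le 5/16<1$ on the convex ball. Your argument is more elementary and self-contained (the Jacobi-field estimate needs no injectivity, so there is no circularity), at the cost of not yielding the stronger geometric information that the injectivity radius itself is bounded below by $\pi\tau_{min}$.
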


\begin{proof}[Proof of Lemma \ref{model_properties_k=2}]
Proposition 6.1 in \cite{Niyogi08} states that for all $x\in M$, $\norm{II^M_x}_{op} \leq 1/\tau_{min}$. In particular, Gauss equation (\cite[Proposition 3.1 (a), p.135]{DoCarmo92}) yields that the sectional curvatures of $M$ satisfy $-2/\tau_{min}^2 \leq \kappa \leq 1/\tau_{min}^2$. Using Corollary 1.4 of \cite{Alexander06}, we get that the injectivity radius of $M$ is at least $\pi \tau_{min} \geq \tau_{min}/4$. Therefore, $\exp_p: \mathcal{B}_{T_p M} (0,\tau_{min}/4) \rightarrow M$ is one-to-one.

Let us write $\mathbf{N}_p(v) = \exp_p(v) - p - v$. We clearly have $\mathbf{N}_p(0) = 0$ and $d_0 \mathbf{N}_p = 0$. 
Let now $v \in \mathcal{B}_{T_p M}(0,\tau_{min}/4)$ be fixed. We have $d_v \mathbf{N}_p =  d_v \exp_p - Id_{T_p M}$.
For $0 \leq t \leq \norm{v}$, we write $\gamma(t) = \exp_p(tv/\norm{v})$ for the arc-length parametrized geodesic from $p$ to $\exp_p(v)$, and $P_{t}$ for the parallel translation along $\gamma$. From Lemma 18 of \cite{Dyer15},
\begin{align*}
\norm{
d_{t \frac{v}{\norm{v}}} \exp_p
-
P_{t}
}_{op}
\leq
\frac{2}{\tau_{min}^2}
\frac{t^2}{2}
\leq
\frac{t}{4\tau_{min}}.
\end{align*}
We now derive an upper bound for $\norm{ P_{t} - Id_{T_p M}}_{op}$. 
For this, fix two unit vectors $u \in \R^D$ and $w \in T_p M$, and write $g(t) =  \langle P_t(w) - w , u \rangle $. Letting $\bar{\nabla}$ denote the ambient derivative in $\R^D$, by definition of parallel translation,
\begin{align*}
\left\lvert
g'(t)
\right\rvert
&=
\left\lvert
\langle \bar{\nabla}_{\gamma'(t)} P_t(w) - w , u \rangle
\right\rvert
\\
&=
\left\lvert
\langle II^M_{\gamma(t)}\bigl(\gamma'(t),  P_t(w) \bigr) , u \rangle
\right\rvert
\\
&\leq
1/\tau_{min}.
\end{align*}
Since $g(0) = 0$, we get $\norm{ P_{t} - Id_{T_p M}}_{op} \leq t/\tau_{min}$.
Finally, the triangle inequality leads to 
\begin{align*}
\norm{d_v \mathbf{N}_p}_{op}
&=
\norm{d_{v} \exp - Id_{T_p M}}_{op} 
\\
&\leq
\norm{d_v \exp -  P_{\norm{v}}}_{op} 
+
\norm{P_{\norm{v}} - Id_{T_p M}}_{op} 
\\
&\leq
\frac{5 \norm{v} }{4 \tau_{min}}.
\end{align*} 
We conclude with the property of the projection $\pi^\ast = \pi_{T_p M}$. Indeed, defining $R_2(y-p) = (y-p) - \pi^\ast(y-p)$, Lemma 4.7 in \cite{federer1959} gives
\begin{align*}
\norm{R_2(y-p)}
&=
d(y-p,T_p M)
\\
&\leq
\frac{\norm{y-p}^2}{2\tau_{min}}.
\end{align*}

\end{proof}

\subsection{Geometric Properties of the Models $\mathcal{C}^k$}\label{subsec:A2}

\begin{Alem}\label{Alem:fourre_tout_geom_model}
For any $M \in \mathcal{C}^{k}_{\tau_{min},\mathbf{L}}$ and $x\in M$, the following holds.

\begin{enumerate}[(i)]

\item For all $v_1,v_2\in \mathcal{B}_{T_xM}\left(0,\frac{1}{4L_\perp}\right)$,
\begin{align*}
\frac{3}{4}\norm{v_2-v_1} \leq \norm{\Psi_x(v_2)-\Psi_x(v_1)} \leq \frac{5}{4} \norm{v_2-v_1}
.
\end{align*}

\item For all $h \leq \frac{1}{4L_\perp} \wedge \frac{2 \tau_{min}}{5}$, 
\begin{align*}
M \cap \mathcal{B}\left(x,\frac{3h}{5}\right)
\subset
\Psi_x \left( \mathcal{B}_{T_x M} \left(x,h\right) \right)
\subset
M \cap \mathcal{B}\left(x,\frac{5h}{4}\right)
.
\end{align*}

\item For all $h \leq \frac{\tau_{min}}{2}$, 
\begin{align*}
\mathcal{B}_{T_x M}\left(0,\frac{7h}{8}\right) \subset \pi_{T_x M}\left(\mathcal{B}(x,h) \cap M\right).
\end{align*}

\item\label{korderdecomposition}
Denoting by $\pi^\ast = \pi_{T_x M}$ the orthogonal projection onto $T_x M$, for all $x \in M$, there exist multilinear maps $T_2^\ast,\ldots,T_{k-1}^\ast$ from $T_x M$ to $\R^D$, and $R_k$ such that for all $y \in  \mathcal{B}\left(x,\frac{\tau_{min} \wedge L_\perp^{-1}}{4}\right)\cap M$,
\begin{align*}
y-x = \pi^*&(y-x) + T_2^*( \tens{\pi^*(y-x)}{2}) + \hdots + T_{k-1}^*(\tens{\pi^*(y-x)}{k-1}) 
		\\
		&+ R_{k}(y-x),
\end{align*}
with
\begin{align*}
\norm{R_k (y-x)} \leq C \norm{y-x}^{k}
\text{ \quad  and \quad }
\norm{T_i^\ast}_{op} \leq L_i', \text{ for } 2 \leq i \leq k-1,
\end{align*}
where $L'_i$ depends on $d,k,\tau_{min},L_\perp,\ldots,L_i$, and $C$ on $d$, $k$, $\tau_{min}$, $L_\perp$, $\ldots$,  $L_k$. Moreover, for $k\geq 3$, $T_2^\ast = II^M_x$. 

\item For all $x \in M$, $\norm{II^M_x}_{op} \leq 1/\tau_{min}$. In particular, the sectional curvatures of $M$ satisfy 
\begin{align*}
\frac{-2}{\tau_{min}^2}
\leq
\kappa
\leq 
\frac{1}{\tau_{min}^2}
.
\end{align*}

\end{enumerate}

\end{Alem}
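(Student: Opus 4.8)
The plan is to establish the five items in the order (v), (i), (ii)--(iii), (iv), each leaning on the earlier ones, with (iv) carrying essentially all of the work.

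Item (v) costs nothing new: an element of $\mathcal{C}^{k}_{\tau_{min},\mathbf{L}}$ is a compact connected $\mathcal{C}^2$-submanifold with reach at least $\tau_{min}$, so Lemma \ref{model_properties_k=2} gives $\|II_x^M\|_{op}\le 1/\tau_{min}$, and feeding this into the Gauss equation $\kappa(u,v)=\langle II_x^M(u,u),II_x^M(v,v)\rangle-\|II_x^M(u,v)\|^2$ for orthonormal $u,v\in T_xM$ yields $-2/\tau_{min}^2\le\kappa\le 1/\tau_{min}^2$. For (i), since $d_0\mathbf{N}_x=0$ one has $d_v\mathbf{N}_x=\int_0^1 d^2_{tv}\mathbf{N}_x(v,\cdot)\,dt$, hence $\|d_v\mathbf{N}_x\|_{op}\le L_\perp\|v\|\le \tfrac{1}{4}$ on $\mathcal{B}_{T_xM}(0,1/(4L_\perp))$; writing $\Psi_x(v_2)-\Psi_x(v_1)=(v_2-v_1)+\int_0^1 d_{v_1+t(v_2-v_1)}\mathbf{N}_x\,(v_2-v_1)\,dt$ and applying the triangle inequality produces the factors $\tfrac{3}{4}$ and $\tfrac{5}{4}$.

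The right-hand inclusion of (ii) is then just (i) with $v_1=0$ (and $\Psi_x(v)\in M$). For the left-hand inclusion of (ii) and for (iii) I would argue by connectedness: the restriction of $\Psi_x$ (resp. of $\pi_{T_xM}$) to the relevant domain is injective, hence an open map onto $M$ by invariance of domain, and $M\cap\mathcal{B}(x,r)$ is connected for $r<\tau_{min}$ (a standard consequence of $\tau_M\ge\tau_{min}$). On the boundary sphere $\{y:\|y-x\|=h\}$ the quantity measuring "escape" is bounded below: for (ii), $\|\Psi_x(v)-x\|\ge\tfrac{3}{4} h>\tfrac{3}{5} h$ by (i); for (iii), Pythagoras applied to $y-x=\pi_{T_xM}(y-x)+R_2(y-x)$ together with $\|R_2(y-x)\|\le\|y-x\|^2/(2\tau_{min})$ from Lemma \ref{model_properties_k=2} gives $\|\pi_{T_xM}(y-x)\|=\sqrt{h^2-\|R_2(y-x)\|^2}\ge h\sqrt{1-h^2/(4\tau_{min}^2)}>\tfrac{7}{8} h$ as soon as $h\le\tau_{min}/2$. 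In each case the portion of the target ball actually attained is open and closed in it and contains the image of $x$, hence is all of it.

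Item (iv) is where the real work lies, and I expect the quantitative power-series reversion to be the main obstacle. Start from the order-$k$ Taylor expansion $\mathbf{N}_x(v)=\sum_{i=2}^{k-1}A_i(v^{\otimes i})+\rho(v)$ with $A_i:=\tfrac{1}{i!}d_0^i\mathbf{N}_x$ (so $\|A_i\|_{op}\le L_i/i!$, with the convention $L_2:=L_\perp$) and $\|\rho(v)\|\le\tfrac{L_k}{k!}\|v\|^k$. For $y\in\mathcal{B}(x,\tfrac{\tau_{min}\wedge L_\perp^{-1}}{4})\cap M$, item (ii) lets us write $y=\Psi_x(v)$ and item (i) gives $\|v\|\le\tfrac{4}{3}\|y-x\|$, small enough that the derivative bounds on $\mathbf{N}_x$ are available at $v$. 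With $w:=\pi^*(y-x)$ and $\pi^*:=\pi_{T_xM}$ one has $w=v+\pi^*\mathbf{N}_x(v)$, an $O(\|v\|^2)$-perturbation of the identity; inverting it degree by degree (Lagrange reversion) yields $i$-linear maps $B_i$, $1\le i\le k-1$, with $B_1=\mathrm{Id}$, $B_2=-\pi^*A_2$, norms controlled by $\|A_2\|_{op},\dots,\|A_i\|_{op}$, and a remainder bound $\|v-\sum_{i=1}^{k-1}B_i(w^{\otimes i})\|\le C\|w\|^k$. Substituting into $y-x=v+\mathbf{N}_x(v)$ and dropping all terms of $w$-degree $\ge k$ gives $y-x=\pi^*(y-x)+\sum_{i=2}^{k-1}T^*_i(w^{\otimes i})+R_k(y-x)$ with $\|T^*_i\|_{op}\le L'_i$ and $\|R_k(y-x)\|\le C\|w\|^k\le C\|y-x\|^k$; reading off the quadratic coefficient, $T^*_2=(I-\pi^*)\circ A_2$ is the normal-valued Hessian term of $\Psi_x$ at $x$, which one identifies with the second fundamental form $II_x^M$, and in particular $\pi^*\circ T^*_2=0$. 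The points requiring care are the propagation of the dependence of the $L'_i$'s and of $C$ on $(d,k,\tau_{min},L_\perp,\dots,L_i)$ through the reversion, and the quantitative control of the reversion remainder, for which one combines the $\mathcal{C}^k$ bound on $\rho$ with a lower bound $\|w\|\ge\tfrac{1}{2}\|v\|$ valid for $\|v\|$ small.
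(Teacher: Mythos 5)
Your handling of (i), (v) and, in substance, (iv) follows the same route as the paper. Items (i) and (v) are exactly the paper's steps (mean-value bound on $d_v\mathbf{N}_x$ plus the reverse triangle inequality; Proposition 6.1 of \cite{Niyogi08} plus the Gauss equation). For (iv), the paper also treats $G=\pi_{T_xM}\circ\Psi_x$ as a perturbation of the identity: it proves a quantitative inverse-function lemma (via Fa\`a di Bruno) bounding all differentials of $G^{-1}$ up to order $k$, and then Taylor-expands $\Psi_x\circ G^{-1}$ at $0$; your degree-by-degree reversion of $w=v+\pi^*\mathbf{N}_x(v)$ with a remainder bound using $\norm{w}\geq \tfrac{1}{2}\norm{v}$ is the same computation packaged differently, and it propagates the constants in the same way. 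Your identification of the quadratic coefficient as the normal part of the Hessian of $\Psi_x$, then with $II_x^M$ via a curve $t\mapsto\Psi_x(tv)$, is also the paper's argument.

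Where you genuinely diverge is (ii)--(iii). The paper argues quantitatively: $\norm{d_vG-Id_{T_xM}}_{op}\leq 1/4$ makes $G$ a diffeomorphism whose image contains $\mathcal{B}_{T_xM}(0,3h/4)$, Lemma \ref{model_properties_k=2} gives $\pi_{T_xM}\bigl(M\cap\mathcal{B}(x,3h/5)\bigr)\subset\mathcal{B}_{T_xM}(0,3h/4)$, and the two are glued by the injectivity of $\pi_{T_xM}$ on $M\cap\mathcal{B}(x,5h/4)$, imported from Lemma 3 in \cite{Arias13}; item (iii) is quoted directly from that same lemma. Your open-closed argument works verbatim for (iii), where the ambient connected set is the Euclidean ball $\mathcal{B}_{T_xM}(0,7h/8)$, but it needs the same external ingredient (injectivity of $\pi_{T_xM}$ on $M\cap\mathcal{B}(x,h)$), which you assert without proof or citation. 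More importantly, for the left inclusion of (ii) your argument runs inside $M\cap\mathcal{B}(x,3h/5)$ and therefore hinges on the connectedness of $M\cap\mathcal{B}(x,r)$ for $r<\tau_{min}$. That statement is true, but it is not a one-line consequence of $\tau_M\geq\tau_{min}$: one needs, e.g., that the distance to $x$ restricted to $M$ has no critical point $y$ with $0<\norm{y-x}<\tau_M$ (a Federer-type fact), combined with a gradient-flow or first-minimum argument; the paper's projection-based proof is designed precisely so as not to invoke it. Either supply that argument (or a reference), or replace the connectedness step by the paper's injectivity-of-projection gluing; as written, this is the one real gap in the proposal.
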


\begin{proof}[Proof of Lemma \ref{Alem:fourre_tout_geom_model}]
\begin{enumerate}[(i)]

\item Simply notice that from the reverse triangle inequality,
\begin{align*}
\left|
\frac{\norm{\Psi_x(v_2)-\Psi_x(v_1)}}{\norm{v_2-v_1}}
-
1
\right|
&\leq
\frac{\norm{N_x(v_2)-N_x(v_1)}}{\norm{v_2-v_1}}
\leq
L_\perp (\norm{v_1}\vee \norm{v_2})
\leq \frac{1}{4}.
\end{align*}

\item 
The right-hand side inclusion follows straightforwardly from (i). Let us focus on the left-hand side inclusion. For this, consider the map defined by $G =\pi_{T_x M} \circ \Psi_x$ on the domain $\mathcal{B}_{T_x M }\left(0, h\right)$. For all $v \in \mathcal{B}_{T_x M }\left(0, h\right)$, we have
\begin{align*}
\norm{d_v G - Id_{T_x M}}_{op}
&=
\norm{ \pi_{T_x M} \circ d_v \mathbf{N}_x}_{op}
\leq
\norm{ d_v \mathbf{N}_x}_{op}
\leq
L_\perp \norm{v}
\leq \frac{1}{4}<1.
\end{align*}
Hence, $G$ is a diffeomorphism onto its image and it satisfies $\norm{G(v)}\geq {3 \norm{v}}/{4}$. It follows that
\begin{align*}
\mathcal{B}_{T_x M }\left(0, \frac{3h}{4}\right) \subset G\left( \mathcal{B}_{T_x M }\left(0, h\right) \right)
=
\pi_{T_xM} \left( \Psi_x \left( \mathcal{B}_{T_x M }\left(0, h\right) \right) \right)
.
\end{align*}

Now, according to Lemma \ref{model_properties_k=2}, for all $y \in \mathcal{B}\left(x, \frac{3h}{5}\right)\cap M$,
\begin{align*}
\norm{\pi_{T_x M}(y-x)}
& \leq 
\norm{y-x} + \frac{\norm{y-x}^2}{2\tau_{min}}
\leq
\left(1 + \frac{1}{4}\right)\norm{y-x}
\leq
\frac{3h}{4},
\end{align*}
from what we deduce $\pi_{T_x M} \left( \mathcal{B}\left(x,\frac{3h}{5}\right) \cap M \right) \subset \mathcal{B}_{T_x M}\left( 0, \frac{3h}{4}\right)$.
As a consequence, 
\begin{align*}
\pi_{T_x M} \left( \mathcal{B}\left(x,\frac{3h}{5}\right) \cap M \right)
\subset
\pi_{T_xM} \left( \Psi_x \left( \mathcal{B}_{T_x M }\left(0, h\right) \right) \right),
\end{align*}
which yields the announced inclusion since $\pi_{T_x M}$ is one to one on $\mathcal{B}\left(x,\frac{5h}{4}\right) \cap M$ from Lemma 3 in \cite{Arias13}, and 
\begin{align*}
\left(\mathcal{B}\left(x,\frac{3h}{5}\right) \cap M\right) 
\subset 
\Psi_x \left( \mathcal{B}_{T_x M }\left(0, h\right)\right)
\subset 
\mathcal{B}\left(x,\frac{5h}{4}\right) \cap M.
\end{align*}

%

\item Straightforward application of Lemma 3 in \cite{Arias13}.

\item Notice that Lemma \ref{model_properties_k=2} gives the existence of such an expansion for $k=2$. Hence, we can assume $k \geq 3$.
Taking $h = \frac{\tau_{min} \wedge L_\perp^{-1}}{4}$, we showed in the proof of (ii) that the map $G$ is a diffeomorphism onto its image, with $\norm{d_v G - Id_{T_x M}}_{op} \leq \frac{1}{4} < 1$. Additionally, the chain rule yields $\norm{d^i_v G}_{op} \leq \norm{d_v^i \Psi_x}_{op} \leq L_i$ for all $2 \leq i \leq k$. Therefore, from Lemma \ref{lischitz_of_inverse}, the differentials of $G^{-1}$ up to order $k$ are uniformly bounded. As a consequence, we get the announced expansion writing
\begin{align*}
y-x
&=
\Psi_x \circ G^{-1} \left( \pi^\ast (y-x) \right),
\end{align*}
and using the Taylor expansions of order $k$ of $\Psi_x$ and $G^{-1}$. 

Let us now check that $T^\ast_2 = II^M_x$.
Since, by construction, $T_2^\ast$ is the second order term of the Taylor expansion of $\Psi_x \circ G^{-1}$ at zero, a straightforward computation yields
\begin{align*}
T_2^\ast
&=
(I_D - \pi_{T_x M}) \circ d^2_0 \Psi_x
\\
&=
\pi_{T_xM^\perp} \circ d^2_0 \Psi_x.
\end{align*}
Let $v \in T_x M$ be fixed. Letting $\gamma(t) = \Psi_x(tv)$ for $|t|$ small enough, it is clear that $\gamma''(0) = d^2_0 \Psi(\tens{v}{2})$.
Moreover, by definition of the second fundamental form \cite[Proposition 2.1, p.127]{DoCarmo92}, since $\gamma(0) = x$ and $\gamma'(0) = v$, we have
\begin{align*}
II^M_x(\tens{v}{2})
&=
\pi_{T_x M^\perp} (\gamma''(0)).
\end{align*}
Hence 
\begin{align*}
T_2^\ast(\tens{v}{2})
&=
\pi_{T_xM^\perp} \circ d^2_0 \Psi_x(\tens{v}{2})
\\
&=
\pi_{T_x M^\perp} (\gamma''(0))
\\
&=
II^M_x(\tens{v}{2}),
\end{align*}
which concludes the proof.

\item The first statement is a rephrasing of Proposition 6.1 in \cite{Niyogi08}. It yields the bound on sectional curvature, using the Gauss equation \cite[Proposition 3.1 (a), p.135]{DoCarmo92}.

\end{enumerate}

\end{proof}

In the proof of Lemma \ref{Alem:fourre_tout_geom_model} (iv), we used a technical lemma of differential calculus that we now prove. It states quantitatively that if $G$ is $\mathcal{C}^k$-close to the identity map, then it is a diffeomorphism onto its image and the differentials of its inverse $G^{-1}$ are controlled.

\begin{Alem}\label{lischitz_of_inverse}
Let $k\geq 2$ and $U$ be an open subset of $\R^d$. Let $G : U \rightarrow \R^d$ be $\mathcal{C}^{k}$. Assume that $\norm{I_d - d G }_{op} \leq \varepsilon < 1$, and that for all $2 \leq i \leq k$, $\norm{d^i G}_{op} \leq L_i$ for some $L_i >0$.
Then $G$ is a $\mathcal{C}^{k}$-diffeomorphism onto its image, and for all $2 \leq i \leq k$,
\begin{align*}
\norm{I_d - d G^{-1}}_{op} \leq \frac{\varepsilon}{1-\varepsilon}
\text{\quad and \quad}
\norm{d^i G^{-1}}_{op}
\leq
L'_{i,\varepsilon,L_2,\ldots,L_i} < \infty,
\text{ for } 2\leq i \leq k.
\end{align*}
\end{Alem}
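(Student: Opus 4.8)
The plan is to prove the two assertions separately, first the invertibility together with the Lipschitz-type bound on $dG^{-1}$, then the higher-order bounds by induction on $i$.

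For the first part, I would argue that $G$ is injective on $U$ via the mean value inequality applied to $G - \mathrm{id}$: for $x, y \in U$ lying in a convex subset, $\norm{(G(x) - x) - (G(y) - y)} \leq \varepsilon \norm{x - y}$, so $\norm{G(x) - G(y)} \geq (1 - \varepsilon)\norm{x - y} > 0$ when $x \neq y$. (If $U$ is not convex one restricts to balls; since we only use $G^{-1}$ locally this is harmless, but in fact the standard statement is fine as is.) For each $x$, $dG_x = I_d - (I_d - dG_x)$ is invertible with $\norm{(dG_x)^{-1}}_{op} \leq 1/(1-\varepsilon)$ by the Neumann series. Hence by the inverse function theorem $G$ is a $\mathcal{C}^k$-diffeomorphism onto its image, and $d G^{-1}_{G(x)} = (dG_x)^{-1}$. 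Writing $d G^{-1} = (I_d - (I_d - dG))^{-1} = \sum_{m \geq 0}(I_d - dG)^m$, we get $\norm{I_d - dG^{-1}}_{op} = \norm{\sum_{m \geq 1}(I_d - dG)^m}_{op} \leq \sum_{m\geq 1}\varepsilon^m = \varepsilon/(1-\varepsilon)$, which is the claimed bound.

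For the higher-order bounds I would induct on $i$ from $2$ to $k$. Differentiating the identity $dG^{-1}_{G(x)} \circ dG_x = I_d$, or more conveniently applying the Faà di Bruno / chain rule to $G^{-1} \circ G = \mathrm{id}$, one obtains for each $2 \leq i \leq k$ a relation expressing $(d^i G^{-1}) \cdot (dG)^{\otimes i}$ as a universal polynomial (with integer coefficients) in $d G^{-1}, d^2 G^{-1}, \ldots, d^{i-1} G^{-1}$ and $d^2 G, \ldots, d^i G$, all composed appropriately. Concretely, $d^i(G^{-1}\circ G) = 0$ for $i \geq 2$ expands as
\begin{align*}
d^i_x G^{-1}\bigl((d_xG)^{\otimes i}\bigr)
=
- \sum_{\substack{2 \leq j \leq i-1 \\ \text{partitions}}} d^j_{G(x)} G^{-1} \bigl( \text{products of } d^{\bullet}G \bigr),
\end{align*}
where the sum is over the relevant set partitions with at least one block of size $\geq 2$. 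Since $\norm{(dG_x)^{-1}}_{op} \leq 1/(1-\varepsilon)$, one can solve for $d^i G^{-1}$ by precomposing with $(dG_x)^{-1}$ in each of the $i$ arguments, which costs a factor $(1-\varepsilon)^{-i}$. Bounding each term using $\norm{d^j G}_{op} \leq L_j$ ($2 \leq j \leq i$) and the inductively obtained bounds on $\norm{d^j G^{-1}}_{op}$ for $j < i$, together with $\norm{dG^{-1}}_{op} \leq 1 + \varepsilon/(1-\varepsilon) = 1/(1-\varepsilon)$, yields a finite bound $L'_{i,\varepsilon,L_2,\ldots,L_i}$ depending only on $\varepsilon, L_2, \ldots, L_i$ (and the combinatorial constants, which depend only on $d$ and $i$). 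This closes the induction.

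The main obstacle is purely bookkeeping: writing down the higher-order chain rule cleanly enough that the recursion for $\norm{d^i G^{-1}}_{op}$ is transparent, and making sure the composition points ($x$ versus $G(x)$) are tracked correctly so that the operator-norm estimates compose without hidden dependence on anything other than $\varepsilon$ and the $L_j$'s. There is no analytic difficulty — everything is finite-dimensional multilinear algebra and the Neumann series — but the Faà di Bruno expansion is notationally heavy, so I would either cite a standard form of the multivariate chain rule or carry out the first couple of cases ($i = 2, 3$) explicitly and then assert the general pattern, since only $i \leq k$ with $k$ fixed is ever needed.
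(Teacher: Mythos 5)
Your proposal is correct and follows essentially the same route as the paper: the operator-inverse (Neumann series) bound $\norm{I_d-dG^{-1}}_{op}\leq \varepsilon/(1-\varepsilon)$ at first order, then Faà di Bruno applied to the identity composition plus induction on $i$ for the higher-order bounds, the only (cosmetic) difference being that you differentiate $G^{-1}\circ G=\mathrm{id}$ and precompose with $(d_xG)^{-1}$ in each of the $i$ slots, whereas the paper differentiates $G\circ G^{-1}=\mathrm{id}$, isolates the $j=1$ term and inverts $d_xG$ once. One small slip: the sum in your displayed relation should run over $1\leq j\leq i-1$ (all partitions having at least one block of size $\geq 2$), since the term $d_{G(x)}G^{-1}\bigl(d^i_xG\,\cdot\bigr)$ with $j=1$ is present — as your surrounding prose in fact acknowledges — so this does not affect the induction.
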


\begin{proof}[Proof of Lemma \ref{lischitz_of_inverse}]
For all $x\in U$, $\norm{d_x G - I_d}_{op} <1$, so $G$ is one to one, and for all $y=G(x) \in G(U)$,
\begin{align*}
\norm{I_d - d_y G^{-1}}_{op}
&=
\norm{I_d - (d_x G)^{-1}}_{op}
\\
&\leq
\norm{(d_x G)^{-1}}_{op} \norm{I_d- d_x G}_{op}
\\
&\leq
\frac{\norm{I_d- d_x G}_{op}}{1-\norm{I_d- d_x G}_{op}}
\\
&\leq
\frac{\varepsilon}{1-\varepsilon}
.
\end{align*}
For $2\leq  i \leq k$ and $1\leq j \leq i$, write $\Pi_i^{(j)}$ for the set of partitions of $\left\{1,\ldots,i\right\}$ with $j$ blocks.
Differentiating $i$ times the identity $G \circ G^{-1} = Id_{G(U)}$, Faa di Bruno's formula yields that, for all $y = G(x)\in G(U)$ and all unit vectors $h_1,\ldots,h_i \in \R^D$,
\begin{align*}
0 = 
d_y \left( G \circ G^{-1} \right).(h_\alpha)_{1\leq \alpha \leq i}
=
\sum_{j = 1}^i
\sum_{\pi \in \Pi_i^{(j)}}
d_x^{j} G
.
\left(
\left(
d_y^{\left| I \right|} G^{-1}.
\left(
h_\alpha
\right)_{\alpha \in I}
\right)_{I \in \pi}
\right).
\end{align*}
Isolating the term for $j=1$ entails
\begin{align*}
&\norm{
d_x G.
\left(
d_y^i G^{-1}.
\left(h_\alpha\right)_{1 \leq \alpha \leq i}
\right)
}_{op}
\\
&\hspace{1ex}=
\norm{
-
\sum_{j = 2}^i
\sum_{\pi \in \Pi_i^{(j)}}
d_x^{j} G
.
\left(
\left(
d_y^{\left| I \right|} G^{-1}.
\left(
h_\alpha
\right)_{\alpha \in I}
\right)_{I \in \pi}
\right)
}_{op}
\\
&\hspace{1ex}\leq
\sum_{j = 2}^i
\sum_{\pi \in \Pi_i^{(j)}}
\norm{d^{j} G}_{op}
\prod_{I \in \pi}
\norm{
d^{\left| I \right|} G^{-1}
}_{op}
.
\end{align*}
Using the first order Lipschitz bound on $G^{-1}$, we get
\begin{align*}
\norm{
d^i G^{-1}
}_{op}
&\leq
\frac{1+\varepsilon}{1-\varepsilon}
\sum_{j = 2}^i
L_j
\sum_{\pi \in \Pi_i^{(j)}}
\prod_{I \in \pi}
\norm{
d^{\left| I \right|} G^{-1}
}_{op}.
\end{align*}
The result follows by induction on $i$.
\end{proof}

\subsection{Proof of Proposition 1}\label{subsec:A3}

This section is devoted to prove Proposition 1 (reproduced below as Proposition \ref{statistical_model_stability}), that asserts the stability of the model  with respect to ambient diffeomorphisms.

\begin{Aprop}\label{statistical_model_stability}
Let $\Phi : \R^D \rightarrow \R^D$ be a global $\mathcal{C}^k$-diffeomorphism. If
$\norm{d \Phi - I_D}_{op}$
,
$\norm{d^2 \Phi}_{op}$
,
\ldots
,
$\norm{d^k \Phi}_{op}
$ 
are small enough,
then for all $P$ in $\mathcal{P}^{k}_{\tau_{min},\mathbf{L}, f_{min}, f_{max}}$, the pushforward distribution $P' = \Phi_\ast P$ belongs to $\mathcal{P}^{k}_{\tau_{min}/2,2\mathbf{L}, f_{min}/2, 2 f_{max}}$.

Moreover, if $\Phi = \lambda I_D$ ($\lambda>0$) is an homogeneous dilation, then $P' \in \mathcal{P}^{k}_{\lambda \tau_{min},\mathbf{L}_{(\lambda)},f_{min}/\lambda^d,f_{max}/\lambda^d}$, where $\mathbf{L}_{(\lambda)} = (L_\perp/\lambda,L_3/\lambda^2,\ldots,L_k/\lambda^{k-1})$.
\end{Aprop}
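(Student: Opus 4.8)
The statement has three independent pieces, which I would handle in order: the geometric claim that $M' := \Phi(M)$ lies in $\mathcal{C}^{k}_{\tau_{min}/2, 2\mathbf{L}}$, the bound $f_{min}/2 \leq f' \leq 2 f_{max}$ for the density $f'$ of $P' = \Phi_\ast P$ on $M'$, and finally the dilation case, which is elementary rescaling. For the geometry, fix $p \in M$, write $p' = \Phi(p)$ and $T' = d_p\Phi(T_pM)$; since $\Phi$ is a global $\mathcal{C}^k$-diffeomorphism, $M'$ is a compact connected $d$-dimensional submanifold and $T_{p'}M' = T'$. The main point is the reach bound $\tau_{M'} \geq \tau_{min}/2$, for which I would use Federer's characterization $\tau_{M'}^{-1} = \sup_{p'\neq q'} 2\,d(q'-p', T_{p'}M')/\norm{q'-p'}^2$ (\cite[Theorem 4.18]{federer1959}). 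Pairs with $\norm{q'-p'} > \tau_{min}$ contribute less than $2/\tau_{min}$ since $d(q'-p',T_{p'}M') \leq \norm{q'-p'}$; for $\norm{q'-p'} \leq \tau_{min}$, writing $p = \Phi^{-1}(p')$, $q = \Phi^{-1}(q')$ (so $\norm{q-p} \leq 2\tau_{min}$ when $\norm{d\Phi-I_D}_{op}\leq 1/2$), a first–order Taylor expansion gives $q'-p' = d_p\Phi(q-p) + E$ with $\norm{E} \leq \tfrac12\sup\norm{d^2\Phi}_{op}\norm{q-p}^2$, hence $\norm{q'-p'} \geq (1-\varepsilon_1)\norm{q-p}$ with $\varepsilon_1 = \norm{d\Phi-I_D}_{op} + \tau_{min}\sup\norm{d^2\Phi}_{op}$; splitting $q-p = \pi_{T_pM}(q-p) + R_2(q-p)$ as in Lemma \ref{model_properties_k=2} and using $d_p\Phi(\pi_{T_pM}(q-p)) \in T'$ and $\norm{R_2(q-p)} \leq \norm{q-p}^2/(2\tau_{min})$ yields $\norm{\pi_{T'^\perp}(q'-p')} \leq (\tfrac{1+\norm{d\Phi-I_D}_{op}}{2\tau_{min}} + \tfrac{\sup\norm{d^2\Phi}_{op}}{2})\norm{q-p}^2$. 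Combining, each term of the supremum is at most $(\tfrac{1+\norm{d\Phi-I_D}_{op}}{\tau_{min}} + \sup\norm{d^2\Phi}_{op})/(1-\varepsilon_1)^2 \leq 2/\tau_{min}$ once the derivative bounds on $\Phi$ are small enough (in terms of $\tau_{min}$).

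For the parametrizations, I would reparametrize $\Phi\circ\Psi_p$ over $T' = T_{p'}M'$. The map $v'(w) = \pi_{T'}(\Phi(\Psi_p(w)) - p')$ sends a ball of $T_pM$ into $T'$, with $v'(0) = 0$ and $d_0 v' = \pi_{T'}\circ d_p\Phi|_{T_pM}$ a linear isomorphism onto $T'$ that is $\mathcal{C}^k$-close to an isometry; Lemma \ref{lischitz_of_inverse} makes it a $\mathcal{C}^k$-diffeomorphism with inverse $w(\cdot)$ defined on $\mathcal{B}_{T'}(0,1/(8L_\perp))$ and with uniformly bounded differentials up to order $k$. Setting $\Psi'_{p'}(v') = p' + v' + \mathbf{N}'_{p'}(v')$ with $\mathbf{N}'_{p'}(v') = \pi_{T'^\perp}(\Phi(\Psi_p(w(v'))) - p')$, one gets $\mathbf{N}'_{p'}(0) = 0$ and $d_0\mathbf{N}'_{p'} = 0$ because $\pi_{T'^\perp}\circ d_p\Phi|_{T_pM} = 0$ (as $d_p\Phi(T_pM) = T'$). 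Expanding $\pi_{T'^\perp}\circ\Phi\circ\Psi_p\circ w$ by the Faa di Bruno formula, the top-order term again benefits from this cancellation, so the leading contribution to $\norm{d^2_{v'}\mathbf{N}'_{p'}}_{op}$ is $(1+O(\varepsilon))L_\perp$ and to $\norm{d^i_{v'}\mathbf{N}'_{p'}}_{op}$ ($3 \leq i \leq k$) is $(1+O(\varepsilon))L_i$, the remaining terms carrying a factor $\sup_{2\leq j\leq k}\norm{d^j\Phi}_{op}$ multiplied by products of $L_\perp, L_3,\ldots,L_i$ and of the inverse-differential bounds of Lemma \ref{lischitz_of_inverse}. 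Hence $\norm{d^2\mathbf{N}'_{p'}}_{op} \leq 2L_\perp$ and $\norm{d^i\mathbf{N}'_{p'}}_{op} \leq 2L_i$ for $\norm{d\Phi-I_D}_{op}, \norm{d^2\Phi}_{op},\ldots,\norm{d^k\Phi}_{op}$ small enough (in terms of $d,k,L_\perp,\ldots,L_k$), and with the reach bound this gives $M' \in \mathcal{C}^{k}_{\tau_{min}/2, 2\mathbf{L}}$.

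Since $\Phi|_M : M \to M'$ is a $\mathcal{C}^k$-diffeomorphism, the area formula for $\mathcal{H}^d$ shows $P' = \Phi_\ast P$ has density $f' = (f/J)\circ(\Phi|_M)^{-1}$ on $M'$, where $J(p)$ is the product of the singular values of $d_p\Phi|_{T_pM} : T_pM \to T_{p'}M'$; each lies in $[1-\varepsilon_0, 1+\varepsilon_0]$ with $\varepsilon_0 = \sup\norm{d\Phi - I_D}_{op}$, so $J \in [(1-\varepsilon_0)^d, (1+\varepsilon_0)^d]$ and $f_{min}/(1+\varepsilon_0)^d \leq f' \leq f_{max}/(1-\varepsilon_0)^d \subset [f_{min}/2, 2f_{max}]$ for $\varepsilon_0$ small (in terms of $d$). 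For $\Phi = \lambda I_D$, $M' = \lambda M$ has $\tau_{M'} = \lambda\tau_M \geq \lambda\tau_{min}$; from $\Psi_p$ one gets the parametrization $w \mapsto \lambda p + w + \lambda\mathbf{N}_p(w/\lambda)$ of $M'$ over $\lambda T_pM = T_{\lambda p}M'$ on a ball of radius $\lambda r \geq 1/(4(L_\perp/\lambda))$, whose normal part $\mathbf{N}'_{\lambda p}(w) = \lambda\mathbf{N}_p(w/\lambda)$ satisfies $d^i_w\mathbf{N}'_{\lambda p} = \lambda^{1-i}d^i_{w/\lambda}\mathbf{N}_p$, so $\norm{d^2\mathbf{N}'}_{op} \leq L_\perp/\lambda$ and $\norm{d^i\mathbf{N}'}_{op} \leq L_i/\lambda^{i-1}$; finally $\mathcal{H}^d(\lambda A) = \lambda^d\mathcal{H}^d(A)$ gives density $f(\cdot/\lambda)/\lambda^d \in [f_{min}/\lambda^d, f_{max}/\lambda^d]$, which is exactly membership in $\mathcal{P}^{k}_{\lambda\tau_{min},\mathbf{L}_{(\lambda)},f_{min}/\lambda^d,f_{max}/\lambda^d}$.

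The conceptual crux is the reach bound $\tau_{M'} \geq \tau_{min}/2$: unlike the local parametrization estimates it is a genuinely global statement, and the argument must turn the first- and second-order control on $\Phi$ into a uniform bottleneck estimate, for which Federer's characterization of the reach is the natural tool. The bounds on the $\mathbf{N}'_{p'}$'s are more tedious than deep — careful bookkeeping in the Faa di Bruno and inverse-function estimates of Lemma \ref{lischitz_of_inverse} — and the density and dilation parts are routine.
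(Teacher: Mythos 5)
Your reach bound (a direct re-derivation via Federer's characterization, where the paper simply invokes the $\mathcal{C}^2$-stability of the reach, Theorem 4.19 of \cite{federer1959}), your density computation, and the dilation case are all fine. The genuine gap is in the differential bounds for your new parametrization. You force the normal part to be genuinely normal: you invert $G(w)=\pi_{T'}\bigl(\Phi(\Psi_p(w))-p'\bigr)$ and set $\mathbf{N}'_{p'}=\pi_{T'^\perp}\circ\Phi\circ\Psi_p\circ G^{-1}$. But the model does \emph{not} require $\mathbf{N}_p$ to be normal-valued (and for $\Psi_p=\exp_p$ it is not), so $G$ differs from the identity by a genuinely nonlinear map whose higher differentials are of size $L_\perp, L_3,\dots$ — they do \emph{not} shrink as $\Phi\to I_D$. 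Consequently your claim that ``the remaining terms carry a factor $\sup_{2\leq j\leq k}\norm{d^j\Phi}_{op}$'' is false: already with $\Phi=I_D$, Faa di Bruno gives
\begin{align*}
d^3\mathbf{N}'_{p'}=\pi_{T'^\perp}\Bigl[d^3\mathbf{N}_p\bigl(dG^{-1},dG^{-1},dG^{-1}\bigr)+3\,d^2\mathbf{N}_p\bigl(d^2G^{-1},dG^{-1}\bigr)+d\mathbf{N}_p\bigl(d^3G^{-1}\bigr)\Bigr],
\end{align*}
and since $\norm{d^2G^{-1}}_{op}$ is of order $L_\perp$ (because $d^2G=\pi_{T'}d^2\mathbf{N}_p$ need not be small), the middle term is of order $L_\perp^2$ with no small prefactor. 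Hence your argument yields at best $\norm{d^3\mathbf{N}'_{p'}}_{op}\lesssim L_3+L_\perp^2$, which cannot be bounded by $2L_3$ when $L_3\ll L_\perp^2$ — and no smallness assumption on $\Phi$ repairs this, since these terms are already present at $\Phi=I_D$. The same pollution by products of lower-order constants occurs at every order $i\geq 3$. (Your order-$2$ bound survives only because of the restriction to the smaller ball, and even there the constants are tight.)

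The paper sidesteps this entirely by exploiting that $\mathbf{N}'$ need not be normal: it takes $\Psi'_{p'}(v')=\Phi\bigl(\Psi_p(d_{p'}\Phi^{-1}\,v')\bigr)$, i.e.\ reparametrizes by the \emph{linear} map $d_{p'}\Phi^{-1}$ restricted to $T_{p'}M'$. Then every chain-rule term either is $d^i\mathbf{N}_p$ composed with a near-isometry (giving $(1+O(\varepsilon))L_i$) or carries a factor $\norm{d^j\Phi}_{op}$, $j\geq2$, or $\norm{d\Phi-I_D}_{op}$, so the factor-$2$ degradation follows for $\Phi$ close enough to the identity. To close your gap, either adopt that linear reparametrization, or weaken the conclusion to membership in $\mathcal{P}^k_{\tau_{min}/2,\mathbf{L}'}$ with $\mathbf{L}'$ depending polynomially on $\mathbf{L}$ — but that is not the statement to be proved.
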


\begin{proof}[Proof of Proposition \ref{statistical_model_stability}]
The second part is straightforward since the dilation $\lambda M$ has reach $\tau_{\lambda M} = \lambda \tau_{M}$, and can be parametrized locally by $\tilde{\Psi}_{\lambda p} (v) = \lambda \Psi_p(v/\lambda) = \lambda p + v + \lambda \mathbf{N}_p(v/\lambda)$, yielding the differential bounds $\mathbf{L}_{(\lambda)}$. 
Bounds on the density follow from homogeneity of the $d$-dimensional Hausdorff measure.

The first part follows combining Proposition \ref{best_model_stability} and Lemma \ref{change_of_variable_hausdorff}.
\end{proof}

Proposition \ref{best_model_stability} asserts the stability of the geometric model, that is, the reach bound and the existence of a smooth parametrization when a submanifold is perturbed.

\begin{Aprop}\label{best_model_stability}
Let $\Phi : \R^D \rightarrow \R^D$ be a global $\mathcal{C}^k$-diffeomorphism. If
$\norm{d \Phi - I_D}_{op}$
,
$\norm{d^2 \Phi}_{op}$
,
\ldots
,
$\norm{d^k \Phi}_{op}
$ 
are small enough,
then for all $M$ in $\mathcal{C}^{k}_{\tau_{min},\mathbf{L}}$, the image $M' = \Phi\left(M\right)$ belongs to $\mathcal{C}^{k}_{\tau_{min}/2,2L_{\perp},2L_3,\ldots,2 L_k}$.
\end{Aprop}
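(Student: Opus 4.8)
\textbf{Proof strategy for Proposition \ref{best_model_stability}.}

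The plan is to verify the two defining properties of the class $\mathcal{C}^k_{\tau_{min}/2,2L_\perp,2L_3,\ldots,2L_k}$ for $M' = \Phi(M)$: first the reach bound $\tau_{M'} \geq \tau_{min}/2$, and second the existence of local parametrizations $\Psi'_{p'}$ of the required form with the prescribed differential bounds. Throughout, I will treat $\norm{d\Phi - I_D}_{op}, \norm{d^2\Phi}_{op}, \ldots, \norm{d^k\Phi}_{op}$ as parameters that can be chosen as small as needed (depending only on $d$, $k$, $\tau_{min}$, $\mathbf{L}$), and track how each estimate degrades under the perturbation.

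First I would handle the reach. Since $\Phi$ is a $\mathcal{C}^k$-diffeomorphism close to the identity in $\mathcal{C}^1$, it is bi-Lipschitz with constants close to $1$, and $d\Phi$ is close to an isometry, so tangent spaces are nearly preserved. The cleanest route is to use the characterization of the reach via the inequality $d(y - x, T_x M) \leq \norm{y-x}^2/(2\tau_M)$ for all $x, y \in M$ (which, by a result of Federer, is equivalent to $\tau_M \geq \tau_{min}$ up to constants; this is essentially the content of Lemma \ref{model_properties_k=2} combined with standard reach estimates). Writing $y' = \Phi(y)$, $x' = \Phi(x)$, one expands $y' - x' = d_x\Phi(y-x) + O(\norm{y-x}^2)$ using the $\mathcal{C}^2$ bound on $\Phi$, and compares $T_{x'}M' = d_x\Phi(T_x M)$ with $d_x\Phi(T_x M)$; controlling the distance of $y' - x'$ to $T_{x'}M'$ reduces to the corresponding bound for $M$ plus error terms of size $\norm{d\Phi - I_D}_{op}$ and $\norm{d^2\Phi}_{op}$ times $\norm{y-x}^2$. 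Taking the perturbations small enough degrades the constant $1/(2\tau_{min})$ to at most $1/\tau_{min}$, yielding $\tau_{M'} \geq \tau_{min}/2$.

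Next, the parametrization. For $p \in M$ let $\Psi_p : \mathcal{B}_{T_pM}(0,r) \to M$ be the given parametrization with $r \geq 1/(4L_\perp)$, and set $p' = \Phi(p)$. The natural candidate is $\widetilde{\Psi} = \Phi \circ \Psi_p$, which maps into $M'$; I would then post-compose with the linear reparametrization that identifies $T_pM$ with $T_{p'}M' = d_p\Phi(T_pM)$ and subtracts off the affine part, i.e. define $\Psi'_{p'}(w) = \widetilde{\Psi}(\ell^{-1}(w))$ where $\ell : T_pM \to T_{p'}M'$ is the restriction of $d_p\Phi$ followed by the orthogonal projection $\pi_{T_{p'}M'}$, which is invertible with $\norm{\ell - (\text{isometry})}$ small. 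One checks $\Psi'_{p'}(0) = p'$, $d_0\Psi'_{p'} = \mathrm{id}_{T_{p'}M'}$ after this normalization, so $\mathbf{N}'_{p'}(w) = \Psi'_{p'}(w) - p' - w$ satisfies $\mathbf{N}'_{p'}(0) = 0$ and $d_0\mathbf{N}'_{p'} = 0$. The higher differential bounds follow from the chain rule (Faà di Bruno): $d^i(\Phi \circ \Psi_p)$ is a universal polynomial in $d^j\Phi$ ($j \leq i$) and $d^j\Psi_p$ ($j \leq i$), so $\norm{d^i_v \mathbf{N}'_{p'}}_{op}$ is bounded by $\norm{d^i_v\mathbf{N}_p}_{op}$ plus terms each carrying a factor $\norm{d^j\Phi - I_D}_{op}$ (for $j=1$) or $\norm{d^j\Phi}_{op}$ (for $j \geq 2$); the composition with $\ell^{-1}$ contributes only bounded linear factors. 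Hence $\norm{d^2_v\mathbf{N}'_{p'}}_{op} \leq 2L_\perp$ and $\norm{d^i_v\mathbf{N}'_{p'}}_{op} \leq 2L_i$ for $3 \leq i \leq k$ once the perturbations are small enough, and the domain radius shrinks by at most a bounded factor so that $r' \geq 1/(4 \cdot 2L_\perp)$ holds (possibly after also invoking the reach bound $\tau_{M'} \geq \tau_{min}/2$ to guarantee $\Psi'_{p'}$ is well-defined and one-to-one on a ball of that radius, via Lemma \ref{lem:fourre_tout_geom_model}).

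The main obstacle I anticipate is the bookkeeping around the two reparametrizations: $\Psi_p^{-1}$ need not be exactly the tangent projection, and $d_p\Phi$ does not preserve tangent spaces, so one must compose with $\ell^{-1}$ and argue that $\ell$ is invertible with well-controlled inverse (this is where Lemma \ref{lischitz_of_inverse} is useful) while simultaneously ensuring the domain does not shrink below $1/(8L_\perp)$. None of this is deep, but it requires care to avoid circularity — in particular one should establish the reach bound first, since the validity and injectivity of the candidate parametrization on the claimed ball uses it.
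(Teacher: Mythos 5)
Your proposal is correct and takes essentially the same route as the paper: the candidate parametrization is $\Phi\circ\Psi_p\circ\ell^{-1}$ with $\ell$ the restriction of $d_p\Phi$ to $T_pM$ (your post-composition with $\pi_{T_{p'}M'}$ is redundant since $d_p\Phi(T_pM)=T_{p'}M'$, so your $\ell^{-1}$ is exactly the paper's $d_{p'}\Phi^{-1}$ restricted to $T_{p'}M'$), followed by the same chain-rule/Faà di Bruno bookkeeping for $\mathbf{N}'_{p'}$ and the domain radius $1/(8L_\perp)$. The only difference is cosmetic: for the reach the paper directly cites Federer's Theorem 4.19, whereas you re-derive that estimate from the characterization $d(y-x,T_xM)\leq \norm{y-x}^2/(2\tau_M)$, which amounts to the same quantitative argument.
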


\begin{proof}[Proof of Proposition \ref{best_model_stability}]
To bound $\tau_{M'}$ from below, we use the stability of the reach with respect to $\mathcal{C}^2$ diffeomorphisms. Namely, from Theorem 4.19 in \cite{federer1959},
\begin{align*}
\tau_{M'}
= \tau_{\Phi(M)}
&\geq
\frac{(1-\norm{I_D - d \Phi}_{op})^2}{\frac{1+\norm{I_D - d \Phi}_{op}}{\tau_{M}} + \norm{d^2 \Phi}_{op}}
\\
&\geq
\tau_{min}
\frac{(1-\norm{I_D - d \Phi}_{op})^2}{1+\norm{I_D - d \Phi}_{op} + \tau_{min} \norm{d^2 \Phi}_{op}}
\geq
\frac{\tau_{min}}{2}
\end{align*}
for $\norm{I_D - d \Phi}_{op}$ and $\norm{d^2 \Phi}_{op}$ small enough.
This shows the stability for $k=2$, as well as that of the reach assumption for $k\geq 3$. 

By now, take $k\geq 3$.
We focus on the existence of a good parametrization of $M'$ around a fixed point $p' = \Phi(p) \in M'$. For $v' \in T_{p'} M' = d_p \Phi \left( T_p M \right)$, let us define 
\begin{align*}
\Psi'_{p'}(v')
&=
\Phi \left( \Psi_p\left( d_{p'} \Phi^{-1}.v' \right) \right)
\\
&=
p' + v' + \mathbf{N}'_{p'}(v')
,
\end{align*}
where $ \mathbf{N}'_{p'}(v') = \left\lbrace \Phi \left( \Psi_p\left( d_{p'} \Phi^{-1}.v' \right) \right) - p' -v' \right\rbrace$.
\begin{center}

\begin{tikzcd}
M	\arrow{rr}{\Phi}& & M' \\
T_p M \arrow{u}{\Psi_p}	\arrow{rr}[swap]{d_p \Phi}& & T_{p'} M' \arrow{u}[swap]{\Psi'_{p'}}
\end{tikzcd}

\end{center}
The maps $\Psi'_{p'}(v')$ and $\mathbf{N}'_{p'}(v')$ are well defined whenever $\norm{d_{p'} \Phi^{-1}.v'} \leq \frac{1}{4 L_\perp}$, so in particular if $\norm{v'} \leq \frac{1}{4 \left(2 L_\perp\right)} \leq \frac{1- \norm{I_D - d \Phi}_{op}}{4L_\perp}$ and $\norm{I_D - d\Phi }_{op} \leq \frac{1}{2}$.
One easily checks that $\mathbf{N}'_{p'}(0) = 0$, $d_0 \mathbf{N}'_{p'} = 0$ and writing $c(v') = p +d_{p'} \Phi^{-1} . v' + \mathbf{N}_{p'}\left( d_{p'} \Phi^{-1} . v'\right)$, for all unit vector $w' \in T_{p'} M'$,

\begin{align*}
\norm{d^2_{v'} \mathbf{N}'_{p'}(\tens{w'}{2})}
&=
\Bigl\Vert
d^2_{c(v')} \Phi 
\left(
\tens{
\left\{
d_{d_{p'} \Phi^{-1} . v'} {\Psi}_{p} \circ d_{p'} \Phi^{-1}. w'
\right\}
}{2}
\right)
\\
& \hspace{2em} +
d_{c(v')} \Phi 
\circ
d^2_{d_{p'} \Phi^{-1} . v'} {\Psi}_{p}
\left(
\tens{
\left\{
d_{p'} \Phi^{-1}. w'
\right\}
}{2}
\right)
\Bigr\Vert
\\
&=
\Bigl\Vert
d^2_{c(v')} \Phi 
\left(
\tens{
\left\{
d_{d_{p'} \Phi^{-1} . v'} {\Psi}_{p} \circ d_{p'} \Phi^{-1}. w'
\right\}
}{2}
\right)
\\
& \hspace{2em} +
\left( d_{c(v')} \Phi -Id \right)
\circ
d^2_{d_{p'} \Phi^{-1} . v'} {\Psi}_{p}
\left(
\tens{
\left\{
d_{p'} \Phi^{-1}. w'
\right\}
}{2}
\right)
\\
& \hspace{2em} +
d^2_{d_{p'} \Phi^{-1} . v'} {\Psi}_{p}
\left(
\tens{
\left\{
d_{p'} \Phi^{-1}. w'
\right\}
}{2}
\right)
\Bigr\Vert
\\
&\leq
\norm{d^2 \Phi}_{op} \left(1 + L_\perp\norm{d_{p'} \Phi^{-1}.v'}\right)^2\norm{d_{p'} \Phi^{-1}.w'}^2
\\
& \hspace{2em} +
\norm{I_D-d \Phi}_{op} L_\perp \norm{d_{p'} \Phi^{-1}.w'}^2
\\
& \hspace{2em} +
L_\perp \norm{d_{p'} \Phi^{-1}.w'}^2
\\
&\leq
\norm{d^2 \Phi}_{op} (1 + 1/4)^2\norm{d_{p'} \Phi^{-1}}_{op}^2
\\
& \hspace{2em} +
\norm{I_D-d \Phi}_{op} L_\perp \norm{d \Phi^{-1}}_{op}^2
\\
& \hspace{2em} +
L_\perp \norm{d_{p'} \Phi^{-1}}_{op}^2.
\end{align*}
Writing further $\norm{d \Phi^{-1}}_{op} \leq (1 - \norm{I_D-d\Phi}_{op})^{-1} \leq 1+2 \norm{I_D-\Phi}_{op}$ for $ \norm{I_D-d\Phi}_{op}$ small enough depending only on $L_\perp$, it is clear that the right-hand side of the latter inequality goes below $2 L_\perp$ for $\norm{ I_D -d \Phi}_{op}$ and $\norm{d^2 \Phi}_{op}$ small enough.
Hence, for $\norm{I_D -d \Phi}_{op}$ and $\norm{d^2 \Phi}_{op}$ small enough depending only on $L_\perp$, $\Vert{d^2_{v'} \mathbf{N}'_{p'}}\Vert_{op} \leq 2L_\perp$ for all $\norm{v'} \leq \frac{1}{4(2L_\perp)}$.
From the chain rule, the same argument applies for the order $3 \leq i \leq k$ differential of $\mathbf{N}'_{p'}$.
\end{proof}

%

Lemma \ref{change_of_variable_hausdorff} deals with the condition on the density in the models $\mathcal{P}^k$. It gives a change of variable formula for pushforward of measure on submanifolds, ensuring a control on densities with respect to intrinsic volume measure.

\begin{Alem}[Change of variable for the Hausdorff measure]
\label{change_of_variable_hausdorff}
Let $P$ be a probability distribution on $M \subset \R^D$ with density $f$ with respect to the $d$-dimensional Hausdorff measure $\mathcal{H}^d$.
Let $\Phi : \R^D \rightarrow \R^D$ be a global diffeomorphism such that $\norm{I_D - d \Phi}_{\mathrm{op}} <1/3$. Let $P' = \Phi_{\ast} P$ be the pushforward of $P$ by $\Phi$. Then $P'$ has a density $g$ with respect to $\mathcal{H}^d$. This density can be chosen to be, for all $z \in \Phi(M)$,
\[
g(z) = \frac{f\left(\Phi^{-1} \left(z\right)\right)}{\sqrt{ \det \left(\pi_{T_{\Phi^{-1}(z)} M} \circ d_{\Phi^{-1}(z)}  \Phi^T \, \circ  \, \restriction{ d_{\Phi^{-1}(z)} \Phi }{ {T_{\Phi^{-1}(z)} M} } \right)}}.
\]
In particular, if $f_{min} \leq f \leq f_{max}$ on $M$, then for all $z \in \Phi(M)$,
\begin{align*}
\left( 1 - 3d/2 \norm{I_D - d \Phi}_{\mathrm{op}}\right) f_{min} 
\leq 
g(z) 
\leq 
f_{max} \left( 1 + 3( 2^{d/2} - 1 )\norm{I_D - d \Phi}_{\mathrm{op}}\right)
.
\end{align*}
\end{Alem}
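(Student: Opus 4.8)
The plan is to read the stated identity as the area (change-of-variables) formula for $\mathcal{C}^1$ maps between $d$-dimensional submanifolds, applied to $\Phi|_M\colon M\to M':=\Phi(M)$. Since $\Phi$ is a global $\mathcal{C}^k$-diffeomorphism of $\R^D$ with $k\ge 2$, the restriction $\Phi|_M$ is an injective $\mathcal{C}^1$ map and $M'$ is again a $d$-dimensional submanifold; hence for every Borel $A\subset M$ and every nonnegative Borel $u$ on $M'$,
\[
\int_A u\bigl(\Phi(x)\bigr)\,J\Phi(x)\,d\mathcal{H}^d(x)=\int_{M'}u(y)\,d\mathcal{H}^d(y),
\]
where $J\Phi(x)=\sqrt{\det\bigl((d_x\Phi|_{T_xM})^{\ast}\circ d_x\Phi|_{T_xM}\bigr)}$ is the $d$-Jacobian of $\Phi|_M$, the adjoint being taken with respect to the Euclidean structures of $T_xM\subset\R^D$ and $\R^D$. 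The adjoint of $d_x\Phi|_{T_xM}\colon T_xM\to\R^D$ is $\pi_{T_xM}\circ(d_x\Phi)^T$, so $J\Phi(x)^2=\det\bigl(\pi_{T_xM}\circ(d_x\Phi)^T\circ\restriction{d_x\Phi}{T_xM}\bigr)$, which is exactly the quantity under the square root in the statement, and the assumption $\norm{I_D-d\Phi}_{\mathrm{op}}<1/3$ guarantees $J\Phi(x)>0$ everywhere.

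Granting this, the density formula follows by a routine substitution. For any Borel $B\subset M'$,
\[
P'(B)=P\bigl(\Phi^{-1}(B)\bigr)=\int_{\Phi^{-1}(B)}f\,d\mathcal{H}^d=\int_{\Phi^{-1}(B)}\frac{f(x)}{J\Phi(x)}\,J\Phi(x)\,d\mathcal{H}^d(x),
\]
and applying the area formula with $u(y)=\mathbbm{1}_B(y)\,f\bigl(\Phi^{-1}(y)\bigr)/J\Phi\bigl(\Phi^{-1}(y)\bigr)$ rewrites this as $\int_B u\,d\mathcal{H}^d$. Thus $P'$ has density $g(z)=f\bigl(\Phi^{-1}(z)\bigr)/J\Phi\bigl(\Phi^{-1}(z)\bigr)$ with respect to $\mathcal{H}^d$ on $M'$, which is the announced expression.

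For the two-sided bound I would control $J\Phi(x)$ through the singular values of $d_x\Phi|_{T_xM}$. Writing $\varepsilon=\norm{I_D-d\Phi}_{\mathrm{op}}<1/3$ and decomposing $d_x\Phi|_{T_xM}=\iota+(d_x\Phi-I_D)|_{T_xM}$ with $\iota\colon T_xM\hookrightarrow\R^D$ the inclusion, one gets $\norm{(d_x\Phi|_{T_xM})v}\in[(1-\varepsilon)\norm{v},(1+\varepsilon)\norm{v}]$ for all $v\in T_xM$, hence all singular values lie in $[1-\varepsilon,1+\varepsilon]$ and $J\Phi(x)\in[(1-\varepsilon)^d,(1+\varepsilon)^d]$. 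Consequently $f_{min}(1+\varepsilon)^{-d}\le g\le f_{max}(1-\varepsilon)^{-d}$, and the stated inequalities are obtained from elementary scalar estimates valid for $\varepsilon<1/3$: on the lower side $(1+\varepsilon)^{-d}\ge 1-d\varepsilon\ge 1-\tfrac{3d}{2}\varepsilon$ (e.g. $(1+\varepsilon)^d(1-d\varepsilon)\le 1$ by AM--GM on $d$ copies of $1+\varepsilon$ and one copy of $1-d\varepsilon$), and on the upper side a convexity bound for $\varepsilon\mapsto(1-\varepsilon)^{-d}$ on $[0,1/3]$ together with $3(2^{d/2}-1)\ge d$ for $d\ge 1$.

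I do not expect a genuine obstacle here: the only delicate points are (i) invoking the area formula in the submanifold setting and correctly identifying the $d$-Jacobian with the determinant displayed in the statement, which is bookkeeping with adjoints and orthogonal projections, and (ii) calibrating the elementary inequalities so that the final constants come out exactly as $3d/2$ and $3(2^{d/2}-1)$; both are routine once the structure above is in place.
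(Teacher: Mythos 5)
Your argument follows the paper's proof in all essentials: identify the density of $\Phi_{\ast}P$ as $f\circ\Phi^{-1}$ divided by the tangential Jacobian via the area formula, then bound that Jacobian in terms of $\norm{I_D-d\Phi}_{\mathrm{op}}$. The only structural difference is that you invoke the area formula directly for the map $\Phi|_M$ between $d$-dimensional submanifolds, whereas the paper derives the same identity by pulling everything back to $T_pM$ through the exponential chart and applying Federer's Euclidean-domain area formula twice (once to $\Psi_p=\exp_p$ and once to $\Phi\circ\Psi_p$, the chart contribution cancelling because $d_0\exp_p$ is the inclusion). Both routes are legitimate, and your identification of the $d$-Jacobian with $\sqrt{\det\bigl(\pi_{T_xM}\circ d_x\Phi^T\circ d_x\Phi|_{T_xM}\bigr)}$ via the adjoint is correct. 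Your singular-value estimate $J\Phi\in[(1-\varepsilon)^d,(1+\varepsilon)^d]$ is in fact slightly sharper than the paper's, which only controls $\norm{B^TB-I_D}_{\mathrm{op}}\le 3\varepsilon$ and gets $[(1-3\varepsilon)^{d/2},(1+3\varepsilon)^{d/2}]$.

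The one step that is not as routine as you claim is the calibration of the upper bound. The chord of the convex function $\varepsilon\mapsto(1-\varepsilon)^{-d}$ over $[0,1/3]$ has slope $3\bigl((3/2)^d-1\bigr)$, which is strictly larger than $3(2^{d/2}-1)$ because $(3/2)^d=(9/4)^{d/2}>2^{d/2}$; so your convexity argument yields $g\le f_{max}\bigl(1+3((3/2)^d-1)\varepsilon\bigr)$ rather than the stated constant, and indeed at $\varepsilon=1/3$ one has $(1-\varepsilon)^{-d}=(3/2)^d>2^{d/2}=1+3(2^{d/2}-1)\cdot\tfrac13$, so the stated inequality cannot be recovered from your Jacobian bound on the whole range $\varepsilon<1/3$. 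For what it is worth, the paper's own derivation of this constant has the same defect: its claimed inequality $(1-3\varepsilon)^{d/2}\ge\bigl(1+3(2^{d/2}-1)\varepsilon\bigr)^{-1}$ already fails for $d=2$ and every $\varepsilon>0$, since it amounts to $1-9\varepsilon^2\ge 1$. This is purely a matter of constants: everywhere the lemma is used, only a bound of the form $1\pm C_d\norm{I_D-d\Phi}_{\mathrm{op}}$ for ``small enough'' perturbations is needed, and your two-sided estimate $f_{min}(1+\varepsilon)^{-d}\le g\le f_{max}(1-\varepsilon)^{-d}$ delivers that. Your lower-bound calibration via $(1+\varepsilon)^{-d}\ge 1-d\varepsilon\ge 1-\tfrac{3d}{2}\varepsilon$ is correct as written.
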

\begin{proof}[Proof of Lemma \ref{change_of_variable_hausdorff}]
Let $p \in M$ be fixed and $A \subset \mathcal{B}(p,r)\cap M$ for $r$ small enough. For a differentiable map $h: \R^d \rightarrow \R^D$  and for all $x\in \R^d$, we let $J_h (x)$ denote the $d$-dimensional Jacobian $J_h (x) = \sqrt{\det \left(d_x h^T \, d_x h\right)}$.
 The area formula (\cite[Theorem 3.2.5]{Federer69}) states that if $h$ is one-to-one, 
\[ \int_{A} u\left(h(x)\right) J_h(x) \lambda^d(d x) = \int_{h(A)} u(y) \mathcal{H}^d(d y),
\]
whenever $u : \R^D \rightarrow \R$ is Borel, where $\lambda^d$ is the Lebesgue measure on $\R^d$. By definition of the pushforward, and since $d P = f d \mathcal{H}^d$, 

\[ \int_{\Phi(A)} d P'(z) = \int_{A} f(y) \mathcal{H}^d(d y).
\]
Writing $\Psi_p = \exp_p: T_p M \rightarrow \R^D$ for the exponential map of $M$ at $p$, we have
\begin{align*}
\int_{A} f(y) \mathcal{H}^d(d y)
&= \int_{{\Psi_p}^{-1}(A)} f({\Psi_p}(x)) J_{\Psi_p}(x) \lambda^d(d x)
.
\end{align*}
Rewriting the right hand term, we apply the area formula again with $h = \Phi \circ {\Psi_p}$,
\begin{align*}
\int_{{\Psi_p}^{-1}(A)} 
&
f({\Psi_p}(x)) J_{\Psi_p}(x) \lambda^d(d y)
\\
&= \int_{{\Psi_p}^{-1}(A)} f\left(\Phi^{-1} \left(h(x)\right)\right) \frac{J_{\Psi_p}(h^{-1}\left(h(x)\right))}{J_{\Phi\circ {\Psi_p}}(h^{-1}\left(h(x)\right))}J_{\Phi\circ {\Psi_p}}(x) \lambda^d(d x) \\
&= \int_{\Phi(A)} f\left(\Phi^{-1} \left(z\right)\right) \frac{J_{\Psi_p}(h^{-1}\left(z\right))}{J_{\Phi\circ {\Psi_p}}(h^{-1}\left(z\right))}\mathcal{H}^d(d z).
\end{align*}
Since this is true for all $A \subset \mathcal{B}(p,r)\cap M$, $P'$ has a density $g$ with respect to $\mathcal{H}^d$, with
\[
g(z) = f\left(\Phi^{-1} \left(z\right)\right) \frac{J_{\Psi_{\Phi^{-1} \left(z\right)}}(\Psi_{\Phi^{-1} \left(z\right)}^{-1} \circ \Phi^{-1} \left(z\right))}{J_{\Phi\circ \Psi_{\Phi^{-1} \left(z\right)}}(\Psi_{\Phi^{-1} \left(z\right)}^{-1} \circ \Phi^{-1}\left(z\right))}.
\]
Writing $p = \Phi^{-1}(z)$, it is clear that $\Psi_{\Phi^{-1} \left(z\right)}^{-1} \circ \Phi^{-1}\left(z\right)= \Psi_{p}^{-1} (p) = 0 \in T_p M$. Since $d_0 \exp_p: T_p M \rightarrow \R^D$ is the inclusion map,  we get the first statement. 

We now let $B$ and $\pi_T$ denote $d_p \Phi$ and $\pi_{T_p M}$  respectively. For any unit vector $v\in T_p M$,
\begin{align*}
\left|\norm{\pi_T B^T B v}-\norm{v} \right|
&\leq \norm{\pi_T \left(B^T B - I_D\right) v}\\
&\leq \norm{B^T B - I_D}_{\mathrm{op}} \\
&\leq \left( 2 + \norm{I_D - B}_{\mathrm{op}} \right) \norm{I_D - B}_{\mathrm{op}}\\
&\leq 3 \norm{I_D - B}_{\mathrm{op}}.
\end{align*}
Therefore, $ 1 -  3 \norm{I_D - B}_{\mathrm{op}} \leq \norm{ \pi_T B^T \restriction{B}{T_p M}}_{\mathrm{op}}\leq 1 +  3 \norm{I_D - B}_{\mathrm{op}}$. Hence,
\begin{align*}
\sqrt{\det \left( \pi_T B^T \restriction{B}{T_p M} \right)} 
\leq \left(  1 +  3 \norm{I_D - B}_{\mathrm{op}} \right)^{d/2}
\leq \frac{1}{1- \frac{3d}{2}\norm{I_D - B}_{\mathrm{op}}},
\end{align*}
and
\begin{align*}
\sqrt{\det \left( \pi_T B^T \restriction{B}{T_p M} \right)} 
\geq \left(  1 -  3 \norm{I_D - B}_{\mathrm{op}} \right)^{d/2}
\geq \frac{1}{1 + 3 (2^{d/2}-1)\norm{I_D - B}_{\mathrm{op}}},
\end{align*}
which yields the result.
\end{proof}

\newpage
\section*{Appendix B: Some Probabilistic Tools}
\label{sec:appendix:proba_tools}
\addtocounter{section}{1}
\setcounter{subsection}{0}
\subsection{Volume and Covering Rate}\label{subsec:B1}
    The first lemma of this section gives some details about the covering rate of a manifold with bounded reach.

\begin{Alem}\label{Alem:volume+hausdorff_density}
Let $P_0 \in \mathcal{P}^k$ have support $M \subset \R^D$. Then for all $r \leq \tau_{min}/4$ and $x$ in $M$,
\begin{align*}
c_d f_{min} r^d
\leq
p_x(r)
\leq
C_d f_{max} r^d,
\end{align*}
for some $c_d,C_d >0$, with $p_x(r)= P_0 \bigl(\mathcal{B}(x,r) \bigr)$.

Moreover, letting $h = \left( \frac{C'_d k}{f_{min}}  \frac{\log n}{n}\right)^{1/d}$ with $C'_d$ large enough, the following holds.
For $n$ large enough so that $h \leq \tau_{min}/4$, with probability at least $1 - \left( \frac{1}{n}\right)^{k/d}$,
\begin{align*}
d_H\left(M,\Y_n\right)
\leq
h/2.
\end{align*}

\end{Alem}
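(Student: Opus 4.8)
The plan is to prove the two assertions of Lemma \ref{Alem:volume+hausdorff_density} in turn: first the two-sided volume estimate on balls, then the covering (Hausdorff-distance) bound, which follows from the lower volume estimate via a union bound over a covering net of $M$.

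\textbf{Step 1: Volume estimates.} Fix $x \in M$ and $r \leq \tau_{min}/4$. Since $P_0$ has density $f$ with respect to the volume measure $\mathcal{H}^d$ on $M$ with $f_{min} \leq f \leq f_{max}$, it suffices to bound $\mathcal{H}^d\bigl(\mathcal{B}(x,r)\cap M\bigr)$ from above and below by constant multiples of $r^d$. For this I would use the parametrization $\Psi_x$ (here $\exp_x$, since $M \in \mathcal{C}^2_{\tau_{min}}$ suffices for the volume bounds, but the same works for $\mathcal{C}^k$) together with the bi-Lipschitz property from Lemma \ref{lem:fourre_tout_geom_model}(i)--(ii): since $r \leq \tau_{min}/4 \leq 1/(4L_\perp)$ for $k=2$ (recall $L_\perp = 5/(4\tau_{min})$, so $1/(4L_\perp) = \tau_{min}/5$; one should use $r \le \tau_{\min}/5$ or simply shrink the admissible radius, or invoke part (ii) which is stated for $h \le \frac{1}{4L_\perp}\wedge\frac{2\tau_{min}}{5}$), the set $\mathcal{B}(x,r)\cap M$ is sandwiched between $\Psi_x\bigl(\mathcal{B}_{T_xM}(0,4r/5)\bigr)$ and $\Psi_x\bigl(\mathcal{B}_{T_xM}(0,5r/4)\bigr)$, and $\Psi_x$ distorts $d$-dimensional volume by a factor in $[(3/4)^d,(5/4)^d]$ because its differential is $(1/4)$-close to an isometry onto $T_xM$ (the Jacobian $J_{\Psi_x}$ satisfies $(3/4)^d \le J_{\Psi_x} \le (5/4)^d$ on the relevant ball, by the bound $\|d_v\mathbf{N}_x\|_{op}\le L_\perp\|v\|\le 1/4$). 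Integrating over the Euclidean ball $\mathcal{B}_{T_xM}(0,cr)\subset\R^d$ of volume $\omega_d (cr)^d$ gives $c_d f_{min} r^d \le p_x(r) \le C_d f_{max} r^d$ with $c_d,C_d$ depending only on $d$.

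\textbf{Step 2: Covering bound.} Let $h = \bigl(\frac{C'_d k}{f_{min}}\frac{\log n}{n}\bigr)^{1/d}$ and assume $n$ is large enough that $h \le \tau_{min}/4$. Since $M$ is compact, choose a maximal $h/4$-separated set $\{z_1,\dots,z_N\}\subset M$; by maximality the balls $\mathcal{B}(z_\ell,h/4)$ cover $M$, and by the standard packing-versus-volume argument together with Step 1 (the balls $\mathcal{B}(z_\ell,h/8)$ are disjoint, each of $P_0$-mass $\ge c_d f_{min}(h/8)^d$, and they sit inside the total mass $1$), we get $N \le C''_d/(f_{min} h^d)$. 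For each $\ell$, $P_0\bigl(\mathcal{B}(z_\ell,h/4)\bigr) \ge c_d f_{min}(h/4)^d =: p$, so the probability that no sample point $Y_i$ falls in $\mathcal{B}(z_\ell,h/4)$ is $(1-p)^n \le e^{-np}$. A union bound gives
\begin{align*}
\mathbb{P}\Bigl(\exists\, \ell:\ \Y_n\cap\mathcal{B}(z_\ell,h/4)=\emptyset\Bigr)
\le N e^{-np}
\le \frac{C''_d}{f_{min} h^d}\, e^{-c_d f_{min}(h/4)^d n}.
\end{align*}
Plugging in $f_{min} h^d = C'_d k \frac{\log n}{n}$, the exponent is $-c_d C'_d k (\log n)/4^d$, so for $C'_d$ large enough (depending only on $d$) this probability is at most $(1/n)^{k/d}$. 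On the complementary event, every $z_\ell$ is within $h/4$ of some $Y_i$; since the $\mathcal{B}(z_\ell,h/4)$ cover $M$, any $y\in M$ lies within $h/4$ of some $z_\ell$, hence within $h/4+h/4=h/2$ of some $Y_i$. As $\Y_n\subset M$ trivially, this gives $d_H(M,\Y_n)\le h/2$ with probability at least $1-(1/n)^{k/d}$.

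\textbf{Main obstacle.} The only genuinely delicate point is bookkeeping the constants and admissible radii so that the geometric inclusions of Lemma \ref{lem:fourre_tout_geom_model} apply uniformly in $x\in M$ (in particular reconciling the radius $\tau_{min}/4$ in the statement with the slightly smaller radii $\frac{1}{4L_\perp}\wedge\frac{2\tau_{min}}{5}$ appearing in parts (i)--(ii)); this is routine but must be done carefully. Everything else — the Jacobian estimate, the packing bound on $N$, and the union bound — is standard, and the logarithmic factor in $h$ is exactly what is needed to beat the $N\asymp 1/h^d$ term in the union bound while leaving a polynomially small failure probability.
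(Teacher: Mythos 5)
Your proof is correct in substance and reaches the same conclusions, but it is more self-contained than the paper's argument, which is essentially a proof by citation: for the volume estimate the paper sandwiches $\mathcal{B}(x,r)\cap M$ between geodesic balls $\mathcal{B}_M(x,r)$ and $\mathcal{B}_M(x,6r/5)$ (Proposition 25 of the cited \cite{AamariLevrard2015}) and then invokes Jacobian bounds for $\exp_x$ (Proposition 27 there), while for the covering bound it directly applies Theorem 3.3 of \cite{Chazal2013}, which is exactly the net-plus-union-bound estimate you rederive by hand (maximal $h/4$-separated set, packing bound $N \lesssim 1/(f_{min}h^d)$ via the mass lower bound, $(1-p)^n\le e^{-np}$, and the choice of $C'_d$ so that the exponent beats $1+k/d$). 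Your Step 2 is therefore a complete and correct elementary substitute for the citation, and your tuning of $C'_d$ matches the paper's ($C'_d \ge 8^d/c_d$ works since $k\ge 1$). The one point to be careful about is the radius range in Step 1, which you flag yourself: using the paper's internal Lemma \ref{lem:fourre_tout_geom_model}(i)--(ii) with $\Psi_x=\exp_x$ and $L_\perp = 5/(4\tau_{min})$ only yields the chart-coverage inclusion for Euclidean radii up to roughly $3\tau_{min}/25$, so as written your argument gives the upper bound $p_x(r)\le C_d f_{max}r^d$ only on that smaller range (the lower bound extends to all $r\le\tau_{min}/4$ by monotonicity at the cost of a $d$-dependent constant). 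The paper avoids this by citing the geodesic-ball comparison, which is valid on the whole range $r\le \tau_{min}/4$; patching your version at radii comparable to $\tau_{min}$ needs such a Euclidean-versus-geodesic distance bound (a consequence of the reach condition) rather than only the bookkeeping you describe. This is a constant-factor issue that does not affect any use of the lemma in the paper, where only scales $r\asymp h\to 0$ matter, but it should be acknowledged or repaired if one wants the statement verbatim.
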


\begin{proof}[Proof of Lemma \ref{Alem:volume+hausdorff_density}]
Denoting by $\mathcal{B}_M(x,r)$ the geodesic ball of radius $r$ centered at $x$, Proposition 25 of \cite{AamariLevrard2015} yields
\begin{align*}
\mathcal{B}_M(x,r)
\subset
\mathcal{B}(x,r) \cap M
\subset
\mathcal{B}_M(x,6r/5).
\end{align*}
Hence, the bounds on the Jacobian of the exponential map given by Proposition 27 of \cite{AamariLevrard2015} yield
\begin{align*}
c_d r^d
\leq
Vol\bigl(\mathcal{B}(x,r) \cap M \bigr)
\leq
C_d r^d,
\end{align*}
for some $c_d,C_d >0$. Now, since $P$ has a density $f_{min} \leq f \leq f_{max}$ with respect to the volume measure of $M$, we get the first result.

Now we notice that since $ p_x(r) \geq c_d f_{min} r^d$, Theorem 3.3 in \cite{Chazal2013} entails, for $s \leq \tau_{min}/8$,
\begin{align*}
\mathbb{P}
\left(
d_H\bigl(M,\X_n\bigr)
\geq
s
\right)
\leq 
\frac{4^d}{c_d f_{min} s^d}
\exp \left( - \frac{c_d f_{min}}{2^d} n s^d \right)
.
\end{align*}
Hence, taking $s=h/2$, and $h = \left( \frac{C'_d k}{f_{min}}  \frac{\log n}{n}\right)^{1/d}$ with $C'_d$ so that
$
C'_d
\geq
\frac{8^d}{c_d k}
\vee
\frac{2^d(1+k/d)}{c_d k}
$
yields the result. Since $k\geq 1$, taking $C'_d = \frac{8^d}{c_d}$ is sufficient.
\end{proof}

\subsection{Concentration Bounds for Local Polynomials}\label{subsec:B2}
     This section is devoted to the proof of the following proposition.
\begin{Aprop}\label{polminoration}
      Set $h= \left ( K \frac{\log n }{n-1} \right )^{\frac{1}{d}}$. There exist constants $\kappa_{k,d}$, $c_{k,d}$ and $C_d$ such that, if $K \geq (\kappa_{k,d} f_{max}^2 /f_{min}^3)$ and $n$ is large enough so that $3h/2 \leq h_0 \leq \tau_{min}/4$, then with probability at least $1-\left ( \frac{1}{n} \right )^{\frac{k}{d}+1}$, we have
      \[
      \begin{array}{@{}ccc}
      P_{0,n-1} [S^2(\pi^*(x)) \mathbbm{1}_{\mathcal{B}(h/2)}(x)] &\geq& c_{k,d} h^d f_{min} \|S_h\|_2^2, \\
      N(3h/2) & \leq & C_d f_{max} (n-1) h^d,
      \end{array}
      \]
      for every $S \in \R^{k}[x_{1:d}]$, where $N(h) = \sum_{j = 2}^n \mathbbm{1}_{\mathcal{B}(0,h)}(Y_j)$. 
\end{Aprop}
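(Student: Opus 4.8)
The plan is to treat the two claimed bounds separately. The count bound $N(3h/2)\le C_d f_{max}(n-1)h^d$ is elementary: $N(3h/2)=\sum_{i=2}^n\mathbbm{1}_{\mathcal{B}(0,3h/2)}(Y_i)$ is a sum of $n-1$ i.i.d.\ Bernoulli variables with parameter $p=P_0(\mathcal{B}(0,3h/2))$, and since $3h/2\le\tau_{min}/4$, Lemma~\ref{Alem:volume+hausdorff_density} gives $c_d f_{min}h^d\le p\le C_d f_{max}h^d$, so that $\mu:=(n-1)p=\mathbb{E}[N(3h/2)]$ satisfies $c_d f_{min}K\log n\le\mu\le C_d f_{max}K\log n$ using $(n-1)h^d=K\log n$. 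A Chernoff bound yields $\mathbb{P}(N(3h/2)\ge 2\mu)\le e^{-\mu/3}\le n^{-c_d f_{min}K/3}$, which lies below $\tfrac12 n^{-k/d-1}$ once $K\ge\kappa_{k,d}f_{max}^2/f_{min}^3$ (so that $f_{min}K$ exceeds a dimensional threshold) and $n$ is large; on that event $N(3h/2)\le 2\mu\le C_d f_{max}(n-1)h^d$.

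For the main bound, set $Y_1=0$, $\pi^\ast=\pi_{T_0M}$, and for $S\in\R^k[x_{1:d}]$ put $g_S(y)=S^2(\pi^\ast(y))\mathbbm{1}_{\{\|y\|\le h/2\}}(y)=S_h^2\bigl(\pi^\ast(y)/h\bigr)\mathbbm{1}_{\{\|y\|\le h/2\}}(y)$, so that $P_{0,n-1}[S^2(\pi^\ast(x))\mathbbm{1}_{\mathcal{B}(h/2)}(x)]=\tfrac1{n-1}\sum_{i=2}^n g_S(Y_i)$. Both sides of the asserted inequality are homogeneous of degree two in $S$ (recall $\|(\lambda S)_h\|_2=\lambda\|S_h\|_2$), so it is enough to prove it for all $S$ with $\|S_h\|_2=1$; equivalently, with $\psi(x)=\bigl((x/h)^\alpha\bigr)_{|\alpha|\le k}$ the rescaled monomial feature map and $\widetilde M_n=\tfrac1{n-1}\sum_{i=2}^n\psi(\pi^\ast(Y_i))\psi(\pi^\ast(Y_i))^\top\mathbbm{1}_{\{\|Y_i\|\le h/2\}}$, it suffices to show $\lambda_{\min}(\widetilde M_n)\ge\tfrac12\lambda_{\min}(\mathbb{E}\widetilde M_n)$ with the required probability, together with the population bound $\lambda_{\min}(\mathbb{E}\widetilde M_n)\ge c'_{k,d}f_{min}h^d$ (the proposition's constant being then $c_{k,d}=c'_{k,d}/2$).

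The population bound $\mathbb{E}[g_S(Y)]\ge c'_{k,d}f_{min}h^d\|S_h\|_2^2$ I would prove by a chain of changes of variables. Bound $\mathbb{E}[g_S(Y)]\ge f_{min}\int_{M\cap\mathcal{B}(0,h/2)}S_h^2(\pi^\ast(y)/h)\,d\mathcal{H}^d(y)$; use the inclusion $\Psi_0(\mathcal{B}_{T_0M}(0,2h/5))\subset M\cap\mathcal{B}(0,h/2)$ from Lemma~\ref{Alem:fourre_tout_geom_model}(ii); change variables first through $\Psi_0$ and then through $G:=\pi^\ast\circ\Psi_0$, both diffeomorphisms whose Jacobians are bounded above and below by dimensional constants and which satisfy $\mathcal{B}(0,3h/10)\subset G(\mathcal{B}(0,2h/5))$ (Lemma~\ref{Alem:fourre_tout_geom_model}(i) and its proof); rescale $w=hz$, reducing the integral to $h^d\int_{\mathcal{B}_{\R^d}(0,3/10)}S_h^2(z)\,dz$; and finally use the equivalence of the norms $Q\mapsto\bigl(\int_{\mathcal{B}(0,3/10)}Q^2\bigr)^{1/2}$ and $Q\mapsto\|Q\|_2$ on the finite-dimensional space $\R^k[z_{1:d}]$ to get $\int_{\mathcal{B}(0,3/10)}S_h^2\ge\gamma_{k,d}\|S_h\|_2^2$.

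It then remains to transfer this to $\widetilde M_n$ uniformly, which is the technical heart of the proof. I would write $(n-1)\widetilde M_n=\sum_{i=2}^n\psi(\pi^\ast(Y_i))\psi(\pi^\ast(Y_i))^\top\mathbbm{1}_{\{\|Y_i\|\le h/2\}}$ as a sum of $n-1$ i.i.d.\ rank-one positive semidefinite matrices, each of operator norm at most $N_{k,d}:=\dim\R^k[x_{1:d}]$ since $\|\pi^\ast(Y_i)/h\|\le 1/2$ on the support of each term, with $\lambda_{\min}(\mathbb{E}[(n-1)\widetilde M_n])\ge c'_{k,d}f_{min}K\log n$. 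The matrix Chernoff inequality then gives $\mathbb{P}\bigl(\lambda_{\min}(\widetilde M_n)\le\tfrac12 c'_{k,d}f_{min}h^d\bigr)\le N_{k,d}\,n^{-c''_{k,d}f_{min}K}$, which is below $\tfrac12 n^{-k/d-1}$ once $K\ge\kappa_{k,d}f_{max}^2/f_{min}^3$ and $n$ is large. (An alternative avoiding matrix concentration: union-bound a Bernstein inequality — with the variance estimate $\mathbb{E}[g_S(Y)^2]\le N_{k,d}\,P_0(\mathcal{B}(0,h/2))\le C_{k,d}f_{max}h^d$ valid when $\|S_h\|_2=1$ — over an $\varepsilon$-net of $\{S:\|S_h\|_2=1\}$ at scale $\varepsilon\asymp f_{min}/f_{max}$ matched to the Lipschitz bound $\|\widetilde M_n\|_{op}\le N_{k,d}N(3h/2)/(n-1)$, which is controlled on the event of the count bound above.) A union bound over the two events completes the proof. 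The main obstacle is precisely this passage from a single polynomial to all of them at once; the population estimate is a routine change of variables plus a finite-dimensional norm equivalence, and the count bound is immediate.
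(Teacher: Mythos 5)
Your proof is correct, and its concentration step takes a genuinely different route from the paper's. The paper first establishes an auxiliary uniform deviation bound (Proposition~\ref{poldeviations}) for empirical means of products $\prod_{j}\bigl(\langle u_j,y\rangle/b\bigr)^{\varepsilon_j}\mathbbm{1}_{\mathcal{B}(y_0,b)}(y)$ over all unit vectors $u_j$ and $\varepsilon\in\{0,1\}^k$, proved via the Talagrand--Bousquet inequality together with symmetrization and Slepian's lemma; it then converts this coefficientwise control into the statement through $|(P_{0,n-1}-P_0)S^2\mathbbm{1}|\leq c_{d,k}f_{max}h^d\|S_h\|_2^2/\sqrt{K'}$, and obtains the count bound from the same proposition with $\varepsilon=(0,\ldots,0)$. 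You instead recast the uniform statement as a minimum-eigenvalue bound for the empirical second-moment matrix of the rescaled monomial features and invoke the matrix Chernoff inequality, which gives uniformity over $S$ for free from a single matrix event (each summand is rank-one, PSD, with operator norm at most $\dim\R^k[x_{1:d}]$ since $\|\pi^*(Y_i)\|/h\leq 1/2$ on the relevant event); the count bound you get from a scalar binomial Chernoff bound, which is equivalent to the paper's $\varepsilon=0$ case. Your population lower bound (inclusions from Lemma~\ref{Alem:fourre_tout_geom_model}, changes of variables through $\Psi_0$ and $G=\pi^*\circ\Psi_0$, rescaling, and norm equivalence on the finite-dimensional polynomial space) carries the same geometric content as the paper's inline estimate $\int_{\mathcal{B}(0,r)}S^2\geq C_{d,k}r^d\|S_r\|_2^2$ combined with Lemma~\ref{Alem:fourre_tout_geom_model}, only spelled out in more detail. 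Comparing the two: the paper's empirical-process bound is self-contained given Bousquet's inequality and is reused for the count bound, whereas your matrix-Chernoff argument is shorter, avoids the Slepian comparison altogether, and in fact only requires $f_{min}K$ to exceed a constant depending on $k,d$ --- a condition implied by (and weaker than) the stated threshold $K\geq\kappa_{k,d}f_{max}^2/f_{min}^3$ since $f_{max}\geq f_{min}$. Two small points to make explicit in a final write-up: the whole argument is carried out conditionally on the base point $Y_1$ (as the paper implicitly does), and the dimensional prefactor in the matrix Chernoff bound is absorbed into the failure probability by taking $n$ large enough depending only on $k$ and $d$.
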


        A first step is to ensure that empirical expectations of order $k$ polynomials are close to their deterministic counterparts.

        \begin{Aprop}\label{poldeviations}
      Let $b \leq \tau_{min}/8$. For any $y_0$ $\in$ $M$, we have
      \begin{multline*}\label{eqpoldeviations}
      \mathbb{P} \left [ \sup_{u_1, \hdots, u_k, \varepsilon \in \{0,1\}^k} \left |(P_0-P_{0,n-1}) \prod_{j=1}^{p} \left (\frac{\left\langle u_j,y \right\rangle} {b} \right )^{\varepsilon_j}\mathbbm{1}_{\mathcal{B}(y_0,b)}(y) \right | \right .   \\ 
      \left . \vphantom{\sup_{u_1, \hdots, u_p, \varepsilon \in \{0,1\}^p} \left |(P-P_n) \prod_{j=1}^{k} \left (\frac{\left\langle u_j,y \right\rangle} {b} \right )^{\varepsilon_j}\mathbbm{1}_{B_x(h)}(y) \right |} \geq p_{y_0}(b) \left ( \frac{4 k \sqrt{2 \pi}}{\sqrt{(n-1)p_{y_0}(b)}} + \sqrt{\frac{2t}{(n-1)p_{y_0}(b)}} + \frac{2}{3 (n-1) p_{y_0}(b)} \right ) \right ] \leq e^{-t},
      \end{multline*}
      where $P_{0,n-1}$ denotes the empirical distribution of $n-1$ i.i.d. random variables $Y_i$ drawn from $P_0$.
      \end{Aprop}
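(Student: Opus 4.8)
The plan is to recognise the quantity in the left-hand side as the supremum
\[
W := \sup_{u_1,\dots,u_k,\,\varepsilon\in\{0,1\}^k}\Bigl|(P_0-P_{0,n-1})\,g_{u,\varepsilon}\Bigr|,
\qquad
g_{u,\varepsilon}(y)=\prod_{j=1}^{k}\Bigl(\tfrac{\langle u_j,y\rangle}{b}\Bigr)^{\varepsilon_j}\mathbbm{1}_{\mathcal{B}(y_0,b)}(y),
\]
of a centred empirical process indexed by the unit vectors $u_1,\dots,u_k$ and the sign pattern $\varepsilon$, and then to bound it in the classical two-step way: first control $\E[W]$, then concentrate $W$ around $\E[W]$. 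On the support of the indicator the points relevant to the application satisfy $\norm{y}\le b$ (this is the regime in which the statement is used, $y_0$ being the base point of a local expansion), so every $g_{u,\varepsilon}$ is bounded by $1$ in sup-norm and has $P_0$-variance at most $p:=p_{y_0}(b)=P_0(\mathcal{B}(y_0,b))$. I expect the first summand $\tfrac{4k\sqrt{2\pi}}{\sqrt{(n-1)p}}\,p$ of the claimed bound to come from $\E[W]$, and the two remaining Bernstein-type summands from the concentration step.

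For the bound on $\E[W]$ I would symmetrise, reducing to the Rademacher complexity $\E_\sigma\sup_{u,\varepsilon}\bigl|\tfrac1{n-1}\sum_i\sigma_i g_{u,\varepsilon}(Y_i)\bigr|$, and then peel off the active linear factors one at a time by the contraction/comparison principle for products of uniformly bounded function classes. The core computation is that, for a single linear factor with the remaining (frozen) factors collected into coefficients $h_i$ with $|h_i|\le\mathbbm{1}_{\mathcal{B}(y_0,b)}(Y_i)$,
\[
\E_\sigma\sup_{\norm{u}\le 1}\Bigl|\tfrac{1}{(n-1)b}\sum_i\sigma_i h_i\langle u,Y_i\rangle\Bigr|
=\tfrac{1}{(n-1)b}\,\E_\sigma\Bigl\|\sum_i\sigma_i h_i Y_i\Bigr\|
\le\tfrac{\sqrt{N_b}}{n-1},
\]
using Cauchy--Schwarz and $\norm{Y_i}\le b$ on the ball, where $N_b=\#\{i:Y_i\in\mathcal{B}(y_0,b)\}$. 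Iterating over the at most $k$ active factors and the $2^k$ sign patterns, taking expectations with $\E N_b=(n-1)p$, and applying Jensen's inequality yields $\E[W]\le \dfrac{4k\sqrt{2\pi}\,\sqrt{p}}{\sqrt{n-1}}$; the numerical constant is tracked by passing from Rademacher to Gaussian complexities at each peeling step, which is where the $\sqrt{2\pi}$ enters. Crucially this bound is dimension-free, in line with the $D$-independence of every rate in the paper.

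The second step is an essentially black-box application of Talagrand's concentration inequality for suprema of empirical processes (Bousquet's version) to the uniformly bounded, variance-$\le p$ class $\{g_{u,\varepsilon}\}$: with probability at least $1-e^{-t}$,
\[
W\le\E[W]+\sqrt{\tfrac{2t\,(p+2\,\E[W])}{n-1}}+\tfrac{t}{3(n-1)}.
\]
In the regime in which the preceding proposition is invoked one has $(n-1)p\gtrsim\log n$, so $\E[W]\lesssim p$; splitting $\sqrt{2t(p+2\E[W])/(n-1)}\le\sqrt{2tp/(n-1)}+2\sqrt{t\,\E[W]/(n-1)}$ and absorbing the cross term by the arithmetic--geometric mean inequality leaves $W$ bounded by a constant multiple of $\E[W]$, plus $\sqrt{2tp/(n-1)}$, plus a remainder of order $t/(n-1)$. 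Inserting the Step-1 bound on $\E[W]$ and rewriting the whole bound as a multiple of $p$ gives exactly the stated inequality after relabelling the numerical constants.

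The main obstacle is Step 1, namely obtaining the expected-supremum bound with the correct, dimension-free dependence. A naive union bound over an $\epsilon$-net of the product of $k$ spheres would also work, but it would force the right-hand side --- and hence the constant $K$ in the intended application --- to depend on the ambient dimension $D$, which is not allowed there; escaping this dependence is precisely why one must use the symmetrisation argument and exploit the linearity of each factor (so that the vector Rademacher/Gaussian estimate above applies), handling the product structure via iterated contraction for bounded classes. The concentration step, once boundedness and the variance bound are in hand, is routine.
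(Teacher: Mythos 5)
Your proposal is correct and follows essentially the same route as the paper: bound the expected supremum by symmetrisation plus a dimension-free comparison argument exploiting the linearity of each factor (the paper passes to a Gaussian process and uses Slepian's lemma with a telescoping bound on increments of products, which is the rigorous form of your ``peel off one factor at a time with frozen coefficients'' step), then apply Bousquet's version of Talagrand's inequality with $\|f\|_\infty\le 1$ and variance $\le p_{y_0}(b)$. The only caution is that the joint supremum over $(u_1,\dots,u_k,\varepsilon)$ cannot literally be handled factor by factor with the others frozen; one needs the telescoping increment estimate (or a genuine product-class contraction lemma) to reduce to the sum of $k$ linear processes, exactly as in the paper's Slepian comparison.
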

      
      \begin{proof}[Proof of Proposition \ref{poldeviations}]
Without loss of generality we choose $y_0=0$ and shorten notation to $\mathcal{B}(b)$ and $p(b)$. Let $\mathcal{Z}$ denote the empirical process on the left-hand side of Proposition \ref{poldeviations}. Denote also by $f_{u,\varepsilon}$ the map $\prod_{j=1}^{k} \left (\frac{\left\langle u_j,y \right\rangle} {b} \right )^{\varepsilon_j}\mathbbm{1}_{\mathcal{B}(b)}(y)$, and let $\mathcal{F}$ denote the set of such maps, for $u_j$ in $\mathcal{B}(1)$ and $\varepsilon$ in $\{ 0,1 \}^k$.

Since $\| f_{u,\varepsilon} \|_{\infty} \leq 1$ and $P f_{u,\varepsilon}^2 \leq p(b)$, the Talagrand-Bousquet inequality (\cite[Theorem 2.3]{Bousquet02}) yields
\begin{align*}
\mathcal{Z} \leq 4 \mathbb{E}\mathcal{Z} + \sqrt{\frac{2 p(b)t}{n-1}} + \frac{2 t}{3 (n-1)},
\end{align*}
with probability larger than $1 - e^{-t}$. It remains to bound $\mathbb{E}\mathcal{Z}$ from above.
\begin{Alem}\label{expectationofmaximalprocess}
       We may write
       \[
       \mathbb{E} \mathcal{Z} \leq \frac{  \sqrt{2 \pi p(b)}}{\sqrt{n-1}} k.
       \]
\end{Alem}
\begin{proof}[Proof of Lemma \ref{expectationofmaximalprocess}]
Let $\sigma_i$ and $g_i$ denote some independent Rademacher and Gaussian variables. For convenience, we denote by $\mathbb{E}_A$ the expectation with respect to the random variable $A$. Using symmetrization inequalities we may write
\begin{align*}
\mathbb{E} \mathcal{Z} &= \mathbb{E}_Y \sup_{u,\varepsilon} \left |(P_0-P_{0,n-1}) \prod_{j=1}^{k} \left ( \frac{\left\langle u_j,y \right\rangle}{b} \right )^{\varepsilon_j} \mathbbm{1}_{\mathcal{B}(b)}(y) \right | \\
             & \leq \frac{2}{n-1} \mathbb{E}_Y \mathbb{E}_\sigma \sup_{u,\varepsilon} \sum_{i=1}^{n-1}{\sigma_i \prod_{j=1}^{k}\left ( \frac{\left\langle u_j,Y_i \right\rangle}{b} \right )^{\varepsilon_j} \mathbbm{1}_{\mathcal{B}(b)}(Y_i)} \\
             & \leq \frac{\sqrt{2 \pi}}{n-1}\mathbb{E}_Y \mathbb{E}_g \sup_{u,\varepsilon} \sum_{i=1}^{n-1}{g_i \prod_{j=1}^{k}\left ( \frac{\left\langle u_j,Y_i \right\rangle}{b} \right )^{\varepsilon_j} \mathbbm{1}_{\mathcal{B}(b)}(Y_i)}.
\end{align*}

         Now let $\mathcal{Y}_{u,\varepsilon}$ denote the Gaussian process $\sum_{i=1}^{n-1}{g_i \prod_{j=1}^{k}\left ( \frac{\left\langle u_j,Y_i \right\rangle}{b} \right )^{\varepsilon_j}} \mathbbm{1}_{\mathcal{B}(b)}(Y_i)$. Since, for any $y$ in $\mathcal{B}(b)$, $u$,$v$ in $\mathcal{B}(1)^k$, and $\varepsilon$, $\varepsilon'$ in $\{0,1\}^k$, we have
         \begin{multline*}
\left | \prod_{j=1}^{k} \left ( \frac{\left\langle y , u_j \right\rangle }{b} \right )^{\varepsilon_j} - \prod_{j=1}^{k} \left ( \frac{\left\langle y , v_j \right\rangle }{b} \right )^{\varepsilon'_j} \right |  
\\ 
\begin{aligned}[t] 
& \begin{multlined}[t] \leq 
\left | \sum_{r=1}^{k} \left (\prod_{j=1}^{k+1-r} \left ( \frac{\left\langle y , u_j \right\rangle }{b} \right )^{\varepsilon_j} \prod_{j=k+2-r}^{k} \left ( \frac{\left\langle y , v_j \right\rangle }{b} \right )^{\varepsilon'_j} \right . \right .\\
\left . \left .  - \prod_{j=1}^{k-r} \left ( \frac{\left\langle y , u_j \right\rangle }{b} \right )^{\varepsilon_j} \prod_{j=k+1-r}^{k} \left ( \frac{\left\langle y , v_j \right\rangle }{b} \right )^{\varepsilon'_j} \right ) \right | 
\end{multlined}
\\
& \leq \begin{multlined}[t] 
\sum_{r=1}^{k} \left | \prod_{j=1}^{k-r} \left ( \frac{\left\langle y , u_j \right\rangle }{b} \right )^{\varepsilon_j} \prod_{j=k+2-r}^{k} \left ( \frac{\left\langle y , v_j \right\rangle }{b} \right )^{\varepsilon'_j} \left [ \left ( \frac{\left\langle u_{k+1-r},y \right\rangle}{b} \right )^{\varepsilon_{k+1-r}} \right . \right . \\
 \left . \left . - \left ( \frac{\left\langle v_{k+1-r},y \right\rangle}{b} \right )^{\varepsilon'_{k+1-r}} \right ] \right |
 \end{multlined} 
\\
&\leq 
\sum_{r=1}^{k} \left | \frac{\left\langle \varepsilon_r u_r - \varepsilon'_r v_r,y \right\rangle}{b} \right |,          
\end{aligned}
\end{multline*}
we deduce that 
      \begin{align*}
      \mathbb{E}_g (\mathcal{Y}_{u,\varepsilon} - \mathcal{Y}_{v,\varepsilon'})^2 & \leq k \sum_{i=1}^{n-1} \sum_{r=1}^{k} \left ( \frac{\left\langle \varepsilon_r u_r, Y_i \right\rangle}{b} - \frac{\left\langle \varepsilon'_r v_r, Y_i \right\rangle}{b}\right )^2 \mathbbm{1}_{\mathcal{B}(b)}(Y_i) \\
                  & \leq \mathbb{E}_g (\Theta_{u,\varepsilon} - \Theta_{v,\varepsilon'})^2, 
      \end{align*}
      where $\Theta_{u,\varepsilon} = \sqrt{k}\sum_{i=1}^{n-1}\sum_{r=1}^{k} g_{i,r} \frac{\left\langle \varepsilon_r u_r, Y_i \right\rangle}{b} \mathbbm{1}_{\mathcal{B}(b)}(Y_i)$. According to Slepian's Lemma \cite[Theorem 13.3]{Massart13}, it follows that
      \begin{align*}
      \mathbb{E}_g \sup_{u,\varepsilon} \mathcal{Y}_g & \leq  \mathbb{E}_g \sup_{u,\varepsilon} \Theta_{u,\varepsilon} \\
                                            & \leq  \sqrt{k} \mathbb{E}_g \sup_{u,\varepsilon}  \sum_{r=1}^{k} \frac{\left\langle \varepsilon_r u_r, \sum_{i=1}^{n-1} g_{i,r} \mathbbm{1}_{\mathcal{B}(b)}(Y_i) Y_i \right\rangle}{b} \\
                                            & \leq  \sqrt{k} \mathbb{E}_g \sup_{u,\varepsilon} \sqrt{k \sum_{r=1}^{k} \frac{\left\langle \varepsilon_r u_r, \sum_{i=1}^{n-1} g_{i,r} \mathbbm{1}_{\mathcal{B}(b)}(Y_i) Y_i \right\rangle^2}{b^2} }.
         \end{align*}
We deduce that
\begin{align*}
\mathbb{E}_g \sup_{u,\varepsilon} Y_g & \leq  \mathbb{E}_g \sup_{u,\varepsilon} \Theta_g                                      \\
                                            & \leq  k \sqrt{ \mathbb{E}_g \sup_{\|u\|=1, \varepsilon \in \{0,1\}}\frac{\left\langle \varepsilon u, \sum_{i=1}^{n-1} g_{i} \mathbbm{1}_{\mathcal{B}(b)}(Y_i) Y_i \right\rangle^2}{b^2}}  \\
                                            & \leq  k \sqrt{\mathbb{E}_g \left \| \sum_{i=1}^{n-1} \frac{g_i Y_i}{b} \mathbbm{1}_{\mathcal{B}(b)}(Y_i) \right \|^2} \\
                                            & \leq  k \sqrt{N(b)}.
      \end{align*}
       Then we can deduce that $\mathbb{E}_X\mathbb{E}_g \sup_{u,\varepsilon} Y_g \leq  k \sqrt{p(b)}$.
\end{proof}
          Combining Lemma \ref{expectationofmaximalprocess} with Talagrand-Bousquet's inequality gives the result of Proposition \ref{poldeviations}.
\end{proof}

         We are now in position to prove Proposition \ref{polminoration}.
\begin{proof}[Proof of Proposition \ref{polminoration}]
If $ h/2 \leq \tau_{min}/4$, then, according to Lemma \ref{Alem:volume+hausdorff_density}, $p( h/2) \geq c_d f_{min} h^d$, hence, if $h= \left ( K \frac{\log(n)}{n-1} \right )^{\frac{1}{d}}$, $(n-1)p(h/2) \geq K c_{d}f_{min} \log(n)$. Choosing $b=h/2$ and $t =  (k/d +1) \log(n) + \log(2)$ in Proposition \ref{poldeviations} and $ K = K'/f_{min}$, with $K'>1$  leads to
\begin{multline*}
\mathbb{P} \left [ \sup_{u_1, \hdots, u_k, \varepsilon \in \{0,1\}^k} \left |(P_0-P_{0,n-1}) \prod_{j=1}^{k} \left (2 \frac{\left\langle u_j,y \right\rangle} {h} \right )^{\varepsilon_j}\mathbbm{1}_{\mathcal{B}(y_0,h/2)}(y) \right |  \right . \\
\left . \vphantom{\sup_{u_1, \hdots, u_k, \varepsilon \in \{0,1\}^k} \left |(P-P_{n-1}) \prod_{j=1}^{k} \left (\frac{\left\langle u_j,y \right\rangle} {h} \right )^{\varepsilon_j}\mathbbm{1}_{B_x(h)}(y) \right |} \geq \frac{c_{d,k}f_{max}}{\sqrt{K'}} h^{d} \right ] \leq \frac{1}{2}\left ( \frac{1}{n} \right )^{\frac{k}{d}+1}.
\end{multline*}
     On the complement of the probability event mentioned just above, for a polynomial $S= \sum_{\alpha \in [0,k]^d | |\alpha| \leq k} a_{\alpha} y_{1:d}^\alpha$, we have
     \begin{align*}
     (P_{0,n-1}-P_0) S^2(y_{1:d}) \mathbbm{1}_{\mathcal{B}(h/2)}(y) & \geq - \sum_{ \alpha, \beta}\frac{c_{d,k}f_{max}}{\sqrt{K'}} |a_\alpha a_\beta| h^{d+|\alpha| + |\beta|} \\
     & \geq - \frac{c_{d,k}f_{max}}{\sqrt{K'}} h^d \|S_h\|_2^2.
     \end{align*}
       On the other hand, we may write, for all $r>0$ ,      
       \[
 \int_{\mathcal{B}(0,r)} S^2(y_{1:d})dy_1 \hdots dy_d \geq C_{d,k} r^d \|S_r\|^2_2,
 \]
  for some constant $C_{d,k}$. It follows that  
  \[
  P_0 S^2(y_{1:d}) \mathbbm{1}_{\mathcal{B}(h/2)}(y) \geq P_0 S^2(y_{1:d}) \mathbbm{1}_{B(7h/16)}(y_{1:d}) \geq c_{k,d} h^d f_{min} \| S_h \|_2^2
  ,
  \]
   according to Lemma \ref{Alem:fourre_tout_geom_model}. Then we may choose $K' = \kappa_{k,d} (f_{max}/f_{min})^2$, with $\kappa_{k,d}$ large enough so that  
       \[
       P_{0,n-1} S^2(x_{1:d}) \mathbbm{1}_{\mathcal{B}(h/2)}(y) \geq c_{k,d}f_{min} h^d \|S_h\|^2_2.
       \]
       
       The second inequality of Proposition \ref{polminoration} is derived the same way from Proposition \ref{poldeviations}, choosing $\varepsilon = \left ( 0, \hdots, 0 \right )$, $b=3h/2$ and $h \leq \tau_{min}/8$ so that $b \leq \tau_{min}/4$.
\end{proof}

\newpage
\section*{Appendix C: Minimax Lower Bounds}
\label{sec:appendix:lower_bounds}
\addtocounter{section}{1}
\setcounter{subsection}{0}

\subsection{Conditional Assouad's Lemma}\label{subsec:C1}
This section is dedicated to the proof of Lemma 7, reproduced below as Lemma \ref{conditional_assouad}.
\begin{Alem}[Conditional Assouad]
\label{conditional_assouad}
Let $m \geq 1$ be an integer and let 
$\left\{ \mathcal{Q}_\tau\right\}_{\tau\in \{0,1\}^m}$ be a family of $2^m$ submodels $\mathcal{Q}_\tau \subset \mathcal{Q}$.
Let $\left\{U_k \times U'_k\right\}_{1 \leq k \leq m}$ be a family of pairwise disjoint subsets of $\mathcal{X} \times \mathcal{X}'$, and $\mathcal{D}_{\tau,k}$ be subsets of $\mathcal{D}$. Assume that
for all $\tau \in \left\{ 0,1 \right\}^m$ and $1 \leq k \leq m$,
\begin{itemize}
\item for all $Q_\tau \in \mathcal{Q}_\tau$, $\theta_X(Q_\tau) \in \mathcal{D}_{\tau,k}$ on the event $\left\{ X \in U_k \right\}$;
\item for all $\theta \in \mathcal{D}_{\tau,k}$ and $\theta' \in \mathcal{D}_{\tau^k,k}$, $d(\theta,\theta')\geq \Delta$.
\end{itemize}
For all $\tau \in \left\{0,1 \right\}^m$, let $\overline{Q}_\tau \in \overline{Conv} (\mathcal{Q}_\tau)$, and write $\bar{\mu}_\tau$ and $\bar{\nu}_\tau$ for the marginal distributions of $\overline{Q}_\tau$ on $\mathcal{X}$ and $\mathcal{X}'$ respectively. Assume that if $(X,X')$ has distribution $\overline{Q}_\tau$, $X$ and $X'$ are independent conditionally on the event $\left\{(X,X')\in U_k \times U'_k \right\}$, and that
\begin{align*}
\min_{\mathclap{\substack{\tau \in \left\{ 0,1 \right\}^m \\ 1 \leq k \leq m}}}
~~
\left\{
\left( 
\int_{U_k} d \bar{\mu}_\tau \wedge d \bar{\mu}_{\tau^k}
\right)
\left( 
\int_{U'_k} d \bar{\nu}_\tau \wedge d \bar{\nu}_{\tau^k}
\right)
\right\}
\geq
1- \alpha.
\end{align*}
Then,
\begin{align*}
\inf_{\hat{\theta}} 
\sup_{Q \in \mathcal{Q}}
\E_Q
\left[
d
	\bigl( 
	\theta_X(Q)
	,
	\hat{\theta}(X,X')
	\bigr)
\right]
&\geq
m
\frac{\Delta}{2} 
(1- \alpha)
,
\end{align*}
where the infimum is taken over all the estimators $\hat{\theta} : \mathcal{X} \times \mathcal{X}' \rightarrow \mathcal{D}$.
\end{Alem}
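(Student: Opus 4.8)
The plan is to follow the classical testing-based proof of Assouad's Lemma, but to carry the conditioning on the event $\{X \in U_k\}$ through every step. First I would reduce to a Bayesian lower bound: fix the family $\{\overline{Q}_\tau\}_{\tau \in \{0,1\}^m}$ and note that for any estimator $\hat\theta$,
\begin{align*}
\sup_{Q \in \mathcal{Q}} \E_Q\bigl[ d(\theta_X(Q),\hat\theta)\bigr]
\geq
\frac{1}{2^m}\sum_{\tau \in \{0,1\}^m} \E_{\overline{Q}_\tau}\bigl[ d(\theta_X(\overline{Q}_\tau),\hat\theta)\bigr],
\end{align*}
using that $\overline{Conv}(\mathcal{Q}_\tau) \subset \overline{Conv}(\mathcal{Q})$ and that a sup dominates any average. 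Then, because the $U_k \times U_k'$ are pairwise disjoint, I restrict the loss to these events: $\E_{\overline{Q}_\tau}[d(\theta_X,\hat\theta)] \geq \sum_{k=1}^m \E_{\overline{Q}_\tau}[d(\theta_X,\hat\theta)\mathbbm{1}_{U_k \times U_k'}(X,X')]$, which lets me treat each coordinate $k$ separately and exhibits the factor $m$.

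Next, for a fixed $k$, I would pair up $\tau$ and $\tau^k$ and use the separation hypothesis. On the event $\{X \in U_k\}$ we have $\theta_X(\overline{Q}_\tau) \in \mathcal{D}_{\tau,k}$ and $\theta_X(\overline{Q}_{\tau^k}) \in \mathcal{D}_{\tau^k,k}$, and any two such points are at distance $\geq \Delta$. Hence, by the triangle inequality, for any value $\hat\theta(x,x')$ of the estimator, $d(\theta_X(\overline{Q}_\tau),\hat\theta) + d(\theta_X(\overline{Q}_{\tau^k}),\hat\theta) \geq \Delta$ on $U_k \times U_k'$. Summing the two contributions and using the conditional independence of $X$ and $X'$ under each $\overline{Q}_\tau$ (restricted to $U_k \times U_k'$, the restricted measure factorizes as $\bar\mu_\tau|_{U_k} \otimes \bar\nu_\tau|_{U_k'}$ up to normalization), the standard two-point testing bound gives
\begin{align*}
\E_{\overline{Q}_\tau}\bigl[d(\theta_X,\hat\theta)\mathbbm{1}_{U_k\times U_k'}\bigr]
+
\E_{\overline{Q}_{\tau^k}}\bigl[d(\theta_X,\hat\theta)\mathbbm{1}_{U_k\times U_k'}\bigr]
\geq
\Delta
\left(\int_{U_k} d\bar\mu_\tau \wedge d\bar\mu_{\tau^k}\right)
\left(\int_{U_k'} d\bar\nu_\tau \wedge d\bar\nu_{\tau^k}\right).
\end{align*}
This is the place where the argument genuinely departs from the textbook version, and I expect it to be the main obstacle: one must check carefully that restricting to the product set $U_k \times U_k'$ interacts correctly with the $L^1$-affinity, and that the conditional-independence hypothesis is exactly what is needed to split the affinity of the joint law into the product of the two marginal affinities over $U_k$ and $U_k'$.

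Finally I would average over $\tau$ and $k$. Averaging the pairwise bound over all $2^m$ values of $\tau$ (each pair $\{\tau,\tau^k\}$ counted twice) and over $k = 1,\dots,m$, then invoking the hypothesis that every factor $\bigl(\int_{U_k} d\bar\mu_\tau \wedge d\bar\mu_{\tau^k}\bigr)\bigl(\int_{U_k'} d\bar\nu_\tau \wedge d\bar\nu_{\tau^k}\bigr) \geq 1-\alpha$, yields
\begin{align*}
\frac{1}{2^m}\sum_{\tau}\sum_{k=1}^m \E_{\overline{Q}_\tau}\bigl[d(\theta_X,\hat\theta)\mathbbm{1}_{U_k\times U_k'}\bigr]
\geq
m\,\frac{\Delta}{2}(1-\alpha),
\end{align*}
and combining with the first reduction finishes the proof. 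The only genuinely delicate points are the measure-theoretic bookkeeping around restricting distributions to $U_k \times U_k'$ and the verification that conditional independence lets the joint affinity factor; everything else is the routine Assouad machinery.
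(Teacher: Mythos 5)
Your plan follows essentially the same route as the paper's proof: average over $\tau$, use the disjointness of the $U_k \times U_k'$ to split the loss and extract the factor $m$, pair $\tau$ with $\tau^k$, use the $\Delta$-separation together with the conditional-independence factorization over $U_k \times U_k'$ to lower bound each pair by $\Delta$ times the product of the two restricted affinities, then sum; the bookkeeping ($2^{-(m+1)}$ times $m2^m$ terms) and the final constant match.

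There is, however, one step that does not stand as written: the initial reduction, where you write $\E_{\overline{Q}_\tau}\bigl[d(\theta_X(\overline{Q}_\tau),\hat{\theta})\bigr]$ and justify it by ``a sup dominates any average''. The mixture $\overline{Q}_\tau$ need not belong to $\mathcal{Q}$, so $\theta_X(\overline{Q}_\tau)$ is not defined by the hypotheses, and in the intended application (mixtures over bumped manifolds) the parameter genuinely depends on the mixture component --- which is precisely why mixtures are introduced. What the sup does dominate is the average of the component risks $\int \E_{Q}\bigl[d(\theta_X(Q),\hat{\theta})\bigr]\,d\pi(Q)$, where the parameter varies with $Q$; this is not the expectation of a fixed loss under $\overline{Q}_\tau$, so your later claim that $\theta_X(\overline{Q}_\tau) \in \mathcal{D}_{\tau,k}$ on $\{X \in U_k\}$ is not licensed by the assumptions. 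The fix, which is exactly how the paper proceeds and which your plan absorbs with no structural change, is to bound first, for each fixed $Q_\tau \in \mathcal{Q}_\tau$, $d(\theta_X(Q_\tau),\hat{\theta}) \geq d(\hat{\theta},\mathcal{D}_{\tau,k})$ on $\{X \in U_k\}$: the right-hand side no longer depends on the component, so the inequality extends to $\overline{Q}_\tau$ by linearity, and the two-point step is then run with $d(\hat{\theta},\mathcal{D}_{\tau,k}) + d(\hat{\theta},\mathcal{D}_{\tau^k,k}) \geq \Delta$ instead of the triangle inequality at the (ill-defined) parameters of the mixtures. With that substitution, the factorization step you correctly single out as delicate --- where conditional independence on $U_k \times U_k'$ lets the expectation be written as an iterated integral against $\bar{\mu}_\tau$ and $\bar{\nu}_\tau$ before replacing each by the wedge --- and the final averaging go through as you describe.
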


\begin{proof}[Proof of Lemma \ref{conditional_assouad}]

The proof follows that of Lemma 2 in \cite{Yu97}. Let $\hat{\theta} = \hat{\theta}(X,X')$ be fixed.
For any family of $2^m$ distributions $\left\{Q_\tau\right\}_\tau \in \left\{\mathcal{Q}_\tau\right\}_{\tau}$, since the $U_k\times U'_k$'s are pairwise disjoint,
\begin{align*}
\sup_{Q \in \mathcal{Q}}
\E_Q
&\left[
d
	\bigl( 
	\theta_X(Q)
	,
	\hat{\theta}(X,X')
	\bigr)
\right]
\\
&\geq
\max_{\tau} \E_{Q_\tau} 
d(
\hat{\theta}
, 
\theta_X(Q_\tau)
)
\\
&\geq
\max_{\tau} \E_{Q_\tau} 
\sum_{k=1}^m 
d
\bigl( 
\hat{\theta}
,
\theta_X(Q_\tau)
\bigr)
\mathbbm{1}_{U_k \times U'_k}(X,X')
\\
&\geq
2^{-m}
\sum_\tau
\sum_{k=1}^m 
\E_{Q_\tau} 
d
\bigl( 
\hat{\theta}
,
\theta_X(Q_\tau)
\bigr)
\mathbbm{1}_{U_k \times U'_k}(X,X')
\\
&\geq
2^{-m}
\sum_\tau
\sum_{k=1}^m 
\E_{Q_\tau} 
d
\bigl( 
\hat{\theta}
,
\mathcal{D}_{\tau,k}
\bigr)
\mathbbm{1}_{U_k \times U'_k}(X,X')
\\
&=
\sum_{k=1}^m 
2^{-(m+1)}
\sum_\tau
\Bigl(
\E_{Q_\tau} 
d
\bigl( 
\hat{\theta}
,
\mathcal{D}_{\tau,k}
\bigr)
\mathbbm{1}_{U_k \times U'_k}(X,X')
\\
&\hspace{25ex}
+
\E_{Q_{\tau^k}} 
d
\bigl( 
\hat{\theta}
,
\mathcal{D}_{\tau^k,k}
\bigr)
\mathbbm{1}_{U_k \times U'_k}(X,X')
\Bigr).
\end{align*}
Since the previous inequality holds for all $Q_\tau\in \mathcal{Q}_\tau$, it extends to $\overline{Q}_\tau \in \overline{Conv}(\mathcal{Q}_\tau)$ by linearity.
Let us now lower bound each of the terms of the sum for fixed $\tau \in \left\{0,1\right\}^m$ and $1 \leq k \leq m$. By assumption, if $(X,X')$ has distribution $\overline{Q}_\tau$, then conditionally on $\left\{(X,X') \in U_k \times U'_k\right\}$, $X$ and $X'$ are independent. Therefore,
\begin{align*}
\E_{\overline{Q}_\tau} 
&d
\bigl( 
\hat{\theta}
,
\mathcal{D}_{\tau,k}
\bigr)
\mathbbm{1}_{U_k \times U'_k}(X,X')
+
\E_{\overline{Q}_{\tau^k}} 
d
\bigl( 
\hat{\theta}
,
\mathcal{D}_{\tau^k,k}
\bigr)
\mathbbm{1}_{U_k \times U'_k}(X,X')
\\
&\geq
\E_{\overline{Q}_\tau} 
d
\bigl( 
\hat{\theta}
,
\mathcal{D}_{\tau,k}
\bigr)
\mathbbm{1}_{U_k}(X)\mathbbm{1}_{U'_k}(X')
+
\E_{\overline{Q}_{\tau^k}} 
d
\bigl( 
\hat{\theta}
,
\mathcal{D}_{\tau^k,k}
\bigr)
\mathbbm{1}_{U_k}(X)\mathbbm{1}_{U'_k}(X')
\\
&=
\E_{\bar{\nu}_\tau}
\left[
\E_{\bar{\mu}_\tau}
\left(
d
\bigl( 
\hat{\theta}
,
\mathcal{D}_{\tau,k}
\bigr)
\mathbbm{1}_{U_k}(X)
\right)
\mathbbm{1}_{U'_k}(X')
\right]
\\
&\hspace{2em}
+
\E_{\bar{\nu}_{\tau^k}}
\left[
\E_{\bar{\mu}_{\tau^k}}
\left(
d
\bigl( 
\hat{\theta}
,
\mathcal{D}_{\tau^k,k}
\bigr)
\mathbbm{1}_{U_k}(X)
\right)
\mathbbm{1}_{U'_k}(X')
\right]
\\
&=
\int_{U_k}
\int_{U'_k}
d(\hat{\theta},\mathcal{D}_{\tau,k})
d \bar{\mu}_{\tau}(x)
d \bar{\nu}_{\tau}(x')
+
\int_{U_k}
\int_{U'_k}
d(\hat{\theta},\mathcal{D}_{\tau^k,k})
d \bar{\mu}_{\tau^k}(x)
d \bar{\nu}_{\tau^k}(x')
\\
&\geq
\int_{U_k}
\int_{U'_k}
\left(
d(\hat{\theta},\mathcal{D}_{\tau,k})
+
d(\hat{\theta},\mathcal{D}_{\tau^k,k})
\right)
d \bar{\mu}_{\tau} \wedge d \bar{\mu}_{\tau^k}(x)
d \bar{\nu}_{\tau} \wedge d \bar{\nu}_{\tau^k}(x')
\\
&\geq
\Delta
\left( 
\int_{U_k} d \bar{\mu}_\tau \wedge d \bar{\mu}_{\tau^k}
\right)
\left( 
\int_{U'_k} d \bar{\nu}_\tau \wedge d \bar{\nu}_{\tau^k}
\right)
\\
&\geq
\Delta(1-\alpha)
,
\end{align*}
where we used that 
$d(\hat{\theta},\mathcal{D}_{\tau,k})
+
d(\hat{\theta},\mathcal{D}_{\tau^k,k})
\geq
\Delta.
$
The result follows by summing the above bound  $\left|\left\{1,\ldots,m\right\}\times\left\{0,1\right\}^m \right|= m 2^m$ times.
\end{proof}

\subsection{Construction of Generic Hypotheses}
\label{subsec:generic_hypotheses}\label{subsec:C2}

%

Let $M_0^{(0)}$ be a $d$-dimensional $\mathcal{C}^\infty$-submanifold of $\R^D$ with reach greater than $1$ and such that it contains $\mathcal{B}_{\R^d \times \{0\}^{D-d}}(0,1/2).$ 
$M_0^{(0)}$ can be built for example by flattening smoothly a unit $d$-sphere in $\R^{d+1} \times \{0\}^{D-d-1}$.
Since $M_{0}^{(0)}$ is $\mathcal{C}^\infty$, the uniform probability distribution $P_0^{(0)}$ on $M_{0}^{(0)}$ belongs to 
$
\mathcal{P}^{k}_{1,\mathbf{L}^{(0)},1/V_0^{(0)},1/V_0^{(0)}}
$, for some $\mathbf{L}^{(0)}$ and $V_0^{(0)} = Vol(M_0^{(0)})$.

Let now $M_0 = (2\tau_{min}) M_{0}^{(0)}$ be the submanifold obtained from $M_{0}^{(0)}$ by homothecy.
By construction, and from Proposition \ref{statistical_model_stability}, we have 
\begin{align*}
\tau_{M_0}\geq 2\tau_{min},
\qquad
\mathcal{B}_{\R^d\times \{0\}^{D-d}}(0,\tau_{min}) \subset M_0,
\qquad
Vol(M_0) = C_d \tau_{min}^d,
\end{align*}
and the uniform probability distribution $P_0$ on $M_0$ satisfies 
\begin{align*}
P_0 \in \mathcal{P}^{k}_{2\tau_{min},\mathbf{L}/2,2f_{min},f_{max}/2},
\end{align*}
whenever
$L_{\perp}/2 \geq L_{\perp}^{(0)}/(2 \tau_{min})$, $\ldots$, $L_{k}/2 \geq L_k^{(0)}/(2\tau_{min})^{k-1}$, and provided that  $2f_{min}\leq
\bigl((2\tau_{min})^dV_0^{(0)}\bigr)^{-1} \leq f_{max}/2$. 
Note that $L_\perp^{(0)},$ $\ldots,L_k^{(0)}$, $Vol(M_0^{(0)})$ depend only on $d$ and $k$.
For this reason, all the lower bounds will be valid for 
$\tau_{min}{L_\perp},\ldots,\tau_{min}^{k-1}{L_k},(\tau_{min}^df_{min})^{-1}$ and ${\tau_{min}^d}{f_{max}}$ large enough to exceed the thresholds
$L_{\perp}^{(0)}/2,\ldots, L_k^{(0)}/2^{k-1}$, $2^dV_0^{(0)}$ and $(2^dV_0^{(0)})^{-1}$ respectively.


For $0 < \delta \leq  \tau_{min}/4$, let $x_1,\ldots,x_{m} \in M_0\cap \mathcal{B}(0,\tau_{min}/4)$ be a family of points such that
\begin{align*}
\mbox{for} \quad 1 \leq k \neq k' \leq m, \quad \norm{x_k - x_{k'}} \geq \delta.
\end{align*}
For instance, considering the family $\left\{ \bigl( l_1 \delta,\ldots,l_d \delta,0,\ldots,0 \bigr)\right\}_{l_i \in \mathbb{Z}, |l_i| \leq \lfloor \tau_{min}/(4\delta) \rfloor}$,
\begin{align*}
m \geq c_d \left(\frac{\tau_{min}}{\delta} \right)^d,
\end{align*} 
for some $c_d >0$.

We let $e \in \R^D$ denote the $(d+1)$th vector of the canonical basis. In particular, we have the orthogonal decomposition of the ambient space
\begin{align*}
\R^D = 
\bigl(\R^d \times \left\{ 0 \right\}^{D-d}\bigr)
+ 
span(e) 
+ 
\bigl(\left\{ 0 \right\}^{d+1} \times \R^{D-d-1}\bigr)
.
\end{align*}

Let $\phi:\R^D\rightarrow [0,1]$ be a smooth scalar map such that 
$\left.\phi\right|_{\mathcal{B}\left(0,\frac{1}{2}\right)} = 1
\text{ and }
\left.\phi\right|_{\mathcal{B}\left(0,1\right)^c} = 0
.$

Let $\Lambda_+>0$ and $1 \geq A_+>A_->0$ be real numbers to be chosen later. Let $\mathbf{\Lambda} = \left(\Lambda_1,\ldots,\Lambda_{m}\right)$ with entries $-\Lambda_+ \leq \Lambda_k \leq \Lambda_+$,
and $\mathbf{A} = \left(A_1,\ldots,A_{m}\right)$  with entries $A_- \leq A_k \leq A_+$.
For $z \in \R^D$, we write $z = (z_1,\ldots,z_D)$ for its coordinates in the canonical basis.
For all $\tau = \left(\tau_1,\ldots,\tau_m\right) \in \left\{0,1\right\}^m$, define the bump map as

\begin{equation}\label{multibump_definition}
\Phi^{\mathbf{\Lambda},\mathbf{A},i}_{\tau}(x) = x + \sum_{k=1}^m \phi\left(\frac{x-x_k}{\delta}\right) \left\{ \tau_k  A_k (x-x_k)_1^i + (1-\tau_k)\Lambda_k \right\} e.
\end{equation}
An analogous deformation map was considered in \cite{AamariLevrard2015}.
We let $P^{\mathbf{\Lambda},\mathbf{A},(i)}_{\tau}$ denote the pushforward distribution of $P_0$ by $\Phi^{\mathbf{\Lambda},\mathbf{A},(i)}_{\tau}$, and write $M^{\mathbf{\Lambda},\mathbf{A},(i)}_{\tau}$ for its support. Roughly speaking, $M^{\mathbf{\Lambda},\mathbf{A},i}_{\tau}$ consists of $m$ bumps at the $x_k$'s having different shapes (Figure \ref{fig:bumps_all}). If $\tau_k = 0$, the bump at $x_k$ is a symmetric plateau function and has height $\Lambda_k$. If $\tau_k = 1$, it fits the graph of the polynomial $A_k (x-x_k)_1^i$ locally.
\begin{figure}[h]
	\begin{center}
		\subfloat[Flat bump: $\tau_k = 0$.]{
			\includegraphics[width = 0.5\textwidth]{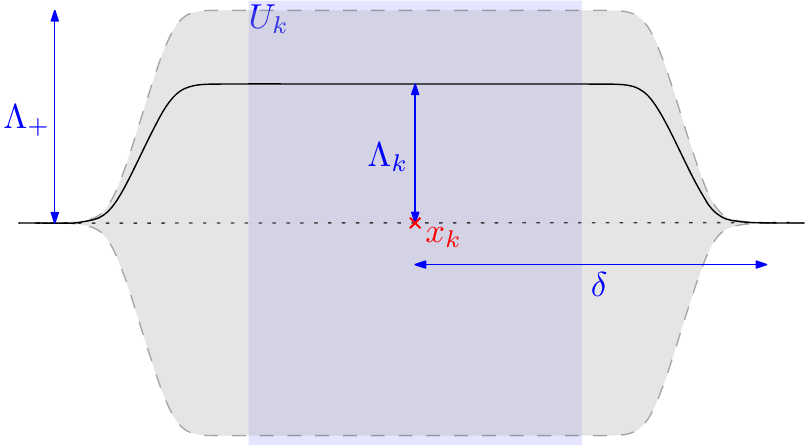}
			\label{subfig:bump_flat}
		}

		\subfloat[Linear bump: $\tau_k = 1$, $i = 1$.]{		
			\includegraphics[width = 0.5\textwidth]{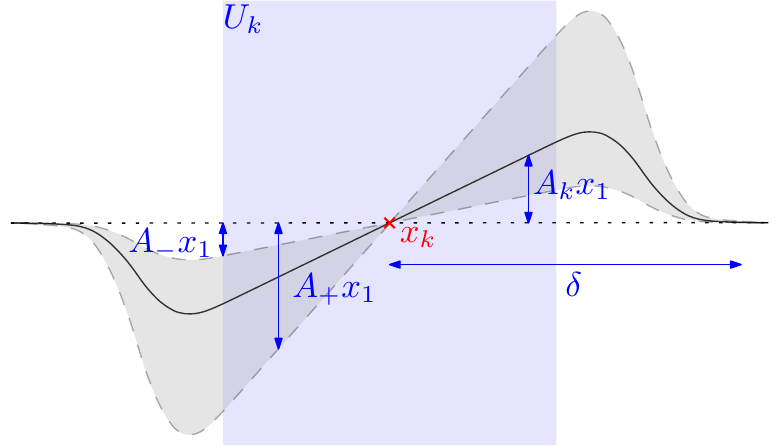}
			\label{subfig:bump_linear}
		}
		\subfloat[Quadratic bump: $\tau_k = 1$, $i = 2$.]{
			\includegraphics[width = 0.5\textwidth]{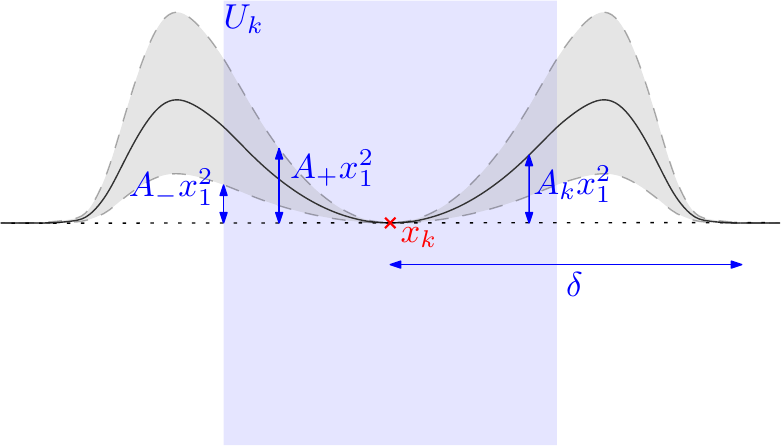}
			\label{subfig:bump_quadratic}
		}

	\end{center}	
	\caption{
		The three shapes of the bump map $\Phi^{\mathbf{\Lambda},\mathbf{A},i}_{\tau}$ around $x_k$.
	}
	\label{fig:bumps_all}
\end{figure}
The following Lemma \ref{differential_bounds_on_multiple_bumps} gives differential bounds and geometric properties of $\Phi^{\mathbf{\Lambda},\mathbf{A},i}_{\tau}$.
\begin{Alem}\label{differential_bounds_on_multiple_bumps}
There exists $c_{\phi,i} < 1$ such that if $A_+ \leq c_{\phi,i} \delta^{i-1}$ and $\Lambda_+ \leq c_{\phi,i} \delta$, then $\Phi^{\mathbf{\Lambda},\mathbf{A},i}_{\tau}$ is a global $\mathcal{C}^\infty$-diffeomorphism of $\R^D$ such that for all $1 \leq k \leq m$, $\Phi^{\mathbf{\Lambda},\mathbf{A},i}_{\tau}\left( \mathcal{B}(x_k,\delta ) \right) = \mathcal{B}(x_k,\delta ) $. Moreover,
\begin{align*}
\norm{I_D - d \Phi^{\mathbf{\Lambda},\mathbf{A},i}_{\tau}}_{op}
\leq
C_{i}
\left\{ \frac{A_+}{\delta^{1-i}} \right\}
\vee
\left\{ \frac{\Lambda_+}{\delta} \right\},
\end{align*}
and for $j \geq 2$,
\begin{align*}
\norm{d^j \Phi^{\mathbf{\Lambda},\mathbf{A},i}_{\tau}}_{op}
\leq
C_{i,j}
\left\{ \frac{A_+}{\delta^{j-i}} \right\}
\vee
\left\{ \frac{\Lambda_+}{\delta^{j}} \right\}
.
\end{align*}
\end{Alem}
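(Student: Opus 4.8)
The plan is to unwind the definition \eqref{multibump_definition} of $\Phi = \Phi^{\mathbf{\Lambda},\mathbf{A},i}_\tau$ and estimate each term directly. Write $\Phi(x) = x + F(x)e$, where $F(x) = \sum_{k=1}^m \phi\bigl(\tfrac{x-x_k}{\delta}\bigr)\bigl\{\tau_k A_k (x-x_k)_1^i + (1-\tau_k)\Lambda_k\bigr\}$. The first thing I would record is that the bump functions $\phi\bigl(\tfrac{\cdot - x_k}{\delta}\bigr)$ have pairwise disjoint supports: since the $x_k$'s are $\delta$-separated and $\phi$ is supported in $\mathcal{B}(0,1)$, the support of the $k$th summand lies in $\mathcal{B}(x_k,\delta)$, so at any point $x$ at most one term of the sum is nonzero. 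Hence $F$ and all its derivatives factor through a single $k$, and on $\mathcal{B}(x_k,\delta)$ the last coordinates are left untouched, which already gives $\Phi\bigl(\mathcal{B}(x_k,\delta)\bigr) = \mathcal{B}(x_k,\delta)$ once we know $\Phi$ is a diffeomorphism fixing the sphere $\partial\mathcal{B}(x_k,\delta)$ (on that sphere $\phi = 0$, so $F = 0$ and $\Phi = \mathrm{id}$).

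Next I would compute the differentials of $F$ by the Leibniz/chain rule on a single term $g_k(x) = \phi\bigl(\tfrac{x-x_k}{\delta}\bigr) h_k(x)$, with $h_k(x) = \tau_k A_k (x-x_k)_1^i + (1-\tau_k)\Lambda_k$. Each differentiation of the $\phi$ factor costs a factor $1/\delta$ and is bounded by $\|\phi\|_{\mathcal{C}^j}$, a constant depending only on $j$; the factor $h_k$ contributes, on $\mathcal{B}(x_k,\delta)$, a term of size at most $A_+ \delta^{i-\ell}$ after $\ell$ derivatives when $\tau_k=1$ (using $|(x-x_k)_1|\leq\delta$), and a term of size at most $\Lambda_+$ (with zero derivatives) when $\tau_k = 0$. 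Collecting the worst contribution over the Leibniz expansion of $d^j g_k$ gives $\|d^j F\|_{op}\leq C_{i,j}\bigl(\tfrac{A_+}{\delta^{j-i}}\vee\tfrac{\Lambda_+}{\delta^j}\bigr)$ for $j\geq 1$ (and the $j=1$ case reads $\|dF\|_{op}\le C_i(\tfrac{A_+}{\delta^{1-i}}\vee\tfrac{\Lambda_+}{\delta})$). Since $d^j\Phi = d^j(\mathrm{id}) + (d^jF)\otimes e$, and $\|e\|=1$, this yields exactly the stated bounds $\|I_D - d\Phi\|_{op}\le C_i(\tfrac{A_+}{\delta^{1-i}}\vee\tfrac{\Lambda_+}{\delta})$ and $\|d^j\Phi\|_{op}\le C_{i,j}(\tfrac{A_+}{\delta^{j-i}}\vee\tfrac{\Lambda_+}{\delta^j})$ for $j\geq 2$.

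It remains to show $\Phi$ is a global $\mathcal{C}^\infty$-diffeomorphism of $\R^D$. Injectivity: if $\Phi(x)=\Phi(y)$ then comparing all coordinates except the $(d+1)$th gives $x=y$ unless both points lie in the common support region; comparing the $(d+1)$th coordinate gives $x_{d+1}+F(x) = y_{d+1}+F(y)$, and since by the first-order bound $\|dF\|_{op} \le C_i(\tfrac{A_+}{\delta^{1-i}}\vee\tfrac{\Lambda_+}{\delta}) \le C_i c_{\phi,i}$, choosing $c_{\phi,i}$ small makes $F$ a contraction in the $x_{d+1}$ variable, forcing $x=y$; this is also what makes $d\Phi = I_D + (dF)\otimes e$ invertible everywhere (its determinant is $1 + \partial_{d+1}F$, bounded away from $0$), so by the inverse function theorem and properness (outside a compact set $\Phi = \mathrm{id}$) $\Phi$ is a $\mathcal{C}^\infty$-diffeomorphism onto $\R^D$. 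The main obstacle — really the only non-mechanical point — is getting the bookkeeping in the Leibniz expansion right so that the exponents of $\delta$ come out as claimed uniformly over $\tau$, and pinning down how small $c_{\phi,i}$ must be (depending only on $\|\phi\|_{\mathcal{C}^1}$ and $i$) to simultaneously guarantee the contraction property and $\Phi\bigl(\mathcal{B}(x_k,\delta)\bigr)=\mathcal{B}(x_k,\delta)$; both are elementary once the disjoint-support observation is in place.
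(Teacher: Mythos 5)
The paper's own ``proof'' of this lemma is a one-line appeal to the chain rule (with a pointer to Lemma 11 of \cite{AamariLevrard2015}), so supplying the details as you do is exactly the intended route, and most of your argument is the right one: the Leibniz bookkeeping (each derivative hitting $\phi(\tfrac{\cdot-x_k}{\delta})$ costs $\delta^{-1}$, each derivative hitting $A_k(x-x_k)_1^i$ trades a factor $\delta$ for a constant, the $\Lambda_k$ term only ever contributes $\Lambda_+\delta^{-j}$) gives precisely the stated exponents; $d\Phi=I_D+e\otimes dF$ is a rank-one perturbation with determinant $1+\partial_{d+1}F$ bounded away from $0$, strict monotonicity in the $e$-coordinate gives injectivity, and identity-outside-a-compact-set plus the inverse function theorem gives the global $\mathcal{C}^\infty$-diffeomorphism.

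There is, however, one genuine flaw: your opening claim that the $m$ summands have pairwise disjoint supports does not follow from the construction. The $k$-th summand is supported in $\mathcal{B}(x_k,\delta)$, while the centers are only required to satisfy $\norm{x_k-x_{k'}}\geq\delta$ (the grid example has adjacent centers at distance exactly $\delta$), so neighboring supports overlap --- indeed on the overlap both bumps can equal $1$. For the differential bounds and the diffeomorphism property this is harmless: a $\delta$-separated family has at most a dimension-dependent number (at most $3^d$, by a volume argument on the disjoint balls $\mathcal{B}(x_k,\delta/2)$) of active bumps at any point, so your estimates survive with constants also depending on $d$. But your proof of $\Phi^{\mathbf{\Lambda},\mathbf{A},i}_{\tau}(\mathcal{B}(x_k,\delta))=\mathcal{B}(x_k,\delta)$ really uses $F\equiv 0$ on $\partial\mathcal{B}(x_k,\delta)$, which fails when a neighboring bump reaches into that sphere; in fact the set identity itself can fail in the overlapping regime: take $x$ on the segment from $x_k$ to an adjacent $x_{k'}$ with $\norm{x-x_k}=(1-\epsilon)\delta$, so that $\norm{x-x_{k'}}\leq\delta/2$ and the $k'$-bump translates $x$ by the full amount $\Lambda_{k'}$ along $e$; since $x-x_k\perp e$, the image has distance $\sqrt{(1-\epsilon)^2\delta^2+\Lambda_{k'}^2}>\delta$ from $x_k$ for $\epsilon$ small. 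So you must either record the non-interaction convention that is clearly intended (separation $2\delta$, or bumps supported in $\mathcal{B}(x_k,\delta/2)$) --- this is also what the subsequent Lemma \ref{Alem:hypotheses_properties} uses when it treats $\Phi^{\mathbf{\Lambda},\mathbf{A},i}_{\tau}$ as a pure translation or a graph over $\mathcal{B}(x_k,\delta/2)$ --- or replace the ball invariance by the invariance of the cylinders $\mathcal{B}_{\R^d\times\{0\}^{D-d}}(x_k,\delta)+\mathcal{B}_{span(e)}(0,\tau_{min}/2)$ actually used there, which only needs that the displacement is along $e$ and of size at most $\Lambda_++A_+\delta^i$. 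Finally, the step ``a diffeomorphism fixing $\partial\mathcal{B}(x_k,\delta)$ pointwise maps the ball onto itself'' deserves its one-line justification (the image of the open ball is connected, avoids the sphere by injectivity, and contains an interior point since $\norm{\Phi(x_k)-x_k}<\delta$; apply the same to $\Phi^{-1}$), which your sketch asserts but does not give.
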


\begin{proof}[Proof of Lemma \ref{differential_bounds_on_multiple_bumps}]
Follows straightforwardly from chain rule, similarly to Lemma 11 in \cite{AamariLevrard2015}.
\end{proof}

\begin{Alem}
\label{Alem:generic_hypotheses_properties}
If $\tau_{min}{L_\perp},\ldots,\tau_{min}^{k-1}{L_k},({\tau_{min}^d}f_{min})^{-1}$ and ${\tau_{min}^d}{f_{max}}$ are large enough (depending only on $d$ and $k$),
then provided that $\Lambda_+ \vee A_+\delta^i \leq c_{k,d,\tau_{min}} \delta^k$,
for all $\tau \in \{0,1\}^m$, 
$P^{\mathbf{\Lambda},\mathbf{A},i}_{\tau} \in \mathcal{P}^{k}_{\tau_{min},\mathbf{L},f_{min},f_{max}}
$
\end{Alem}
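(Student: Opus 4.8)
\textbf{Proof proposal for Lemma \ref{Alem:generic_hypotheses_properties}.}

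The plan is to apply the stability result of Proposition \ref{statistical_model_stability} to the diffeomorphism $\Phi^{\mathbf{\Lambda},\mathbf{A},i}_{\tau}$ acting on the base distribution $P_0$ constructed in Section \ref{subsec:C2}. Recall that $P_0 \in \mathcal{P}^{k}_{2\tau_{min},\mathbf{L}/2,2f_{min},f_{max}/2}$ under the stated largeness assumptions on $\tau_{min}L_\perp,\ldots,\tau_{min}^{k-1}L_k,(\tau_{min}^d f_{min})^{-1},\tau_{min}^d f_{max}$. So it suffices to check that $\Phi = \Phi^{\mathbf{\Lambda},\mathbf{A},i}_{\tau}$ has operator norms $\norm{I_D - d\Phi}_{op}, \norm{d^2 \Phi}_{op}, \ldots, \norm{d^k \Phi}_{op}$ small enough for Proposition \ref{statistical_model_stability} to apply, since then $P^{\mathbf{\Lambda},\mathbf{A},i}_\tau = \Phi_* P_0 \in \mathcal{P}^k_{\tau_{min},\mathbf{L},f_{min},f_{max}}$ (the parameters $2\tau_{min}, \mathbf{L}/2, 2f_{min}, f_{max}/2$ getting degraded by at most a factor $2$ back to $\tau_{min}, \mathbf{L}, f_{min}, f_{max}$).

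First I would invoke Lemma \ref{differential_bounds_on_multiple_bumps}: provided $A_+ \leq c_{\phi,i}\delta^{i-1}$ and $\Lambda_+ \leq c_{\phi,i}\delta$, the map $\Phi$ is a global $\mathcal{C}^\infty$-diffeomorphism with
\begin{align*}
\norm{I_D - d\Phi}_{op} \leq C_i \left( \frac{A_+}{\delta^{1-i}} \vee \frac{\Lambda_+}{\delta} \right),
\qquad
\norm{d^j \Phi}_{op} \leq C_{i,j}\left( \frac{A_+}{\delta^{j-i}} \vee \frac{\Lambda_+}{\delta^j}\right)
\text{ for } 2 \leq j \leq k.
\end{align*}
Next, under the hypothesis $\Lambda_+ \vee A_+ \delta^i \leq c_{k,d,\tau_{min}}\delta^k$, one has $A_+/\delta^{1-i} = (A_+\delta^i)/\delta \leq c_{k,d,\tau_{min}}\delta^{k-1}$ and $\Lambda_+/\delta \leq c_{k,d,\tau_{min}}\delta^{k-1}$, and more generally $A_+/\delta^{j-i} \leq c_{k,d,\tau_{min}}\delta^{k-j}$ and $\Lambda_+/\delta^j \leq c_{k,d,\tau_{min}}\delta^{k-j}$ for each $j$. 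Since $\delta \leq \tau_{min}/4$ is bounded and $k \geq j$, all these quantities are bounded by $c_{k,d,\tau_{min}}$ times a fixed power of $\tau_{min}$, hence can be made as small as desired by choosing the constant $c_{k,d,\tau_{min}}$ small enough (depending only on $d$, $k$, $\tau_{min}$ and the $C_i, C_{i,j}$, which themselves depend only on $i \leq 2$, $j \leq k$, i.e. on $d$ and $k$). In particular they fall below the threshold required by Proposition \ref{statistical_model_stability}, and also below $c_{\phi,i}\delta^{i-1}$ and $c_{\phi,i}\delta$ so that Lemma \ref{differential_bounds_on_multiple_bumps} applies in the first place.

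The only mild subtlety — and the step I would be most careful about — is bookkeeping the constants: one must ensure that the smallness threshold in Proposition \ref{statistical_model_stability} depends only on $\mathbf{L}$ (equivalently on $\tau_{min}$, $d$, $k$ once the largeness assumptions on the $L_i$ are in force), so that it is legitimate to absorb it into the single constant $c_{k,d,\tau_{min}}$ appearing in the statement; and one must double-check that the factor-$2$ slack built into $P_0 \in \mathcal{P}^k_{2\tau_{min},\mathbf{L}/2,2f_{min},f_{max}/2}$ is exactly what Proposition \ref{statistical_model_stability} consumes (it maps $\mathcal{P}^k_{\tau',\mathbf{L}',f'_{min},f'_{max}}$ into $\mathcal{P}^k_{\tau'/2,2\mathbf{L}',f'_{min}/2,2f'_{max}}$), so the composition lands precisely in $\mathcal{P}^{k}_{\tau_{min},\mathbf{L},f_{min},f_{max}}$. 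With these verifications the lemma follows immediately.
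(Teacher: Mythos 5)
Your proposal is correct and follows essentially the same route as the paper: the paper's proof is exactly the application of the model-stability result (Proposition \ref{statistical_model_stability}) to $P_0 \in \mathcal{P}^{k}_{2\tau_{min},\mathbf{L}/2,2f_{min},f_{max}/2}$ via the map $\Phi^{\mathbf{\Lambda},\mathbf{A},i}_{\tau}$, whose differential bounds come from Lemma \ref{differential_bounds_on_multiple_bumps}. Your explicit verification that the condition $\Lambda_+ \vee A_+\delta^i \leq c_{k,d,\tau_{min}}\delta^k$ makes those differential norms fall below the required thresholds is precisely the bookkeeping the paper leaves implicit.
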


\begin{proof}[Proof of Lemma \ref{Alem:generic_hypotheses_properties}]
Follows using the stability of the model Lemma \ref{statistical_model_stability} applied to the distribution 
$
P_0 \in \mathcal{P}^{k}_{2\tau_{min},\mathbf{L}/2,2f_{min},f_{max}/2}
$
and the map $\Phi^{\mathbf{\Lambda},\mathbf{A},i}_{\tau}$,  of which differential bounds are asserted by Lemma \ref{differential_bounds_on_multiple_bumps}.
\end{proof}

\subsection{Hypotheses for Tangent Space and Curvature}\label{subsec:C3}

\subsubsection{Proof of Lemma 8}

This section is devoted to the proof of Lemma 8, for which we first derive two slightly more general results, with parameters to be tuned later.
The proof is split into two intermediate results Lemma \ref{Alem:hypotheses_properties} and Lemma \ref{Alem:hypotheses_TV_estimates}.

Let us write $\bar{Q}^{(i)}_{\tau,n}$ for the mixture distribution on $(\R^D)^n$ defined by
\begin{align}\label{eqn:mixture_definition}
\bar{Q}^{(i)}_{\tau,n}
&=
\int_{[-\Lambda_+,\Lambda_+]^m}
\int_{[A_-,A_+]^m}
\left( P^{\mathbf{\Lambda},\mathbf{A},(i)}_{\tau} \right)^{\otimes n}
\frac{d \mathbf{A}}{\left(A_+ - A_-\right)^m}
\frac{d \mathbf{\Lambda}}{\left(2 \Lambda_+\right)^m}.
\end{align}
Although the probability distribution $\bar{Q}^{(i)}_{\tau,n}$ depends on $A_-,A_+$ and $\Lambda_+$, we omit this dependency for the sake of compactness.
Another way to define $\bar{Q}^{(i)}_{\tau,n}$ is the following: draw uniformly $\mathbf{\Lambda}$ in $[-\Lambda_+,\Lambda_+]^m$ and $\mathbf{A}$ in $[A_-,A_+]^m$, and given $\left(\mathbf{\Lambda},\mathbf{A}\right)$, take $Z_i = \Phi^{\mathbf{\Lambda},\mathbf{A},i}_{\tau}\left(Y_i\right)$, where $Y_1, \ldots, Y_n$ is an i.i.d. $n$-sample with common distribution $P_0$ on $M_0$.
Then $\left(Z_1, \ldots, Z_n\right)$ has distribution $\bar{Q}^{(i)}_{\tau,n}$.

\begin{Alem}\label{Alem:hypotheses_properties}
Assume that the conditions of Lemma \ref{differential_bounds_on_multiple_bumps} hold, and let
\begin{align*}
U_k = 
\mathcal{B}_{\R^d \times \{0\}^{D-d}} \left(x_k,\delta/2\right)
+
\mathcal{B}_{span(e)}(0, {\tau_{min}}/{2}),
\end{align*}
and
\begin{align*}
U'_k = 
\left( 
\R^D 
\setminus 
\left\{
\mathcal{B}_{\R^d \times \{0\}^{D-d}} \left(x_k,\delta\right)
+
\mathcal{B}_{span(e)}(0, {\tau_{min}}/{2})
\right\}
\right)^{n-1}
.
\end{align*} 
Then the sets $U_k \times U'_k$ are pairwise disjoint, 
$\bar{Q}^{(i)}_{\tau,n} \in \overline{Conv} \bigl(\bigl( \mathcal{P}^{(i)}_{\tau} \bigr)^{\otimes n} \bigr)$, 
and if $\left(Z_1,\ldots,Z_n\right) = \left(Z_1,Z_{2:n}\right)$ has distribution $\bar{Q}^{(i)}_{\tau,n}$, $Z_1$ and $Z_{2:d}$ are independent conditionally on the event $\left\{\left(Z_1,Z_{2:n}\right) \in U_k \times U'_k\right\}$.

Moreover, if $\left(X_1,\ldots,X_n\right)$ has distribution $\bigl({P}^{\mathbf{\Lambda},\mathbf{A},(i)}_{\tau}\bigr)^{\otimes n}$ (with fixed $\mathbf{A}$ and $\mathbf{\Lambda}$), then on the event $\left\{X_1 \in U_k \right\}$, we have:
\begin{itemize}
\item  if $\tau_k = 0$,
\begin{align*}
T_{X_1} {{M}^{\mathbf{\Lambda},\mathbf{A},(i)}_{\tau}}  = \R^d \times \left\{0\right\}^{D-d}
\text{\quad , \quad }
\norm{
II_{X_1}^{{{M}^{\mathbf{\Lambda},\mathbf{A},(i)}_{\tau}} }
\circ
\pi_{T_{X_1} {{M}^{\mathbf{\Lambda},\mathbf{A},(i)}_{\tau}}}
}_{op} = 0
\end{align*}
and $d_H\bigl(M_0, {{M}^{\mathbf{\Lambda},\mathbf{A},(i)}_{\tau}} \bigr) \geq |\Lambda_k|$. 
\item if $\tau_k = 1$,
\begin{itemize}
\item for $i = 1$:
$\angle \left(T_{X_1} {{M}^{\mathbf{\Lambda},\mathbf{A},(1)}_{\tau}} , \R^d \times \left\{0\right\}^{D-d} \right) \geq A_-/2$.
\item for $i = 2$:
$\norm{II_{X_1}^{{{M}^{\mathbf{\Lambda},\mathbf{A},(2)}_{\tau}}}
\circ
\pi_{T_{X_1} {{M}^{\mathbf{\Lambda},\mathbf{A},(2)}_{\tau}}
 }}_{op} \geq A_-/2$. 
\end{itemize}
\end{itemize}
\end{Alem}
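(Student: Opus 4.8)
The plan is to establish each assertion of Lemma~\ref{Alem:hypotheses_properties} by unwinding the explicit construction of the bump map $\Phi^{\mathbf{\Lambda},\mathbf{A},i}_{\tau}$ from \eqref{multibump_definition} and invoking the differential estimates of Lemma~\ref{differential_bounds_on_multiple_bumps}. First I would check the disjointness of the sets $U_k\times U_k'$: by construction $U_k$ is a cylinder over the ball $\mathcal{B}_{\R^d\times\{0\}^{D-d}}(x_k,\delta/2)$, and since $\|x_k-x_{k'}\|\geq\delta$ for $k\neq k'$, the first-coordinate balls $\mathcal{B}(x_k,\delta/2)$ and $\mathcal{B}(x_{k'},\delta/2)$ are disjoint; moreover $U_{k'}'$ explicitly excludes the larger cylinder over $\mathcal{B}(x_{k'},\delta)\supset\mathcal{B}(x_{k'},\delta/2)$, so if $z_1\in U_k$ then $z_1\notin U_{k'}$ is not quite what is needed --- rather one observes that $U_k\times U_k'$ and $U_{k'}\times U_{k'}'$ are disjoint because a point $z_1\in U_k$ lies in $\mathcal{B}(x_k,\delta/2)$ in its first $d$ coordinates, hence in the excluded region of $U_{k'}'$ (as $k$ appears in the $(n-1)$-fold product only through the first component), forcing $z_{2:n}\notin U_{k'}'$ whenever $z_1\in U_k$. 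A careful bookkeeping of indices settles this.

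Next I would verify the mixture membership and conditional independence. That $\bar Q^{(i)}_{\tau,n}\in\overline{Conv}((\mathcal P^{(i)}_\tau)^{\otimes n})$ is immediate from the defining integral \eqref{eqn:mixture_definition} once we know each $P^{\mathbf{\Lambda},\mathbf{A},(i)}_\tau\in\mathcal P^{(i)}_\tau\subset\mathcal P^k$, which is exactly Lemma~\ref{Alem:generic_hypotheses_properties} under the stated smallness of $\Lambda_+\vee A_+\delta^i$. For the conditional independence: under $\bar Q^{(i)}_{\tau,n}$ the sample is $Z_i=\Phi^{\mathbf{\Lambda},\mathbf{A},i}_\tau(Y_i)$ with $(\mathbf{\Lambda},\mathbf A)$ random and $Y_i$ i.i.d.\ from $P_0$. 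The key point is that on the event $\{Z_1\in U_k\}$ the value of $Z_1$ depends on $(\mathbf\Lambda,\mathbf A)$ only through the single pair $(\Lambda_k,A_k)$ (because the bumps at $x_{k'}$, $k'\neq k$, are supported in $\mathcal B(x_{k'},\delta)$, disjoint from the relevant neighborhood of $x_k$), while on $\{Z_{2:n}\in U_k'\}$ none of the $Z_j$, $j\geq 2$, falls in $\mathcal B(x_k,\delta)+\mathcal B_{span(e)}(0,\tau_{min}/2)$, so $Z_{2:n}$ depends on $(\mathbf\Lambda,\mathbf A)$ only through the coordinates $(\Lambda_{k'},A_{k'})_{k'\neq k}$. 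Since the mixing measure is a product over the $m$ coordinates, conditioning on $\{Z_1\in U_k,\,Z_{2:n}\in U_k'\}$ factorizes the joint law into the $Z_1$-part and the $Z_{2:n}$-part, giving independence. This factorization-of-the-prior argument is the conceptual heart of the lemma.

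Finally I would compute the geometric quantities on the event $\{X_1\in U_k\}$ for a fixed $(\mathbf\Lambda,\mathbf A)$. Near $x_k$, the submanifold $M^{\mathbf\Lambda,\mathbf A,(i)}_\tau$ is the graph over $\R^d\times\{0\}^{D-d}$ (intersected with $M_0$, which is flat there) of the function $x\mapsto \phi((x-x_k)/\delta)\{\tau_k A_k(x-x_k)_1^i+(1-\tau_k)\Lambda_k\}e$; on the smaller ball $\mathcal B(x_k,\delta/2)$ where $\phi\equiv1$ this is exactly $x\mapsto \{\tau_k A_k(x-x_k)_1^i+(1-\tau_k)\Lambda_k\}e$. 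If $\tau_k=0$ this is the constant $\Lambda_k e$, so locally $M$ is an affine copy of $\R^d\times\{0\}^{D-d}$ translated by $\Lambda_k e$: its tangent space is $\R^d\times\{0\}^{D-d}$ and its second fundamental form vanishes; the Hausdorff-distance bound $d_H(M_0,M^{\mathbf\Lambda,\mathbf A,(i)}_\tau)\geq|\Lambda_k|$ follows because the point $x_k+\Lambda_k e$ lies on $M^{\mathbf\Lambda,\mathbf A,(i)}_\tau$ at distance exactly $|\Lambda_k|$ from $M_0$ (using $\tau_{M_0}\geq 2\tau_{min}$ to guarantee the nearest point of $M_0$ is $x_k$). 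If $\tau_k=1$ and $i=1$, the graph of $x\mapsto A_k(x-x_k)_1 e$ has tangent space spanned by $\R^{d-1}$ directions together with $\partial_1+A_k e$; the principal angle with $\R^d\times\{0\}^{D-d}$ is $\arctan(A_k)\geq A_k/2\geq A_-/2$ for $A_k\leq A_+\leq 1$. If $\tau_k=1$ and $i=2$, at $x_1=x_k$ the first derivative of $A_k(x-x_k)_1^2$ vanishes so the tangent space is $\R^d\times\{0\}^{D-d}$, while the second derivative in the $e_1$ direction is $2A_k$, giving $\|II\|_{op}\geq 2A_k\geq A_-\geq A_-/2$ (or one needs the $1/2$ to absorb the slight distortion when $x_1\neq x_k$ within $U_k$, handled by a Taylor estimate). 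I expect the main obstacle to be the careful verification that these local computations are unaffected by (a) the cutoff $\phi$ and (b) the fact that within $U_k$ the base point $x_1$ ranges over a whole ball rather than sitting at $x_k$ --- both require a uniform estimate showing the derivative bounds degrade by at most a factor $2$, which is where the constant $c_{\phi,i}$ in Lemma~\ref{differential_bounds_on_multiple_bumps} and the choice $\delta\leq\tau_{min}/4$ are used.
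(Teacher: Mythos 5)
Your proposal follows essentially the same route as the paper's proof: the convex-hull membership is read off from the defining mixture integral; conditional independence comes from observing that, on the event, $Z_1$ is driven only by $(Y_1,\Lambda_k,A_k)$ while $Z_{2:n}$ is driven by the remaining $Y_j$'s and bump parameters, together with the product structure of the mixing measure; and the geometric claims follow from the explicit form of $\Phi^{\mathbf{\Lambda},\mathbf{A},i}_{\tau}$ on $\mathcal{B}(x_k,\delta/2)$ where $\phi\equiv 1$ (translation by $\Lambda_k e$ when $\tau_k=0$, linear graph for $i=1$, parabola for $i=2$), exactly as in the paper. Your argument for $d_H\bigl(M_0,M^{\mathbf{\Lambda},\mathbf{A},(i)}_{\tau}\bigr)\geq|\Lambda_k|$ via the point $x_k+\Lambda_k e$ is correct and is in fact more explicit than the paper, whose proof leaves that claim implicit.

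The one step that does not work as written is your disjointness argument: membership of $z_1$ in $U_k$ cannot ``force $z_{2:n}\notin U_{k'}'$'' --- the set $U_{k'}'$ constrains only $z_{2:n}$, so that implication is a non sequitur. The observation you discarded is the right one: points of $U_k$ have their first $d$ coordinates in $\mathcal{B}(x_k,\delta/2)$ and the $x_k$'s are $\delta$-separated, so $U_k\cap U_{k'}=\emptyset$ (up to the degenerate touching case $\|x_k-x_{k'}\|=\delta$, removed by taking the separation strict or the balls open), and $U_k\cap U_{k'}=\emptyset$ already gives $(U_k\times U_k')\cap(U_{k'}\times U_{k'}')=\emptyset$. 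Two further imprecisions are harmless and mirror the paper's own level of detail: the angle in the model is $\angle(U,V)=\norm{\pi_U-\pi_V}_{op}=A_k/\sqrt{1+A_k^2}$ rather than $\arctan A_k$ (both are $\geq A_k/2$ for $A_k\leq 1$), and the claim that neighbouring bumps are supported away from $\mathcal{B}(x_k,\delta/2)$ holds only if the $x_k$'s are taken, say, $2\delta$-separated, which is the separation implicitly needed by the paper's argument as well.
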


\begin{proof}[Proof of Lemma \ref{Alem:hypotheses_properties}]
It is clear from the definition (\ref{eqn:mixture_definition}) that $\bar{Q}^{(i)}_{\tau,n} \in \overline{Conv} \bigl( \bigl( \mathcal{P}^{(i)}_{\tau} \bigr)^{\otimes n}\bigr)$. By construction of the $\Phi^{\mathbf{\Lambda},\mathbf{A},i}_{\tau}$'s, these maps leave the sets
\begin{align*}
\mathcal{B}_{\R^d \times \{0\}^{D-d}} \left(x_k,\delta\right)
+
\mathcal{B}_{span(e)}(0, {\tau_{min}}/{2})
\end{align*}
unchanged for all $\mathbf{\Lambda},\mathbf{L}$. Therefore,  on the event $\left\{ \left(Z_1,Z_{2:n}\right) \in U_k \times U'_k\right\}$, one can write $Z_1$ only as a function of $X_1,\Lambda_k,A_k$, and $Z_{2:n}$ as a function of the rest of the $X_j$'s,$\Lambda_{k}$'s and $A_{k}$'s. Therefore, $Z_1$ and $Z_{2:n}$ are independent.

We now focus on the geometric statements. For this, we fix a deterministic point $z = \Phi^{\mathbf{\Lambda},\mathbf{A},(i)}_{\tau}(x_0) \in U_k \cap {M}^{\mathbf{\Lambda},\mathbf{A},(i)}_{\tau}$. By construction, one necessarily has $x_0 \in M_0 \cap \mathcal{B}(x_k,\delta/2)$.
\begin{itemize}
\item  If $\tau_k = 0$, locally around $x_0$, $\Phi^{\mathbf{\Lambda},\mathbf{A},(1)}_{\tau}$ is the translation of vector $\Lambda_k e$. Therefore, since $M_0$ satisfies $T_{x_0} M_0  = \R^d \times \left\{0\right\}^{D-d}$ and $II_{x_0}^{M_0} = 0$, we have
\begin{align*}
T_{z} {{M}^{\mathbf{\Lambda},\mathbf{A},(i)}_{\tau}}  = \R^d \times \left\{0\right\}^{D-d}
\text{\quad and \quad }
\norm{II_{z}^{{{M}^{\mathbf{\Lambda},\mathbf{A},(i)}_{\tau}} }
\circ
\pi_{ T_{z} {M}^{\mathbf{\Lambda},\mathbf{A},(i)}_{\tau}  }
}_{op} = 0
.
\end{align*}

\item if $\tau_k = 1$,
\begin{itemize}
\item for $i = 1$: locally around $x_0$, $\Phi^{\mathbf{\Lambda},\mathbf{A},(1)}_{\tau}$ can be written as $x \mapsto x + A_k (x-x_k)_1 e$. Hence, $T_z {M}^{\mathbf{\Lambda},\mathbf{A},(i)}_{\tau}$ contains the direction $(1,A_k)$ in the plane $span(e_1,e)$ spanned by the first vector of the canonical basis and $e$. As a consequence, since $e$ is orthogonal to $\R^d \times \left\{0\right\}^{D-d} $,
\begin{align*}
\angle
	\left(
		T_{z} {{M}^{\mathbf{\Lambda},\mathbf{A},(1)}_{\tau}} 
		, 
		\R^d \times \left\{0\right\}^{D-d} 
	\right)
	&\geq  
	\left( 1 + 1/A_k^2\right)^{-1/2}
	\geq A_k/2 \geq A_- /2
.
\end{align*}

\item for $i = 2$: locally around $x_0$, $\Phi^{\mathbf{\Lambda},\mathbf{A},(2)}_{\tau}$ can be written as $x \mapsto x + A_k (x-x_k)_1^2 e$. Hence, ${M}^{\mathbf{\Lambda},\mathbf{A},(2)}_{\tau}$ contains an arc of parabola of equation $y =A_k (x-x_k)_1^2$ in the plane $span(e_1,e)$. As a consequence,
\begin{align*}
\norm{
II_{z}^{{{M}^{\mathbf{\Lambda},\mathbf{A},(2)}_{\tau}} }
\circ
\pi_{ T_{z} {M}^{\mathbf{\Lambda},\mathbf{A},(2)}_{\tau}  }
}_{op}
&\geq 
A_k/2 
\geq A_-/2
.
\end{align*}
\end{itemize}
\end{itemize}
\end{proof}

\begin{Alem}\label{Alem:hypotheses_TV_estimates}
Assume that the conditions of Lemma \ref{differential_bounds_on_multiple_bumps} and Lemma \ref{Alem:hypotheses_properties} hold. If in addition, $c A_+ (\delta/4)^i \leq \Lambda_+ \leq C A_+ (\delta/4)^i$ for some absolute constants $C \geq c > 3/4$, and $A_- = A_+/2$, then,

\begin{align*}
\int_{U_k}
d \bar{Q}^{(i)}_{\tau,1} \wedge d \bar{Q}^{(i)}_{\tau^k,1} 
\geq 
\frac{c_{d,i}}{C}  \left(\frac{\delta}{\tau_{min}}\right)^d,
\end{align*}
and
\begin{align*}
\int_{U'_k}
d \bar{Q}^{(i)}_{\tau,n-1} \wedge d \bar{Q}^{(i)}_{\tau^k,n-1} 
= 
\left( 1 - c'_d  \left(\frac{\delta}{\tau_{min}}\right)^d \right)^{n-1}.
\end{align*}
\end{Alem}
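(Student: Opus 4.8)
The plan is to reduce both integrals to elementary computations on the mixing parameters $(\mathbf{\Lambda},\mathbf{A})$, exploiting that $\bar Q^{(i)}_\tau$ and $\bar Q^{(i)}_{\tau^k}$ only differ through the $k$th bump. First I would recall from the construction \eqref{eqn:mixture_definition} that, since the supports $\Phi^{\mathbf{\Lambda},\mathbf{A},i}_\tau(\mathcal B(x_k,\delta))=\mathcal B(x_k,\delta)$ are left invariant and the bumps at distinct $x_j$ do not interact, the only coordinate that distinguishes $\tau$ from $\tau^k$ is the $k$th one: for $\tau_k=0$ the $k$th bump is the constant plateau $\Lambda_k e$ (with $\Lambda_k$ uniform in $[-\Lambda_+,\Lambda_+]$), while for $\tau_k=1$ it is the monomial graph $A_k(x-x_k)_1^i e$ (with $A_k$ uniform in $[A_-,A_+]$). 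All other bumps have identical laws under $\bar Q^{(i)}_\tau$ and $\bar Q^{(i)}_{\tau^k}$. Hence $\int_{U_k} d\bar Q^{(i)}_{\tau,1}\wedge d\bar Q^{(i)}_{\tau^k,1}$ is computed by disintegrating along $\pi_{\R^d\times\{0\}^{D-d}}(Z_1)$: conditionally on the base point $x_0$ of $Z_1$ lying in $M_0\cap\mathcal B(x_k,\delta/2)$ (an event of $P_0$-probability $\asymp (\delta/\tau_{min})^d$ by Lemma \ref{Alem:volume+hausdorff_density}), the $e$-coordinate of $Z_1$ is $\Lambda_k$ under one hypothesis and $A_k(x_0-x_k)_1^i$ under the other, and one just has to lower bound the $L^1$-overlap of these two push-forward densities on $span(e)$.

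The core estimate is therefore: for $x_0$ fixed with $(x_0-x_k)_1\in[-\delta/2,\delta/2]$, the law of $A_k(x_0-x_k)_1^i$ as $A_k\sim\mathrm{Unif}[A_-,A_+]$ is uniform on an interval of length $|A_+-A_-|\,|(x_0-x_k)_1|^i\le A_+(\delta/2)^i$ contained in $[0,A_+(\delta/2)^i]$, while the plateau law is uniform on $[-\Lambda_+,\Lambda_+]$. Under the hypotheses $cA_+(\delta/4)^i\le\Lambda_+\le CA_+(\delta/4)^i$ and $A_-=A_+/2$, the first interval has length at least $(A_+/2)(\delta/4)^i\ge \Lambda_+/(2C)$ and is contained in $[0,\Lambda_+ \cdot (\text{const})]$, so a fixed fraction $\gtrsim 1/C$ of its mass lands inside $[-\Lambda_+,\Lambda_+]$; comparing the two uniform densities (heights $\asymp 1/\Lambda_+$ and $\asymp 1/(A_+\delta^i)\asymp 1/\Lambda_+$) gives a pointwise overlap of a constant fraction of one of them. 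Integrating this constant overlap against the $P_0$-mass $\asymp(\delta/\tau_{min})^d$ of $\{x_0\in M_0\cap\mathcal B(x_k,\delta/2)\}$, restricted further to the set where $|(x_0-x_k)_1|$ is of order $\delta$ (still a constant fraction of that ball, by the volume bounds and the flatness of $M_0$ near $x_k$), yields the announced $\int_{U_k} d\bar Q^{(i)}_{\tau,1}\wedge d\bar Q^{(i)}_{\tau^k,1}\ge (c_{d,i}/C)(\delta/\tau_{min})^d$.

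For the $n-1$ remaining coordinates, note that on the event $U'_k$ none of the points $Z_2,\dots,Z_n$ falls in the cylinder $\mathcal B_{\R^d\times\{0\}^{D-d}}(x_k,\delta)+\mathcal B_{span(e)}(0,\tau_{min}/2)$, and off that cylinder the maps $\Phi^{\mathbf{\Lambda},\mathbf{A},i}_\tau$ and $\Phi^{\mathbf{\Lambda},\mathbf{A},i}_{\tau^k}$ coincide (for every fixed value of the remaining mixing parameters, which are shared). Hence on $U'_k$ the two mixtures $\bar Q^{(i)}_{\tau,n-1}$ and $\bar Q^{(i)}_{\tau^k,n-1}$ are literally equal, so $\int_{U'_k} d\bar Q^{(i)}_{\tau,n-1}\wedge d\bar Q^{(i)}_{\tau^k,n-1}=\bar Q^{(i)}_{\tau,n-1}(U'_k)$; this is the probability that an i.i.d. $(n-1)$-sample from $P_0$ (possibly bumped elsewhere, which does not affect the base points' law) avoids $M_0\cap\mathcal B(x_k,\delta)$, whose $P_0$-mass is $c'_d(\delta/\tau_{min})^d$ by Lemma \ref{Alem:volume+hausdorff_density}, giving exactly $(1-c'_d(\delta/\tau_{min})^d)^{n-1}$. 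The main obstacle is the first part: one must be careful that the interval swept by $A_k(x_0-x_k)_1^i$ genuinely overlaps $[-\Lambda_+,\Lambda_+]$ with comparable density, which is precisely why the two-sided constraint $cA_+(\delta/4)^i\le\Lambda_+\le CA_+(\delta/4)^i$ and the choice $A_-=A_+/2$ are imposed; tracking the $i$-dependence of the constants and restricting to the sub-ball where $|(x_0-x_k)_1|\asymp\delta$ (rather than arbitrarily small) requires the local graph structure of $M_0$ around $x_k$, which is available since $M_0$ contains $\mathcal B_{\R^d\times\{0\}^{D-d}}(0,\tau_{min})$.
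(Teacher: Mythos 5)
Your plan is correct and follows essentially the same route as the paper: the same reduction to the one--dimensional overlap, for a fixed base point, of the uniform plateau law on $[-\Lambda_+,\Lambda_+]$ against the pushforward uniform law on $[A_-x_1^i,A_+x_1^i]$ (where the constraints $cA_+(\delta/4)^i\le\Lambda_+\le CA_+(\delta/4)^i$, $A_-=A_+/2$, $c>3/4$ guarantee a constant-fraction overlap of comparable densities), integrated against the $P_0$-mass $\asymp(\delta/\tau_{min})^d$; and the same observation that the two mixtures coincide on $U'_k$, so the second integral is exactly $P_0^{\otimes(n-1)}$ of avoiding the cylinder. One small caution: restrict the first coordinate to $|(x_0-x_k)_1|\le\delta/4$ (as the paper does by integrating $x_1$ over $[0,\delta/4]$) rather than an arbitrary band of order $\delta$, since for $x_1$ near $\delta/2$ and $C$ close to $1$ the interval $[A_-x_1^i,A_+x_1^i]$ can lie entirely above $\Lambda_+$ and the overlap vanishes.
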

\begin{proof}[Proof of Lemma \ref{Alem:hypotheses_TV_estimates}]
First note that all the involved distributions  have support in $\R^d \times span(e) \times \left\{ 0 \right\}^{D-(d+1)}$. Therefore, we use the canonical coordinate system of $\R^d \times span(e)$, centered at $x_k$, and we denote the components by $(x_1,x_2,\ldots,x_d,y) = (x_1,x_{2:d},y)$. Without loss of generality, assume that $\tau_k = 0$ (if not, flip $\tau$ and $\tau^k$).
Recall that $\phi$ has been chosen to be constant and equal to $1$ on the ball $\mathcal{B}(0,1/2)$.

By definition (\ref{eqn:mixture_definition}), on the event $\left\{Z \in U_k \right\}$, a random variable $Z$ having distribution $\bar{Q}^{(i)}_{\tau,1}$ can be represented by $Z = X + \phi\left(\frac{X-x_k}{\delta}\right)\Lambda_ke = X + \Lambda_ke$ where $X$ and $\Lambda_k$ are independent and have respective distributions $P_0$ (the uniform distribution on $M_0$) and the uniform distribution on $[-\Lambda_+,\Lambda_+]$. 
Therefore, on $U_k$, $\bar{Q}^{(i)}_{\tau,1}$ has a density with respect to the Lebesgue measure $\lambda_{d+1}$ on $\R^d \times span(e)$ that can be written as
\begin{align*}
\bar{q}^{(i)}_{\tau,1}(x_1,x_{2:d},y)
&=
\frac{\mathbbm{1}_{[-\Lambda_+,\Lambda_+]}(y)}{2 Vol(M_0) \Lambda_+}
.
\end{align*}
Analogously, nearby $x_k$ a random variable $Z$ having distribution $\bar{Q}^{(i)}_{\tau^k,1}$ can be represented by $Z = X + A_k(X-x_k)_1^i e$ where $A_k$ has uniform distribution on $[A_-,A_+]$. 
Therefore, a straightforward change of variable yields the density
\begin{align*}
\bar{q}^{(i)}_{\tau^k,1}(x_1,x_{2:d},y)
&=
\frac{\mathbbm{1}_{[A_-x_1^i,A_+x_1^i]}(y)}{Vol(M_0)\left(A_+ - A_-\right)x_1^i}
.
\end{align*}
We recall that $Vol(M_0) = (2\tau_{min})^d Vol\bigl( M_0^{(0)} \bigr) = c'_d \tau_{min}^d$.
Let us now tackle the right-hand side inequality, writing

\begin{align*}
\int_{U_k}
&
d \bar{Q}^{(i)}_{\tau,1} \wedge d \bar{Q}^{(i)}_{\tau^k,1} 
\\
&=
\int_{\mathcal{B}(x_k,\delta/2)}
\left(
\frac{\mathbbm{1}_{[-\Lambda_+,\Lambda_+]}(y)}{2 Vol(M_0) \Lambda_+}
\right)
\wedge 
\left(
\frac{\mathbbm{1}_{[A_-x_1^i,A_+x_1^i]}(y)}{Vol(M_0)\left(A_+ - A_-\right)x_1^i}
\right)
d y d x_1 d x_{2:d}
\\
&\geq
\int_{\mathcal{B}_{\R^{d-1}}(0,\frac{\delta}{4})}
\int_{-\delta/4}^{\delta/4}
\int_\R
\left(
\frac{\mathbbm{1}_{[-\Lambda_+,\Lambda_+]}(y)}{2\Lambda_+}
\right)
\wedge
\left(
\frac{\mathbbm{1}_{[A_-x_1^i,A_+x_1^i]}(y)}{A_+x_1^i/2}
\right)
\frac{d y
d x_1
d x_{2:d}}{Vol(M_0)}
.
\end{align*}
It follows that
\begin{align*}
\int_{U_k}
&
d \bar{Q}^{(i)}_{\tau,1} \wedge d \bar{Q}^{(i)}_{\tau^k,1} \\
&\geq
\frac{c_d}{\tau_{min}^d} \delta^{d-1}
\int_0^{\delta/4}
\int_{A_+x_1^i/2}^{\Lambda_+ \wedge (A_+ x_1^i)}
\frac{1}{2\Lambda_+}
\wedge
\frac{2}{A_+x_1^i}
dy d x_1
\\
&\geq
\frac{c_d}{\tau_{min}^d} \delta^{d-1}
\int_0^{\delta/4}
\int_{A_+x_1^i/2}^{(c \wedge 1) (A_+ x_1^i)}
\frac{(2c \wedge 1/2)}{2\Lambda_+}
dy d x_1
\\
&=
\frac{c_d}{\tau_{min}^d} \delta^{d-1} (2c \wedge 1/2)\left( c\wedge 1 - 1/2 \right) \frac{A_+}{\Lambda_+} \frac{\left(\delta/4\right)^{i+1}}{i+1}
\\
&\geq
\frac{c_{d,i}}{C} \left(\frac{\delta}{\tau_{min}}\right)^d.
\end{align*}
%
For the integral on $U'_k$, notice that by definition, $\bar{Q}^{(i)}_{\tau,n-1}$ and $\bar{Q}^{(i)}_{\tau^k,n-1}$ coincide on 
$U'_k$ since they are respectively the image distributions of $P_0$ by functions that are equal on that set. 
Moreover, these two functions leave $
\R^D 
\setminus 
\left\{
\mathcal{B}_{\R^d \times \{0\}^{D-d}} \left(x_k,\delta\right)
+
\mathcal{B}_{span(e)}(0, {\tau_{min}}/{2})
\right\}
$
unchanged. 
Therefore,
\begin{align*}
\int_{U'_k}
d \bar{Q}^{(i)}_{\tau,n-1} \wedge & d \bar{Q}^{(i)}_{\tau^k,n-1} 
\\
&= 
P_0^{\otimes n-1} \left( U'_k \right)
\\
&
= 
\left( 
1 
- 
P_0\left(
\mathcal{B}_{\R^d \times \{0\}^{D-d}} \left(x_k,\delta\right)
+
\mathcal{B}_{span(e)}(0, {\tau_{min}}/{2})
\right)\right)^{n-1}
\\
&= 
\left(1 - \omega_d \delta^d/Vol(M_0) \right)^{n-1}
,
\end{align*}
hence the result.
\end{proof}

\begin{proof}[Proof of Lemma 8]
The properties of $\bigl\{ \bar{Q}^{(i)}_{\tau,n} \bigr\}_\tau$ and $\left\{U_k \times U'_k\right\}_k$ given by Lemma \ref{Alem:hypotheses_properties} and Lemma \ref{Alem:hypotheses_TV_estimates} yield the result, setting $\Lambda_+ = A_+\delta^i/4$, $A_+ = 2 A_- = \varepsilon \delta^{k-i}$ for $\varepsilon = \varepsilon_{k,d,\tau_{min}}$, and $\delta$ such that $c'_d  \left(\frac{\delta}{\tau_{min}}\right)^d = \frac{1}{n-1}$.
\end{proof}

\subsubsection{Proof of Lemma 9}

This section details the construction leading to Lemma 9 that we restate in Lemma \ref{Alem:hypotheses_glissant_noise}.

\begin{Alem}
\label{Alem:hypotheses_glissant_noise}
Assume that $\tau_{min}{L_\perp}$,$\ldots$,$\tau_{min}^{k-1}{L_k}$,$({\tau_{min}^d}f_{min})^{-1}$, ${\tau_{min}^d}{f_{max}}$ are large enough (depending only on $d$ and $k$), and $\sigma \geq C_{k,d,\tau_{min}} \left({1}/({n-1)} \right)^{k/d}$ for $C_{k,d,\tau_{min}} > 0$ large enough.
Given $i \in \{1,2\}$, there exists a collection of $2^m$ distributions $\bigl\{\mathbf{P}_\tau^{(i),\sigma}\bigr\}_{\tau \in \{0,1\}^m} \subset \mathcal{P}^k(\sigma)$ with associated submanifolds $\bigl\{M_\tau^{(i),\sigma}\bigr\}_{\tau \in \{0,1\}^m}$, together with pairwise disjoint subsets $\{U^\sigma_k\}_{1 \leq k \leq m}$ of $\R^D$ such that the following holds for all $\tau \in \{0,1\}^m$ and $1 \leq k \leq m$.

If $x \in U_k^\sigma$ and $y = \pi_{M_\tau^{(i),\sigma}} (x)$, we have
\begin{itemize}
\item  if $\tau_k = 0$,
\begin{align*}
T_{y} M_\tau^{(i),\sigma}  = \R^d \times \left\{0\right\}^{D-d}
\text{\quad , \quad }
\norm{
II_{y}^{ M_\tau^{(i),\sigma} }
\circ
\pi_{T_{y} M_\tau^{(i),\sigma}}
}_{op} = 0,
\end{align*}
 
\item if $\tau_k = 1$,
\begin{itemize}
\item for $i = 1$:
$
\displaystyle
\angle 
\left(
T_{y} M_\tau^{(1),\sigma} 
, 
\R^d \times \left\{0\right\}^{D-d} \right) 
\geq c_{k,d,\tau_{min}} \left( \frac{\sigma}{n-1}\right)^{\frac{k-1}{k+d}}$,
\item for $i = 2$:
$
\displaystyle
\norm{
II_{y}^{M_\tau^{(2),\sigma} }
\circ
\pi_{
T_{y} M_\tau^{(2),\sigma} 
}
}_{op} 
\geq c'_{k,d,\tau_{min}} \left( \frac{\sigma}{n-1}\right)^{\frac{k-2}{k+d}}
$. 
\end{itemize}
\end{itemize}
Furthermore,
\begin{align*}
\int_{(\R^D)^{n-1}}
\bigl(\mathbf{P}_{\tau}^{(i),\sigma}\bigr)^{\otimes n-1} 
\wedge 
\bigl(\mathbf{P}_{\tau^k}^{(i),\sigma}\bigr)^{\otimes n-1} 
\geq
c_0,
\text{ and}
\quad
m \cdot \int_{U_k^\sigma}
\mathbf{P}_{\tau}^{(i),\sigma} \wedge \mathbf{P}_{\tau^k}^{(i),\sigma}
&
\geq
c_d.
\end{align*}
\end{Alem}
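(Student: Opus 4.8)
The plan is to mimic the noise-free construction of Section \ref{subsec:C2}, but instead of deforming the manifold $M_0$ by the bump maps $\Phi^{\mathbf{\Lambda},\mathbf{A},i}_\tau$ and then sampling exactly on the resulting manifold, I keep the fixed base manifold $M_0$ (with its uniform distribution $P_0$), and I hide the bump \emph{inside the noise}. Concretely, fix the bump centers $x_1,\dots,x_m\in M_0\cap\mathcal{B}(0,\tau_{min}/4)$ with pairwise distances $\geq\delta$, as before, with $m\geq c_d(\tau_{min}/\delta)^d$. For $\tau\in\{0,1\}^m$, let $M_\tau^{(i),\sigma}=\Phi^{\mathbf{\Lambda},\mathbf{A},i}_\tau(M_0)$ with $\Lambda_+\vee A_+\delta^i\lesssim\sigma$, so that each bump has amplitude at most $\sigma$ and hence lies within the tubular neighbourhood of radius $\sigma$ of $M_0$; by Lemma \ref{differential_bounds_on_multiple_bumps} and Lemma \ref{Alem:generic_hypotheses_properties}, $M_\tau^{(i),\sigma}\in\mathcal{C}^k_{\tau_{min},\mathbf{L}}$ once the parameters satisfy the usual thresholds. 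Now define $\mathbf{P}_\tau^{(i),\sigma}$ to be the distribution of $X=\Phi^{\mathbf{\Lambda},\mathbf{A},i}_\tau(Y)$ where $Y\sim P_0$ and $(\mathbf{\Lambda},\mathbf{A})$ is drawn independently and uniformly from $[-\Lambda_+,\Lambda_+]^m\times[A_-,A_+]^m$; equivalently, $X=Y+Z$ with $Z=\Phi^{\mathbf{\Lambda},\mathbf{A},i}_\tau(Y)-Y\in T_YM_0^\perp\cap\mathcal{B}(0,\sigma)$ perpendicular to $M_0$, $\mathbb{E}(Z\mid Y)=0$ provided the $\Lambda_k$'s are centered and we symmetrize the bump appropriately. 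The key conceptual point, already noted in the text after the statement, is that because the noise amplitude $\sigma$ is large relative to the sampling scale $(1/(n-1))^{1/d}$, the distributions $\mathbf{P}_\tau^{(i),\sigma}$ and $\mathbf{P}_{\tau^k}^{(i),\sigma}$ are mutually absolutely continuous, so no outer mixture over a hyperparameter is needed: the randomness in $(\mathbf{\Lambda},\mathbf{A})$ is absorbed into the conditional law of $X$ given $Y=\pi_{M_0}(X)$, yielding a genuine element of $\mathcal{P}^k(\sigma)$.

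Next I would record the geometric lower bounds. Exactly as in Lemma \ref{Alem:hypotheses_properties}: on the region $U_k^\sigma$ around $x_k$, if $\tau_k=0$ the local deformation is a translation by $\Lambda_ke$, so $\pi_{M_\tau^{(i),\sigma}}(x)=y$ has $T_yM_\tau^{(i),\sigma}=\R^d\times\{0\}^{D-d}$ and vanishing second fundamental form; if $\tau_k=1$ and $i=1$ the local graph is $y=A_k(x-x_k)_1$, giving $\angle(T_yM_\tau^{(1),\sigma},\R^d\times\{0\}^{D-d})\geq A_-/2$; if $\tau_k=1$ and $i=2$ the local graph is the parabola $y=A_k(x-x_k)_1^2$, giving $\|II_y^{M_\tau^{(2),\sigma}}\circ\pi_{T_yM}\|_{op}\geq A_-/2$. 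It remains to pick $\delta$ and the amplitudes to optimize the Assouad bound. Here $A_-\asymp A_+\asymp\varepsilon\,\delta^{k-i}$, $\Lambda_+\asymp A_+\delta^i\asymp\varepsilon\,\delta^k\lesssim\sigma$, which forces $\delta\gtrsim(\sigma/\varepsilon)^{1/k}$; we take $\delta\asymp(\sigma)^{1/k}\cdot(\text{something})$. The Assouad separation is $\Delta\asymp A_-\asymp\delta^{k-i}$, multiplied by $m\asymp(\tau_{min}/\delta)^d$, so the rate is $m\Delta\asymp\delta^{k-i-d}$; balancing against the affinity condition $m\cdot\delta^d/\tau_{min}^d\cdot(n-1)\asymp\mathrm{const}$, i.e. $\delta^{2d}\asymp\tau_{min}^{2d}/(n-1)$ is \emph{not} the right constraint here — rather, the affinity on the $(n-1)$ other coordinates forces $(1-c_d(\delta/\tau_{min})^d)^{n-1}\geq c_0$, i.e. $\delta^d(n-1)\lesssim\tau_{min}^d$, while $\delta^k\asymp\sigma$ from the amplitude constraint. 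Solving $\delta\asymp(\sigma/(n-1))^{1/(k+d)}$ (i.e. simultaneously $\delta^d(n-1)\asymp\tau_{min}^d\cdot(\delta^k/\sigma)$ up to constants) gives $\delta^{k-i}\asymp(\sigma/(n-1))^{(k-i)/(k+d)}$, which is exactly the announced rate with $i=1$ for tangent spaces and $i=2$ for curvature. The hypothesis $\sigma\geq C_{k,d,\tau_{min}}(1/(n-1))^{k/d}$ is what guarantees this $\delta$ is at most $\tau_{min}/4$, so the construction fits inside $\mathcal{B}(0,\tau_{min}/4)$ and the bumps do not collide.

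The two affinity estimates are then proved as in Lemma \ref{Alem:hypotheses_TV_estimates}, but simpler because there is no product-of-mixtures structure to untangle: for the single-observation term one computes the densities of $\mathbf{P}_\tau^{(i),\sigma}$ and $\mathbf{P}_{\tau^k}^{(i),\sigma}$ on $U_k^\sigma$ with respect to Lebesgue measure on $\R^d\times\mathrm{span}(e)$ (the first being essentially uniform on a slab of height $2\Lambda_+$, the second supported on the graph-offset region of comparable height $\sim A_+x_1^i$), and the overlap integral is $\gtrsim (1/C)(\delta/\tau_{min})^d$, whence $m\int_{U_k^\sigma}\mathbf{P}_\tau^{(i),\sigma}\wedge\mathbf{P}_{\tau^k}^{(i),\sigma}\geq c_d$ after summing over the $m\asymp(\tau_{min}/\delta)^d$ balls; for the $(n-1)$-fold term one uses that outside $\bigcup_k\mathcal{B}(x_k,\delta)$ the two laws coincide, so the affinity equals $P_0^{\otimes(n-1)}$ of that complement $=(1-\omega_d\delta^d/\mathrm{Vol}(M_0))^{n-1}\geq c_0$ by the choice of $\delta$. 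The main obstacle — and the step requiring genuine care rather than bookkeeping — is verifying that $\mathbf{P}_\tau^{(i),\sigma}$ is a legitimate element of $\mathcal{P}^k(\sigma)$: one must check that the off-the-manifold displacement $Z$ is genuinely perpendicular to $T_{Y}M_0$ (true by construction of the bump, whose displacement is along the fixed direction $e\perp\R^d\times\{0\}^{D-d}$, at least on the flat part containing the $x_k$'s), that $\|Z\|\leq\sigma$ (ensured by $\Lambda_+\vee A_+(\delta/2)^i\leq\sigma$), that $\mathbb{E}(Z\mid Y)=0$ (ensured by using a centered/symmetrized bump in the $\tau_k=0$ case, and by the uniform law of $\Lambda_k$; in the $\tau_k=1$ quadratic case the parabola bump is one-sided, so one must either allow $\mathbb{E}(Z\mid Y)\neq0$ — which the model permits only if we relax, or — more carefully — recenter by subtracting the mean, absorbing the difference into a $\mathcal{C}^k$ reparametrization of the support via Proposition \ref{statistical_model_stability}), and that $\pi_M(X)=Y$, which holds because $\sigma<\tau_{min}$ and the bumped manifold still has reach $\geq\tau_{min}$. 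Once this membership is secured, the conclusion follows by plugging $\{(\mathbf{P}_\tau^{(i),\sigma})^{\otimes n}\}_\tau$ and $\{U_k^\sigma\times(\R^D)^{n-1}\}_k$ into Lemma \ref{conditional_assouad}, exactly as done in the proof of Theorem \ref{thm:lower_bound_tangent} in the main text.
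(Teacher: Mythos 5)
There is a genuine gap, and it lies exactly at the step where the noise level $\sigma$ has to enter the affinity bounds. In your construction the only ``spread'' of each hypothesis comes from randomizing the bump amplitudes, so near $x_k$ the two competing laws occupy regions of thickness $\Lambda_+\vee A_+\delta^i\asymp\delta^k$, not $\sigma$. Their overlap on $\mathcal{B}(x_k,\delta)$ is then only a \emph{constant fraction} of the bump mass (exactly as in the noise-free mixture computation of Lemma \ref{Alem:hypotheses_TV_estimates}), so the per-observation affinity is $1-c\,(\delta/\tau_{min})^d$ with $c$ bounded away from $0$, and the $(n-1)$-fold affinity is bounded below by a constant only if $\delta^d(n-1)\lesssim\tau_{min}^d$. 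That is incompatible with the choice $\delta\asymp(\sigma/(n-1))^{1/(k+d)}$ whenever $\sigma\gg(1/(n-1))^{k/d}$; your own balancing paragraph betrays this, since you first write the constraint $(1-c_d(\delta/\tau_{min})^d)^{n-1}\geq c_0$ and $\delta^k\asymp\sigma$, and then silently replace it by $\delta^d(n-1)\asymp\tau_{min}^d\,(\delta^k/\sigma)$ --- the latter is the constraint one gets only from a different construction. The paper's proof achieves this smaller deficit by taking $\mathbf{P}_\tau^{(i),\sigma}$ to be the \emph{uniform distribution on the $\sigma/2$-offset} of a deterministically bumped manifold (no mixture over amplitudes at all, and the $\tau_k=0$ bumps are simply flat): two such tubes differ only in a sliver whose volume is a fraction $\asymp A\delta^i/\sigma$ of the tube over $\mathcal{B}(x_k,\delta)$, giving per-observation affinity $1-3\frac{A\delta^i}{\sigma}(\delta/\tau_{min})^d$, which is what makes $\delta\asymp(\sigma/(n-1))^{1/(k+d)}$ admissible. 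Without thickening the hypotheses to scale $\sigma$, the announced $\sigma$-dependent separation cannot be reached.

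A second, related problem is model membership and identification of the ``associated submanifold.'' Writing $X=Y+Z$ with $Y\sim P_0$ on $M_0$ and $Z=\Phi_\tau^{\mathbf{\Lambda},\mathbf{A},i}(Y)-Y$ makes the support of the noise-free component equal to $M_0$ for every $\tau$, so the manifold attached to your distribution by the model $\mathcal{P}^k(\sigma)$ (recall $Y=\pi_M(X)$ since $\sigma<\tau_{min}$) is $M_0$, whose tangent spaces and curvature at the $x_k$'s do not vary with $\tau$ --- the separation you then claim refers to a manifold $M_\tau^{(i),\sigma}=\Phi_\tau^{\mathbf{\Lambda},\mathbf{A},i}(M_0)$ that is moreover random (it depends on the drawn amplitudes), so it is not a fixed $M_\tau^{(i),\sigma}$ as the lemma requires. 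If instead you try to re-decompose $X$ around a fixed ``middle'' bumped manifold, the displacement is along $e$, hence not orthogonal to that manifold away from the bump apex, and $\mathbb{E}(Z\mid Y)=0$ is not secured; invoking Proposition \ref{statistical_model_stability} does not repair this, since pushing forward by a diffeomorphism does not produce the exact centering condition. The paper avoids all of this because the uniform law on the $\sigma/2$-offset automatically yields perpendicular, centered, $\sigma$-bounded noise around the fixed manifold $M_\tau^{A,i}$, with the density bounds on $\pi_{M_\tau^{A,i}}{}_\ast\mathbf{P}_\tau^{A,i}$ checked via the volume comparison of offsets. Your geometric separation statements ($\tau_k=0$ flat, $\tau_k=1$ slope or curvature $\geq A_-/2$) and the use of Lemma \ref{conditional_assouad} are fine, but the core of this lemma is precisely the two points above.
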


\begin{proof}[Proof of Lemma \ref{Alem:hypotheses_glissant_noise}]

Following the notation of Section \ref{subsec:generic_hypotheses}, for $i \in \left\{1,2\right\}$, $\tau \in \{0,1\}^m$, $\delta \leq \tau_{min}/4$ and $A>0$, consider
\begin{equation}\label{eq:multibump_definition_noise_glissant}
\Phi^{A,i}_{\tau}(x) = x + \sum_{k=1}^m \phi\left(\frac{x-x_k}{\delta}\right) \left\{ \tau_k  A (x-x_k)_1^i \right\} e.
\end{equation}
Note that \eqref{eq:multibump_definition_noise_glissant} is a particular case of \eqref{multibump_definition}.
Clearly from the definition, $\Phi^{A,i}_{\tau}$ and $\Phi^{A,i}_{\tau^k}$ coincide outside $\mathcal{B}(x_k,\delta)$, 
$(\Phi(x) - x) \in span(e)$ for all $x\in \R^D$, and $\norm{I_D - \Phi}_\infty \leq A\delta^i$.
Let us define $M_{\tau}^{A,i} = \Phi_{\tau}^{A,i}(M_0)$.
From Lemma \ref{Alem:generic_hypotheses_properties}, we have $M_{\tau}^{A,i} \in  \mathcal{C}^{k}_{\tau_{min},\mathbf{L}}$ provided that $\tau_{min}{L_\perp},\ldots,\tau_{min}^{k-1}{L_k}$ are large enough, and that $\delta \leq \tau_{min}/2$, with $A / \delta^{k-i} \leq \varepsilon$ for $\varepsilon =\varepsilon_{k,d,\tau_{min},i}$ small enough.

Furthermore, let us write
\begin{align*}
{U^\sigma_k} = 
\mathcal{B}_{\R^d \times \{0\}^{D-d}} \left(x_k,\delta/2\right)
+
\mathcal{B}_{\{0\}^{d}\times \R^{D-d}} \left(x_k,\sigma/2\right).
\end{align*}
Then the family $\{{U^\sigma_k}\}_{1\leq k \leq m}$ is pairwise disjoint.
Also, since $\tau_k = 0$ implies that $M_{\tau}^{A,i}$ coincides with $M_0$ on $\mathcal{B}(x_k,\delta)$, we get that if $x \in {U^\sigma_k}$ and $y = \pi_{M_{\tau}^{A,i}}(x)$,
\begin{align*}
T_{y} M_{\tau}^{A,i}  = \R^d \times \left\{0\right\}^{D-d}
\text{\quad , \quad }
\norm{
II_{y}^{M_{\tau}^{A,i}}
\circ
\pi_{T_{y} {{M_{\tau}^{A,i}}}}
}_{op} = 0.
\end{align*}
Furthermore, by construction of the bump function $\Phi_{\tau}^{A,i}$, if $x \in {U_k^\sigma}$ and $\tau_k = 1$, then
\[
\angle \left(T_{y}M_{\tau}^{A,i} , \R^d \times \left\{0\right\}^{D-d} \right) \geq \frac{A}{2},
\]
and
\[
\norm{II_{y}^{M_{\tau}^{A,i}}
\circ
\pi_{T_{y} M_{\tau}^{A,i}
 }}_{op} \geq \frac{A}{2}.
\]
Now, let us write 
\begin{align*}
\mathcal{O}_{\tau}^{A,i} &= \left\{ y + \xi \left| y \in M_{\tau}^{A,i}, \xi \in \left(T_y M_{\tau}^{A,i}\right)^\perp, \norm{\xi} \leq \sigma/2 \right.\right\}
\end{align*}
for the offset of $M_{\tau}^{\Lambda,A,i}$ of radius $\sigma/2$. 
The sets $\bigl\{\mathcal{O}_{\tau}^{A,i}\bigr\}_\tau$ are closed subsets of $\R^D$ with non-empty interiors.
Let $\mathbf{P}_{\tau}^{A,i}$ denote the uniform distribution on $\mathcal{O}_{\tau}^{A,i}$.
Finally, let us denote by $P_{\tau}^{A,i} = \bigl(\pi_{M_{\tau}^{A,i}}\bigr)_\ast \mathbf{P}_{\tau}^{A,i}$ the pushforward distributions of $\mathbf{P}_{\tau}^{A,i}$ by the projection maps $\pi_{M_{\tau}^{A,i}}$.
From Lemma 19 in \cite{Maggioni16}, $P_{\tau}^{A,i}$ has a density $f_{\tau}^{A,i}$ with respect to the volume measure on $M_{\tau}^{A,i}$, and this density satisfies
\begin{align*}
Vol\left(M_{\tau}^{A,i}\right) f_{\tau}^{A,i}
\leq
\left( \frac{\tau_{min}+\sigma/2}{\tau_{min}-\sigma/2} \right)^d
\leq
\left( \frac{5}{3}\right)^d,
\end{align*}
and
\begin{align*}
Vol\left(M_{\tau}^{A,i}\right) f_{\tau}^{A,i}
\geq
\left( \frac{\tau_{min}-\sigma/2}{\tau_{min}+\sigma/2} \right)^d 
\geq
\left( \frac{3}{5}\right)^d. 
\end{align*}
Since, by construction, $Vol(M_0) = c_d \tau_{min}^d$, and $c'_d \leq Vol\bigr( M_{\tau}^{\Lambda,A,i} \bigr)/Vol(M_0) \leq C'_d$ whenever $A/\delta^{i-1} \leq \varepsilon'_{d,\tau_{min},i}$, we get that $P_{\tau}^{A,i}$ belongs to the model $\mathcal{P}^k$ provided that $({\tau_{min}^d}f_{min})^{-1}$ and ${\tau_{min}^d}{f_{max}}$ are large enough. 
This proves that under these conditions, the family $\bigl\{\mathbf{P}_{\tau}^{A,i}\bigr\}_{\tau \in \{0,1\}^m}$ is included in the model $\mathcal{P}^k({\sigma})$.

Let us now focus on the bounds on the $L^1$ test affinities. 
Let $\tau \in \{0,1\}^m$ and $1 \leq k \leq m$ be fixed, and assume, without loss of generality, that $\tau_k = 0$ (if not, flip the role of $\tau$ and $\tau^k$).
First, note that 
\begin{align*}
\int_{(\R^D)^{n-1}}
\bigl(\mathbf{P}_{\tau}^{A,i}\bigr)^{\otimes n-1} 
\wedge 
\bigl(\mathbf{P}_{\tau^k}^{A,i}\bigr)^{\otimes n-1} 
\geq
\left(
\int_{\R^D}
\mathbf{P}_{\tau}^{A,i}
\wedge 
\mathbf{P}_{\tau^k}^{A,i}
\right)^{n-1}.
\end{align*}
Furthermore, since $\mathbf{P}_{\tau}^{A,i}$ and $\mathbf{P}_{\tau^k}^{A,i}$ are the uniform distributions on $\mathcal{O}_{\tau}^{A,i}$ and $\mathcal{O}_{\tau}^{A,i}$,
\begin{align*}
\int_{\R^D}
\mathbf{P}_{\tau}^{A,i} \wedge \mathbf{P}_{\tau^k}^{A,i}
&=
1-
\frac{1}{2}
\int_{\R^D}
\left|
\mathbf{P}_{\tau}^{A,i} 
- 
\mathbf{P}_{\tau^k}^{A,i}
\right|
\\
&=
1-
\frac{1}{2}
\int_{\R^D}
\left| 
\frac{
	\mathbbm{1}_{\mathcal{O}_{\tau}^{A,i}}(a)
	}
	{
	Vol\left(\mathcal{O}_{\tau}^{A,i}\right)
	} 
-
\frac{
	\mathbbm{1}_{\mathcal{O}_{\tau}^{A,i}}(a)
	}
	{
	Vol\left(\mathcal{O}_{\tau^k}^{A,i}\right)
	} 
\right| 
d\mathcal{H}^D(a).
\end{align*}
Furthermore,
\begin{align*}
\frac{1}{2}
\int_{\R^D}
\left| 
\frac{
	\mathbbm{1}_{\mathcal{O}_{\tau}^{A,i}}(a)
	}
	{
	Vol\left(\mathcal{O}_{\tau}^{A,i}\right)
	}
\right. 
&-
\left.
\frac{
	\mathbbm{1}_{\mathcal{O}_{\tau^k}^{A,i}}(a)
	}
	{
	Vol\left(\mathcal{O}_{\tau^k}^{A,i}\right)
	} 
\right| 
d\mathcal{H}^D(a)
\\
&=
\frac{1}{2} Vol\left( \mathcal{O}_{\tau}^{A,i} \cap \mathcal{O}_{\tau^k}^{A,i}\right)
\left|
\frac{
	1
	}
	{
	Vol\left(\mathcal{O}_{\tau}^{A,i}\right)
	} 
-
\frac{
	1
	}
	{
	Vol\left(\mathcal{O}_{\tau^k}^{A,i}\right)
	} 
\right|
\\
&\quad 
+
\frac{1}{2} 
\left( 
\frac{Vol \left( \mathcal{O}_{\tau}^{A,i} \setminus \mathcal{O}_{\tau^k}^{A,i} \right)}{Vol\left( \mathcal{O}_{\tau}^{A,i}\right)} 
+
\frac{Vol \left( \mathcal{O}_{\tau^k}^{A,i} \setminus \mathcal{O}_{\tau}^{A,i} \right)}{Vol\left( \mathcal{O}_{\tau^k}^{A,i} \right)}
\right)
\\
&\leq
\frac{3}{2}
\frac{
	Vol \left( \mathcal{O}_{\tau}^{A,i} \setminus \mathcal{O}_{\tau^k}^{A,i} \right)
	\vee
	Vol \left( \mathcal{O}_{\tau^k}^{A,i} \setminus \mathcal{O}_{\tau}^{A,i} \right)
	}
	{
	Vol\left(
\mathcal{O}_{\tau}^{A,i} 
\right)
\wedge
Vol\left(
\mathcal{O}_{\tau^k}^{A,i}
\right)	
	}.
\end{align*}
To get a lower bound on the denominator, note that for $\delta \leq \tau_{min}/2$, $M_{\tau}^{A,i}$ and $M_{\tau^k}^{A,i}$ both contain 
\begin{align*}
\mathcal{B}_{\R^d\times \{0\}^{D-d}}(0,\tau_{min})\setminus \mathcal{B}_{\R^d\times \{0\}^{D-d}}(0,\tau_{min}/4)
,
\end{align*}
so that $\mathcal{O}_{\tau}^{A,i}$ and $\mathcal{O}_{\tau^k}^{A,i}$ both contain
\begin{align*}
\left(
\mathcal{B}_{\R^d\times \{0\}^{D-d}}(0,\tau_{min})
\setminus
\mathcal{B}_{\R^d\times \{0\}^{D-d}}(0,\tau_{min}/4)
\right)
+
\mathcal{B}_{\{0\}^{d} \times \R^{D-d} }(0,\sigma/2).
\end{align*}
As a consequence, 
$
Vol\left(
\mathcal{O}_{\tau}^{A,i} 
\right)
\wedge
Vol\left(
\mathcal{O}_{\tau^k}^{A,i}
\right)
\geq
c_d \omega_d \tau_{min}^d \omega_{D-d} (\sigma/2)^{D-d},
$
where $\omega_\ell$ denote the volume of a $\ell$-dimensional unit Euclidean ball.

We now derive an upper bound on $Vol \bigl( \mathcal{O}_{\tau}^{A,i} \setminus \mathcal{O}_{\tau^k}^{A,i} \bigr)$.
To this aim, let us consider $a_0 = y + \xi \in  \mathcal{O}_{\tau}^{A,i} \setminus \mathcal{O}_{\tau^k}^{A,i}$, with $y \in  M_{\tau}^{A,i}$ and $\xi \in \bigl(T_y M^{A,i}_{\tau}
\bigr)^\perp$. 
Since $\Phi^{A,i}_{\tau}$ and $\Phi^{A,i}_{\tau^k}$ coincide outside $\mathcal{B}(x_k,\delta)$, so do $M^{A,i}_{\tau}$ and $M^{A,i}_{\tau^k}$. Hence, one necessarily has $y \in \mathcal{B}(x_k,\delta)$.
Thus, $\bigl(T_y M^{A,i}_{\tau}
\bigr)^\perp = T_y M_0^\perp = span(e) + \left\{0\right\}^{d+1}\times \R^{D-d-1}$, so we can write $\xi = se + z$ with $s \in \R$ and $z \in \left\{0\right\}^{d+1}\times \R^{D-d-1}$.
By definition of $\mathcal{O}_{\tau}^{A,i}$, $\norm{\xi} = \sqrt{ s^2 + \norm{z}^2} \leq \sigma/2$, which yields $\norm{z} \leq \sigma/2$ and $|s| \leq \sqrt{ (\sigma/2)^2 - \norm{z}^2}$.
Furthermore, $y_0$ does not belong to $ \mathcal{O}_{\tau^k}^{A,i}$, which translates to
\begin{align*}
\sigma/2
<
d\bigl(a_0, M_{\tau^k}^{A,i} \bigr)
&\leq
\norm{y_0 + se + z - \Phi_{\tau^k}^{A,i}(y_0)}
\\
&=
\sqrt{\left| s + \left\langle e, y_0 -  \Phi_{\tau^k}^{A,i}(y_0) \right\rangle \right|^2 + \norm{z}^2},
\end{align*}
from what we get $|s| \geq \sqrt{ (\sigma/2)^2 - \norm{z}^2} - \norm{I_D -  \Phi_{\tau^k}^{A,i}}_\infty$. 
We just proved that $\mathcal{O}_{\tau}^{A,i} \setminus \mathcal{O}_{\tau^k}^{A,i}$ is a subset of
\begin{multline*}
\mathcal{B}_{d}(x_k,\delta) 
+
\biggl\{ 
se + z
\bigg|
~(s,z) \in \R \times \R^{D-d-1}, 
\norm{z} \leq \sigma/2 
\text{ and} 
\\
\sqrt{ (\sigma/2)^2 - \norm{z}^2} - \norm{I_D - \Phi_{\tau^k}^{A,i}}_\infty
\leq
|s|
\leq 
\sqrt{ (\sigma/2)^2 - \norm{z}^2}
\biggr\}.
\end{multline*}
Hence, 
\begin{align}
\label{eq:bound_vol_1}
Vol\left(\mathcal{O}_{\tau}^{A,i} \setminus \mathcal{O}_{\tau^k}^{A,i}\right)
\leq
\omega_d \delta^d \times 2 \norm{I_D - \Phi_{\tau^k}^{A,i}}_\infty \times \omega_{D-d-1} (\sigma/2)^{D-d-1}.
\end{align}
Similar arguments lead to
\begin{align}
\label{eq:bound_vol_2}
Vol\left(\mathcal{O}_{\tau^k}^{A,i} \setminus \mathcal{O}_{\tau}^{A,i} \right)
\leq
\omega_d \delta^d \times 2 \norm{I_D - \Phi_{\tau}^{A,i}}_\infty \times \omega_{D-d-1} (\sigma/2)^{D-d-1}.
\end{align}
Since $\norm{I_D - \Phi_{\tau}^{A,i}}_\infty \vee \norm{I_D - \Phi_{\tau^k}^{A,i}}_\infty \leq A \delta^{i}$, summing up bounds \eqref{eq:bound_vol_1} and \eqref{eq:bound_vol_2} yields
\begin{align*}
\int_{\R^D}
\mathbf{P}_{\tau}^{A,i} \wedge \mathbf{P}_{\tau^k}^{A,i}
&
\geq
1-
3 \frac{
\omega_d \omega_{D-d-1} A \delta^{i} \cdot\delta^d (\sigma/2)^{D-d-1}
}
{
\omega_d \tau_{min}^d \omega_{D-d} (\sigma/2)^{D-d}
}
\\
&
\geq
1-
3
\frac{A\delta^{i}}{\sigma} \left(\frac{\delta}{\tau_{min}}\right)^d.
\end{align*}

To derive the last bound, we notice that since $U_k^\sigma \subset \mathcal{O}_{\tau}^{A,i} = Supp \bigl( \mathbf{P}_{\tau}^{A,i}\bigr ) $, we have
\begin{align*}
\int_{U_k^\sigma}
\mathbf{P}_{\tau}^{A,i} \wedge \mathbf{P}_{\tau^k}^{A,i}
&
\geq
\frac{
	Vol \left( U_k^\sigma \cap \mathcal{O}_{\tau^k}^{A,i} \right)
	}
	{
	Vol\left(
\mathcal{O}_{\tau}^{A,i} 
\right)
\wedge
Vol\left(
\mathcal{O}_{\tau^k}^{A,i}
\right)	
	}
\\
&\geq
\frac{
	Vol \left( U_k^\sigma \right)
	-
	Vol \left( U_k^\sigma \setminus \mathcal{O}_{\tau^k}^{A,i} \right)
	}
	{
	Vol\left(
\mathcal{O}_{\tau}^{A,i} 
\right)
\wedge
Vol\left(
\mathcal{O}_{\tau^k}^{A,i}
\right)	
	}
\\
&\geq
\frac{
	Vol \left( U_k^\sigma \right)
	-
	Vol \left( \mathcal{O}_{\tau}^{A,i} \setminus \mathcal{O}_{\tau^k}^{A,i} \right)
	}
	{
	Vol\left(
\mathcal{O}_{\tau}^{A,i} 
\right)
\wedge
Vol\left(
\mathcal{O}_{\tau^k}^{A,i}
\right)	
	}
\\
&
\geq
\frac{
\omega_d (\delta/2)^d \omega_{D-d} (\sigma/2)^{D-d}
-
\omega_d \delta^d A \delta^{i} \omega_{D-d-1}(\sigma/2)^{D-d-1}
}
{
\omega_d \tau_{min}^d \omega_{D-d} (\sigma/2)^{D-d}
}.
\end{align*}
Hence, whenever $A \delta^i \leq c_d \sigma$ for $c_d$ small enough, we get
\begin{align*}
\int_{U_k^\sigma}
\mathbf{P}_{\tau}^{A,i} \wedge \mathbf{P}_{\tau^k}^{A,i}
&
\geq
c'_d \left(\frac{\delta}{\tau_{min}}\right)^d.
\end{align*}
Since $m$ can be chosen such that $m \geq c_d (\tau_{min}/\delta)^{d}$, we get the last bound.

Eventually, writting $\mathbf{P}_\tau^{(i),\sigma} = \mathbf{P}_{\tau}^{A,i}$  for the particular parameters $A = \varepsilon \delta^{k-i}$, for $\varepsilon = \varepsilon_{k,d,\tau_{min}}$ small enough, and $\delta$ such that $\frac{3 A\delta^{i}}{\sigma} \left(\frac{\delta}{\tau_{min}}\right)^d =  \frac{1}{n-1}$ yields the result.
Such a choice of parameter $\delta$ does meet the condition $A \delta^i = \varepsilon \delta^k \leq c_d \sigma$, provided that
$
\sigma \geq \frac{c_d}{\varepsilon} \left(\frac{1}{n-1} \right)^{k/d}$.
\end{proof}

\subsection{Hypotheses for Manifold Estimation}\label{subsec:C4}

\subsubsection{Proof of Lemma 5}
Let us prove Lemma 5, stated here as Lemma \ref{Alem:hypotheses_hausdorff_nonoise}.
\begin{Alem}
\label{Alem:hypotheses_hausdorff_nonoise}
If $\tau_{min}{L_\perp},\ldots,\tau_{min}^{k-1}{L_k},({\tau_{min}^d}f_{min})^{-1}$ and ${\tau_{min}^d}{f_{max}}$ are large enough (depending only on $d$ and $k$), there exist $P_0,P_1 \in \mathcal{P}^k$ with associated submanifolds $M_0,M_1$ such that
\begin{align*}
d_H(M_0,M_1) 
\geq 
c_{k,d,\tau_{min}}
\left(\frac{1}{n}\right)^{\frac{k}{d}}
,
\text{ and}
\quad
\norm{P_0 \wedge P_1}_1^n \geq c_0.
\end{align*}
\end{Alem}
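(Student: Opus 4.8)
Here is the line of attack I would follow.

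\medskip

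The plan is to build the two hypotheses $M_0,M_1$ from the generic bump construction of Section~\ref{subsec:generic_hypotheses} with a single bump ($m=1$) and to conclude with Le Cam's Lemma (Theorem~\ref{lecam}). Let $M_0$ be the base submanifold of Section~\ref{subsec:generic_hypotheses}, so that $\tau_{M_0}\geq 2\tau_{min}$, $\mathcal{B}_{\R^d\times\{0\}^{D-d}}(0,\tau_{min})\subset M_0$ and $Vol(M_0)=c_d\tau_{min}^d$, and let $P_0$ be the uniform distribution on $M_0$; under the largeness assumptions of the statement one has $P_0\in\mathcal{P}^k_{2\tau_{min},\mathbf{L}/2,2f_{min},f_{max}/2}\subset\mathcal{P}^k$. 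Fix $x_1=0$, and for a width $0<\delta\leq\tau_{min}/4$ and a height $\Lambda>0$ to be tuned, let $\Phi(x)=x+\phi\bigl((x-x_1)/\delta\bigr)\,\Lambda e$ be the flat ($\tau_1=0$) instance of the bump map \eqref{multibump_definition}, and set $M_1=\Phi(M_0)$, $P_1=\Phi_\ast P_0$. By Lemma~\ref{differential_bounds_on_multiple_bumps}, $\Phi$ is a $\mathcal{C}^\infty$-diffeomorphism of $\R^D$ with $\Phi\bigl(\mathcal{B}(x_1,\delta)\bigr)=\mathcal{B}(x_1,\delta)$ and $\|I_D-d\Phi\|_{op}\lesssim\Lambda/\delta$, and by Lemma~\ref{Alem:generic_hypotheses_properties} one gets $P_1\in\mathcal{P}^k$ as soon as $\Lambda\leq c_{k,d,\tau_{min}}\delta^k$.

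\medskip

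Next I would bound the separation. Since $\phi\equiv 1$ on $\mathcal{B}(0,1/2)$, $\Phi$ acts on $\mathcal{B}(x_1,\delta/2)$ as the translation by $\Lambda e$, so $x_1+\Lambda e\in M_1$. As $M_0$ coincides with $\R^d\times\{0\}^{D-d}$ near $x_1$ and has reach at least $2\tau_{min}$, for $\Lambda\leq\tau_{min}$ the point $x_1$ is the unique nearest point of $M_0$ to $x_1+\Lambda e$, whence $d_H(M_0,M_1)\geq d(x_1+\Lambda e,M_0)=\Lambda$. For the test affinity the key observation is that $\Phi$ is the identity outside $\mathcal{B}(x_1,\delta)$ while mapping $\mathcal{B}(x_1,\delta)$ onto itself, so $P_0$ and $P_1=\Phi_\ast P_0$ restrict to the same measure on $\mathcal{B}(x_1,\delta)^c$; hence $\|P_0\wedge P_1\|_1\geq P_0\bigl(\mathcal{B}(x_1,\delta)^c\bigr)=1-P_0\bigl(\mathcal{B}(x_1,\delta)\bigr)$. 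Using $Vol\bigl(M_0\cap\mathcal{B}(x_1,\delta)\bigr)\leq C_d\delta^d$ (as in Lemma~\ref{Alem:volume+hausdorff_density}) together with $Vol(M_0)=c_d\tau_{min}^d$, this gives $\|P_0\wedge P_1\|_1\geq 1-C'_d(\delta/\tau_{min})^d$.

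\medskip

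Finally, I would tune the parameters: choose $\delta^d=c'_d\tau_{min}^d/n$ with $c'_d$ small enough that $\delta\leq\tau_{min}/4$ for all $n\geq 1$ and $C'_dc'_d\leq 1/2$, so that $\|P_0\wedge P_1\|_1^n\geq(1-C'_dc'_d/n)^n\geq c_0$ for some absolute $c_0>0$, while setting $\Lambda=c_{k,d,\tau_{min}}\delta^k=c'_{k,d,\tau_{min}}(1/n)^{k/d}$ keeps $P_1$ in $\mathcal{P}^k$, respects $\Lambda\leq\tau_{min}$, and ensures $d_H(M_0,M_1)\geq c'_{k,d,\tau_{min}}(1/n)^{k/d}$. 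Applying Theorem~\ref{lecam} then yields the stated lower bound.

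\medskip

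The only genuinely geometric step is verifying that $M_1\in\mathcal{C}^k_{\tau_{min},\mathbf{L}}$, which is precisely what the already-proved stability result Lemma~\ref{Alem:generic_hypotheses_properties} delivers (this is where the largeness hypotheses on $\tau_{min}L_\perp,\ldots,\tau_{min}^{k-1}L_k,(\tau_{min}^df_{min})^{-1},\tau_{min}^df_{max}$ are used); granting that, what remains is a short two-point computation. The point to watch is that the regularity constraint $\Lambda\lesssim\delta^k$ and the indistinguishability constraint $\delta^d\lesssim 1/n$ are mutually compatible and together produce exactly the rate $(1/n)^{k/d}$ — this balance is the heart of the argument rather than any single estimate.
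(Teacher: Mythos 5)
Your proposal is correct and follows essentially the same route as the paper: a single flat bump $\Phi(x)=x+\phi(x/\delta)\Lambda e$ applied to the base manifold $M_0$ of Section \ref{subsec:generic_hypotheses}, membership in $\mathcal{P}^k$ via Lemma \ref{Alem:generic_hypotheses_properties} under $\Lambda\lesssim\delta^k$, the affinity bound $\|P_0\wedge P_1\|_1\geq 1-P_0(\mathcal{B}(0,\delta))\gtrsim 1-c(\delta/\tau_{min})^d$ since the two distributions coincide outside $\mathcal{B}(0,\delta)$, and the calibration $\delta^d\asymp\tau_{min}^d/n$, $\Lambda\asymp\delta^k$ feeding into Le Cam's Lemma. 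The only cosmetic differences are that you prove $d_H(M_0,M_1)\geq\Lambda$ explicitly (the paper asserts equality by construction) and bound $P_0(\mathcal{B}(0,\delta))$ by a volume estimate rather than computing it exactly, neither of which changes the argument.
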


\begin{proof}[Proof of Lemma \ref{Alem:hypotheses_hausdorff_nonoise}]
Following the notation of Section \ref{subsec:generic_hypotheses}, for $\delta \leq \tau_{min}/4$ and $\Lambda>0$, consider
\begin{align*}
\Phi^{\Lambda}_{\tau}(x) = x + \phi\left(\frac{x}{\delta}\right) \Lambda \cdot e,
\end{align*}
which is a particular case of \eqref{multibump_definition}. Define $M^\Lambda = \Phi^\Lambda(M_0)$, and $P^\Lambda = \Phi^\Lambda_\ast P_0$. Under the conditions of Lemma \ref{Alem:generic_hypotheses_properties}, $P_0$ and $P^\Lambda$ belong to $\mathcal{P}^k$, and by construction, $d_H(M_0,M^\Lambda) = \Lambda$. In addition, since $P_0$ and $P^\Lambda$ coincide outside $\mathcal{B}(0,\delta)$,
\begin{align*}
\int_{\R^D} d P_0 \wedge d P^\Lambda
&=
P_0\bigl(\mathcal{B}(0,\delta)\bigr)
=
\omega_d \left( \frac{\delta}{\tau_{min}}\right)^d.
\end{align*}
Setting $P_1 = P^\Lambda$ with  $\omega_d \left( \frac{\delta}{\tau_{min}}\right)^d = \frac{1}{n}$ and $\Lambda = c_{k,d,\tau_{min}} \delta^k$ for $c_{k,d,\tau_{min}}>0$ small enough yields the result.
\end{proof}

\subsubsection{Proof of Lemma 6}

Here comes the proof of Lemma 6, stated here as Lemma \ref{Alem:hypotheses_hausdorff_nonoise}.

\begin{Alem}
\label{Alem:hypotheses_hausdorff_noise}
If $\tau_{min}{L_\perp},\ldots,\tau_{min}^{k-1}{L_k},({\tau_{min}^d}f_{min})^{-1}$ and ${\tau_{min}^d}{f_{max}}$ are large enough (depending only on $d$ and $k$), there exist $P_0^\sigma,P_1^\sigma \in \mathcal{P}^k(\sigma)$ with associated submanifolds $M_0^\sigma,M_1^\sigma$ such that
\begin{align*}
d_H(M_0^\sigma,M_1^\sigma) 
\geq 
c_{k,d,\tau_{min}}
\left(\frac{\sigma}{n}\right)^{\frac{k}{d+k}}
,
\text{ and}
\quad
\norm{P_0^\sigma \wedge P_1^\sigma}_1^n \geq c_0.
\end{align*}
\end{Alem}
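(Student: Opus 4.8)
The plan is to mimic the single-bump construction used for Lemma~\ref{Alem:hypotheses_hausdorff_nonoise}, thicken the two competing manifolds into tubular neighbourhoods of radius $\sigma/2$ so that the hypotheses land in $\mathcal{P}^k(\sigma)$, and re-balance the bump width $\delta$ against the bump height $\Lambda$ and the noise level $\sigma$. Keeping the notation of Section~\ref{subsec:generic_hypotheses}, for $\delta \leq \tau_{min}/4$ and $\Lambda>0$ set $\Phi^\Lambda(x) = x + \phi(x/\delta)\,\Lambda\, e$, the special case of \eqref{multibump_definition} with a single flat bump at the origin. Let $M_0^\sigma = M_0$ and $M_1^\sigma = \Phi^\Lambda(M_0)$, so that $d_H(M_0^\sigma,M_1^\sigma) = \Lambda$; by Lemma~\ref{differential_bounds_on_multiple_bumps} and Lemma~\ref{Alem:generic_hypotheses_properties}, $M_1^\sigma \in \mathcal{C}^k_{\tau_{min},\mathbf{L}}$ as soon as $\Lambda \leq c_{k,d,\tau_{min}}\delta^k$. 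Let $\mathbf{P}_0^\sigma$ and $\mathbf{P}_1^\sigma$ be the uniform distributions on the offsets $\mathcal{O}_0$ and $\mathcal{O}_1$ of radius $\sigma/2$ of $M_0^\sigma$ and $M_1^\sigma$, and take $P_0^\sigma = \mathbf{P}_0^\sigma$, $P_1^\sigma = \mathbf{P}_1^\sigma$.

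First I would check that $P_0^\sigma, P_1^\sigma \in \mathcal{P}^k(\sigma)$, exactly as in the proof of Lemma~\ref{Alem:hypotheses_glissant_noise}: the image of $\mathbf{P}_i^\sigma$ under $\pi_{M_i^\sigma}$ has a density with respect to the volume measure on $M_i^\sigma$ lying between $(3/5)^d/\mathrm{Vol}(M_i^\sigma)$ and $(5/3)^d/\mathrm{Vol}(M_i^\sigma)$ by Lemma~19 of \cite{Maggioni16}, which meets the bounds $f_{min} \leq f \leq f_{max}$ provided $(\tau_{min}^d f_{min})^{-1}$ and $\tau_{min}^d f_{max}$ are large enough, while the normal component $X-\pi_{M_i^\sigma}(X)$ has norm at most $\sigma/2$ and is centred given $\pi_{M_i^\sigma}(X)$; the underlying noise-free supports are $M_0^\sigma$ and $M_1^\sigma$.

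The heart of the argument is the test-affinity bound. Writing $\norm{P_0^\sigma \wedge P_1^\sigma}_1 = 1 - \tfrac12 \norm{\mathbf{P}_0^\sigma - \mathbf{P}_1^\sigma}_{TV}$, I would bound the total variation by $\mathrm{Vol}(\mathcal{O}_0 \,\triangle\, \mathcal{O}_1) / \bigl(\mathrm{Vol}(\mathcal{O}_0)\wedge \mathrm{Vol}(\mathcal{O}_1)\bigr)$, following the computation in the proof of Lemma~\ref{Alem:hypotheses_glissant_noise}. Since $\Phi^\Lambda$ is the identity outside $\mathcal{B}(0,\delta)$ and displaces points by at most $\Lambda$ along $e$, the symmetric difference is contained in a slab of tangential extent $\delta$, height of order $\Lambda$ along $e$, and thickness $\sigma$ in the remaining $D-d-1$ normal directions, so $\mathrm{Vol}(\mathcal{O}_0 \,\triangle\, \mathcal{O}_1) \leq C_d\, \delta^d \Lambda\, \sigma^{D-d-1}$; conversely both offsets contain a fixed annulus of $\R^d\times\{0\}^{D-d}$ thickened by $\sigma/2$, so $\mathrm{Vol}(\mathcal{O}_0)\wedge\mathrm{Vol}(\mathcal{O}_1) \geq c_d\,\tau_{min}^d\,\sigma^{D-d}$. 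This yields $\norm{P_0^\sigma \wedge P_1^\sigma}_1 \geq 1 - C_d\,(\Lambda/\sigma)\,(\delta/\tau_{min})^d$.

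It then remains to calibrate: take $\Lambda = c_{k,d,\tau_{min}}\delta^k$ with the constant small enough for Lemma~\ref{Alem:generic_hypotheses_properties}, and pick $\delta$ so that $C_d\,(\Lambda/\sigma)\,(\delta/\tau_{min})^d = 1/n$, i.e. $\delta$ of order $\tau_{min}^{d/(d+k)}(\sigma/n)^{1/(d+k)}$. For $n$ large this meets $\delta \leq \tau_{min}/4$ and all side conditions of Lemmas~\ref{differential_bounds_on_multiple_bumps} and \ref{Alem:generic_hypotheses_properties}, and then $\norm{P_0^\sigma \wedge P_1^\sigma}_1^n \geq (1-1/n)^n \geq c_0$ while $d_H(M_0^\sigma,M_1^\sigma) = \Lambda$ is of order $(\sigma/n)^{k/(d+k)}$, as desired. (In the regime $\sigma \lesssim (1/n)^{k/d}$ this rate is dominated by $(1/n)^{k/d}$, so one may instead invoke Lemma~\ref{Alem:hypotheses_hausdorff_nonoise} together with $\mathcal{P}^k \subset \mathcal{P}^k(\sigma)$.) The main obstacle is the volume estimate for $\mathcal{O}_0 \,\triangle\, \mathcal{O}_1$: it requires describing a point lying in one offset but not the other through its tangential and normal coordinates near the bump — the analogue of the estimates \eqref{eq:bound_vol_1}–\eqref{eq:bound_vol_2}, redone for the purely flat bump — whereas the membership $\mathbf{P}_i^\sigma \in \mathcal{P}^k(\sigma)$ is routine given Lemma~19 of \cite{Maggioni16}.
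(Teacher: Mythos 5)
Your construction and calibration are exactly those of the paper's proof: a single flat bump $\Phi^\Lambda$ applied to $M_0$, uniform distributions on the $\sigma/2$-offsets, the test-affinity bound $1-C(\Lambda/\sigma)(\delta/\tau_{min})^d$ obtained from the same symmetric-difference volume estimates as in Lemma \ref{Alem:hypotheses_glissant_noise}, and the choice $\Lambda \asymp \delta^k$ with $(\Lambda/\sigma)(\delta/\tau_{min})^d \asymp 1/n$. This matches the paper's argument in all essentials, so no further comment is needed.
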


\begin{proof}[Proof of Lemma \ref{Alem:hypotheses_hausdorff_noise}]
The proof follows the lines of that of Lemma \ref{Alem:hypotheses_glissant_noise}. Indeed, with the notation of Section \ref{subsec:generic_hypotheses}, for $\delta \leq \tau_{min}/4$ and $0< \Lambda \leq c_{k,d,\tau_{min}} \delta^k$ for $c_{k,d,\tau_{min}}>0$ small enough, consider
\begin{align*}
\Phi^{\Lambda}_{\tau}(x) = x + \phi\left(\frac{x}{\delta}\right) \Lambda \cdot e.
\end{align*}
Define $M^\Lambda = \Phi^\Lambda(M_0)$.
Write $\mathcal{O}_0$, $\mathcal{O}^{\Lambda}$ for the offsets of radii $\sigma/2$ of $M_0$, $M^\Lambda$, and 
and $\mathbf{P}_0, \mathbf{P}^\Lambda$ for the uniform distributions on these sets.

By construction, we have $d_H(M_0,M^\Lambda) = \Lambda$, and as in the proof of Lemma \ref{Alem:hypotheses_glissant_noise}, we get
\begin{align*}
\int_{\R^D}
\mathbf{P}_0 \wedge \mathbf{P}^{\Lambda}
&
\geq
1-
3
\frac{\Lambda}{\sigma} \left(\frac{\delta}{\tau_{min}}\right)^d.
\end{align*}
Denoting ${P}_0^\sigma = \mathbf{P}_0$ and ${P}_1^\sigma = \mathbf{P}^\Lambda$ with $\Lambda = \varepsilon_{k,d,\tau_{min}} \delta^k$ and $\delta$ such that $3
\frac{\Lambda}{\sigma} \left(\frac{\delta}{\tau_{min}}\right)^d$ yields the result.

\end{proof}

\subsection{Minimax Inconsistency Results}\label{subsec:C5}

This section is devoted to the proof of Theorem 1, reproduced here as Theorem \ref{Athm:minimax_non_consistency_tangent_and_curvature}.
\begin{Athm}\label{Athm:minimax_non_consistency_tangent_and_curvature}
Assume that $\tau_{min} = 0$.
If $D \geq d+3$, then, for all $k \geq 2$ and $L_\perp>0$, provided that $L_3/L_\perp^2,\ldots,{L_k}/L_\perp^{k-1},{L_\perp^d}/{f_{min}}$ and ${f_{max}}/{L_\perp^d}$ are large enough (depending only on $d$ and $k$), for all $n\geq 1$, 
\begin{align*}
\inf_{\hat{T}} 
\sup_{P \in \mathcal{P}_{(x)}^k}
\E_{P^{\otimes n}}
\angle
\bigl(
T_{x} M
,
\hat{T}
\bigr)
\geq \frac{1}{2} > 0,
\end{align*}
where the infimum is taken over all the estimators $\hat{T} = \hat{T}\bigl(X_1,\ldots,X_n\bigr)$. 

Moreover, for any $D \geq d+1$, provided that $L_3/L_\perp^2,\ldots,{L_k}/L_\perp^{k-1},{L_\perp^d}/{f_{min}}$ and ${f_{max}}/{L_\perp^d}$ are large enough (depending only on $d$ and $k$), for all $n\geq 1$, 
\begin{align*}
\inf_{\widehat{II}} 
\sup_{P \in \mathcal{P}_{(x)}^k}
\E_{P^{\otimes n}}
\norm{
II_{x}^M
\circ
\pi_{T_{x} M}
-
\widehat{II}
}_{op}
\geq \frac{L_\perp}{4} > 0,
\end{align*}
where the infimum is taken over all the estimators $\widehat{II} = \widehat{II}\bigl(X_1,\ldots,X_n\bigr)$.
\end{Athm}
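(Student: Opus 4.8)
The plan is to prove both lower bounds by a two-point (Le Cam) argument: exhibit, for each $n$, a pair of distributions $P_0, P_1 \in \mathcal{P}^k_{(x)}$ whose supports $M_0, M_1$ both pass through the prescribed point $x$, but with $T_x M_0$ and $T_x M_1$ (resp. $II^{M_0}_x$ and $II^{M_1}_x$) as far apart as claimed, while $\norm{P_0 \wedge P_1}_1 = 1$, i.e. $P_0 = P_1$ as measures. The point is that when $\tau_{min} = 0$ we are allowed to let two sheets of the manifold come arbitrarily close near $x$, so we can build $M_0$ and $M_1$ that \emph{carry the very same probability distribution on $\R^D$} — the ambient data $X_1, \dots, X_n$ then contains literally no information distinguishing the two — yet have prescribed different first/second order behavior at $x$. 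Le Cam's Lemma (Theorem \ref{lecam}) with $\norm{P_0 \wedge P_1}_1^n = 1$ then gives the minimax risk $\geq \tfrac12 d(\theta(P_0), \theta(P_1))$, which is $\geq 1/2$ for the angle and $\geq L_\perp/4$ for the second fundamental form once the two geometries are chosen with the stated separation.

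For the tangent space bound I would, in the case $D \geq d+3$, take a fixed smooth reference manifold (a flattened sphere as in Section \ref{subsec:C2}, rescaled by $L_\perp^{-1}$ so that all the $L_i$ constraints hold with room to spare) and then produce two isometric copies $M_0, M_1$ of it, realized in $\R^D$ in such a way that: (i) both contain $x$; (ii) near $x$, $M_0$ is tangent to $\R^d \times \{0\}^{D-d}$ while $M_1$ is tangent to a $d$-plane making angle $\geq 1/2$ (in fact we can take them orthogonal) with it; (iii) away from a small ball $\mathcal{B}(x, \Lambda)$ the two embeddings agree, and inside that ball one sheet is "folded" past the other. The extra codimension ($D - d \geq 3$) is exactly what lets the two local patches, with transverse tangent planes at the common point $x$, be joined to the common far part by $\mathcal{C}^k$ bridges of reach that is positive (for fixed $\Lambda$) but which we never need to lower-bound, since the only model requirement here is $\tau_M \geq 0$; the $L_i$ bounds are controlled because the construction is a fixed one scaled so the curvature budget is met. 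Since the embeddings agree outside $\mathcal{B}(x,\Lambda)$ and we may push forward the same uniform-type density, arranging $P_0 = P_1$ amounts to matching the pushforward of the common reference measure under two maps that differ only on $\mathcal{B}(x,\Lambda)$ — choose the density to be the reference one transported identically, i.e. both $M_i$ are obtained from $M_0^{(0)}$ by global diffeomorphisms that are measure-matched; concretely one fixes the distribution on the "far" part and fills in the ball so that total mass and local density bounds $f_{min} \le f \le f_{max}$ are respected on both. For the curvature bound the same scheme applies with $D \geq d+1$: now $M_0$ is locally a $d$-plane (so $II^{M_0}_x = 0$) and $M_1$ is locally the graph of $v \mapsto \tfrac{L_\perp}{2}\langle v, e_1\rangle^2 e$ over $T_x M_1 = \R^d \times\{0\}^{D-d}$, giving $\norm{II^{M_1}_x}_{op} = L_\perp/2$, hence $\norm{II^{M_0}_x \circ \pi - II^{M_1}_x \circ \pi}_{op} \geq L_\perp/2$ and the triangle inequality forces the risk $\geq L_\perp/4$; one codimension suffices because here the two tangent planes coincide.

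The main obstacle is step (iii): verifying that one can actually realize, with a single fixed construction (scaled by $L_\perp$), two $\mathcal{C}^k$ embeddings that (a) agree outside a small ball, (b) have the prescribed transverse first/second order jets at the common point $x$ inside the ball, (c) satisfy the parametrization bounds of Definition \ref{best_model_definition} with the given $L_i$'s — note $\tau_{min} = 0$ means there is \emph{no} reach constraint, which is what makes (a)+(b) simultaneously possible — and (d) can be equipped with densities in $[f_{min}, f_{max}]$ whose pushforwards to $\R^D$ coincide exactly. I would handle (a)–(c) by writing the local patches as graphs over fixed $d$-planes, gluing with a cutoff $\phi$ as in \eqref{multibump_definition}, and checking the differential bounds by the chain rule exactly as in Lemma \ref{differential_bounds_on_multiple_bumps}; the ``largeness'' hypotheses on $L_3/L_\perp^2, \dots, L_k/L_\perp^{k-1}, L_\perp^d/f_{min}, f_{max}/L_\perp^d$ are precisely there to absorb the fixed constants produced by this glued construction (just as in Section \ref{subsec:C2}). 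For (d), the cleanest route is to make $M_0$ and $M_1$ related by a global $\mathcal{C}^k$-diffeomorphism $\Phi$ of $\R^D$ that is the identity outside $\mathcal{B}(x,\Lambda)$, and define $P_1 = \Phi_\ast P_0$; then $P_0$ and $P_1$ are distinct distributions but to make $\norm{P_0\wedge P_1}_1 = 1$ I instead choose $P_0$ supported on $M_0 \cup (\text{bridge})$ so that its restriction to $\mathcal{B}(x,\Lambda)$ is carried by a part common to both $M_0$ and $M_1$ — i.e. put \emph{zero} mass (or a mass placed on the shared arc) inside $\mathcal{B}(x,\Lambda)$ while keeping $f \geq f_{min}$ only on the overlap region; the details of reconciling ``$f \geq f_{min}$ everywhere on $M_i$'' with ``the two measures coincide'' is the delicate bookkeeping, solved by taking the two supports to share all of $M_i$ except a thin fold near $x$ and putting the density there equal on the shared complement and supported on the common fold curve. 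Once $P_0 = P_1$ is secured, the conclusion is immediate from Theorem \ref{lecam}.
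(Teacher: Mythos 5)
Your overall strategy --- a two-point Le Cam argument with two submanifolds through $x$ that are ``flipped'' near $x$ and identical far from $x$, taking products with a sphere (or graphs over $d$-planes) scaled by $L_\perp^{-1}$ so the $\mathcal{C}^k$ bounds hold --- is exactly the paper's strategy. But the mechanism you build the proof around, namely forcing $\norm{P_0 \wedge P_1}_1 = 1$, i.e.\ $P_0 = P_1$ as measures on $\R^D$, cannot be realized inside the model, and this is a genuine gap. If $P_0 = P_1$ then their supports coincide, so $M_0 = M_1$ and hence $T_xM_0 = T_xM_1$ and $II^{M_0}_x = II^{M_1}_x$: there is nothing left to separate. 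Your attempted repair in the last paragraph makes this worse rather than better: an element of $\mathcal{P}^k_{(x)}$ must have support equal to the whole closed submanifold $M$ and a density $f$ with $f \geq f_{min} > 0$ with respect to the $d$-dimensional volume measure on $M$. Since the two manifolds have transverse tangent planes at $x$, their intersection near $x$ is at most lower-dimensional, so each $P_i$ is forced to put mass at least $f_{min}$ times a positive volume on the part of $M_i$ near $x$ that the other manifold misses; putting ``zero mass'' there, or carrying the mass on a shared fold curve (a lower-dimensional set, hence not absolutely continuous with respect to the volume measure on $M$), violates the model. So $\norm{P_0 \wedge P_1}_1 < 1$ necessarily.

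The fix is exactly what the paper does: do not insist on affinity $1$. Make $M_0$ and $M_1$ coincide outside $\mathcal{B}(x,\delta)$ (with the flip of height $\Lambda$ inside, and $\Lambda$ small compared to the relevant power of $\delta$ so that both stay in $\mathcal{C}^k_{\tau,\mathbf{L}}$ for some $\tau>0$, which is all that $\tau_{min}=0$ requires), take (near-)uniform distributions, and accept $\norm{P_0 \wedge P_1}_1 \geq 1 - c\,\delta$ (the paper uses curves $\mathcal{C},\mathcal{C}'\subset\R^3$ times $\mathcal{S}^{d-1}(1/L_\perp)$ for the tangent case, and a plane-versus-sphere-cap flip for curvature). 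Le Cam then gives, at fixed $n$, a lower bound $\tfrac12\,\Delta\,(1-c\delta)^n$ with $\Delta = 1$ for the angle and $\Delta = L_\perp/2$ for the second fundamental form; letting $\delta \to 0$ (and $\Lambda \to 0$ accordingly) at fixed $n$ recovers the stated constants $1/2$ and $L_\perp/4$. With this modification the rest of your construction (the gluing via a cutoff as in the bump maps, the scaling by $L_\perp^{-1}$, and the role of the largeness assumptions on $L_3/L_\perp^2,\ldots,f_{max}/L_\perp^d$ to absorb the constants) is sound and matches the paper's argument.
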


We will make use of Le Cam's Lemma, which we recall here.

\begin{Athm}[Le Cam's Lemma \cite{Yu97}]\label{lecam}
For all pairs $P,P'$ in $\mathcal{P}$,
\begin{align*}
\inf_{ \hat{\theta}}
\sup_{P \in \mathcal{P}}
\E_{P^{\otimes n}}
d( \theta(P) , \hat{\theta} )
\geq
\frac{1}{2} 
d\left( \theta(P) , \theta(P') \right) 
\norm{P \wedge P'}_1^n
,
\end{align*}
where the infimum is taken over all the estimators $\hat{\theta} = \hat{\theta}(X_1,\ldots,X_n)$.

\end{Athm}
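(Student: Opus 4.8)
The plan is to run the classical two-point argument, reducing the minimax risk to a single test affinity and then tensorizing. First I would fix an arbitrary estimator $\hat{\theta} = \hat{\theta}(X_1,\ldots,X_n)$ and the pair $P,P' \in \mathcal{P}$, writing $\theta = \theta(P)$ and $\theta' = \theta(P')$. Since the supremum over $\mathcal{P}$ dominates the average of the two risks at $P$ and $P'$,
\begin{align*}
\sup_{Q \in \mathcal{P}} \E_{Q^{\otimes n}} d\bigl(\theta(Q), \hat{\theta}\bigr)
\geq
\max \left\{ \E_{P^{\otimes n}} d(\theta, \hat{\theta}),\ \E_{P'^{\otimes n}} d(\theta', \hat{\theta}) \right\}
\geq
\frac{1}{2}\left( \E_{P^{\otimes n}} d(\theta, \hat{\theta}) + \E_{P'^{\otimes n}} d(\theta', \hat{\theta}) \right).
\end{align*}

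Next I would introduce densities $p$ and $p'$ of $P^{\otimes n}$ and $P'^{\otimes n}$ with respect to a common dominating measure $\mu$ (for instance $\mu = P^{\otimes n} + P'^{\otimes n}$), and bound the bracketed sum from below by replacing $p$ and $p'$ with their pointwise minimum $p \wedge p'$:
\begin{align*}
\E_{P^{\otimes n}} d(\theta, \hat{\theta}) + \E_{P'^{\otimes n}} d(\theta', \hat{\theta})
&= \int \bigl( d(\theta, \hat{\theta})\, p + d(\theta', \hat{\theta})\, p' \bigr)\, d\mu
\\
&\geq \int \bigl( d(\theta, \hat{\theta}) + d(\theta', \hat{\theta}) \bigr) (p \wedge p')\, d\mu
\geq d(\theta, \theta') \int (p \wedge p')\, d\mu,
\end{align*}
where the last inequality uses that $d$ is a pseudo-metric, so the triangle inequality gives $d(\theta, \hat{\theta}) + d(\theta', \hat{\theta}) \geq d(\theta, \theta')$ pointwise on the sample space, regardless of the (measurable) value taken by $\hat{\theta}$. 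By definition of the $L^1$ test affinity recalled above, $\int (p \wedge p')\, d\mu = \norm{P^{\otimes n} \wedge P'^{\otimes n}}_1$.

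The remaining step is the tensorization inequality $\norm{P^{\otimes n} \wedge P'^{\otimes n}}_1 \geq \norm{P \wedge P'}_1^n$. I would derive it from the elementary pointwise bound $\prod_{i=1}^n (a_i \wedge b_i) \leq \bigl(\prod_{i=1}^n a_i\bigr) \wedge \bigl(\prod_{i=1}^n b_i\bigr)$ for nonnegative reals, applied with $a_i$, $b_i$ the marginal densities of $P$, $P'$ at $x_i$ with respect to a common dominating measure on $\mathcal{X}$; integrating the inequality over $x = (x_1,\ldots,x_n)$ and applying Fubini turns the right-hand side into $\bigl(\int (a \wedge b)\bigr)^n = \norm{P \wedge P'}_1^n$. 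Chaining the three displays and taking the infimum over $\hat{\theta}$ then yields the announced bound. The argument is largely bookkeeping; the only points needing a little care are the choice of a common dominating measure so that all the densities involved are simultaneously well defined, and the observation that $\int (p \wedge p')\, d\mu$ is independent of that choice (so that the notation $\norm{\cdot \wedge \cdot}_1$ is unambiguous) — I expect the tensorization inequality to be the only genuinely substantive ingredient.
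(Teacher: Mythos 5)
Your proof is correct and is the standard argument for Le Cam's two-point bound: lower-bound the supremum by the average of the two risks, pass to the pointwise minimum of the densities, invoke the triangle inequality for the pseudo-metric $d$, and tensorize the affinity via $\prod_i(a_i\wedge b_i)\le(\prod_i a_i)\wedge(\prod_i b_i)$. The paper states this lemma as a citation to \cite{Yu97} and does not reprove it, so there is no alternative route in the text to compare against; your derivation matches the classical one.
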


\begin{proof}[Proof of Theorem \ref{Athm:minimax_non_consistency_tangent_and_curvature}]

For $\delta \geq \Lambda > 0$, let $\mathcal{C},\mathcal{C}' \subset \R^3$ be closed curves of the Euclidean space as in Figure \ref{fig:non_consistency_two_hypotheses_tangent}, and such that outside the figure, $\mathcal{C}$ and $\mathcal{C}'$ coincide and are $\mathcal{C}^\infty$.
The bumped parts are obtained with a smooth diffeomorphism similar to \eqref{multibump_definition} and centered at $x$. 
Here, $\delta$ and $\Lambda$ can be chosen arbitrarily small.

\begin{figure}
\centering
\includegraphics[width= 0.7\textwidth]{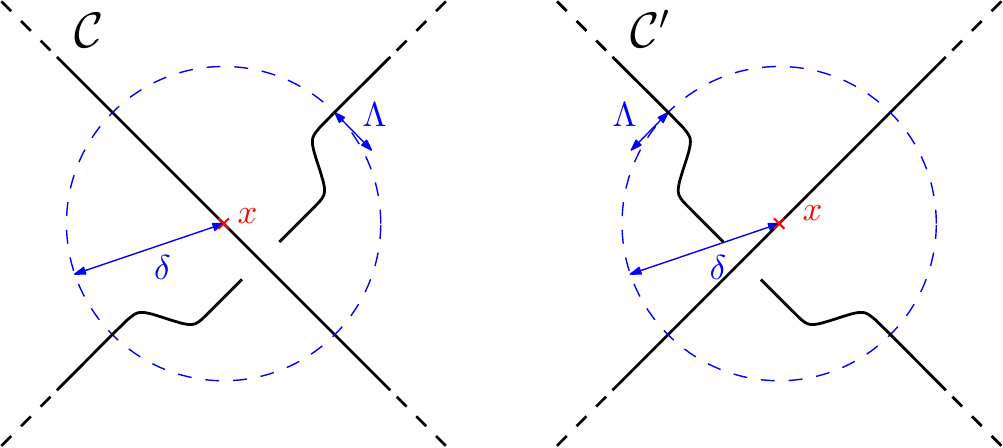}
\caption{Hypotheses for minimax lower bound on tangent space estimation with $\tau_{min} = 0$. }
\label{fig:non_consistency_two_hypotheses_tangent}
\end{figure}
Let $\mathcal{S}^{d-1} \subset \R^{d}$ be a $d-1$-sphere of radius $1/L_{\perp}$. Consider the Cartesian products $M_1 = \mathcal{C}\times \mathcal{S}^{d-1}$ and $M_1' = \mathcal{C}'\times \mathcal{S}^{d-1}$. $M_1$ and $M_1'$ are subsets of $\R^{d+3} \subset \R^D$. 
Finally, let $P_1$ and $P_1'$ denote the uniform distributions on $M$ and $M'$. 
Note that $M$, $M'$ can be built by homothecy of ratio $\lambda = 1/L_\perp$ from some unitary scaled $M_1^{(0)},{M'}_1^{(0)}$, similarly to Section 5.3.2 in \cite{AamariL17}, yielding, from Proposition \ref{statistical_model_stability}, that $P_1,P'_1$ belong to $\mathcal{P}^k_{(x)}$ provided that $L_3/L_\perp^2,\ldots,{L_k}/L_\perp^{k-1},{L_\perp^d}/{f_{min}}$ and ${f_{max}}/{L_\perp^d}$ are large enough (depending only on $d$ and $k$), and that $\Lambda,\delta$ and  $\Lambda^k/\delta$ are small enough. From Le Cam's Lemma \ref{lecam}, we have for all $n \geq 1$,
\begin{align*}
\inf_{\hat{T}} 
\sup_{P \in \mathcal{P}_{(x)}^k}
\E_{P^{\otimes n}}
\angle
\bigl(
T_{x} M
,
\hat{T}
\bigr)
&\geq
\frac{1}{2} 
\angle
\bigl(
T_{x} M_1
,
T_{x} M_1'
\bigr)
\norm{P_1 \wedge P_1'}_1^n.
\end{align*}
By construction, $\angle \bigl(T_{x} M_1,T_x M_1'\bigr) = 1$, and since $\mathcal{C}$ and $\mathcal{C}'$ coincide outside $\mathcal{B}_{\R^3}(0,\delta)$, 
\begin{align*}
\norm{P_1 \wedge P_1'}_1
&= 
1 - Vol\left(   \left( \mathcal{B}_{\R^3}(0,\delta) \cap \mathcal{C}\right) \times \mathcal{S}^{d-1} \right) 
/ Vol\left( \mathcal{C} \times \mathcal{S}^{d-1} \right)
\\
&=
1-
Length \left( \mathcal{B}_{\R^3}(0,\delta) \cap \mathcal{C}\right) /  Length(\mathcal{C}) 
\\
&\geq
1-
c_{L_\perp} \delta.
\end{align*} 
Hence, at fixed $n\geq 1$, letting $\Lambda,\delta$ go to $0$ with $\Lambda^k/\delta$ small enough, we get the announced bound.

We now tackle the lower bound on curvature estimation with the same strategy. Let $M_2,M_2' \subset \R^D$ be $d$-dimensional submanifolds as in Figure \ref{fig:non_consistency_two_hypotheses_curvature}: they both contain $x$, the part on the top of $M_2$ is a half $d$-sphere of radius $2/L_\perp$, the bottom part of $M'_2$ is a piece of a $d$-plane, and the bumped parts are obtained with a smooth diffeomorphism similar to \eqref{multibump_definition}, centered at $x$.
Outside $\mathcal{B}(x,\delta)$, $M_2$ and $M_2'$ coincide and connect smoothly the upper and lower parts.
\begin{figure}
\centering
\includegraphics[width= 0.7\textwidth]{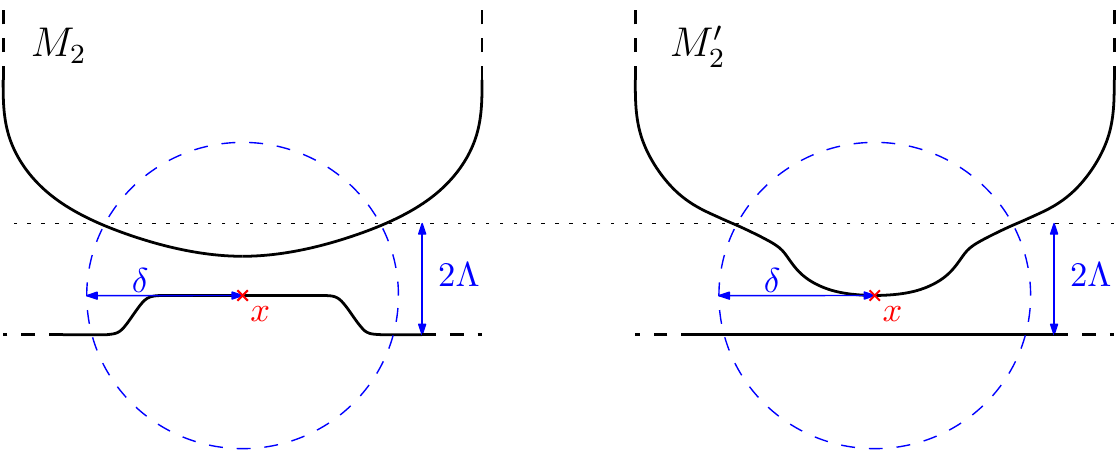}
\caption{Hypotheses for minimax lower bound on curvature estimation with $\tau_{min} = 0$. }
\label{fig:non_consistency_two_hypotheses_curvature}
\end{figure}
Let $P_2,P'_2$ be the probability distributions obtained by the pushforward given by the bump maps.
Under the same conditions on the parameters as previously, $P_2$ and $P_2'$ belong to $\mathcal{P}^{k}_{(x)}$ according to Proposition \ref{statistical_model_stability}. 
Hence from Le Cam's Lemma \ref{lecam} we deduce
\begin{align*}
\inf_{\widehat{II}} 
\sup_{P \in \mathcal{P}_{(x)}^k}
\E_{P^{\otimes n}}
&
\norm{
II_{x}^M
\circ
\pi_{T_x M}
-
\widehat{II}
}_{op}
\\
&\geq 
\frac{1}{2} 
\norm{
II_{x}^{M_2}
\circ
\pi_{T_{x} M_2}
-
II_{x}^{M_2'}
\circ
\pi_{T_{x} M_2'}
}_{op}
\norm{P_2 \wedge P_2'}_1^n.
\end{align*}
But by construction, $\norm{II_{x}^{M_2}
\circ
\pi_{T_{x} M_2}}_{op}= 0$,  and since $M_2'$ is a part of a sphere of radius $2/L_\perp$ nearby $x$, $\norm{II_{x}^{M_2'}
\circ
\pi_{T_{x} M_2'}}_{op} = L_\perp/2$. Hence,
\[
\norm{
II_{x}^{M_2}
\circ
\pi_{T_{x} M_2}
-
II_{x}^{M_2'}
\circ
\pi_{T_{x} M_2'}
}_{op}
\geq L_\perp/2
.
\]
Moreover, since $P_2$ and $P'_2$ coincide on $\R^D \setminus \mathcal{B}(x,\delta)$, 
\[
\norm{P_2 \wedge P'_2}_1 = 1 - P_2(\mathcal{B}(x,\delta)) \geq 1 - c_{d,L_\perp} \delta^d.
\] 
At $n\geq 1$ fixed, letting $\Lambda, \delta$ go to $0$ with $\Lambda^k/\delta$ small enough, we get the desired result.

\end{proof}
\end{document}